\journal{CGP17004}
\let\OLDthebibliography\thebibliography
\renewcommand\thebibliography[1]{
	\OLDthebibliography{#1}
	\setlength{\parskip}{0pt}
	\setlength{\itemsep}{0.8ex} 
}
\theoremstyle{definition}
\newtheorem{thm}{Theorem}[section]
\newtheorem{prop}[thm]{Proposition}
\newtheorem{lem}[thm]{Lemma}
\newtheorem{cor}[thm]{Corollary}
\newtheorem{defn}[thm]{Definition}
\newtheorem{exam}[thm]{Example}
\newtheorem{rmk}[thm]{Remark}
\theoremstyle{definition} \newtheorem{notat}{Notation}
\theoremstyle{remark}   
\theoremstyle{remark}   
\theoremstyle{remark}   
\DeclareMathOperator{\Stab}{\Scal\hspace{-0.1em}\tcal\hspace{-0.1em}\acal\hspace{-0.1em}\bcal}
\def\sl{\operatorname{\textup{SL}}(2,\Cbb)}
\def\psl{\operatorname{\textup{PSL}}(2,\Cbb)}
\def\Mt{{ M \hspace{-0.96em} \widetilde{\raisebox{-0.11em}{\phantom{W}}}}}
\def\Mh{{ M \hspace{-0.96em} \widehat{\raisebox{-0.1em}{\phantom{W}}}}}
\def\Mth{{M \hspace{-0.96em} \widetilde{\raisebox{-0.15em}{\phantom{W}}} \hspace{-1.1em} \widehat{\raisebox{0.03em}{\phantom{W}}}  } }
\def\Mb{{ M \hspace{-0.96em} \overline{{\phantom{N}}} }}
\def\Mbt{{ M \hspace{-0.96em} \overline{\phantom{N}} \hspace{-0.95em} \widetilde{\raisebox{0.02em}{\phantom{W}}}}}
\def\IsoH{{\psl}}
\def\H{{\mathbb{H}^3}}
\def\Hb{{\overline{\mathbb{H}^3}}}
\def\T{{\Tscr}}
\def\Tt{{\Tscr \hspace{-0.83em} \scalebox{0.8}[1]{$\widetilde{\raisebox{-0.13em}{\phantom{W}}}$ }\hspace{-0.2em} }}
\newsavebox\CBox
\newcommand\hcancel[2][0.5pt]{%
	\ifmmode\sbox\CBox{$#2$}\else\sbox\CBox{#2}\fi%
	\makebox[0pt][l]{\usebox\CBox}%
	\rule[0.5\ht\CBox-#1/2]{\wd\CBox}{#1}}
\begin{document}

\begin{frontmatter}

\title{Octahedral developing of knot complement I: pseudo-hyperbolic structure}
	
\author{Hyuk Kim}
\address{Department of Mathematical Sciences, Seoul National University, Seoul, 08826, Korea}
\fntext[h_kim]{The first author was supported by Basic Science Research Program through the NRF of Korea funded by the Ministry of Education (2015R1D1A1A01057299).}
\ead{hyukkim@snu.ac.kr}

\author{Seonhwa Kim} 
\address{Center for Geometry and Physics, Institute for Basic Science, Pohang, 37673, Korea}
\fntext[s_kim]{The second author was supported by IBS-R003-D1.}
\ead{ryeona17@ibs.re.kr}

\author{Seokbeom Yoon}
\address{Department of Mathematical Sciences, Seoul National University, Seoul, 08826,  Korea}
\fntext[s_yoon]{The third author was supported by Basic Science Research Program through the NRF of Korea funded by the Ministry of Education (2013H1A2A1033354).}
\ead{sbyoon15@snu.ac.kr}

\begin{abstract}
It is known that a knot complement can be decomposed into ideal octahedra along a knot diagram. A solution to 
the gluing equations applied to this decomposition gives a pseudo-developing map of the knot complement, which will be called a pseudo-hyperbolic structure.  In this paper, we study these in terms of segment and region variables which are motivated by the volume conjecture so that we can compute complex volumes of all the boundary parabolic representations explicitly.
We investigate the octahedral developing and holonomy representation carefully, and obtain a concrete formula of Wirtinger generators for the representation and also of cusp shape. We demonstrate explicit  solutions for $T(2,N)$ torus knots, $J(N,M)$ knots and also for other interesting knots as examples. Using these solutions we can observe the asymptotic behavior of complex volumes and cusp shapes of these knots. We note that this construction works for any knot or link, and  reflects systematically both geometric properties of the knot complement and combinatorial aspects of the knot diagram.
\end{abstract}

\begin{keyword}
Knot, octahedral decomposition,  pseudo-hyperbolic structure, pseudo-developing.
\MSC[2010] 57M05 \sep 57M25 \sep 57M27 
\end{keyword}

\end{frontmatter}

\tableofcontents

	\section{Introduction}\label{sec:Introduction}
	
	The earlier beautiful interaction  between Riley’s work on boundary parabolic representation of a knot group  and Thurston’s work on hyperbolic geometry of a knot complement still gives us much inspiration (\cite{riley_parabolic_1972}, \cite{thurston_geometry_1977}, \cite{riley_personal}), and can be generalized further to encompass not only hyperbolic knots but also general knots (or links) if we relax the condition ``faithful and discrete'' in the representation side and ``angle sum equals $2\pi$'' in the geometric side. Although there have been some efforts and results in this direction, we would like to add more concrete and explicit geometric flavor to laying the foundation of such interaction through a series of papers starting with this one.

	If a knot complement has a hyperbolic structure, that is, it has a developing map from its universal cover to $\H$, it gives us a corresponding holonomy representation into $\psl$. Conversely suppose  we have an ideal triangulation of a knot complement and if a knot group has a representation $\rho$, then we can first construct a developing of 0-cells (or ideal points) $\rho$-equivariantly and then extend to higher skeletons using geodesics and geodesic planes, and so on to obtain a ``developing" map. (See for example \cite{zickert_volume_2009}.) Here this developing map is not a local diffeomorphism in general and we have to consider a more general type developing allowing “folding” of tetrahedra. Such a developing in this broader sense focusing on the holonomy will be called a pseudo-hyperbolic structure in this paper. To study such structures we have to discard the angle sum condition in Thurston’s edge condition. Then the remaining edge condition, ``the product of cross ratios around an edge is equal to $1$'', turns out to be the right condition for the correspondence between a representation of a knot group and a pseudo-hyperbolic structure on the knot complement.  We will call this condition the gluing equation or the hyperbolicity equation. We would like to stress again that this works for all knots or links.

     In this paper,	we will follow exactly what Thurston  did in \cite{thurston_geometry_1977} to obtain an explicit picture of the pseudo-developing map using a very specific triangulation coming from an octahedral decomposition. Then we can talk about knot invariants stemming from hyperbolic knot theory more generally applied to any knot or link. 
     The complex volume is one notable such invariant and this can be obtained from each representation of a knot group (\cite{neumann_extended_2003}, \cite{zickert_volume_2009}). We will discuss cusp shape as another such example and give an explicit formula for each boundary parabolic representation of a knot group which can be calculated from the explicit geometry of the pseudo-developing map.
	
	The real problem in such geometric arguments is the lack of the existence of a canonical triangulation for a knot complement to handle systematically, but the octahedral decomposition of a knot complement alternatively can give us a canonical method to work with, once we have a knot diagram. (Even if this gives us an ideal triangulation of the knot complement minus two points, it really doesn’t matter.) The advantage of this decomposition is obvious since we can compute an explicit formula for knot invariants through very systematic developing, and furthermore this decomposition already contains the information about the combinatorics of the knot diagram, thus opening a bridge for a possible connection between hyperbolic invariants and combinatorial ones. 
	
		The idea of connecting  an octahedron to each crossing of knot diagram  seems to  first appear in the Kashaev's study on   quantum dilogarithm and R-matrix to define his quantum link invariant (\cite{Kr1}, \cite{Kr2}) 
	and  was interpreted as an   ideal triangulation of knot complement in the effort to understand  the volume conjecture \cite{thurston_1999}.
This structure has been intensively studied especially by Y. Yokota and others  because of its relevance to the volume conjecture(\cite{murakami_intro2010}, \cite{yokota_potential_2002}, \cite{CM_2013}, \cite{OhtsukiTakata2015},...), but there are also other works in some geometric contexts (\cite{We2005},\cite{IKquandle2014},...).

In general, an octahedron can be decomposed into tetrahedra in several ways, but we use two particular decompositions called the 4-term and 5-term triangulation. These triangulations are related to the $R$-matrix of the Kashaev invariant and colored Jones polynomial respectively. 
			Precisely, the 4-term (resp., 5-term) triangulation  is a decomposition into four (resp., five)  tetrahedra as in Figure \ref{fig:four_five}  and each tetrahedron corresponds to a $q$-series term in the $R$-matrix of the Kashaev invariant (resp., colored Jones polynomial)
			 or to a dilogarithm function term in the optimistic limits (\cite{yokota_potential_2002}, \cite{CM_2013}). 
			
			Our geometric observation started from the volume fomulas given by the optimistic limits (\cite{CKK_2014},\cite{cho_optimistic_2013}).
	  As the state sum  of  the Kashaev invariant (resp., colored Jones polynomial) is expressed by the terms  indexed by  segments{\footnote{ We use the term ``segment" instead of  ``edge" for knot diagrams to avoid the confusion with an edge of a triangulation, see Section \ref{sec:OctaDeco}.} (resp., regions) of a knot diagram,   
	the cross-ratios of the 4-term (resp., 5-term) triangulation 
	are parametrized by variables assigned to segments (resp., regions) denoted by $z$-variables(resp., $w$-variables).

	We will explain that in these triangulations the $z$-variable can be interpreted geometrically as the coordinates of vertices of a hyperbolic ideal octahedron so that the cross-ratio of the  side  edges of octahedron is given by the ratio of two $z$-variables, and similarly the cross-ratio of the other edges of the octahedron can be written as ratios of $w$-variables. The hyperbolicity equation can be given in either of these variables, and with $z$(or $w$)-solutions we  can compute the pseudo-developing and holonomy representation explicitly. All these materials will be discussed in Sections \ref{sec:OctaDeco} and \ref{sec:dev}.
	
	We then discuss three applications of these constructions: boundary parabolic representations, complex volume and cusp shape. All the representations of a knot group  into $\psl$ can be obtained through the pseudo-hyperbolic structures on the knot complement as holonomy. In fact a solution of the hyperbolicity equation gives a pseudo developing and thus the corresponding holonomy. We will focus on the boundary parabolic representation in this paper and also point out what kind of subtlety arises mostly for non geometric representations when the ``pinched case" occurs in the pseudo-developing, i.e., the case where the top and bottom vertices of the octahedron coincide. There are other ways to compute the representations algebraically but the cases of possible representations bifurcate in a complicated manner and we would like to handle this problem more systematically in a subsequent paper analyzing these pinched cases. In general this method coming from pseudo-hyperbolic structures is more satisfactory since it always comes with  geometry and we can actually see what is going on. 
		
	The complex volume formula using $z$-variables was first given by Yokota 
	and he used a collapsed version of the 4-term octahedral decomposition to obtain a genuine triangulation of a knot complement and then plug in $z$-solutions to the 4-term potential function to obtain the complex volume \cite{yokota_potential_2002}. The more general non-collapsed version is done in \cite{CKK_2014}. The 5-term complex volume formula using the colored Jones polynomial was obtained by Cho and Murakami \cite{CM_2013}. Both of these are  elegant explicit formulas for complex volume which actually are anticipated from the volume conjecture as a limit, and hence this pseudo-hyperbolic structure is naturally involved in the limit algebro-analytic geometry of quantum invariants of a knot.
	
	The next invariant we consider is the cusp shape which is also a hyperbolic invariant, but now can be generalized again to any knot with a boundary parabolic representation. Although this can be calculated algebraically once we have a representation of a knot group, we can see the geometry of this representation through a pseudo-developing map. We can express this invariant as a sum of rational functions of $z$-variables defined at each crossing as in the case of the volume potential functions, but this is simpler since it doesn’t have dilogarithm functions. We will derive the formula in Section \ref{sec:Holonomy} after discussing the holonomy representation. 
	
	Since these invariants can be given by exact formulas, we can see their behaviors for a family of knots. As a sample example, we compute and show their asymptotic behavior for twist knots in Section \ref{sec:Solution}. One interesting observation for this computation is that the twist knot, $J(2,N)$ has exactly $N$ solutions  and thus has $N$ boundary parabolic representations, and as $N$ increases, the limit of the volumes of all the representations (not only the geometric one but also all the other Galois conjugates) seems to converge to the volume of the Whitehead link complement. Another interesting observation is the behavior of the Chern-Simons invariants. The volume formula gives us the values without reducing mod $\pi^2$ and this values increase almost linearly as the number of twists goes to infinity. We hope this phenomenon could be investigated more precisely. We also are wondering if this value without reducing mod $\pi^2$ suggests an $\eta$-invariant of a knot complement when it is defined appropriately  \cite{meyerhoff_eta1997}.  
	
	Now for the actual computations of these invariants and pseudo-developing and holonomy itself, we need to solve the hyperbolicity equation. At first glance this looks very complicated but as we demonstrate for the case of torus knots $T(2,N)$ and $J(N,M)$-knots in Section \ref{sec:Solution}, it has also pretty nice structure of its own, a Fibonacci type structure. In general the hyperbolicity equation in the 4-term triangulation is given by local pictures of the diagram and reveals the combinatorics of a knot in a very systematic way, and it certainly deserves to be explored further. It should have a certain inherent structure in itself just as the Ptolemy coordinates of these triangulations  have a cluster algebra structure as demonstrated in \cite{hikami_cluster_2015} and \cite{CZpersonal}.
	In fact there is a correspondence between $z$- or $w$-variables and Zickert's long edge parameters and we will discuss this correspondence along with other subjects in a subsequent paper \cite{KYptolemy}. 	
	Also the result of Thistlethwaite and Tsvietkova \cite{ThistlethwaiteTsvietkova2014}  can be reinterpreted in this setting and thereby their result works not only for hyperbolic but for any knot.
We also would like to mention that  
the behaviour of the octahedral decomposition  under Reidemeister moves is  studied  in \cite{CM_2017} giving  an elegant algebraic formula for the corresponding transitions  of $w$-solutions for a boundary parabolic representation. 
	As one already expects, there should be a relation between the character variety and the solution variety, and we will discuss this subject separately in another paper as well \cite{KP}.
	We state theorems for  a knot for simplicity but 
	 most of the discussions in this paper also work for a link.

\section*{Acknowledgement}
We should thank  our colleague Insung Park who assisted us in improving the theory. We are also grateful to C. Zickert for email discussions, in particular,  about $\psl$-representations and solutions to the gluing equations.

\section{Preliminaries : ideal triangulation and pseudo-developing}\label{sec:pseudo_dev}	
	
      We briefly review ingredients in as self-contained a manner as possible for those who are not familiar with the subject. All materials in this section are more or less known to experts in hyperbolic geometry, but we reorganize them from the literature in a manner convenient for our purpose.     
       We  denote by $\psl$  the group of orientation preserving isometries on $\H$ throughout the paper.

     In this section we would like to consider a generalized hyperbolic structure on a $3$-manifold that serves as a rigid underlying geometry whose holonomy is a given representation of its fundamental group to $\psl$. We propose to call such a geometric structure a \emph{pseudo-hyperbolic structure},  which is a generalization of a hyperbolic structure of finite volume, allowing ``folding'' and ``branched covering'' in the developing.

      We basically  refer to Zickert's setting  (in particular, Section 4 in \cite{zickert_volume_2009})
      and extend it to allow non-parabolic representations.  It would  also be helpful to see A. Champanerkar's thesis \cite{abhijit_thesis}. }
       We often use the term  \emph{pseudo-deveoloping map} (\cite{dunfield_cyclic_1999}, \cite{francaviglia_hyperbolic_2004}, \cite{segerman_pseudo-developing_2011})
     	rather than developing map    
    	to emphasize the differences : (i) we drop the condition of local diffeomorphism, commonly used for the usual developing map for $(G,X)$-structures (see Section 3 in \cite{thurston_geometry_1977}), and  (ii) we  require  continuous extendability  to the end-compactification. (The precise defnition  will be made  in the following subsection.)     
    
       \subsection{Pseudo-developing}
          Throughout the article, we always assume that a 3-manifold $M$ is orientable,  connected and  homeomorphic to the interior of a compact manifold $\Mb$ with  (possibly without) boundary components $E_1, \dots, E_k$, and such an $M$ will be called an \emph{open tame} 3-manifold.  
     We  denote  by $\Mh$, called the {\emph{end-compactification}} of $M$,  the quotient space  obtained from $\Mb$ by identifying each boundary component $E_i$ to a single point $p_i$.  
      We refer to the quotient point $p_i$ as  an \emph{ideal point} and a small deleted neighborhood of it   as  an \emph{end}  of $M$.  
      In general, $\Mh$ would be a manifold except for the ideal points which might be isolated singularities, and is sometimes called  a \emph{pseudo-manifold} {\cite{hodgson_triangulations_2014}}.
      Note that  ideal points of $M$ are contained not in   $M$, but in $\Mh$. We sometimes abuse  the terminology   ``end'' with  ``boundary''. 

     Let  $\Mt$ be the universal cover of $M$ with the covering map $\pi: \Mt \to  M$.
      Let $\Mth$ be the {end-compactification} of the universal cover $\Mt$ which is also obtained  by collapsing each boundary component of $\Mbt$ to a single point. Then the universal covering map $\pi$ admits a continuous extension $\widehat{\pi}:\Mth \to \Mh$ and an ideal point $\widetilde{p_i}$  of $\Mt$  becomes a lifting of the ideal point $p_i$ of $M$. 
      The deck transformation group $\Pi$ of $\Mt$ also acts on $\Mth$, whose orbit space is $\Mh$, but $\widehat{\pi}$ may not be a covering map because  the action may not be free on ideal points of $\Mt$.

 The fundamental group $\pi_1(\Mb,x_0)$ is isomorphic to $\Pi$ with a choice of a base point $x_0$ and its lifting $\widetilde{x_0}$.   Under this isomorphism, a peripheral subgroup of $\pi_1(\Mb,x_0)$ is identified with a subgroup of $\Pi$ preserving a boundary component of $\Mbt$. 
 	Therefore   	 	
  	we refer to the stabilizer of an  ideal point $\widetilde{p}$ of $\Mt$,  denoted by $\Stab(\widetilde{p})$, as a \emph{peripheral subgroup} of $\Pi$.

Now we define a pseudo-developing as follows.
      
       \begin{defn}[pseudo-developing]\label{defn:pseudodev}
       	A \emph{pseudo-developing} of $M$ is an equivariant pair $(\Dcal,\rho)$  where 
       	 	$\Dcal : \Mth \to  \Hb$ is a continous map 
       	 	 with $\Dcal(\Mt)\subset \H$ 
       	 	and a \emph{holonomy} homomorphism 
       	 	$\rho : \Pi \to \IsoH$  such that  $\Dcal ( g \cdot x ) = \rho(g) \cdot \Dcal(x)$ for all $g \in \Pi$ and $x \in \Mth$.
       	Two pseudo-developings $(\Dcal,\rho)$ and $(\Dcal',\rho')$ are  \emph{equivalent}  if there exists $\phi \in \IsoH$ such that $\rho=\phi \rho' \phi^{-1}$ and  there is a $\rho$-equivariant homotopy between $\Dcal$ and $\phi \circ \Dcal'$.
       \end{defn}

       	It is well-known  that for two usual developing maps $\Dcal$ and $\Dcal'$ from $\Mt$ to $\H$ with the same holonomy $\rho$ , there always exists a $\rho$-equivarient homotopy  between them
       	defined by the convex combination along  the geodesic between $\Dcal(x)$ and 
       	$\Dcal'(x)$.  However, we have to be careful for the  pseudo-developing case since the map is also defined for ideal points. (See Theorem \ref{thm:equipseudo}.)  
%
       Another issue is that the usual $\rho$-equivariant homotopy doesn't preserve some important geometric invariants such as volume in non-compact cases, because $\rho$-equivariance  does not control  the open ends well.  See \cite{dunfield_cyclic_1999} and \cite{francaviglia_hyperbolic_2004} for the details on this issue. But we remark that our definition of pseudo-developing is slightly different from theirs. 

When we consider a pseudo-developing of $M$,   sometimes it is more convenient to use  $M'$ instead of $M$ which is obtained by deleting a finite number of points of $M$.
  In that case, we get the following proposition.

\begin{prop}\label{prop:sphereboundary}
	Suppose $\overline{M'}$  has a 2-sphere boundary component and let $M$ be obtained from $\overline{M'}$ by capping off the 2-sphere boundary with a 3-ball.  Then every pseudo-developing map $\Dcal$ of $M$ is also a pseudo-developing map  of $M'$.
\end{prop}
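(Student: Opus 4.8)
The plan is to exploit that deleting finitely many points from a $3$-manifold changes neither $\pi_1$ nor the end-compactification of its universal cover, so that the pair $(\Dcal,\rho)$ transfers to $M'$ essentially unchanged; the only real work is a small point-set lemma together with a routine check of the three clauses of Definition \ref{defn:pseudodev}. Concretely, the capped $2$-spheres of $\overline{M'}$ are the boundaries of small balls in $M$ around finitely many points $p_1,\dots,p_n$, so that $M'=M\setminus S$ with $S=\{p_1,\dots,p_n\}$, and I would set up the proof around this description.

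First I would record the topology. Since points have codimension $3$, the inclusion $M'\hookrightarrow M$ induces an isomorphism $\pi_1(M')\cong\pi_1(M)$; hence $M'$ has the same deck group $\Pi$, acting freely and properly discontinuously on $\Mt$, and its universal cover is $\widetilde{M'}=\Mt\setminus\pi^{-1}(S)$, where $\pi^{-1}(S)$ is a finite union of $\Pi$-orbits of points. Choosing (possible by proper discontinuity) a $\Pi$-equivariant family of pairwise disjoint closed balls centered at these points, $\overline{\widetilde{M'}}$ is $\Mbt$ with the corresponding open balls removed, so the ends of $\widetilde{M'}$ are those of $\Mt$ together with one spherical end $S^2\times(0,1)$ per removed ball. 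The key elementary fact is that if a $3$-manifold has a collar neighborhood $S^2\times[0,1)$ of a boundary $2$-sphere, then collapsing that sphere to a point is homeomorphic to coning it off, i.e. to gluing a $3$-ball back along it; applied $\Pi$-equivariantly this yields a canonical $\Pi$-equivariant homeomorphism $\widehat{\widetilde{M'}}\cong\Mth$ that carries the new ideal points to the points $\widetilde p\in\Mt$ lying over $S$ and descends to $\widehat{M'}\cong\Mh$. Equivalently and more concretely: a neighborhood basis of a new ideal point is the image of a punctured-ball neighborhood of the corresponding $\widetilde p$ in $\Mt$, and on $\widehat{\widetilde{M'}}$ one simply takes $\Dcal$ to be $\Dcal|_{\widetilde{M'}}$, extended by its old values on the old ideal points and by the already-defined values $\Dcal(\widetilde p)\in\H$ on the new ones.

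With this identification I would then check Definition \ref{defn:pseudodev} for $M'$ with the same pair $(\Dcal,\rho)$: (i) continuity of $\Dcal\colon\widehat{\widetilde{M'}}\to\Hb$ holds on the old part by hypothesis, and at a new ideal point because, under the homeomorphism, approaching the new spherical end is approaching an \emph{interior} point $\widetilde p\in\Mt$ where $\Dcal$ is already continuous; (ii) $\Dcal(\widetilde{M'})\subset\H$ is immediate from $\widetilde{M'}\subset\Mt$ and $\Dcal(\Mt)\subset\H$; (iii) $\rho$-equivariance is inherited since the homeomorphism $\widehat{\widetilde{M'}}\cong\Mth$ is $\Pi$-equivariant and $\Pi$ and $\rho$ are unchanged. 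This gives that $(\Dcal,\rho)$ is a pseudo-developing of $M'$.

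The only step requiring genuine (though mild) care — the ``hard part'' — is the point-set lemma that the end-compactification of $\widetilde{M'}$ is canonically and $\Pi$-equivariantly homeomorphic to that of $\Mt$, i.e. that collapsing a $2$-sphere end coincides with filling in a $3$-ball; this rests on the existence of a $\Pi$-invariant system of disjoint balls about the deleted points. It is also worth stating explicitly that Definition \ref{defn:pseudodev} imposes no condition forcing an ideal point into $\partial\H$, so sending the new ideal points to interior points $\Dcal(\widetilde p)\in\H$ is legitimate; correspondingly, their peripheral subgroups $\Stab(\widetilde p)$ are trivial (the action on $\Mt$ being free), so the equivariance requirement at those ideal points is vacuous.
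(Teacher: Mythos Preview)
Your proof is correct and follows the same idea as the paper: the end-compactifications $\widehat{\widetilde{M'}}$ and $\widehat{\widetilde{M}}$ coincide, and since $\Dcal(\widetilde{M'})\subset\Dcal(\Mt)\subset\H$, the same pair $(\Dcal,\rho)$ serves for $M'$. The paper's proof is a two-line sketch of exactly this, whereas you have carefully unpacked the identification $\widehat{\widetilde{M'}}\cong\Mth$ (via the ``collapsing a spherical end equals filling a ball'' observation) and verified the clauses of Definition~\ref{defn:pseudodev} explicitly.
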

\def\Mpt{{ M' \hspace{-1.4em} 
		\scalebox{1.13}[1]{$\widetilde{\raisebox{-0.1em}{\phantom{W'}}}$}}}
\def\Mph{{ M' \hspace{-1.4em} \scalebox{1.25}[1]{$\widehat{\raisebox{-0.05em}{\phantom{W'}}}$} }}
\begin{proof}
	The two end-compactifications $\Mph$ and $\Mh$ are essentially same and in that case we may assume $\Dcal(\Mpt)\subset \Dcal(\Mt) \subset \H$.  
\end{proof}	

\begin{rmk}
The converse of the above is also true by Proposition \ref{prop:spherecusp}   (in the sense of equivalence). Therefore we sometimes ignore  a 2-sphere  boundary component.  
We will say that   a  boundary  component of $\Mb$ is  \emph{trivial} if it is a 2-sphere, and the corresponding end or the ideal point of $M$ will be also called trivial.
	
\end{rmk}

       \subsection{Existence and equivalence through an ideal triangulation }
       
       A pseudo-developing in the previous subsection is more concretely  discussed with an ideal triangulation. 
       We first recall the notion of an ideal triangulation.
       Let $\Mh$ be 
       obtained by gluing faces  of   standard 3-simplices $\Delta_1, \Delta_2, \dots \Delta_n$ with orientation reversing identification.
       The vertices in the 0-skeleton $\Mh^{(0)}$ are the  only possible singularities as  non-manifold points. 
       Let $M$ be the complement of the 0-skeleton $\Mh \setminus \Mh^{(0)}$  and  $\Mb$ be the \emph{exterior} of the 0-skeleton $\Mh \setminus N (\Mh^{(0)})$ where $ N (\Mh^{(0)})$ is a sufficiently small open ball neighborhood  of  the 0-skeleton. Then $M$ is a connected orientable 3-manifold with the end-compactification $\Mh$, and  $\Mb$  is a compact manifold with boundary whose interior is homeomorphic to $M$. 
       We call a vertex of $\Mh^{(0)}$  an  \emph{ideal vertex} of $M$ and each simplex without vertices in $M$ an \emph{ideal simplex}, i.e., \emph{ideal tetrahedron, ideal triangle,} and \emph{ideal edge}.  
	Let us call this cell decomposition of $M$ an \emph{ideal triangulation} $\T$  and denote an ideally triangulated manifold by $(M,\T)$. 
	
		\begin{rmk}\label{rmk:existencetriangulation}
		It is well-known that we can find an ideal triangulation for any open tame  3-manifold. We can prove this fact  without much  difficulty by using a \emph{triangular pillow with a pre-drilled tube} as in \cite{We2005}. As a corollary, there exists an ideal triangulation with one ideal vertex for any closed 3-manifold $M$ (with a deleted ideal point).
	\end{rmk}
	
	Each ideal $i$-simplex of $M$  can be lifted to the universal cover $\Mt$  and we obtain an  ideal  triangulation 
	$\Tt$ of $\Mt$  where the deck transformation group $\Pi$ of $\Mt$   acts cellularly. 
 Now we can talk about a necessary and sufficient condition for the existence and equivalence of pseudo-developings with respect to a fixed representation.

        \begin{thm}[existence]\label{thm:existpseudo} 
        	
        	For a given homomorphism  $\rho :\Pi \to \IsoH$,   the following are equivalent.
        	\begin{enumerate}[(a)]
        		\item There exists a pseudo-developing  ($\Dcal,\rho)$ of $M$.
        		\item The $\rho$-image of each peripheral subgroup of $\Pi$  has a fixed point 
        		in $\overline{\Hbb^3}$. 
        \end{enumerate}
        	
        \end{thm}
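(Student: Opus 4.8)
The plan is to read off both implications from an ideal triangulation. Fix one, say $\T$, for $M$ (it exists by Remark \ref{rmk:existencetriangulation}), together with its lift $\Tt$ to $\Mt$, on which the deck group $\Pi$ acts cellularly, and argue in the spirit of Thurston's original construction. The implication $(a)\Rightarrow(b)$ is then immediate from equivariance: if $(\Dcal,\rho)$ is a pseudo-developing and $\widetilde p$ is an ideal point of $\Mt$, then for each $g\in\Stab(\widetilde p)$ one has $\rho(g)\cdot\Dcal(\widetilde p)=\Dcal(g\cdot\widetilde p)=\Dcal(\widetilde p)$, so $\Dcal(\widetilde p)\in\Hb$ is a common fixed point of $\rho(\Stab(\widetilde p))$.

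For $(b)\Rightarrow(a)$ I would construct $\Dcal$ skeleton by skeleton over $\Tt$, making every choice on a single $\Pi$-orbit of cells and propagating it by $\rho$, so that equivariance is built in. On the $0$-skeleton (the ideal points of $\Mt$): for each orbit choose a representative $\widetilde p_i$, whose stabilizer is a peripheral subgroup, use hypothesis $(b)$ to pick a point $x_i\in\Hb$ fixed by $\rho(\Stab(\widetilde p_i))$, set $\Dcal(\widetilde p_i):=x_i$, and define $\Dcal(g\cdot\widetilde p_i):=\rho(g)\cdot x_i$; this is well defined precisely because $g\cdot\widetilde p_i=g'\cdot\widetilde p_i$ forces $g^{-1}g'\in\Stab(\widetilde p_i)$, which fixes $x_i$. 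One then extends over the $1$-, $2$- and $3$-cells: over a cell whose vertices land in ``general position'' one spans the geodesic simplex of $\Hb$ on those vertices, whose relative interior automatically lies in $\H$ even when some vertices lie on $\partial\H$; over a degenerate cell — in particular in the pinched case, where several vertices collapse to one point of $\partial\H$ — one uses that $\Hb$ is a closed ball, hence contractible, to extend the map already defined on the boundary of the cell over the whole cell with its interior pushed into $\H$ (for instance by coning from an interior point). Carried out orbit by orbit this yields a continuous $\rho$-equivariant map $\Dcal:\Mth\to\Hb$ with $\Dcal(\Mt)\subset\H$, i.e.\ a pseudo-developing with holonomy $\rho$.

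The subtleties, and where I expect the real work to lie, are all at the ideal points. First, the $\Pi$-action is not free there — the stabilizers are exactly the peripheral subgroups — so the $0$-skeleton step genuinely needs hypothesis $(b)$ and not merely contractibility of the target. Second, one must verify continuity of $\Dcal$ at each ideal point $\widetilde p$; since $\Tt$ is locally finite this reduces to checking, inside each of the finitely many simplices containing $\widetilde p$, that points tending to $\widetilde p$ have images tending to $\Dcal(\widetilde p)$, which is delicate exactly when $\Dcal(\widetilde p)\in\partial\H$, because the spanning simplices then escape to infinity in $\H$. To keep everything consistent it is cleanest to fix in advance a single natural rule — compatible with faces and degenerations and equivariant under $\IsoH$ — for spanning a $k$-simplex by an ordered tuple of points of $\Hb$; granting such a rule, the continuity check at the ideal points and the compatibility of all the orbitwise choices become routine neighborhood bookkeeping, using compactness of the links of the ideal points.
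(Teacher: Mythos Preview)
Your overall strategy coincides with the paper's: $(a)\Rightarrow(b)$ by equivariance at the ideal points, and $(b)\Rightarrow(a)$ by an equivariant skeleton-by-skeleton extension over an ideal triangulation, using the fixed-point hypothesis on the $0$-skeleton and contractibility of $\Hb$ on the higher cells. The paper's execution is in fact simpler than yours: it never invokes geodesic simplices, but just picks on each lifted edge an arbitrary path in $\H$ joining the two endpoint images, and then extends over $2$- and $3$-cells ``similarly''. This avoids any case split into generic versus pinched configurations.

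There is, however, one genuine slip in your continuity discussion. You assert that only finitely many simplices contain a given ideal point $\widetilde p$ and that the link of $\widetilde p$ is compact. Neither holds in $\Mth$: the link of $\widetilde p$ is the boundary component $\widetilde E\subset\partial\Mbt$, which covers the downstairs boundary component $E_i$, and for a typical cusp (e.g.\ a hyperbolic knot, where $\pi_1(E_i)\hookrightarrow\pi_1(M)$) one has $\widetilde E\cong\mathbb{R}^2$, so infinitely many simplices of $\Tt$ share the vertex $\widetilde p$. What actually makes $\Dcal$ continuous is the quotient topology on $\Mth$: by definition $\Dcal$ is continuous iff its pullback along the collapsing map $q:\Mbt\to\Mth$ is, and $\Mbt$ is a genuine manifold with boundary on which the lifted cell structure \emph{is} locally finite. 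At any point $x_0\in\widetilde E$ only finitely many truncated simplices meet, and in each of them approaching $x_0$ corresponds, under $q$, to approaching the vertex $\widetilde p$ inside a single closed simplex of $\Mth$, where $\Dcal$ is continuous by construction. The paper glosses over this point as well, but the justification you wrote down is not the right one.
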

        
        \begin{proof}
        	It is obvious that (a) implies (b) since for each ideal point $\widetilde{p}$ of $\Mt$, the $\rho$-image of the peripheral subgroup $\Stab(\widetilde{p})$ fixes $\Dcal(\widetilde{p})$ by equivariance. 
        	To prove the converse, we use an ideal triangulation $\T$ of $M$. Let $p_1,p_2,\dots,p_k$  be ideal points of $M$ and choose an ideal point  $\widetilde{p_i}$  of $\Mt$ among the  liftings of each $p_i$.
        	For each $\widetilde{p_i}$ we define $\Dcal(\widetilde{p_i})$ to be a fixed point  of $\rho ( \Stab (\widetilde{p_i} ) )$ in $\Hb$. Here the existence of the fixed point comes from condition (b). 
        Since every ideal point $\Mt$ is of the form $g \cdot \widetilde{p_i}$ for some $g \in \Pi$, we can define $\Dcal(g\cdot\widetilde{ p_i})$ $\rho$-equivariantly, i.e., $\Dcal(g\cdot\widetilde{ p_i}):= \rho(g)\cdot \Dcal(\widetilde{ p_i})$.
        
For each 1-cell $e_i$ of $(M,\T)$ choose a lifting $\widetilde{e_i}$ in $\Mt$. Let  the ideal vertices of  $\widetilde{e_i}$ be  $\widetilde{p_i}$ and $\widetilde{q_i}$, and define $\Dcal$ on $\widetilde{e_i}$ to be  an arbitrary  
path in $\H$ connecting $\Dcal(\widetilde{p_i})$ and $\Dcal(\widetilde{q_i})$. 
 For  each 1-cell $\widetilde{e}$ of $\Mt$, there exists a unique $g\in \Pi$ and $\widetilde{e_i}$ such that  $\widetilde{e}=g\cdot \widetilde{e_i}$. Therefore we can extend $\Dcal$ to all 1-cells of $\Mt$ $\rho$-equivariantly. Similarly, we can also define $\Dcal$  on chosen lifted $2$- or $3$-cells first and then extend $\Dcal$ to the whole set of $2$- or $3$- cells in an equivariant way, respectively. By  construction, $\Dcal$ is also a $\rho$-equivariant map on $\Mth$ and $(\Dcal,\rho)$ satisfies  the conditions to be a   pseudo-developing.  
        \end{proof}

   \begin{rmk} \label{rmk:sphereboundary}
   	Let $M'$ be obtained by deleting finitely many points from  $M$. To obtain a pseudo-developing of $M'$,  it is enough to find a pseudo-developing of $M$ by  Proposition \ref{prop:sphereboundary}. Conversely,
   	to obtain a pseudo-developing of $M$, we can use  $M'$ instead of $M$. During the construction of a pseudo-developing of $M'$ using an ideal triangulation, if we choose the $\rho$-image of trivial ideal points within $\H$,   then the resulting  $\Dcal'$ is also a pseudo-developing map of $M$ without any modification. Besides, Theorem \ref{thm:existpseudo} holds also for a closed $3$-manifold. 
   \end{rmk}
Now, we have a  corollary which is easy but fundamental. 
 \begin{cor}\label{cor:existdev}
 For a given representation $\rho$, there exists a pseudo-developing $(\Dcal,\rho)$ of $M$ if the $\rho$-image of the peripheral subgroup is abelian.  In particular, if $M$ is closed or has only torus boundary components, there exists a pseudo-developing $(\Dcal,\rho)$ for any representation $\rho$. 
 \end{cor}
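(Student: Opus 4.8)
The plan is to read this off Theorem~\ref{thm:existpseudo}. That theorem says a pseudo-developing $(\Dcal,\rho)$ of $M$ exists precisely when the $\rho$-image of every peripheral subgroup of $\Pi$ has a fixed point in $\Hb$, so the entire corollary reduces to one elementary fact about the isometry group which I would first isolate as a lemma: \emph{every abelian subgroup $G\subseteq\psl$ has a fixed point in $\Hb$.} Granting this, the hypothesis ``the $\rho$-image of the peripheral subgroup is abelian'' makes condition~(b) of Theorem~\ref{thm:existpseudo} automatic, and a pseudo-developing exists.

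For the lemma I would argue by the classification of nontrivial elements of $\psl$ as elliptic, parabolic, or loxodromic, using that the fixed-point set in $\Hb$ of a parabolic is one boundary point, of a loxodromic a pair of boundary points, and of an elliptic a geodesic together with its two endpoints, together with the fact that an element commuting with $A$ must preserve $\mathrm{Fix}(A)$. If $G$ contains a parabolic, every element of $G$ fixes its unique fixed point, so $G$ fixes a point of $\partial\H$. If $G$ contains a loxodromic $A$ with fixed pair $\{p,q\}$, then each element of $G$ permutes $\{p,q\}$, and a transposition would conjugate $A$ to $A^{-1}\neq A$ (loxodromics have infinite order), against commutativity, so $G$ fixes $p$. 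The remaining case, in which $G$ consists of the identity and elliptic elements, is the one I expect to be the main obstacle: all nontrivial elements of $G$ preserve the axis $\ell$ of a fixed $A\in G$, so if every element of $G$ also fixes both endpoints of $\ell$ we again obtain a common boundary fixed point, whereas if some $B_0\in G$ interchanges the two endpoints, then $B_0$ acts on $\ell\cong\mathbb{R}$ as an orientation-reversing isometry and hence fixes a point $x_0\in\ell\subseteq\H$; commutativity then forces every $C\in G$ to fix $x_0$, since conjugating $B_0$ by $C$ yields the isometry reversing $\ell$ and fixing $C(x_0)$, which equals $B_0$ only if $C(x_0)=x_0$. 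In every case $G$ fixes a point of $\Hb$. A shortcut for the elliptic case is to quote instead that a finite subgroup of $\psl$ fixes the circumcenter of any of its orbits in the $\mathrm{CAT}(0)$ space $\H$, the extreme instance being a Klein four-group, which fixes an interior point of $\H$ but no point of $\partial\H$.

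It then remains to record the ``in particular'' clause. If $M$ is closed there are no nontrivial peripheral subgroups of $\Pi$, so condition~(b) of Theorem~\ref{thm:existpseudo} is vacuous and the pseudo-developing is just an ordinary equivariant developing map (cf.\ Remark~\ref{rmk:sphereboundary}). If instead every boundary component of $\Mb$ is a torus, each peripheral subgroup is isomorphic to $\mathbb{Z}^2$, hence its $\rho$-image is abelian, and the first part of the corollary applies to every representation $\rho$. This produces a pseudo-developing $(\Dcal,\rho)$ in both cases.
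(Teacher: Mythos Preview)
Your proof is correct and follows the same route as the paper: reduce to Theorem~\ref{thm:existpseudo} via the fact that every abelian subgroup of $\psl$ has a fixed point in $\Hb$, which the paper simply asserts while you supply a full case analysis. One small caveat: your proposed shortcut via circumcenters applies only to finite subgroups, and an abelian group of elliptics in $\psl$ need not be finite (an irrational rotation has infinite order), but your primary argument already handles this case without the shortcut.
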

\begin{proof}
	If $M$ has a boundary(=end), there always exists an ideal triangulation as we mentioned in Remark \ref{rmk:existencetriangulation}. If $M$ is closed, we consider an ideally triangulated  $M'$ with a trivial boundary instead of $M$ as Remark \ref{rmk:sphereboundary}.  Therefore  the existence of a  pseudo-developing depends only on the existence of a representation whose image of each peripheral subgroup has a fixed point in $\Hb$.
	Since any abelian subgroup of $\IsoH$ has a fixed point in $\Hb$,  we obtain a pseudo-developing of $M$ by Theorem \ref{thm:existpseudo}. 
	In particular, if $M$ has only torus or trivial boundary components, each peripheral subgroup is already abelian. Therefore there always exists a fixed point of the peripheral image of any representation.
\end{proof}

Similarly to the case of existence, we can find a necessary and sufficient condition that two pseudo-developings are equivalent when they have the same holonomy.

\begin{thm}[equivalence]\label{thm:equipseudo}
	Let $(\Dcal,\rho)$ and $(\Dcal',\rho)$ be two pseudo-developings of $M$ with the same holonomy. Then the following are equivalent.
	\begin{enumerate}[(a)]
		\item  
		The two pseudo-developing ($\Dcal,\rho$) and ($\Dcal',\rho$) are equivalent. 
		\item For a lifting  $\widetilde{p}$ of each ideal point $p_i$, there is a path $\gamma:[0,1] \to  \overline{\Hbb^3}$  connecting  $\Dcal(\widetilde{p})$ and $\Dcal'(\widetilde{p})$ such that $ \rho(\Stab(\widetilde{p}))$ fixes $\gamma(t)$ for all $t\in[0,1]$.		
	\end{enumerate}
	
\end{thm}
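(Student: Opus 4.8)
The plan is to prove both directions by the cell-by-cell technique already used for Theorem~\ref{thm:existpseudo}, now carried out on the cylinder $\Mth\times[0,1]$. The key observation is that an equivalence of $(\Dcal,\rho)$ and $(\Dcal',\rho)$ is the same thing as a $\rho$-equivariant continuous map $H\colon\Mth\times[0,1]\to\Hb$ (with $\Pi$ acting trivially on $[0,1]$) such that $H(\cdot,0)=\Dcal$ and $H(\cdot,1)=\phi\circ\Dcal'$ for some $\phi\in\IsoH$ centralizing $\rho(\Pi)$; and the product of the lifted ideal triangulation $\Tt$ with $[0,1]$ gives a $\Pi$-equivariant cell structure on which $H$ can be built by induction over skeleta, using that $\Hb$ is a closed $3$-ball --- hence contractible, so that every map $S^{k}\to\Hb$ extends over $D^{k+1}$ --- as the engine for the inductive step.

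\emph{Direction} $\mathrm{(a)}\Rightarrow\mathrm{(b)}$ (the easy one, as with Theorem~\ref{thm:existpseudo}). Given such an $H$ and a lift $\widetilde p$ of an ideal point, set $\gamma(t)=H(\widetilde p,t)$, a path from $\Dcal(\widetilde p)$ to $\phi(\Dcal'(\widetilde p))$. For $g\in\Stab(\widetilde p)$, equivariance gives $\rho(g)\gamma(t)=H(g\cdot\widetilde p,t)=H(\widetilde p,t)=\gamma(t)$, so $\gamma$ lies in the fixed-point set $F$ of $\rho(\Stab(\widetilde p))$ in $\Hb$. Since $\phi$ centralizes $\rho(\Stab(\widetilde p))$ it preserves $F$, and $\Dcal'(\widetilde p)\in F$; as $F$ is either path-connected (all of $\Hb$, a closed geodesic, or a single point) or a pair of boundary points that any centralizing $\phi$ must fix individually (the loxodromic case), one can prolong $\gamma$ by a path inside $F$ to reach $\Dcal'(\widetilde p)$, which is exactly (b).

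\emph{Direction} $\mathrm{(b)}\Rightarrow\mathrm{(a)}$. Fix an ideal triangulation $\T$ of $M$ and its lift $\Tt$, and aim for $\phi=\mathrm{id}$. Put $H(\cdot,0)=\Dcal$ and $H(\cdot,1)=\Dcal'$. Over the ideal vertices: choose a lift $\widetilde{p_i}$ of each ideal point $p_i$ of $M$, set $H(\widetilde{p_i},t)=\gamma_i(t)$ for the path supplied by (b), and extend over the orbit by $H(g\cdot\widetilde{p_i},t)=\rho(g)\gamma_i(t)$. This is well defined precisely because of the fixed-point clause in (b): if $g\cdot\widetilde{p_i}=g'\cdot\widetilde{p_i}$ then $g^{-1}g'\in\Stab(\widetilde{p_i})$ fixes every $\gamma_i(t)$, so $\rho(g)\gamma_i(t)=\rho(g')\gamma_i(t)$; and at $t=0,1$ it agrees with $\Dcal,\Dcal'$ by equivariance of those maps. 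Now induct: for a cell $\sigma$ of $\Tt$ of dimension $k\ge1$, the block $\sigma\times[0,1]$ is a $(k{+}1)$-ball whose bounding $k$-sphere already carries a continuous map to $\Hb$ (assembled from $\Dcal$, $\Dcal'$ and the blocks over the facets of $\sigma$), which we extend over $\sigma\times[0,1]$ for one representative of each $\Pi$-orbit and then spread equivariantly; since $\Pi$ acts freely on $\Mt$, such a $\sigma$ has trivial stabilizer and no compatibility condition arises. If one additionally wants every intermediate map to send $\Mt$ into $\H$, each extension can be performed by coning from a point of $\H$ using the linear ball structure of $\Hb$, keeping the image of $\sigma\times(0,1)$ in $\H$.

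The real work --- and the step I expect to be the main obstacle --- is verifying that the assembled $H$ is continuous at the ideal points of $\Mth$, where infinitely many cells of $\Tt$ accumulate and the cone points chosen above are pushed around by $\Pi$. This should be handled exactly as the continuity of a pseudo-developing map itself at an ideal point is: work inside a (horoball-type) deleted neighborhood of each $\widetilde{p_i}$ meeting the adjacent cells in a controlled way, and choose the extensions on those cells so that their images contract onto $\gamma_i(t)$ as one tends to $\widetilde{p_i}$; the continuity of $\Dcal$ and $\Dcal'$ at $\widetilde{p_i}$ supplies the control at the ends $t=0$ and $t=1$, and the interpolating homotopy is squeezed accordingly (cf.\ Section~4 of \cite{zickert_volume_2009}). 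Granting this, $H$ is the desired $\rho$-equivariant homotopy and (a) follows.
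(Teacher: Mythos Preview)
Your proof is correct and follows essentially the same route as the paper's: for (a)$\Rightarrow$(b) restrict the equivariant homotopy to an ideal vertex, and for (b)$\Rightarrow$(a) build $H$ cell-by-cell on an ideal triangulation, first on ideal vertices via the given paths $\gamma_i$ spread equivariantly over their $\Pi$-orbits, then inductively over higher skeleta exactly as in Theorem~\ref{thm:existpseudo}. You are in fact more careful than the paper on two points it leaves implicit---the role of the centralizing $\phi$ in the definition of equivalence, and the continuity of $H$ at the ideal points---so your write-up is a strict elaboration of the paper's argument rather than a departure from it.
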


\begin{proof}
	If there is a $\rho$-equivariant homotopy $H(t,x) : [0,1] \times \Mth \to \Hb$ between $\Dcal$ and $\Dcal'$, then  $H(t,\widetilde{p})$ will be  a  path $\gamma(t)$ connecting $\Dcal(\widetilde{p})$ and $\Dcal'(\widetilde{p})$, which is fixed by $\rho(\Stab(\widetilde{p}))$.   Conversely, we can construct a $\rho$-equivariant homotopy on $0$-cells  first, i.e., for each ideal point $p$ we already have $\gamma(t)$, so  we define $H(t,\widetilde{p}):= \gamma{(t)}$ and then $H(t,g\cdot\widetilde{p}):=\rho(g)\cdot H(t,\widetilde{p})$ for every $g\in \Pi$. Similarly, we extend it  on $i$-cells $\rho$-equivariantly from the homotopy of $(i-1)$-cells as 
	we did in the proof of Theorem \ref{thm:existpseudo}.  
\end{proof}

Then we have the following corollary (see also Proposition \ref{prop:sphereboundary} and Remark \ref{rmk:sphereboundary}), which says that   a 2-sphere boundary component really doesn't matter when we consider a pseudo-developing.

\begin{prop}\label{prop:spherecusp}
Let $M'$ be obtained from $M$ by deleting a  point $p$ of $M$.
For any pseudo-developing map $\Dcal'$ of $M'$, there is an equivalent pseudo-developing map  of $M'$ which is also a pseudo-developing of $M$. 
(See also the spinning construction in \cite{lty_2013_spinning}.)  
\end{prop}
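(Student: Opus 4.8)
The plan is to change $\Dcal'$ only inside a $\Pi$-invariant family of small, pairwise disjoint neighbourhoods of the lifts of the deleted point $p$, leaving it untouched everywhere else; the change will move the images of those lifts from their possibly ideal positions in $\Hb\setminus\H$ into $\H$, and that is exactly what is needed to promote $\Dcal'$ to a pseudo-developing of $M$.

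First I would fix the bookkeeping. Deleting one point from a $3$-manifold changes neither $\pi_1$ nor the simple connectivity of the universal cover, so $\Mpt=\Mt\setminus\pi^{-1}(p)$, with $\Pi$ acting on $\Mpt$ exactly as it does on $\Mt$. Around a lift $\widetilde p$ of $p$ the new end of $\Mpt$ is a $2$-sphere bounding a ball, and collapsing such a sphere in the end-compactification merely restores an ordinary manifold point; hence $\Mph$ is canonically homeomorphic to $\Mth$, compatibly with the $\Pi$-action, and under this identification the ideal points of $\Mpt$ that are not ideal points of $M$ are precisely the lifts of $p$, all of them nonsingular. Since $\widetilde p$ is a genuine point of $\Mt$ on which $\Pi$ acts freely, its peripheral subgroup is $\Stab(\widetilde p)=\{1\}$, so $\rho(\Stab(\widetilde p))=\{1\}$.

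Next I would carry out the surgery. Fix one lift $\widetilde p$ and, by proper discontinuity and freeness of the action, a closed ball $B\ni\widetilde p$ in $\Mt$ whose $\Pi$-translates are pairwise disjoint; a routine properness argument shows these translates do not accumulate at any ideal point coming from $M$. On a concentric sphere $S\subset B$ lying strictly between $\widetilde p$ and $\partial B$ we have $\Dcal'(S)\subset\H$, while only the centre value $\Dcal'(\widetilde p)$ can fail to lie in $\H$; so, in a ball model of $\Hb$ and using convexity of $\H$, I would cone $\Dcal'|_S$ off to a chosen $y\in\H$ over the ball bounded by $S$, keep $\Dcal'$ on the shell between $S$ and $\partial B$, and thereby obtain a continuous map $\Dcal''\colon B\to\H$ agreeing with $\Dcal'$ near $\partial B$. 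Then extend by $\Dcal''(g\cdot x):=\rho(g)\cdot\Dcal''(x)$ on each translate $g\cdot B$ and by $\Dcal'':=\Dcal'$ on the rest of $\Mph$. Freeness makes this well defined, equivariance of $\Dcal'$ makes it continuous across each $\partial(g\cdot B)$, and the non-accumulation of the balls makes it continuous at the ideal points coming from $M$, where in fact $\Dcal''=\Dcal'$. By construction $\Dcal''$ is a continuous $\rho$-equivariant map with $\Dcal''(\Mpt)\subset\H$, hence a pseudo-developing of $M'$, and it now sends every lift of $p$ into $\H$.

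Finally I would invoke Theorem \ref{thm:equipseudo} to see that $\Dcal''$ is equivalent to $\Dcal'$: at an ideal point coming from $M$ the constant path works because $\Dcal''=\Dcal'$ there, and at a lift of $p$ any path from $\Dcal'(\widetilde p)$ to $y$ in $\Hb$ is vacuously fixed by $\rho(\Stab(\widetilde p))=\{1\}$. On the other hand $\Dcal''(\Mt)=\Dcal''(\Mpt\cup\pi^{-1}(p))\subset\H$, and $\Dcal''$ is continuous and $\rho$-equivariant on $\Mth$, so $(\Dcal'',\rho)$ is a pseudo-developing of $M$ as well, the ideal points of $M$ being a subset of those of $M'$. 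The one real obstacle is that the perturbation must be strictly confined near the lifts of $p$: at the remaining ideal points the peripheral holonomy may be, say, loxodromic with isolated fixed points, so one cannot re-run the existence construction of Theorem \ref{thm:existpseudo} from scratch — the values of $\Dcal'$ at those ideal points must be kept exactly. Everything else is the same cell-by-cell, convexity-based manipulation used in the proofs above, and it is, in spirit, the spinning construction of \cite{lty_2013_spinning} applied only to the new spherical end.
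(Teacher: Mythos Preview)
Your proof is correct and follows the same essential approach as the paper: both rely on the key observation that $\Stab(\widetilde p)$ is trivial (since $p$ is an honest manifold point), so there is no obstruction to moving $\Dcal'(\widetilde p)$ into $\H$ while preserving the $\rho$-equivariant homotopy class, and both appeal to Theorem~\ref{thm:equipseudo} for the equivalence. The paper's proof is a two-line sketch that simply notes this triviality and says ``there is no obstacle to defining an equivariant homotopy''; you have spelled out the local surgery explicitly (the equivariant family of disjoint balls, the cone-off construction, the care taken to leave the other ideal points untouched), which is more than the paper does but is exactly the content behind its sketch.
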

\begin{proof}
	If $\Dcal' (\widetilde{p}) \in  \H$ then $\Dcal'$ itself is also a pseudo-developing of $M$ as mentioned in Remark \ref{rmk:sphereboundary}.
	 Although $\Dcal' (\widetilde{p}) \in  \Hb \setminus \H$, we can modify the pseudo-developing map so that  the image of $\widetilde{p}$ is in $\H$ since   
	 $\rho(\Stab(\widetilde{p}))$ is a trivial group that fixes all of  $\Hb$ and there is no obstacle  to defining an equivariant homotopy.
\end{proof}
Similar to Corollary \ref{cor:existdev},
we obtain an obvious corollary for uniqueness of  pseudo-developings as follows. 
\begin{cor}\label{cor:equicor}
	There exists a unique pseudo-developing $(\Dcal,\rho)$ for a representation $\rho$ up to equivalence 	if the $\rho$-image of each peripheral subgroup is a parabolic, elliptic or trivial subgroup of $\IsoH$. 
\end{cor}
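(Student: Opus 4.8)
The plan is to derive this as a direct consequence of the equivalence criterion in Theorem~\ref{thm:equipseudo} together with the existence criterion in Theorem~\ref{thm:existpseudo}. Existence of \emph{some} pseudo-developing $(\Dcal,\rho)$ is already handled: a parabolic, elliptic or trivial isometry subgroup of $\IsoH$ has a fixed point in $\Hb$ (a parabolic fixes its unique point on $\partial\H$, an elliptic fixes its axis, the trivial group fixes everything), so condition (b) of Theorem~\ref{thm:existpseudo} holds and a pseudo-developing exists. The real content is uniqueness up to equivalence, and for that I would verify condition (b) of Theorem~\ref{thm:equipseudo} for any two pseudo-developings $(\Dcal,\rho)$ and $(\Dcal',\rho)$ sharing the holonomy $\rho$.

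First I would fix a lifting $\widetilde{p}$ of an ideal point $p_i$ and set $G=\rho(\Stab(\widetilde{p}))$, which by hypothesis is parabolic, elliptic, or trivial. Both $\Dcal(\widetilde{p})$ and $\Dcal'(\widetilde{p})$ are fixed points of $G$ in $\Hb$ by equivariance, so it suffices to show that the fixed-point set $\mathrm{Fix}(G)\subset\Hb$ is path-connected; then any path in $\mathrm{Fix}(G)$ joining the two images is fixed pointwise by $G$, which is exactly (b). Now I would go case by case: if $G$ is trivial, $\mathrm{Fix}(G)=\Hb$, which is path-connected. If $G$ is parabolic, its fixed-point set is a single point of $\partial\H$, so $\Dcal(\widetilde{p})=\Dcal'(\widetilde{p})$ and the constant path works. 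If $G$ is elliptic (but nontrivial), its fixed-point set is either a geodesic axis together with its two endpoints on $\partial\H$ (when $G$ contains a rotation), which is an arc in $\Hb$ and hence path-connected, or — in the degenerate subcase where $G$ is generated by finitely many rotations about \emph{different} axes and is still elliptic, forcing a common fixed point — again a geodesic with its endpoints. In every case $\mathrm{Fix}(G)$ is a path-connected subset of $\Hb$, so condition~(b) of Theorem~\ref{thm:equipseudo} is satisfied and $(\Dcal,\rho)\sim(\Dcal',\rho)$.

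Finally I would assemble the two halves: existence from Theorem~\ref{thm:existpseudo} (using that the peripheral images have fixed points in $\Hb$ under the stated hypothesis), and uniqueness up to equivalence from Theorem~\ref{thm:equipseudo} (via the path-connectedness of $\mathrm{Fix}(G)$ just established), completing the proof of the corollary.

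The main obstacle I anticipate is the elliptic case: I must be careful that ``elliptic subgroup of $\IsoH$'' is interpreted correctly — namely a nontrivial subgroup, every element of which is elliptic (or the identity), and which therefore shares a common fixed geodesic in $\H$ — so that its fixed locus in $\Hb$ really is a single geodesic arc (with endpoints) rather than something disconnected. One should rule out the possibility of an ``elliptic'' subgroup with two distinct invariant geodesics meeting only at a point (which would make the fixed set in $\Hb$ still connected, but through a single interior point) or, worse, a subgroup generated by elliptics whose fixed geodesics are disjoint — but such a group is not elliptic in the usual sense (it would be non-elementary or contain parabolics/loxodromics), so the hypothesis excludes it. Making this taxonomy of elementary subgroups of $\IsoH$ precise, and matching it to the paper's use of the word ``elliptic,'' is the only delicate point; the rest is a routine application of the two preceding theorems.
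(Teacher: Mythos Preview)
Your proposal is correct and follows essentially the same approach as the paper: the paper's proof simply observes that the fixed-point set in $\Hb$ of a parabolic, elliptic, or trivial subgroup is a single connected component (a point, a complete geodesic with its endpoints, or all of $\Hb$, respectively), and then invokes Theorems~\ref{thm:existpseudo} and~\ref{thm:equipseudo} directly. Your more detailed case analysis and your caution about the meaning of ``elliptic subgroup'' are reasonable but go beyond what the paper spells out.
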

\begin{proof}
	The  fixed points on $\Hb$ of a parabolic, elliptic or trivial subgroup consists of only one connected component, i.e., is a unique point, a complete geodesic of infinite length or all of $\Hb$ respectively. This implies the existence and the uniqueness directly by Theorem \ref{thm:existpseudo} and Theorem \ref{thm:equipseudo}.
\end{proof}

\subsection{Straightening and pseudo-hyperbolic structure} \label{ref:straightening}
In general a pseudo-developing image can be too arbitrary to see any meaningful geometry locally and we want to consider a more  concrete geometric developing, i.e., a straightened developing. First we need a following preparation. 

\begin{defn}[degenerate pseudo-developing]
	A pseudo-developing map is \emph{degenerate} for an ideal triangulation $\T$ if there is a lifted ideal edge in $\Tt$ whose ideal vertices are mapped to the same point. 
\end{defn}
To obtain a non-degenerate pseudo-developing, the ideal triangulation requires a combinatorial property as follows.
\begin{prop}\label{prop:essentialedge}
	If there exists a non-degenerate developing for $\T$, then each ideal edge in $\T$  is \emph{essential}, which means that a lifting of the edge  has two distinct end points in $\Mth$. (An ideal triangulation is called \emph{virtually nonsingular} in \cite{segerman_pseudo-developing_2011} if it has only essential edges.) 
\end{prop}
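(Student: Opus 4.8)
The plan is to argue by contraposition: assuming some ideal edge $e$ of $\T$ is \emph{not} essential, I will show that every pseudo-developing for $\T$ is degenerate. By definition, $e$ being non-essential means that a lifting $\widetilde{e}$ in $\Tt$ has its two ideal endpoints equal, say both equal to $\widetilde{p}$. The key observation is that the endpoints of a lift are well-defined in $\Mth$, and for a pseudo-developing $(\Dcal,\rho)$ the value $\Dcal$ takes on an ideal point depends only on that point of $\Mth$ — so both endpoints of $\widetilde{e}$ are sent to the single point $\Dcal(\widetilde{p})$. Hence $\widetilde{e}$ is a lifted ideal edge whose two ideal vertices are mapped to the same point, which is exactly the defining condition for $\Dcal$ to be degenerate. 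This proves the contrapositive, and therefore the proposition.

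The steps, in order, would be: (1) recall from Section~\ref{sec:pseudo_dev} that each ideal $i$-simplex of $M$ lifts to $\Mt$, giving the triangulation $\Tt$ of $\Mt$ on which $\Pi$ acts cellularly, so that talking about ``the endpoints of a lift $\widetilde{e}$ in $\Mth$'' is meaningful; (2) fix a non-essential edge $e$ and a lift $\widetilde{e}$ realizing the coincidence of endpoints at a single ideal point $\widetilde{p}\in\Mth$; (3) for an arbitrary pseudo-developing $(\Dcal,\rho)$ of $M$ — which, note, by Definition~\ref{defn:pseudodev} is a map on all of $\Mth$ and in particular is a genuine \emph{function} on the set of ideal points — observe that the image of each endpoint of $\widetilde{e}$ is $\Dcal(\widetilde{p})$, so the two images coincide; (4) conclude that $\Dcal$ is degenerate for $\T$, which contradicts non-degeneracy. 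The converse direction of the stated implication is not required, so nothing more is needed.

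The only real subtlety — and the step I would be most careful about — is point (1)/(2): making sure that ``the two ideal endpoints of a lifted edge'' is a well-posed notion and that when they ``coincide'' they coincide as the \emph{same point of $\Mth$}, not merely as points having the same image under $\widehat{\pi}$ or lying in the same $\Pi$-orbit. This is precisely what the definition of \emph{essential} in the statement pins down (``a lifting of the edge has two distinct end points in $\Mth$''), so once that definition is taken at face value the argument is immediate; the work is purely in unwinding the definitions of $\Tt$, of an essential edge, and of a degenerate pseudo-developing, and checking they align. No geometry of $\Hb$ or properties of $\rho$ beyond equivariance are needed — indeed $\rho$ plays no role at all, since the obstruction is entirely at the level of the point $\Dcal$ assigns to $\widetilde{p}$.
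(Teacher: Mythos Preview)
Your proposal is correct and follows exactly the same approach as the paper: both argue by contraposition, observing that a non-essential edge has a lift whose two endpoints coincide in $\Mth$, so any pseudo-developing sends them to the same point and is therefore degenerate. The paper's proof is a one-sentence version of your argument; your more detailed unwinding of the definitions is fine but not strictly necessary.
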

\begin{proof}
	If there is a nonessential edge, i.e., an ideal edge in $\T$ whose liftings already has the same end point in $\Mth$, then any pseudo-developing map should be degenerate for $\T$.		
\end{proof}

On the contrary,
the developing image of each lifted $i$-cell may be better behaved. The following definition is modelled on  geometric simplices. 
\begin{defn}[straightened pseudo-developing]\label{def:straightened}
	A pseudo-developing map $\Dcal$ is \emph{straightened} for an ideal triangulation $\T$ if  $\Dcal$ is non-degenerate for $\T$ and the $\Dcal$-image of each lifted $i$-cells is 	 totally geodesic.  
\end{defn} 
In fact, the non-degeneracy condition is the only requirement to carry a straightened pseudo-developing as follows.
 \begin{prop}\label{prop:nondegstr}
If $\Dcal$ is non-degenerate for $\T$ then there exists a $\rho$-equivariantly homotopic  pseudo-developing $\Dcal'$  which is straightened for $\T$. In this case,
 $\Dcal'$ is called a \emph{straightening} of $\Dcal$. 
\end{prop}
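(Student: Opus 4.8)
The plan is to construct $\Dcal'$ skeleton by skeleton, exactly mirroring the equivariant-extension arguments used in the proofs of Theorem \ref{thm:existpseudo} and Theorem \ref{thm:equipseudo}, but now replacing the ``arbitrary path'' and ``arbitrary filling'' choices by the canonical geodesic ones. First I would leave $\Dcal'$ equal to $\Dcal$ on the $0$-skeleton $\Mth^{(0)}$, so that $\Dcal'$ sends the ideal vertices of $\Tt$ to the same points of $\Hb$ as $\Dcal$ does; this is automatically $\rho$-equivariant and, by the non-degeneracy hypothesis, the two endpoints of every lifted ideal edge are distinct. Next, for each $1$-cell of $(M,\T)$ pick a lift $\widetilde{e_i}$ with ideal endpoints $\Dcal(\widetilde{p_i}),\Dcal(\widetilde{q_i})$, and define $\Dcal'$ on $\widetilde{e_i}$ to be the unique geodesic in $\H$ (a genuine geodesic segment or a complete geodesic, depending on whether the endpoints are finite or ideal) joining these two distinct points; extend over all $1$-cells of $\Mt$ by $\Dcal'(g\cdot\widetilde{e_i}):=\rho(g)\cdot\Dcal'(\widetilde{e_i})$. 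Proceeding inductively, on a chosen lift of each $2$-cell define $\Dcal'$ to be the totally geodesic ideal (or partially ideal) triangle spanned by the already-straightened boundary edges, and on a chosen lift of each $3$-cell the totally geodesic simplex spanned by its four vertices; equivariance propagates these choices to all cells. Well-definedness at the gluing faces is immediate because the boundary data agree and a totally geodesic simplex in $\Hb$ is determined by its vertices.

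I then need two things: that $\Dcal'$ is a bona fide pseudo-developing in the sense of Definition \ref{defn:pseudodev}, and that it is $\rho$-equivariantly homotopic to $\Dcal$. The first is clear: $\Dcal'$ is continuous (geodesic simplices vary continuously with their vertices and the pieces match along faces), it is $\rho$-equivariant by construction, and it maps $\Mt$ into $\H$ because each straightened simplex with at least one finite vertex, or with all vertices distinct in $\overline{\H}$, has interior inside $\H$. For the homotopy, I would again build it one skeleton at a time. On the $0$-skeleton take the constant homotopy. On the $1$-skeleton, for the chosen lift $\widetilde{e_i}$ the two maps $\Dcal|_{\widetilde{e_i}}$ and $\Dcal'|_{\widetilde{e_i}}$ are paths with the same (distinct) endpoints, so they are homotopic rel endpoints by sliding each point along the geodesic joining $\Dcal(x)$ to $\Dcal'(x)$ — here it is essential that both lie in $\H$, or share a common ideal endpoint, so that the geodesic is unambiguous; then extend equivariantly. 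The same ``geodesic convex combination'' trick, performed on a chosen lift of each higher cell rel its already-homotoped boundary, extends the homotopy over the $2$- and $3$-skeleton, and equivariance carries it to all of $\Mth$.

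The main obstacle, and the only place the argument is not completely routine, is the behavior at the ideal points and, more generally, controlling the homotopy near the ends so that it stays inside $\overline{\H}$ and remains continuous up to $\Mth$. On the $0$-skeleton $\Dcal$ and $\Dcal'$ literally agree, so the homotopy is stationary there and the criterion of Theorem \ref{thm:equipseudo}(b) is satisfied trivially (the constant path is fixed by the peripheral subgroup). The subtlety is instead that when a cell has an ideal vertex, the ``convex combination'' homotopy on that cell must be chosen so that the degenerate faces at the ideal vertex are respected — i.e., the homotopy should fix the ideal vertex throughout and move only the finite part — which forces one to use, on each simplex, a coning homotopy based at the ideal vertices rather than a naive barycentric interpolation. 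Checking that these coning homotopies on adjacent simplices agree on shared faces, and that the resulting map $[0,1]\times\Mth\to\overline{\H}$ is continuous at the ideal points, is the heart of the matter; it goes through precisely because non-degeneracy guarantees that no simplex collapses, so the geodesic cones are well-defined and depend continuously on the data.
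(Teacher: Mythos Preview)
Your proposal is correct and follows essentially the same approach as the paper: set $\Dcal'=\Dcal$ on ideal points, extend $\rho$-equivariantly over higher skeleta by geodesic simplices, and observe that the result is $\rho$-equivariantly homotopic to $\Dcal$. The paper's proof is a terse three lines that simply invokes the construction of Theorem \ref{thm:existpseudo} and declares the homotopy ``obvious''; you have spelled out what that word hides.

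One small comment: your final paragraph makes the ideal-vertex issue sound more delicate than it is. Since $\Dcal$ and $\Dcal'$ agree on all ideal points and map $\Mt$ into $\H$, the naive pointwise geodesic interpolation $H(t,x)$ along the geodesic from $\Dcal(x)$ to $\Dcal'(x)$ already works globally: at ideal points it is constant, at interior points both endpoints lie in $\H$, and continuity at the ideal points follows because as $x\to\widetilde p$ both $\Dcal(x)$ and $\Dcal'(x)$ converge to the same boundary point, forcing the entire geodesic between them to converge there as well. So no special coning construction is actually required, and the criterion of Theorem \ref{thm:equipseudo}(b) is satisfied by the constant path, exactly as you note.
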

\begin{proof}
As  in the proof of Theorem \ref{thm:existpseudo},	
define $\Dcal' =\Dcal$ on ideal points and extend $\rho$-equivariantly to  each lifted higher skeleton using  geodesics and ideal geodesic triangles and so on. The resulting pseudo-developing is obviously $\rho$-equivariantly homotopic to $\Dcal$. 
\end{proof}	
 
 Now we  define a ``pseudo-hyperbolic'' structure on $M$  as follows. 
 \begin{defn}[pseudo-hyperbolic structure]
 	A \emph{pseudo-hyperbolic structure} on $M$ is   a straightened pseudo-developing ($\Dcal$,$\rho$) of $M'$ which is obtained from  $M$   by possibly deleting a finite number of  points of $M$. 
 	We refer to $\Dcal$ and $\rho $ as  the \emph{developing map} and  the \emph{holonomy} representation of  a pseudo-hyperbolic manifold $M$,  respectively.  
 	Two pseudo-hyperbolic structures are \emph{equivalent}    if their  pseudo-developings  are equivalent with respect to  the intersection of the two $M'$'s used to define the pseudo-hyperbolic structures. 
 \end{defn}
\begin{rmk}[independence of ideal triangulation]
	As Theorem \ref{thm:equipseudo} shows,   the equivalence class of  pseudo-hyperbolic structures depends only on the holonomy representation and $\Dcal$-image of nontrivial ideal points. Therefore, although we  always need an ideal triangulation to obtain  a pseudo-hyperbolic structure concretely, it is essentially independent of the choice of ideal triangulation.	
\end{rmk}
We would like to emphasize again that   deleting  points really doesn't matter for the notion of a  pseudo-hyperbolic structure. Furthermore,
 we immediately obtain several fundamental  statements about  pseudo-hyperbolic structures as follows.

\begin{thm}
	Let $M$ be a open tame 3-manifold and $\rho$ be a $\psl$-representation.
	\begin{enumerate}[(a)]
		\item 	If the $\rho$-image of each peripheral subgroup is abelian, then  $M$  admits a pseudo-hyperbolic structure with   holonomy $\rho$. 
		\item  If $M$ is closed or has only  torus boundary components, then  there exists a pseudo-hyperbolic structure on $M$ for any representation $\rho$.
		\item If a representation $\rho$ maps   each peripheral  subgroup to a parabolic,  elliptic or  trivial  subgroup of $\IsoH$ then there exists a corresponding pseudo-hyperbolic structure unique  up to $\rho$-equivariant homotopy.
	\end{enumerate}
\end{thm}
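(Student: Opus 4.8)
The plan is to derive all three statements from the existence and uniqueness results for pseudo-developings established above, the one genuinely new ingredient being the passage from a pseudo-developing to a \emph{straightened} one supplied by Proposition~\ref{prop:nondegstr}. Statement (b) will be immediate from (a): a torus peripheral subgroup is abelian and a trivial $2$-sphere end maps to the trivial group, so every peripheral image is abelian. Likewise the uniqueness half of (c) subsumes its existence half, since parabolic, elliptic and trivial subgroups of $\IsoH$ are all abelian, so (a) applies. Thus the real work lies in (a) and in the uniqueness assertion of (c).

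For (a): by Corollary~\ref{cor:existdev} there is a pseudo-developing $(\Dcal,\rho)$ of $M$, built as in Theorem~\ref{thm:existpseudo} from a finite ideal triangulation $\T$ by sending each ideal vertex to a fixed point of the corresponding peripheral image and extending $\rho$-equivariantly over skeleta. This $(\Dcal,\rho)$ need not be non-degenerate for $\T$, which is exactly the hypothesis Proposition~\ref{prop:nondegstr} requires before straightening. I would remedy this by passing to an $M'$ obtained from $M$ by deleting finitely many points: subdivide $\T$ (for instance barycentrically, introducing new vertices at the barycenters of the edges, faces and tetrahedra), delete precisely those new points of $M$, and note that after such a subdivision every edge of the refined ideal triangulation of $M'$ has at least one new vertex $\widetilde{r}$, for which $\Stab(\widetilde{r})$ is trivial and hence $\rho(\Stab(\widetilde{r}))$ fixes all of $\Hb$, so $\Dcal(\widetilde{r})$ may be chosen to be \emph{any} point of $\H$. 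Since there are only finitely many $\Pi$-orbits of edges, the condition that the two endpoints of each edge have distinct $\Dcal$-images amounts to finitely many constraints, each excluding only a proper (measure-zero) subset of the available positions of the movable vertices; choosing these positions generically, one orbit at a time, produces a non-degenerate pseudo-developing $(\Dcal',\rho)$ of $M'$. Proposition~\ref{prop:nondegstr} then yields a straightening of $(\Dcal',\rho)$, which by definition is a pseudo-hyperbolic structure on $M$ with holonomy $\rho$.

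For the uniqueness in (c): let $(\Dcal_1,\rho)$ on $M'_1$ and $(\Dcal_2,\rho)$ on $M'_2$ be two pseudo-hyperbolic structures with the same holonomy. Restricting both to the common $M'=M'_1\cap M'_2$ (legitimate since deleting finitely many points is immaterial, by Proposition~\ref{prop:sphereboundary} and Remark~\ref{rmk:sphereboundary}) gives two pseudo-developings of $M'$ with holonomy $\rho$. For each lifted ideal point $\widetilde{p}$ of $M'$ the group $\rho(\Stab(\widetilde{p}))$ is parabolic, elliptic or trivial — the extra deleted points contribute only trivial stabilizers — so its fixed-point set in $\Hb$ is connected: a single point, a complete geodesic together with its two endpoints, or all of $\Hb$. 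Hence it contains a path from $\Dcal_1(\widetilde{p})$ to $\Dcal_2(\widetilde{p})$ fixed pointwise by $\rho(\Stab(\widetilde{p}))$, so condition (b) of Theorem~\ref{thm:equipseudo} holds and $\Dcal_1$ is $\rho$-equivariantly homotopic to $\Dcal_2$; this is exactly the claimed uniqueness up to $\rho$-equivariant homotopy.

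The step I expect to be the main obstacle is securing non-degeneracy in (a): one must check that deleting the chosen finite point set and re-triangulating really produces an ideal triangulation of an open tame $M'$, so that Theorem~\ref{thm:existpseudo} and Proposition~\ref{prop:nondegstr} apply verbatim, and that after subdivision there is genuinely enough freedom at the trivial ideal vertices to break \emph{every} degenerate edge at once. This is the harmless face of the ``pinched'' phenomenon flagged in the introduction, where an edge of the original triangulation is forced to collapse because its two ends land at the unique parabolic fixed point of a peripheral image. Once non-degeneracy is in hand, the rest is the same $\rho$-equivariant cell-by-cell extension already used in the proofs of Theorems~\ref{thm:existpseudo} and~\ref{thm:equipseudo}, together with the observation that the equivalence class of a pseudo-hyperbolic structure depends only on $\rho$ and on the developed images of the nontrivial ideal points.
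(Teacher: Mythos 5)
Your proposal is correct and follows essentially the same route as the paper: reduce existence and uniqueness to Corollaries~\ref{cor:existdev} and~\ref{cor:equicor}, then obtain non-degeneracy by subdividing, placing the newly created trivial ideal vertices so as to separate the endpoints of every formerly degenerate edge, and finally straighten via Proposition~\ref{prop:nondegstr}. The only cosmetic difference is that you perform a full barycentric subdivision with a genericity argument, whereas the paper inserts a midpoint vertex only on each degenerate edge (while noting that a single barycentric subdivision also suffices).
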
 
\begin{proof}
	 By Corollary \ref{cor:existdev} and Corollary \ref{cor:equicor} we can find a suitable  pseudo-developing  $(\Dcal,\rho)$  for  statements (a),(b) and (c) through an ideal triangulation $\T$. The only remaining part to prove is whether the straightenings exist, where the only requirement is non-degeneracy for $\T$ by  Proposition \ref{prop:nondegstr}. If $\Dcal$ is degenerate for $\T$ then we take a subdivided ideal triangulation $\T'$ from  $\T$ where each degenerate edge connecting ${p_i}$ and ${p_j}$   splits into two edges  $({p_i},{p_k})$ and $({p_k},{p_j})$  by adding a new ideal vertex  $p_k$ within the edge. Then $\Dcal$ is a pseudo-developing map of $(M',\T')$ with added new ideal vertices and  we can modify $\Dcal(\widetilde{p_k})$ within  its homotopy class to be distinct from $\Dcal(\widetilde{p_i} )$ or $\Dcal(\widetilde{p_j})$ because $p_k$ has a trivial peripheral subgroup. Then we obtain a  non-degenerate pseudo-developing for $\T'$. (In fact, a single barycentric subdivision is enough to do this. See  Theorem 4.11 in \cite{zickert_volume_2009}.)
\end{proof} 
A pseudo-hyperbolic structure may be thought of as a geometric apparatus to study a representation of a 3-manifold. A usual hyperbolic structure is  a special case of a pseudo-hyperbolic structure and  we can also consider  hyperbolic invariants of pseudo-hyperbolic structures. This gives us some advantages both in  the theoretical  approach and in calculating them  as shown by the study of  knot complements in this article.

From now on, we always assume that the $\Dcal$-image of  ideal points is contained in $\partial \Hb$. For this case  we say that  a straightened pseudo-developing  is \emph{ideally straightened} for the \emph{underlying triangulation} $\T$. 
Then   we can use the \emph{cross-ratios} to investigate the structure as in the subsequent subsections.    	

\subsection{The cross-ratio and  Thurston's gluing equation}
Let us consider a standard tetrahedron $\Delta$ with the standard orientation with respect to the vertex  labeling  $0,1,2$ and $3$. Consider a hyperbolic ideal tetrahedron in $\Hb$ which is a totally geodesic embedding of $\Delta$ where each vertex $i$  maps to $v_i$ on $\partial \Hb = \Cbb \cup \{\infty\}$. We then assign a cross-ratio 
\begin{equation}\label{eq:crossratiodef}
[v_i,v_j,v_k,v_l]:=\frac{(v_i-v_l)(v_j-v_k)}{(v_i-v_k)(v_j-v_l)}
\end{equation}  
  to an edge $(i,j)$ when $(i,j,k,l)$ gives the standard orientation. Note that the cross-ratio does not depend on the choice between $(i,j)$ and $(j,i)$, so it is assigned to an unoriented edge. An ideal tetrahedron is called \emph{degenerate} if the cross-ratio is $0$, $1$ or  $\infty$. We denote the cross-ratio at an edge $e$ of a tetrahedron $\Delta$ by $cross\text{-}ratio(\Delta,e)$. 
  The cross-ratio values  for  each edge are  preserved under hyperbolic isometries and    satisfy a well-known relation, 
\begin{equation}\label{eq:crossratiorel}
\vcenter{\hbox{	
\begingroup%
  \makeatletter%
  \providecommand\color[2][]{%
    \errmessage{(Inkscape) Color is used for the text in Inkscape, but the package 'color.sty' is not loaded}%
    \renewcommand\color[2][]{}%
  }%
  \providecommand\transparent[1]{%
    \errmessage{(Inkscape) Transparency is used (non-zero) for the text in Inkscape, but the package 'transparent.sty' is not loaded}%
    \renewcommand\transparent[1]{}%
  }%
  \providecommand\rotatebox[2]{#2}%
  \ifx\svgwidth\undefined%
    \setlength{\unitlength}{71.23224717bp}%
    \ifx\svgscale\undefined%
      \relax%
    \else%
      \setlength{\unitlength}{\unitlength * \real{\svgscale}}%
    \fi%
  \else%
    \setlength{\unitlength}{\svgwidth}%
  \fi%
  \global\let\svgwidth\undefined%
  \global\let\svgscale\undefined%
  \makeatother%
  \begin{picture}(1,0.84213355)%
    \put(0,0){\includegraphics[width=\unitlength,page=1]{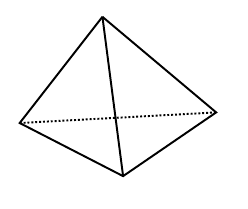}}%
    \put(-0.07395512,0.29782549){\color[rgb]{0,0,0}\makebox(0,0)[lb]{\smash{$v_0$}}}%
    \put(0.34698952,0.80082171){\color[rgb]{0,0,0}\makebox(0,0)[lb]{\smash{$v_1$}}}%
    \put(0.48260137,0.03046293){\color[rgb]{0,0,0}\makebox(0,0)[lb]{\smash{$v_2$}}}%
    \put(0.90385225,0.359075){\color[rgb]{0,0,0}\makebox(0,0)[lb]{\smash{$v_3$}}}%
    \put(4.71165232,0.46563134){\color[rgb]{0,0,0}\makebox(0,0)[lb]{\smash{}}}%
    \put(0,0){\includegraphics[width=\unitlength,page=2]{tetracross.pdf}}%
    \put(0.41349282,0.48863354){\color[rgb]{0,0,0}\makebox(0,0)[lb]{\smash{$z'$}}}%
    \put(0,0){\includegraphics[width=\unitlength,page=3]{tetracross.pdf}}%
    \put(0.55140727,0.34259342){\color[rgb]{0,0,0}\makebox(0,0)[lb]{\smash{$z'$}}}%
    \put(0,0){\includegraphics[width=\unitlength,page=4]{tetracross.pdf}}%
    \put(0.25774727,0.19787578){\color[rgb]{0,0,0}\makebox(0,0)[lb]{\smash{$z''$}}}%
    \put(0,0){\includegraphics[width=\unitlength,page=5]{tetracross.pdf}}%
    \put(0.20906628,0.53047686){\color[rgb]{0,0,0}\makebox(0,0)[lb]{\smash{$z$}}}%
    \put(0,0){\includegraphics[width=\unitlength,page=6]{tetracross.pdf}}%
    \put(0.63003384,0.53327572){\color[rgb]{0,0,0}\makebox(0,0)[lb]{\smash{$z''$}}}%
    \put(0,0){\includegraphics[width=\unitlength,page=7]{tetracross.pdf}}%
    \put(0.64864245,0.21531718){\color[rgb]{0,0,0}\makebox(0,0)[lb]{\smash{$z$}}}%
  \end{picture}%
\endgroup%
}}, \hspace{3em} z'=\dfrac{1}{1-z},~~ z''=1-\dfrac{1}{z}. 
\end{equation}
Therefore it is enough to consider a single complex number $z$, called the  $\emph{shape parameter}$, for a hyperbolic ideal tetrahedron $\Delta$ with a preferred edge.    Note that  if $z = $  $0$, $1$ or  $\infty$ then the cross-ratio at any edge is   $0$, $1$ or  $\infty$.

The shapes of the ideal tetrahedra of a pseudo-hyperbolic manifold  satisfy an algebraic relation for each edge, which will be called \emph{Thurston's gluing equation}. 
Recall that $(M,\Tcal)$ is an ideally triangulated 3-manifold where the ideal tetrahedra $\Delta_1, \dots, \Delta_n$ are glued together by  face pairing homeomorphisms. Each $\Delta_i$ has six (unoriented) edges denoted by $(\Delta_i,e_k)$ where $e_k$ is an edge of  $\Delta_i$ with $k=6i+1,\dots,6i+6$. 
Under the gluing pattern, we have a partition of the set of disjoint edges $(\Delta_i,e_k)$'s where each equivalence class, denoted by $[e]$, is the set of tetrahedral edges identified to an ideal edge $e$ in $\T$.

\begin{thm}\label{thm:thurstoneq}
Let $M$ be an ideally straightened pseudo-hyperbolic manifold  with an underlying ideal triangulation $\T$.  Then the shape parameters of hyperbolic ideal tetrahedra in $\T$ satisfy 
{Thurston's gluing equation} (simply  \emph{gluing equations} or  \emph{hyperbolocity equations}) which is  a system of equations for $(M,\Tcal)$  where  each equation is indexed by an 1-cell $e$ of $\Tcal$: 

\begin{equation}\label{eq:gluingeq}
\prod_{(\Delta_i,e_k) \in [e] }  cross\text{-}ratio(\Delta_i,e_k) = 1. 
\end{equation}
\end{thm}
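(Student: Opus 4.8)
The plan is to verify equation~\eqref{eq:gluingeq} one $1$-cell at a time, by developing the tetrahedra surrounding that cell into a cyclic ``fan'' and recognizing the product of their cross-ratios as the holonomy of a meridian of the cell, which is null-homotopic in $M$.

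First I would fix a $1$-cell $e$ of $\T$ and a lift $\tilde e$ in $\Tt$ with ideal endpoints $\tilde p,\tilde q$. Since $\Dcal$ is straightened, hence non-degenerate, for $\T$, we have $\Dcal(\tilde p)\neq\Dcal(\tilde q)$; conjugating the holonomy by an element of $\IsoH$ --- which, as the cross-ratio is isometry-invariant, changes neither side of~\eqref{eq:gluingeq} --- I may assume $\Dcal(\tilde p)=\infty$ and $\Dcal(\tilde q)=0$ in $\partial\Hb=\Cbb\cup\{\infty\}$. Next I would record the combinatorics around $e$. Because $M$ is a manifold near the arc $e$, a regular neighborhood $N(e)$ is simply connected, so the tetrahedra of $\T$ incident to $e$ form a single cyclic fan (equivalently, the link of $e$ is a circle, with one arc for each element of the edge class $[e]$). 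A meridian loop of $e$ bounds a transverse disk inside $N(e)\subset M$, hence is trivial in $\pi_1(M)$; therefore the link circle of $e$ lifts homeomorphically to the link circle of $\tilde e$, and the tetrahedra of $\Tt$ incident to $\tilde e$ form a fan $\widetilde\Delta_1,\dots,\widetilde\Delta_m$ in bijection with $[e]$ which closes up: $\widetilde\Delta_{m+1}=\widetilde\Delta_1$. Consecutive members $\widetilde\Delta_j,\widetilde\Delta_{j+1}$ share a triangular face $F_j\ni\tilde e$; writing $s_j$ for the third vertex of $F_j$ (indices mod $m$, so $F_m=F_0$ and $s_m=s_0$), the vertices of $\widetilde\Delta_j$ are $\tilde p,\tilde q,s_{j-1},s_j$.

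Then I would carry out the telescoping computation. Non-degeneracy makes the four developed vertices of each $\widetilde\Delta_j$ distinct, so in particular $v_j:=\Dcal(s_j)$ is a finite nonzero complex number. Coherent orientability of $\T$ together with the orientation-reversing face identifications lets one present the vertices of $\widetilde\Delta_j$, in the standard orientation and with the endpoints of $\tilde e$ first, as $(\tilde p,\tilde q,s_{j-1},s_j)$ for every $j$ (choose the cyclic sense of the fan so this holds at $j=1$; the inductive step across $F_j$ is exactly the compatibility of the two orientations that $\widetilde\Delta_j$ and $\widetilde\Delta_{j+1}$ induce on $F_j$). Substituting the developed vertices into~\eqref{eq:crossratiodef} then gives
\[
cross\text{-}ratio(\widetilde\Delta_j,\tilde e)=[\,\infty,\,0,\,v_{j-1},\,v_j\,]=\frac{v_{j-1}}{v_j},
\]
and since these factors are precisely the ones indexed by $[e]$,
\[
\prod_{(\Delta_i,e_k)\in[e]}cross\text{-}ratio(\Delta_i,e_k)=\prod_{j=1}^{m}\frac{v_{j-1}}{v_j}=\frac{v_0}{v_m}=1,
\]
the final equality because $s_m=s_0$ forces $v_m=v_0$. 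As $e$ was arbitrary, \eqref{eq:gluingeq} follows.

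The step I expect to be the real obstacle is the orientation bookkeeping just used: one must be sure that, traversing the entire fan, the standard-orientation ordering of each $\widetilde\Delta_j$ consistently lists the endpoints $\tilde p,\tilde q$ first and the ``incoming'' and ``outgoing'' side vertices in a fixed order, so that the product genuinely telescopes instead of producing an alternating string of $v_{j-1}/v_j$ and $v_j/v_{j-1}$. This is exactly where orientability of $M$ (coherent orientability of $\T$) and the orientation-reversing nature of the face pairings are indispensable, and a careful write-up would make it explicit with a picture of two adjacent developed tetrahedra sharing the ideal triangle on $\{0,\infty\}$ together with the relation~\eqref{eq:crossratiorel}. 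The remaining ingredients --- simple connectivity of $N(e)$, triviality of the meridian in $\pi_1(M)$, and non-degeneracy forcing each $v_j$ finite and nonzero --- are immediate.
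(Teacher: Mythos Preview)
Your proof is correct and follows essentially the same route as the paper's: lift the edge, place its endpoints at $0$ and $\infty$, and telescope the cross-ratios of the surrounding fan of tetrahedra. Your write-up is more explicit than the paper's about why the fan in $\Tt$ actually closes up (via the null-homotopic meridian) and about the orientation bookkeeping needed for the telescoping; the paper simply asserts that the link of $\tilde e$ is a cycle $\widetilde{p_0},\dots,\widetilde{p_m}=\widetilde{p_0}$ and writes down the product $\prod_j \Dcal(\widetilde{p_j})/\Dcal(\widetilde{p_{j-1}})=1$ without further comment.
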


\begin{proof}
%
	
	Let us consider an ideal tetrahedron $\Delta_i$ adjacent to each edge $e$ in $\T$, i.e.,  $(\Delta_i,e_k) \in [e]$.
	Once we choose a lifting $\widetilde{e}$ of $e$ in $\Mth$, we can take  a unique lifting  $\widetilde{\Delta_{i}}$ whose lifted edge $\widetilde{e_k}$ is identified to $\widetilde{e}$, i.e., $(\widetilde{\Delta_i},\widetilde{e_k}) \in [\widetilde{e}]$. 
	The link of $\widetilde{e}$ in $\Tt$ is an  edge path cycle which consists of the  opposite edges of $\widetilde{e_k}$ in each $\widetilde{\Delta_i}$. It is denoted by a cyclically consecutive  ideal points $\widetilde{p_0},...,\widetilde{p_m}(=\widetilde{p_0})$ in $\Tt$. 
	By conjugation of an element of $\psl$, we can assume $\Dcal$-image of the end points of $\widetilde{e}$ are  $0$ and $\infty$ in $ \Cbb \cup \{\infty\}$. Then
	\begin{align*}
	\prod_{(\Delta_i,e_k) \in [e] } cross\text{-}ratio(\Delta_i,e_k) &= \prod_{(\widetilde{\Delta_i},\widetilde{e_k}) \in [\widetilde{e}] } 
	 cross\text{-}ratio(\widetilde{\Delta_i},\widetilde{e_k}) \\
	&= \prod_{j=1}^m 
	 \frac{\Dcal(\widetilde{p_j})}{\Dcal(\widetilde{p_{j-1}})}=1.
	\end{align*}	
\end{proof}

\begin{rmk}
	``Thurston's gluing equation'' is  sometimes taken to mean  not only the above gluing equation at each edge, but also  a condition at each boundary for  complete hyperbolic structure or for a boundary parabolic representation. See Chapter 4 in \cite{thurston_geometry_1977} or \cite{neumann_zagier_1985}.
\end{rmk}
In a sense, Theorem \ref{thm:thurstoneq} is trivial by  definition. But the converse is also true and has more fundamental meaning.  
\subsection{Thurston's construction}\label{sec:TY_construction}
Thurston's gluing equation is a necessary and sufficient condition to define an ideally straightened pseudo-hyperbolic structure, i.e. we can construct a pseudo-developing map and holonomy representation directly from   a solution to  Thurston's gluing equation. 
 
%
The essential idea is of course due to W. Thurston \cite{thurston_geometry_1977} and an elaborated exposition for the generalized case was given by T. Yoshida  \cite{yoshida_ideal_1991}. Although he only considers a hyperbolic manifold decomposed into  hyperbolic ideal tetrahedra, the method is also applicable for a topological ideal triangulation (for example, see \cite{luofeng_volume_optimization}, \cite{segerman_pseudo-developing_2011}). It will be worthwhile to see the slight difference from Yoshida's proof in  Section 5 of \cite{yoshida_ideal_1991}. In our proof, we don't need any requirement  on the  ideal triangulation. (See also Corollary \ref{rmk:Yoshidadiff}).
\begin{thm}\label{thm:thurston_construction}
	Let $(M,\Tcal)$ be an ideally triangulated 3-manifold. If  there is a solution to Thurston's gluing equation, then there exists a straightened pseudo-developing map $\Dcal$ associated to the solution, and the holonomy $\rho$ is unique up to conjugation. 
\end{thm}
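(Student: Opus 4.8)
The plan is to carry out Thurston's original construction from \cite{thurston_geometry_1977} (see also Yoshida's exposition \cite{yoshida_ideal_1991}) in the universal cover, reading off $\Dcal$ one tetrahedron at a time and using the gluing equation exactly where global consistency is needed. First I would pass to $(\Mt,\Tt)$, fix a lift $\widetilde{\Delta}_0$ of one of the $\Delta_i$, and place it in $\Hb$ as an arbitrary totally geodesic ideal tetrahedron realizing the shape parameter that the solution assigns to $\Delta_0$: send three of its ideal vertices to $0,1,\infty$ and let the shape parameter together with the orientation determine the fourth. (For the eventual $\Dcal$ to be genuinely straightened one wants no cross-ratio in $\{0,1,\infty\}$, so that the four ideal vertices are distinct; if some shape is degenerate the same recipe still produces a pseudo-developing, merely not a straightened one.) Then I would propagate: whenever a placed tetrahedron meets a not-yet-placed neighbour $\widetilde{\Delta}'$ along a common face $F$, there is a unique totally geodesic ideal placement of $\widetilde{\Delta}'$ on the far side of the already-placed copy of $F$ realizing the shape of $\widetilde{\Delta}'$, since the three ideal vertices of $F$ are fixed and the fourth is then forced. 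Carried out along any edge-path in the dual graph (tetrahedra and face-pairings) starting at $\widetilde{\Delta}_0$, this yields a candidate image for every tetrahedron, hence a candidate value of $\Dcal$ on every cell of $\Tt$: geodesics for the ideal edges, ideal geodesic triangles for the faces, geodesic tetrahedra for the $3$-cells, and the assigned boundary points for the ideal vertices.

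The crux --- and the only place the hypothesis enters --- is that this candidate is well defined, i.e.\ independent of the dual path, equivalently that the $\psl$-valued monodromy of the procedure on $\pi_1(\Mt\setminus\Tt^{(1)})$, where $\Tt^{(1)}$ is the union of ideal edges, is trivial. Since $\Mt$ is simply connected and $\Tt^{(1)}$ has codimension two, that group is normally generated by the meridian loops around the ideal edges, so it suffices to compute the monodromy around one such meridian, encircling an edge $\widetilde{e}$. Normalising $\Dcal$ on the two endpoints of $\widetilde{e}$ to $0$ and $\infty$, the face identifications met while going around $\widetilde{e}$ compose to a M\"obius transformation fixing $0$ and $\infty$ whose multiplier is precisely $\prod_{(\Delta_i,e_k)\in[e]} cross\text{-}ratio(\Delta_i,e_k)$ --- the very computation in the proof of Theorem~\ref{thm:thurstoneq}. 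By the gluing equation \eqref{eq:gluingeq} this product equals $1$, so the transformation is the identity and the monodromy vanishes. Hence $\Dcal$ is single valued on $\Mt\setminus\Tt^{(1)}$, extends continuously over the $1$-skeleton and the ideal vertices, and, after filling in each $3$-cell by its placed geodesic tetrahedron, defines a map $\Dcal:\Mth\to\Hb$ which is straightened for $\T$ with $\Dcal(\Mt)\subset\H$ (interiors of faces and tetrahedra land in $\H$, ideal vertices in $\partial\Hb$), hence ideally straightened in the sense of Section~\ref{ref:straightening}.

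Finally I would produce the holonomy and pin down its uniqueness. The deck group $\Pi$ acts on $(\Mt,\Tt)$ preserving shape parameters, so for each $g\in\Pi$ the map $\Dcal\circ g$ is again a developing map built by the same recipe and differs from $\Dcal$ only in where it places $\widetilde{\Delta}_0$, namely at $\Dcal(g\cdot\widetilde{\Delta}_0)$; since any two totally geodesic ideal placements with matching vertex labels and the same non-degenerate shape differ by a unique element of $\psl$, there is a unique $\rho(g)\in\psl$ with $\Dcal\circ g=\rho(g)\circ\Dcal$. Comparing $\Dcal\circ(gh)$ with $(\Dcal\circ g)\circ h$ and using that uniqueness gives $\rho(gh)=\rho(g)\rho(h)$, so $(\Dcal,\rho)$ is a pseudo-developing. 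For a fixed $\Dcal$ the equivariance determines $\rho$ outright, since $\Dcal(\Mt)$ contains a non-degenerate tetrahedron and hence is not contained in any geodesic; and replacing the initial placement of $\widetilde{\Delta}_0$ by $\phi\in\psl$ post-composes the whole construction by $\phi$ and replaces $\rho$ by $\phi\rho\phi^{-1}$, so $\rho$ is determined by the solution up to conjugation. The main obstacle, as indicated, is the monodromy-around-edges step, where Thurston's equation does all the work; everything else is either bookkeeping in the universal cover or the elementary rigidity of geometric ideal tetrahedra.
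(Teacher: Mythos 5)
Your proposal is correct and is essentially the paper's own argument (Thurston's construction): place one lifted tetrahedron in $\Hb$, propagate the placement across faces using the rigidity of ideal tetrahedra with prescribed shape, let the gluing equation kill the only obstruction to global consistency, and read off $\rho$ from the change of initial placement. The only difference is one of presentation: where the paper develops along paths and simply asserts that the gluing equation ``exactly implies'' homotopy-invariance of $\widetilde{\alpha}(1)$, you make that step explicit by noting that $\pi_1(\Mt\setminus\Tt^{(1)})$ is normally generated by meridians of the ideal edges and that the monodromy around an edge, normalized to have endpoints $0$ and $\infty$, is $z\mapsto\bigl(\prod cross\text{-}ratio\bigr)\,z$ exactly as in the computation proving Theorem~\ref{thm:thurstoneq}.
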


\begin{proof}
Let  $z:=(z_1, z_2, \dots, z_n)$ be  a solution to Thurston's gluing equation for $(M,\Tcal)$. 
Let  $\Fcal$ be the set of face pairing homeomorphisms, i.e.,
	$M = \bigcup \Delta_i = \coprod \Delta_i \Big/ {\sim}$ where $x \sim f(x)$ for $f \in \Fcal$.
 Note that   each $\Delta_i$ is an ideal 3-simplex and let each  $ \Delta_i{(z_i)}$ (or simply $\Delta_i{(z)}$  for  notational simplicity) be a hyperbolic ideal tetrahedron of $\Delta_i$ whose  cross-ratio is given by $z_i$.  
 At first, we construct a topological space $M_z$ with respect to  $z$ which is obtained by gluing the hyperbolic ideal tetrahedra $\Delta_i(z)$   instead of   $\Delta_i$ with the same gluing pattern. Each gluing map, denoted by adding the subscript $z$ (as in $f_z$ and $\Fcal_z$), is canonically chosen by the unique isometry  between two hyperbolic ideal triangles. So we obtain
 $M_z = \bigcup_i \Delta_i(z) = \coprod \Delta_i(z)  \Big/ {\sim}$
 where $x \sim f_z(x)$ for a face gluing map $f_z \in \Fcal_z$.

Note that  $M_z$  may  not be homeomorphic to   $M$  since there might be a flat tetrahedron with real-valued cross-ratio. There is a continuous map $\Ccal: M \to M_z$ using an  obvious  map for each simplex $\Delta_i$ to $\Delta_i(z)$ and then redefining $f$ to satisfy the gluing  compatibility,  $\Ccal \circ f = f_z \circ \Ccal$.

Let us choose an ideal tetrahedron  $\Delta_{i_0}$, called a \emph{base tetrahedron}, and   a base point $x$ in  the interior of $\Delta_{i_0}$  and consider a continuous  curve $\alpha:[0,1] \to M$ with $\alpha(0)=x$. After a slight perturbation fixing the end points, we can divide $\alpha$ into finitely many  curves $\alpha_0, \alpha_1,\dots,\alpha_k$
 such that 
 \begin{enumerate}[(i)]
 	\vspace{-1ex}
 	\item $\alpha_0*\alpha_1*\cdots*\alpha_k=\alpha$,
 	\item each $\alpha_j$  is contained in the interior of  $\Delta_{i_j}$ away from its end points,
 	\item $\alpha_j(1)=\alpha_{j+1}(0)$ is placed in the interior of a common face  $\Delta_{i_j} \cap \Delta_{i_{j+1}}$.
 \end{enumerate}
Thus, we get a finite sequence of  ideal tetrahedra $\Delta_{i_0},\Delta_{i_1},\dots,\Delta_{i_k}$ along $\alpha$ such that each $ \text{Im}(\alpha_j) \subset \Delta_{i_j}$  and $ \alpha \subset \bigcup \Delta_{i_j}$.

 Now we choose   a hyperbolic ideal triangle in $\H$, called an \emph{initial triangle}, and a face of the base tetrahedron $\Delta_{i_0}$, called a $\emph{base triangle}$.
We can take an isometry $g_{0}:\Delta_{i_0}(z) \to \H$ so that    $g_0\circ\Ccal$ maps the base triangle in $\Delta_{i_0}$ to the initial triangle in $\H$. 
Since $\text{Im}(\alpha_0)\subset \Delta_{i_0}$, we   obtain a path $\widetilde\alpha_0 = g_{0} \circ \Ccal \circ \alpha_0$ and 
then the developing of $\Delta_{i_1}$ is determined by the embedded face of $\Delta_{i_0}$ containing $\alpha_0(1)=\alpha_1(0)$.
Note that every shape  of  $\Delta_{i_j}(z)$ is  determined by the cross-ratio  $z_{i_j}$, that is there is a unique isometric embedding $g_j: \Delta_{i_j}(z) \to \H$  whenever a face of $\Delta_{i_j}(z)$ is already embedded in $\H$. Therefore each $\Delta_{i_j}(z)$ and the  path $\Ccal \circ \alpha_j$ is consecutively  embedded into $\H$  so as to form 
a curve $\widetilde{\alpha} := \widetilde{\alpha_0} * \widetilde{\alpha_1} * \cdots * \widetilde{\alpha_k}:  [0,1] \to \Hbb^3$ where   $\widetilde\alpha_j = g_{j} \circ \Ccal \circ \alpha_j$.
Consequently,   we obtain a developing $\widetilde{\alpha} : [0,1] \to  \Hbb^3$ of a curve $\alpha: [0,1] \to M$ in a unique way up to a choice of  initial triangle (when the base tetrahedron with  base triangle is fixed).

Now we construct a developing map $\Dcal : \widetilde{M} \to \Hbb^3$.
The fact that each cross-ratio is a solution of  Thurston's gluing equation (\ref{eq:gluingeq}) exactly implies that  $\widetilde{\alpha}(1)$  depends only on the homotopy class $[\alpha]$  fixing the end points and $\Dcal$ is continuous. Therefore,
 by identifying the universal cover $(\widetilde{M},\widetilde{x})$ with the set of homotopy classes of paths starting at $x$ in $M$, 
we obtain a well-defined continuous map,
\begin{equation}\label{eq:TYconstruction}
\Dcal : \Mt \to \Hbb^3 ~~\text{ by } [\alpha] \mapsto \widetilde{\alpha}(1).
\end{equation}
A deck transformation $g \in \Pi$ is identified with a loop $\gamma_g$ in $ \pi_1(M,x) $. Then  $\Dcal \circ g$  is obtained by the same construction using $\gamma_g * \alpha$ instead of $\alpha$, i.e.,  $\Dcal \circ g ( [\alpha]) = (\widetilde{\gamma_g * \alpha })(1)$  and  $\Dcal \circ g$ is also a developing with a different initial triangle obtained by  development along $\gamma_g$.  
We define the difference between the  the  initial triangles as $\rho(g)$ and  obtain a map $\rho:\Pi\to\IsoH$ and  $\Dcal \circ g = \rho(g) \circ \Dcal$ by the uniqueness of developing\footnote{If one choose a different $\Ccal:M\to M_z$, the developing is not unique as a map. But the simplex-wise image of the developing doesn't change.}. Then it  follows that $\rho(g_1  g_2) = \rho(g_1) \rho(g_2)$ as  usual. 
By construction it is obvious that  the developing map  can be extended to the end-compatification of $\Mt$ and hence we obtain  a 
pseudo-developing map from $\Mth$ to $\Hb$.  Moreover the resulting pseudo-developing is  unique up to equivalence,  and  the choice of initial triangle  exactly corresponds to the choice of a  representative within the conjugacy  class of the holonomy.
\end{proof}

     A pseudo-developing  from the Thurston construction can be thought of as  a geometric developing of a pseudo-hyperbolic manifold. We emphasize again that the resulting pseudo-hyperbolic structure from a solution is unique up to equivalence.

     Furthermore, as  mentioned before, we don't need any combinatorial assumption for the ideal triangulation $\T$ to construct a pseudo-developing map from a solution and hence we immediately have the following corollary. 
     
     \begin{cor}\label{rmk:Yoshidadiff}
     	If there is a solution of Thurston's gluing equation for $(M,\T)$ then the ideal triangulation $\T$ has only essential edges, i.e., is virtually non-singular.  (see also Theorem~1 in  \cite{segerman_pseudo-developing_2011}.)
     \end{cor}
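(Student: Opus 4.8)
The plan is to deduce the statement directly from Theorem~\ref{thm:thurston_construction} and the combinatorial criterion recorded in Proposition~\ref{prop:essentialedge}; no fresh construction is required. Recall that a \emph{solution} of Thurston's gluing equation for $(M,\T)$ assigns to each tetrahedron $\Delta_i$ a shape parameter $z_i\in\Cbb\setminus\{0,1\}$, so that $\Delta_i(z_i)$ is a genuine hyperbolic ideal tetrahedron with four pairwise distinct ideal vertices in $\partial\Hb$; the equation $\prod cross\text{-}ratio=1$ then holds around every $1$-cell.

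First I would feed such a solution $z=(z_1,\dots,z_n)$ into Theorem~\ref{thm:thurston_construction}, obtaining a \emph{straightened} pseudo-developing map $\Dcal$ for $\T$ associated to $z$. Second, I would unwind the definition of ``straightened'' (Definition~\ref{def:straightened}): a straightened pseudo-developing for $\T$ is by definition non-degenerate for $\T$, i.e.\ no lifted ideal edge of $\Tt$ has both of its endpoints sent by $\Dcal$ to a single point. If one prefers to see this by hand rather than quoting the definition, it suffices to note that in the construction of Theorem~\ref{thm:thurston_construction} each lifted tetrahedron $\widetilde{\Delta_i}$ is carried isometrically onto $\Delta_i(z_i)$, whose four vertices are distinct precisely because $z_i\neq 0,1,\infty$; hence the two endpoints of any lifted edge already receive distinct images. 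Third, having produced a non-degenerate developing for $\T$, I would invoke Proposition~\ref{prop:essentialedge} to conclude that every ideal edge of $\T$ is essential, i.e.\ $\T$ is virtually non-singular.

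There is no substantive obstacle here, since all the real work has been absorbed into Theorem~\ref{thm:thurston_construction} (which notably needs no combinatorial hypothesis on $\T$). The only point deserving attention is the standing convention just recalled --- that a solution takes values in $\Cbb\setminus\{0,1\}$ --- which is what guarantees that the developed tetrahedra, and hence the developed edges, are honestly non-degenerate; once that is fixed the chain ``solution $\Rightarrow$ straightened pseudo-developing $\Rightarrow$ non-degenerate developing $\Rightarrow$ essential edges'' closes immediately.
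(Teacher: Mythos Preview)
Your proposal is correct and follows exactly the same route as the paper: invoke Theorem~\ref{thm:thurston_construction} to obtain a straightened pseudo-developing, observe via Definition~\ref{def:straightened} that this is non-degenerate for $\T$, and then apply Proposition~\ref{prop:essentialedge}. The additional remarks you make (the by-hand verification that $z_i\neq 0,1,\infty$ forces distinct vertices, and the emphasis on the standing convention) are helpful elaborations but not logically required beyond what the paper already assumes.
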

     \begin{proof}
     	If we have a solution, we can construct a straightened pseudo-developing. Hence the resulting pseudo-developing  is   non-degenerate for $\T$ by Definition \ref{def:straightened}. Then Proposition~\ref{prop:essentialedge} completes the proof. 
     \end{proof}

	Note that a developing from the above construction may naturally have a ``folding''  which occurs at the common face between adjacent tetrahedra whose cross-ratios have positive and negative imaginary parts, respectively. 
	This phenomenon was already observed by W.~Thurston under the name   \emph{degree one ideal triangulation}  and also was studied from the point of the scissors congruence and Bloch group in \cite{neumann_bloch_1999}.

  \section{The octahedral decomposition of a knot complement}\label{sec:OctaDeco}
	In this section, we review the octahedral decomposition of a knot complement minus two points to set-up our discussion. 
	Throughout the paper, we fix $K$ to denote a knot in $S^3$ and $D$ to denote an oriented diagram of $K$. Let $N$ be the number of crossings of $D$ and denote the
 	crossings of $D$ by $c_1,\cdots,c_N$. We will use terms and notation about $D$ as follows.
 	\begin{itemize}
	 	 	\item  Deleting all crossings in the projection plane, $D$ is separated into the disjoint union of open intervals. We call the closure of each open interval a \emph{segment} of $D$. There are $2N$ segments of $D$ and we denote them by $s_1, \cdots, s_{2N}$. We use the term ``segment" instead of the usual term ``edge" of a graph to avoid confusion with an edge of a tetrahedron.

	 	 	\item Along the oriented diagram $D$, the union of segments from an under-passing crossing to the following under-passing crossing is called an \emph{over-arc} of $D$. Similarly, an \emph{under-arc} of $D$ is the union of segments from an over-passing crossing to the following over-passing crossing. Note that $D$ has $N$ over-arcs and $N$ under-arcs.
	 	 	
	 	 	\item A \emph{region} of $D$ is a connected component of the complement of $D$ in the projection plane. There are $N+2$ regions of $D$ and we denote them by $r_1, \cdots, r_{N+2}$. 
 	 \end{itemize}  
 	 \begin{figure}[!h]
	 	 	\centering
	 	 	\scalebox{1}{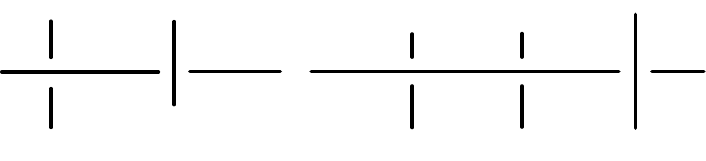}
	 	 	\label{fig:notation}
	 	 	\caption{Segments, over-arcs and under-arcs.}
 	 \end{figure}
   \subsection{Ideal triangulations of $S^3\setminus(K \cup \{\textrm{two points}\})$}      \label{sec:toptriangulation}
 We briefly recall the octahedral decomposition of $S^3\setminus(K \cup \{\textrm{two points}\})$. We first put an ideal octahedron $o_k$ on each crossing $c_k$ such that the top and the bottom vertices $o_k$ touch the over- and the under-passing arcs of $c_k$,   respectively, as in Figure \ref{fig:octaCrossing}.
   \begin{figure}[H]
	\centering
	\begin{align*}
		~&~\hspace{1em}  \vcenter{\hbox{{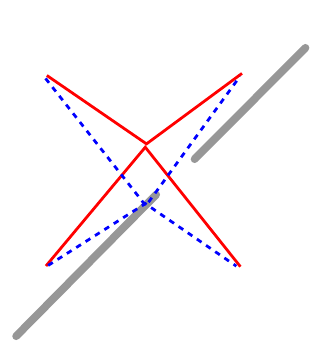}}} &\hspace{2em} \hspace{2em}   ~&~ 	\vcenter{\hbox{{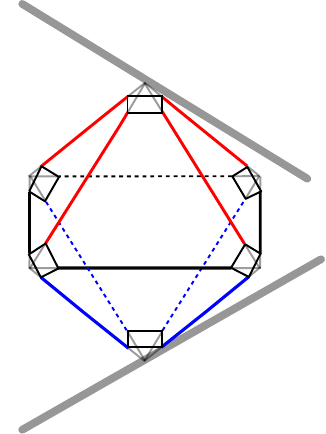}}} 
     \end{align*}
     \caption{An ideal octahedron at a crossing.}
      \label{fig:octaCrossing}
   \end{figure}    
We then glue two pairs of hypotenuses of $o_k$ as follows. Here we denote ideal vertices of $o_k$ by $A,B, \cdots,F$, temporarily.
\begin{itemize}
	\item Identify two upper-hypotenuses $AC$ and $AE$ to form a single edge above $A$ as in Figure~\ref{fig:identified_octa}(a).
	\item Identify two lower-hypotenuses $BF$ and $DF$ to form a single edge below $F$ as in Figure~\ref{fig:identified_octa}(b).
\end{itemize} 
    \begin{figure}[!h]
    	\centering
    	\begin{subfigure}[H]{0.45\textwidth}
    		\centering
    		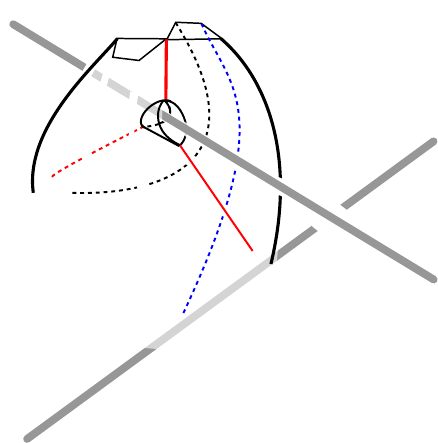
    		\caption{Identify $AC$ and $AE$.}
    	\end{subfigure}
    	\begin{subfigure}[H]{0.45\textwidth}
    		\centering
    		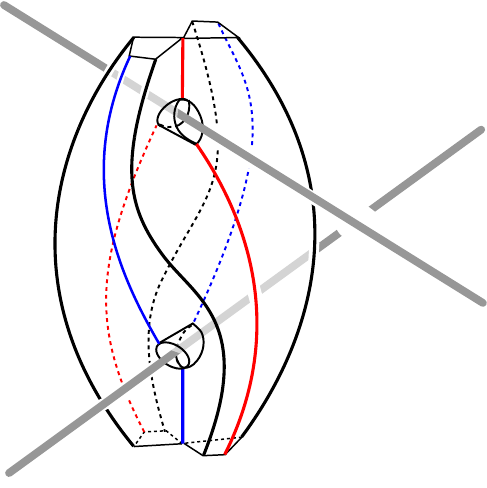
    		\caption{Identify $BF$ and $DF$.}
    	\end{subfigure}        \caption{A twisted octahedron.}
    	\label{fig:identified_octa}
    \end{figure} 

	This results in a \emph{twisted octahedron} whose boundary consists of four \textit{leaves}. A leaf is an once-punctured ideal bigon with two interior edges and two boundary edges. See Figure~\ref{fig:twsitedOctaLeaf}. We follow  \cite{murakami_kashaevs_2001} for the terminology.
	
	For a segment of $D$ we glue two leaves coming from each end of the segment so that the interior and boundary edges of a leaf are identified with those of the other leaf, respectively.  Performing the identification for every segment of $D$, $N$ twisted octahedra form $S^3\setminus(K\cup\{\textrm{two points} \})$. We denote this octahedral decomposition $\{o_1,\cdots,o_N\}$  by $\Ocal_D$. We stress that an octahedral decomposition works for any knot diagram and a link diagram without a component which has only over- or under-passing crossings.
	\begin{figure}[!h]
		\centering
		\includegraphics[scale=0.4]{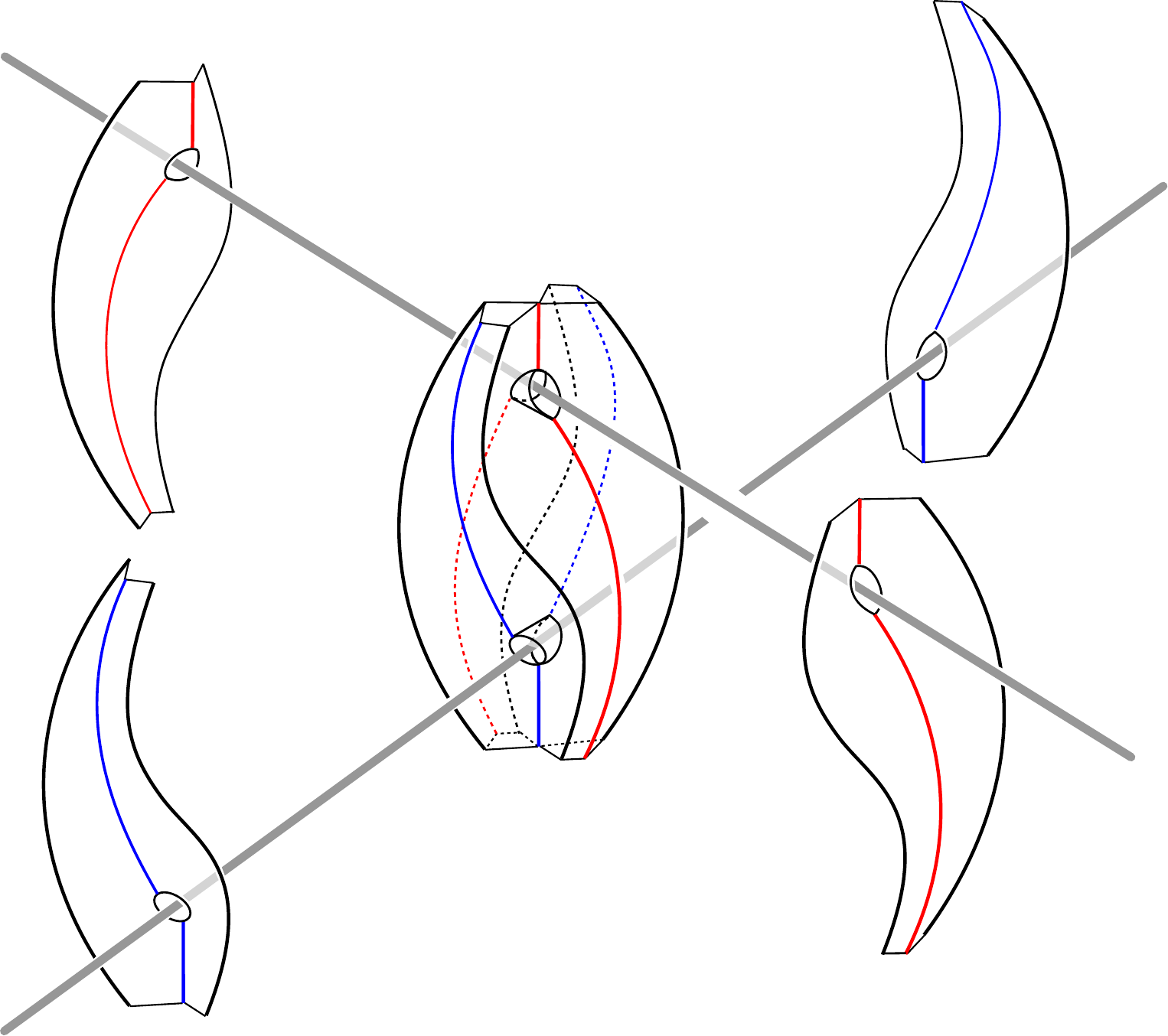}
		\caption{A twisted octahedron with four leaves.}
		\label{fig:twsitedOctaLeaf}
	\end{figure} 

	 There are three ideal points of $\Ocal_D$. We denote the ideal point corresponding to the knot by $p^\circ$. We also denote the one above $D$ by $p^{+}$ and the one below $D$ by $p^-$. There are three types of edges of $\Ocal_D$ : an edge joining  $p^\circ$ and $p^+$(resp., $p^-$) is called an \emph{over-edge}(resp., \emph{under-edge}) and an edge joining $p^+$ and $p^-$ is called a \emph{regional edge}. We remark that over-edges, under-edges and regional edges are in one-to-one correspondence with over-arcs, under-arcs and regions of $D$, respectively.
	    	\begin{defn} Ideal triangulations of $S^3 \setminus (K \cup \{ \textrm{two points}\})$ obtained by dividing each octahedron in $\Ocal_D$ into four and five tetraheda as in Figure \ref{fig:four_five} are called the \emph{four-term triangulation} $\Tcal_{4D}$ and the \emph{five-term triangulation} $\Tcal_{5D}$, respectively. For both cases the additional edges created in the division are called \emph{central edges}. 
		    \end{defn}
	    
		 \begin{figure}[H]
		 	\centering
		 	\tikzset{
		 		commutative diagrams/.cd,
		 		arrow style=tikz,
		 		diagrams={>=stealth}}
		 	\begin{tikzcd}[column sep=2em]	
		 	\vcenter{\hbox{\includegraphics{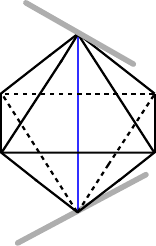}}} \ar[r]  &  \vcenter{\hbox{\includegraphics{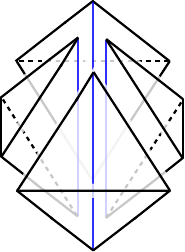}}} &  & 
		 	\vcenter{\hbox{\includegraphics{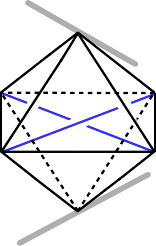}}}\ar[r]  &  \vcenter{\hbox{\includegraphics{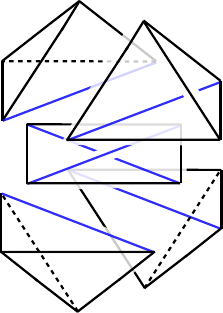}}}
			\end{tikzcd}
		 \caption{Triangulations of an octahedron.}
		 	\label{fig:four_five}
		 \end{figure}

	   \begin{rmk} \label{rmk:five_term} 
	 	We remark that the four term triangulation is related to   the Kashaev invariant  and the five term triangulation to the colored Jones polyonomial  as mentioned in the introduction. Each tetrahedron corresponds to a $q$-series  or a quantum factorial term of the $R$-matrices (\cite{yokota_potential_2002},\cite{CM_2013}).
	 \end{rmk}

		We finally recall the \emph{link} $l(p^\circ)$ of the ideal point $p^\circ$ of $\Ocal_D$. The top and bottom vertices of $o_1, \cdots, o_N$ are identified to  $p^\circ$ and hence each $o_k$ contributes two quadrilaterals, the quadrilaterals near $A$ and $F$ respectively in Figure~\ref{fig:octaCrossing}, to $l(p^\circ)$. We denote the one in the upper-pyramid of $o_k$ by $l_k$ and the other one in the lower-pyramid by $\widehat{l}_k$. Along $D$, these $2N$ quadrilaterals $l_1,\widehat{l}_1,\cdots,l_N,\widehat{l}_N$ form the link $l(p^\circ)$.
		
		\begin{exam}[Figure-eight knot] \label{ex:figure_eight} For a diagram of the figure-eight knot given as in Figure \ref{fig:figure_eight}, the link $l(p^\circ)$ consists of  $8$ quadrilaterals $l_1,\widehat{l}_1,\cdots,l_4,\widehat{l}_4$, which is indeed a torus. Here $a_i$'s indicate the edge identifications. 

		\begin{figure}[!h]
		 	\centering
		 	\scalebox{1}{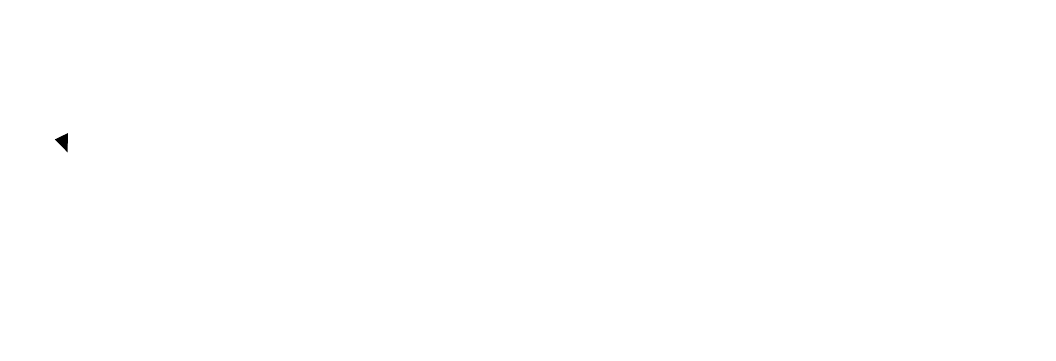} 
		 	\caption{The figure-eight knot diagram and $l(p^\circ)$.}
	 	\label{fig:figure_eight}			
		\end{figure}
		\end{exam}
	
		\begin{rmk} For the link $l(p^{+})$, the octahedron $o_k$ contributes a pair of quadrilaterals $l_k^+$ which is the pair of quadrilaterals near $C$ and $E$ in Figure \ref{fig:identified_octa}(b). One can verify that $l(p^{+})$ (and similarly $l(p^{-})$) is a 2-sphere by putting every $l^{+}_k$ on a plane and expanding them so that they cover the entire plane as in Figure~\ref{fig:figure_eight_bubble}.
		\begin{figure}[!h]
		 	\centering
		 	\scalebox{1}{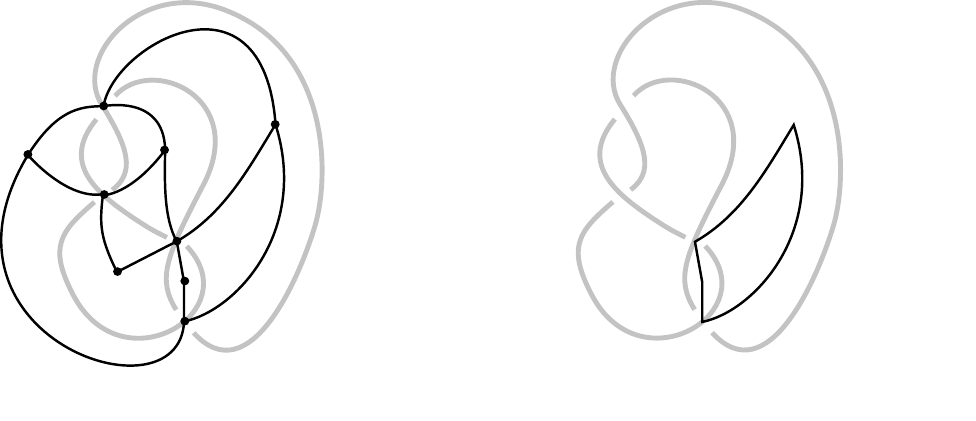} 
		 	\caption{$l(p^{+})$ and $l(p^{-})$ viewed from $p^{+}$ and $p^{-}$, respectively.}
		 	\label{fig:figure_eight_bubble}			
		\end{figure}
		\end{rmk}
 \subsection{Thurston's gluing equations} \label{subsec:gluing}
         From now on we consider each of $o_1, \cdots, o_N$ to be a hyperbolic ideal tetrahedron so that the vertices of $o_k$ lie in  $\Cbb \cup \{\infty \}$. Throughout the paper, we will use indices of the segments and regions around a crossing $c_k$ as in Figure \ref{fig:reg_crossing}.          	
         	
         	\begin{figure}[!h]
         		\centering
         		\scalebox{1}{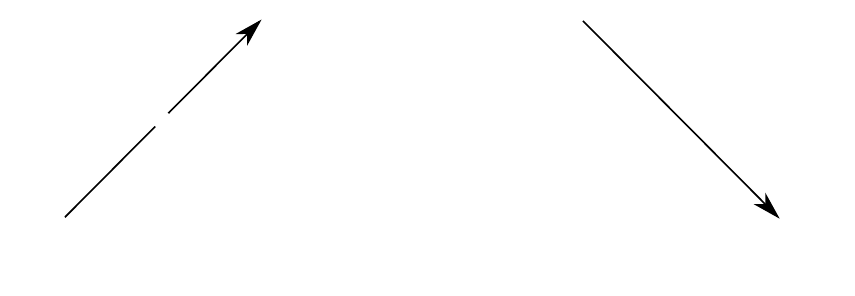}
         		\caption{Indices of segments and regions around a crossing $c_k$. }
         		\label{fig:reg_crossing}
         	\end{figure} 
         	
			\begin{notat} \label{notat:edge} (a)  We denote the upper-hypotenuses of $o_k$ by $\alpha_k,\beta_k,\gamma_k,$ and $\delta_k$  counter-clockwise (resp., clockwise) from the direction of the incoming over-arc of a positive (resp., negative) crossing  $c_k$. We also denote the lower-hypotensuses of $o_k$ by $\widehat{\alpha}_k,\widehat{\beta}_k,\widehat{\gamma}_k,$ and $\widehat{\delta}_k$ clockwise (resp., counter-clockwise) from the direction of the incoming under-arc. See Figure \ref{fig:edge_notation}.\\ 
			(b) We denote the side edge of $o_k$ corresponding to a region $r_j$ by $\tau_{k, j}$. We often denote a side edge by $\tau$ if we don't need to specify the indices.\\
			(c) For an edge of $o_k$ we consider the tetrahedron determined by two faces of $o_k$ containing the edge and the cross-ratio of the tetrahedron associated to the edge is said to be the  \emph{cross-ratio of the edge in $o_k$}.
			Abusing notation, we will denote the cross-ratio of an edge in $o_k$ by the same symbol as the edge itself. For instance,  $\alpha_k = [A,B,E,C]$ and $\tau_{k,b}=[C,D,F,A]$ for the left octahedron in Figure \ref{fig:edge_notation}.
				\begin{figure}[!h]
					 	\centering	
					 	\begin{subfigure}{0.48\textwidth}
					 		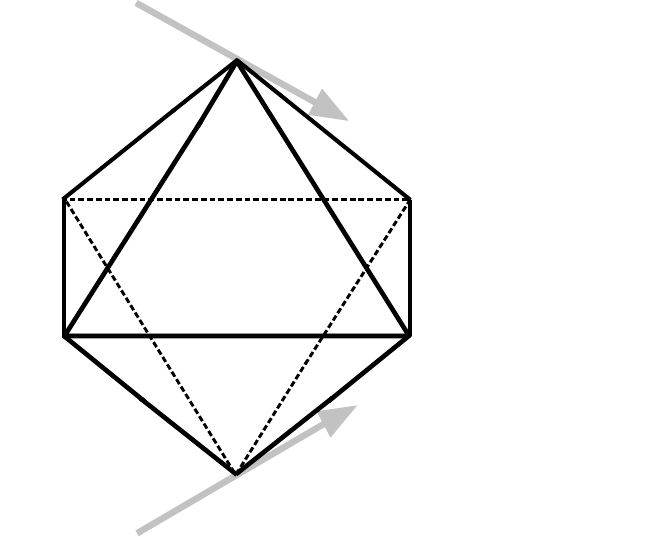
					 		\caption{Positive crossing} 				 		
					 	\end{subfigure}		 	
				 		\begin{subfigure}{0.48\textwidth}
				 			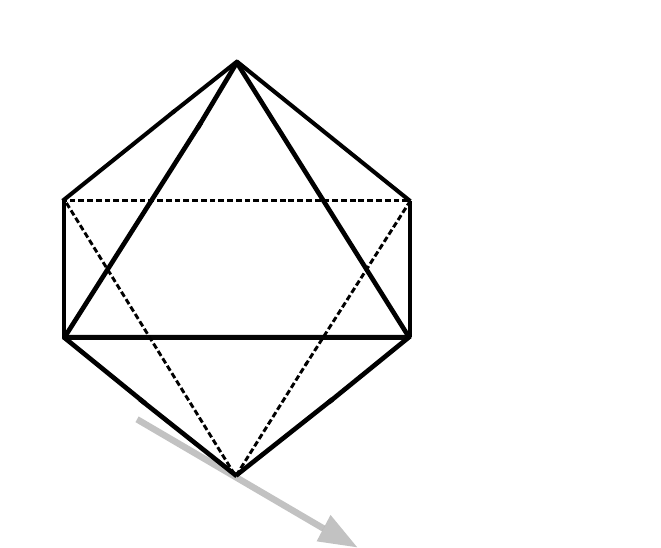
				 			\caption{Negative crossing}
				 		\end{subfigure}					 	
					 	\caption{Edge-notation of $o_k$ for Figure \ref{fig:reg_crossing}.}
					 	\label{fig:edge_notation}			
				\end{figure}
           \end{notat}
         
           Every hyperbolic tetrahedron in this paper is always assumed to be non-degenerate. We therefore always assume the cross-ratios $\tau_{k,a}$, $\tau_{k,b}$,  $\tau_{k,c}$, and $\tau_{k,d}$ are not $0,1$ or $\infty$ for the four-term triangulation $\Tcal_{4D}$, since the octahedron $o_k$ in $\Tcal_{4D}$ divides into tetrahedra of these cross-ratios. Similarly, we assume $\alpha_k$, $ \gamma_k$,  $ \widehat{\alpha}_k$, $ \widehat{\gamma}_k, $ or $ (  \alpha_k   \gamma_k)^{-1}$ are not $0,1$ and $\infty$ for the five-term triangulation $\Tcal_{5D}$. We refer to these condition as \emph{non-degeneracy conditions}.

          Now we describe Thurston's gluing equations for $\Tcal_{5D}$. (The case for $\Tcal_{4D}$ is the same.) 
          If each $o_k$ is an hyperbolic ideal octahedron,  then  the cross-ratios of the tetrahedra in  $\Tcal_{5D}$ automatically satisfy the gluing equation for every central edge, and vice versa.   		 
          We can  express  Thurston's gluing equation for $\Tcal_{5D}$ in terms of  $\alpha_k$, $ \gamma_k$,  $ \widehat{\alpha}_k$, and $ \widehat{\gamma}_k$,
         since the other cross-ratios in $o_k$ are determined easily by these four variables as follows : 
	\begin{equation}\label{eqn:tau_a}
    	\begin{array}{cc}
		\left\{
		\begin{array}{rcl}
		\beta_k &=& (  \alpha_k)' (  \gamma_k)'',\ ~~~ \widehat{\beta}_k ~~=~~ (  \widehat{\alpha}_k)' (  \widehat{\gamma}_k)'' \\[5pt]
\delta_k &=& (  \alpha_k)'' (  \gamma_k)',\ ~~~ \widehat{\delta}_k ~~=~~ (  \widehat{\alpha}_k)'' (  \widehat{\gamma}_k)' \\[5pt]
			  \tau_{k,a} &=&  (  \alpha_k)''\big(  (\alpha_k\gamma_k)^{-1}\big)' (  \widehat{\alpha}_k)'' \\[5pt]
			  \tau_{k,c} &=&  (  \gamma_k)''\big(  (\alpha_k\gamma_k)^{-1}\big)' (  \widehat{\gamma}_k)'' \\[5pt]
			  \tau_{k,b} &=&  (  \gamma_k)'\big(  (\alpha_k\gamma_k)^{-1}\big)'' (  \widehat{\alpha}_k)' \\[5pt]
			  \tau_{k,d} &=&  (  \alpha_k)'\big( (\alpha_k\gamma_k)^{-1}\big)'' (  \widehat{\gamma}_k)'
		\end{array}
		\right.
        & ~~~~\textrm{ for Figure \ref{fig:reg_crossing}(a)}
        \end{array}
	\end{equation}
	\begin{equation}\label{eqn:tau_b}
    	\begin{array}{cc}
		\left\{
		\begin{array}{rcl}
		\beta_k &=& (  \alpha_k)'' (  \gamma_k)',\ ~~~ \widehat{\beta}_k ~~=~~ (  \widehat{\alpha}_k)'' (  \widehat{\gamma}_k)' \\[5pt]
		\delta_k &=& (  \alpha_k)' (  \gamma_k)'',\ ~~~ \widehat{\delta}_k ~~=~~ (  \widehat{\alpha}_k)' (  \widehat{\gamma}_k)'' \\[5pt]
		\tau_{k,a} &=&  (  \alpha_k)'\big(  (\alpha_k\gamma_k)^{-1}\big)'' (  \widehat{\alpha}_k)' \\[5pt]
		\tau_{k,c} &=&  (  \gamma_k)'\big(  (\alpha_k\gamma_k)^{-1}\big)'' (  \widehat{\gamma}_k)' \\[5pt]
		\tau_{k,b} &=&  (  \gamma_k)''\big(  (\alpha_k\gamma_k)^{-1}\big)' (  \widehat{\alpha}_k)'' \\[5pt]
		\tau_{k,d} &=&  (  \alpha_k)''\big( (\alpha_k\gamma_k)^{-1}\big)' (  \widehat{\gamma}_k)''
		\end{array}
		\right.
        & ~~~~\textrm{ for Figure \ref{fig:reg_crossing}(b)}
        \end{array}
	\end{equation}
 We remark that  there is a symmetry in the above cross-ratio expressions, which exchanges $'$ and $''$ when the crossing sign changes. 

We have the following obvious identities. 
	 \begin{lem}\label{lem:dih_lem} (a) $   \alpha_k   \gamma_k =   \widehat{\alpha}_k  \widehat{\gamma}_k$.\\[2pt]
	(b) $   \alpha_k   \beta_k  \gamma_k   \delta_k =    \widehat{\alpha}_k  \widehat{\beta}_k   \widehat{\gamma}_k \widehat{\delta}_k =1$. \\[2pt]
	(c) $   \tau_{k,a}   \tau_{k,b}  \tau_{k,c}  \tau_{k,d}=1$ where $a,b,c$ and $d$ are the indices of the regions around $c_k$. 
	 \end{lem}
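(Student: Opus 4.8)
The plan is to prove all three identities by a direct local computation inside the single octahedron $o_k$, using only the definition \eqref{eq:crossratiodef} of the cross-ratio together with the elementary relation $z\,z'\,z''=-1$ coming from \eqref{eq:crossratiorel} (equivalently $z'z''=-1/z$). Observe first that each of (a), (b), (c) is a statement purely about the cross-ratios attached to the six ideal vertices $v_A,\dots,v_F\in\Cbb\cup\{\infty\}$ of $o_k$, and in fact is an identity of rational functions in those vertices (equivalently, in the $z$-coordinates); hence the non-degeneracy hypothesis will be used only to guarantee that all cross-ratios lie in $\Cbb\setminus\{0,1\}$ and in particular that $o_k$ is a genuine ideal octahedron. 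So everything reduces to reading off the relevant cross-ratios from Figure~\ref{fig:edge_notation} and the tables \eqref{eqn:tau_a}--\eqref{eqn:tau_b}, and a one-line algebraic manipulation.

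For (a), I would read $\alpha_k$ and $\gamma_k$ off Figure~\ref{fig:edge_notation} as the cross-ratios at the two \emph{opposite} upper-hypotenuses of $o_k$. By \eqref{eq:crossratiodef} each of them is the cross-ratio of $v_A$ together with three of the four ``side'' vertices among $v_B,v_C,v_D,v_E$, and in the product $\alpha_k\gamma_k$ all factors containing $v_A$ cancel pairwise, leaving the cross-ratio of the four side vertices in a fixed cyclic order. The same computation applied to the two opposite \emph{lower}-hypotenuses shows that $\widehat\alpha_k\widehat\gamma_k$ equals the cross-ratio of the same four side vertices; the clockwise-versus-counterclockwise convention in Notation~\ref{notat:edge}(a) is exactly what makes the two resulting cross-ratios agree as ordered quadruples. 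Hence $\alpha_k\gamma_k=\widehat\alpha_k\widehat\gamma_k$.

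For (b), I would substitute the table \eqref{eqn:tau_a} (the case of \eqref{eqn:tau_b} being its mirror image, with $'$ and $''$ interchanged, as remarked just after \eqref{eqn:tau_b}): since
$\beta_k\delta_k=\bigl((\alpha_k)'(\alpha_k)''\bigr)\bigl((\gamma_k)'(\gamma_k)''\bigr)=(-\alpha_k^{-1})(-\gamma_k^{-1})=(\alpha_k\gamma_k)^{-1}$,
one gets $\alpha_k\beta_k\gamma_k\delta_k=1$ at once, and the hatted identity is the identical computation with hats. (Conceptually, $\alpha_k\beta_k\gamma_k\delta_k=1$ just records that the link of the vertex $A$ inside $o_k$ closes up: normalizing $v_A=\infty$ and reading the four cross-ratios off Figure~\ref{fig:edge_notation}, the product telescopes to $1$.) For (c), I would substitute \eqref{eqn:tau_a}/\eqref{eqn:tau_b} into $\tau_{k,a}\tau_{k,b}\tau_{k,c}\tau_{k,d}$ and regroup: each of $\alpha_k,\gamma_k,\widehat\alpha_k,\widehat\gamma_k$ supplies a pair $(\cdot)'(\cdot)''=-(\cdot)^{-1}$, while the two factors $\bigl((\alpha_k\gamma_k)^{-1}\bigr)'$ and $\bigl((\alpha_k\gamma_k)^{-1}\bigr)''$ each appear squared and together contribute the square of $\bigl((\alpha_k\gamma_k)^{-1}\bigr)'\bigl((\alpha_k\gamma_k)^{-1}\bigr)''=-(\alpha_k\gamma_k)$, i.e.\ $(\alpha_k\gamma_k)^2$; the four minus signs cancel, and the whole product collapses to $\dfrac{\alpha_k\gamma_k}{\widehat\alpha_k\widehat\gamma_k}$, which is $1$ by part (a). (Equivalently, $\tau_{k,a}\tau_{k,b}\tau_{k,c}\tau_{k,d}=1$ is the gluing equation at the central edge $v_Av_F$ of $\Tcal_{4D}$, automatic because the four tetrahedra there reassemble into the single ideal octahedron $o_k$; for each side edge, the ``cross-ratio of the edge in $o_k$'' coincides with the cross-ratio of the opposite edge $v_Av_F$ of the corresponding four-term tetrahedron.)

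The only genuine work is bookkeeping, and that is where I expect any friction: correctly matching each of $\alpha_k,\dots,\widehat\delta_k$ and $\tau_{k,j}$ with its prescribed $4$-tuple in \eqref{eq:crossratiodef} from Figure~\ref{fig:edge_notation}, keeping the positive- and negative-crossing pictures (which are mirror images, $'\!\leftrightarrow\!''$) straight, and checking that the signs in the telescoping products of (b) and (c) multiply to $+1$. There is no conceptual obstacle; once the conventions are pinned down, (a)--(c) are one-line computations. I would also state explicitly that, being identities of rational functions in the shape parameters, they hold regardless of the non-degeneracy assumption, which enters only to keep the cross-ratios in $\Cbb\setminus\{0,1\}$.
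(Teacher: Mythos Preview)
Your proof is correct and follows essentially the same route as the paper. The paper's own argument is terser: for (a) it observes that $(\alpha_k\gamma_k)^{-1}$ and $(\widehat\alpha_k\widehat\gamma_k)^{-1}$ are the cross-ratios of the middle tetrahedron of $\Tcal_{5D}$ at its two central edges, which are opposite edges and hence carry the same cross-ratio; for (b) and (c) it simply says ``directly follow from equations \eqref{eqn:tau_a} and \eqref{eqn:tau_b}''. Your computation for (a) (cancelling the $v_A$- and $v_F$-factors to land on the same cross-ratio of the four side vertices) is exactly the explicit form of the opposite-edge observation, and your substitutions for (b) and (c) using $z'z''=-1/z$ are precisely what ``directly follow'' unpacks to.
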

	 \begin{proof}
		  The cross-ratios of the tetrahedra of $\Tcal_{5D}$ in the middle of $o_k$ associated to the central edges are $(\alpha_k \gamma_k)^{-1}$ and $(\widehat{\alpha}_k \widehat{\gamma}_k)^{-1}$. Since the central edges are in opposite positions in the tetrahedron, we have (a). The equalities (b) and (c)  directly follow from equations  (\ref{eqn:tau_a}) and (\ref{eqn:tau_b}).
	 \end{proof}
 
	We know that the gluing equations for the central edges are already satisfied and  
	thus only consider the gluing equation for the other edges of $\Tcal_{5D}$: the regional edges, over-edges and under-edges of $\Ocal_D$. \\[4pt]
	 (1) For a regional edge of $\Ocal_{D}$ let $r_j$ be the corresponding region of $D$. To simplify notation,  let $c_{1}, c_{2}, \cdots, c_{n}$ be the corner crossings of $r_j$. Then each octahedron $o_{k}$ contributes the cross-ratio $\tau_{k,j}$ of the regional edge for $1 \leq k \leq n$ and hence its gluing equation is

      \begin{equation}\label{eqn:region_edge}
      	  \displaystyle\prod_{k=1,\cdots,n} \tau_{k,j}  =1.
      \end{equation} Plugging equations (\ref{eqn:tau_a}) and (\ref{eqn:tau_b}) to equation (\ref{eqn:region_edge}), we have the gluing equation for the regional edge in  $\alpha_k,\gamma_k,\widehat{\alpha}_k$, $\widehat{\gamma}_k$. \\[4pt]
	  (2) For an over-edge of $\Ocal_{D}$, suppose the corresponding over-arc of $D$ passes through crossings $c_{1},c_{2}, \cdots, c_{{n+1}}$ as in Figure \ref{fig:over_edge}. Then the gluing equation for the over-edge is 
	  \begin{equation}\label{eqn:over_edge}
	  		\begin{array}{rcl}
	  		  1&=& \widehat{\gamma}_1  \beta_2  \delta_2 \cdots  \beta_{n}  \delta_{n}   \widehat{\alpha}_{n+1} \\[2pt]
	&=&	  		  \widehat{\gamma}_1  ( \alpha_2  \gamma_2)^{-1} \cdots ( \alpha_{n}  \gamma_{n})^{-1}   \widehat{\alpha}_{n+1}.
			\end{array}
	  \end{equation} Here the last equality follows from Lemma \ref{lem:dih_lem}(b).
	  \begin{figure}[!h]
	 	\centering
	 	\scalebox{1}{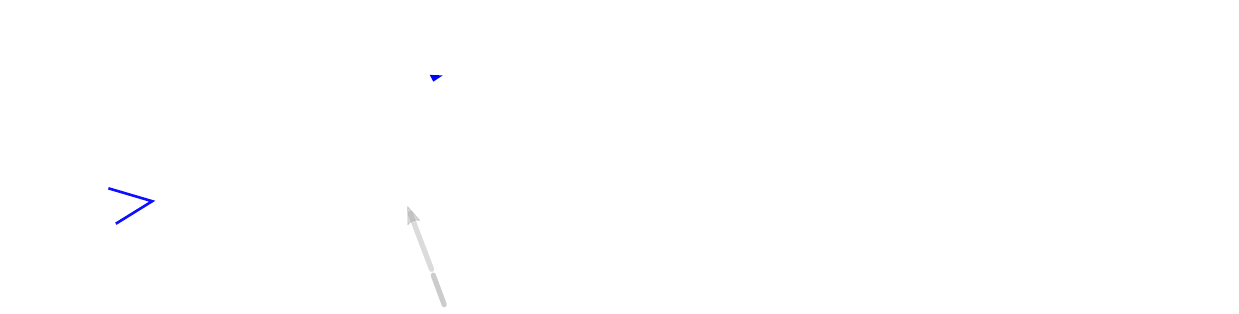}
	 	\caption{Cross-ratios around an over-edge.}
	 	\label{fig:over_edge}
	 \end{figure}
 
		\noindent (3) For an under-edge of $\Ocal_D$, suppose the corresponding under-arc passes through crossings $c_1,c_2,\cdots ,c_{n+1}$. Then the gluing equation for the under-edge is similarly given by 
	\begin{equation}\label{eqn:under_edge}
		\begin{array}{rcl}
		  1&=&\gamma_1  \widehat{\beta}_2  \widehat{\delta}_2 \cdots  \widehat{\beta}_{n}  \widehat{\delta}_{n}   \alpha_{n+1}\\[5pt]
	 &=&\gamma_1  ( \widehat{\alpha}_2  \widehat{\gamma}_2)^{-1} \cdots ( \widehat{\alpha}_{n}  \widehat{\gamma}_{n})^{-1}   \alpha_{n+1}.
	 		\end{array}
	\end{equation}
	 \begin{rmk} \label{rmk:one_segm} For an under-edge corresponding to an under-arc consisting of a single segment as in Figure \ref{fig:hyp_segm}(c), the gluing equation is $ \gamma_k    \alpha_{k+1}=1$. Similarly, for an over-edge corresponding to an over-arc consisting of a single segment as in Figure \ref{fig:hyp_segm}(d), we have $\widehat{\gamma}_k   \widehat{\alpha}_{k+1} =1$.
	 \end{rmk}
	 
	 \begin{defn} The set of hyperbolic ideal octahedra $\{o_1, \cdots ,o_N\}$ is said to be a \emph{solution} to $\Tcal_{5D}$(resp., $\Tcal_{4D}$) if all tetrahedra of $\Tcal_{5D}$(resp.,  $\Tcal_{4D}$) are non-degenerate and their cross-ratios satisfy the gluing equation for every edge of $\Ocal_D$.
	 \end{defn}

	 A solution to $\Tcal_{4D}$ or $\Tcal_{5D}$ induces a pseudo-hyperbolic structure  on 
	 $M=S^3\setminus K$ and a holonomy representation $\rho: \pi_1(M) \rightarrow \psl$.

	 \begin{defn}  We say that a  solution to $\Tcal_{4D}$ or $\Tcal_{5D}$ is an  $m$\emph{-deformed solution} for some $m \in \Cbb \setminus \{0 \}$ if $\rho(\mu)$ is conjugate to ${\begin{pmatrix} \sqrt{m} & 1 \\ 0 &\sqrt{m}^{-1} \end{pmatrix}}$ where $\mu$ is a meridian of the knot. When $m=1$, an $1$-deformed solution will be called  \emph{boundary parabolic solution}.
	 \end{defn}
	 We remark that the value $m\in \Cbb \setminus \{0\}$ in the definition is determined up to $z \mapsto 1/z$. 

	\section{Pseudo-developing maps and variables} \label{sec:dev}
	In this section, we describe a pseudo-developing map for an $m$-deformed solution and present the defining equations of an $m$-deformed solution to $\Tcal_{4D}$ and $\Tcal_{5D}$ in complex variables related to the segments and regions of a diagram $D$.
  \subsection{Pseudo-developing maps and deformed solutions}\label{subsec:developing}
  
  Let hyperbolic octahedra $\{o_1,\cdots,o_N\}$ be an $m$-deformed solution to $\Tcal_{4D}$ or $\Tcal_{5D}$. As we have seen in Section \ref{sec:pseudo_dev},   the solution gives a pseudo-hyperbolic structure $(\Dcal,\rho)$ on $M=S^3 \setminus K$, where $\Dcal : \Mth \rightarrow \overline{\Hbb^3}$ is a developing map and $\rho : \pi_1(M) \rightarrow \psl$ is a holonomy representation. For notational convenience we often confuse an object in $\Mth$ and its developing image under $\Dcal$.

  We first fix the base point $P_1$ of $\pi_1(M)$ to be a point in $l_1$. Recall that the quadrilateral $l_1$ is the intersection of the upper-pyramid of $o_1$ with the link $l(p^{\circ})$. Thus when we consider the Wirtinger generator whose usual base point is  ``$\infty$'', we conjugate it by a path joining $P_1$ to ``$\infty$'' passing through the right-hand side of the over-arc of $c_1$. See the left side of Figure~\ref{fig:figure_eight_dev}. 

  We will depict a developing map $\Dcal$, especially along the particular loop $\lambda_\circ$ which is obtained by pushing the knot $K$ parallelly to the right-hand side.  We homotope $\lambda_\circ$ to a loop in $l(p^\circ)$ passing through the $l_k$'s and $\widehat{l}_k$'s consecutively as in Figure~\ref{fig:figure_eight_dev}. (The reader should not pay atttention to the points $P_k$ and $\widehat{P}_k$ besides $P_1$ in Figure~\ref{fig:figure_eight_dev}. We will specify these points in Section \ref{sec:Holonomy}.)
	 \begin{figure}[H]
	 	\centering
	 	\scalebox{1}{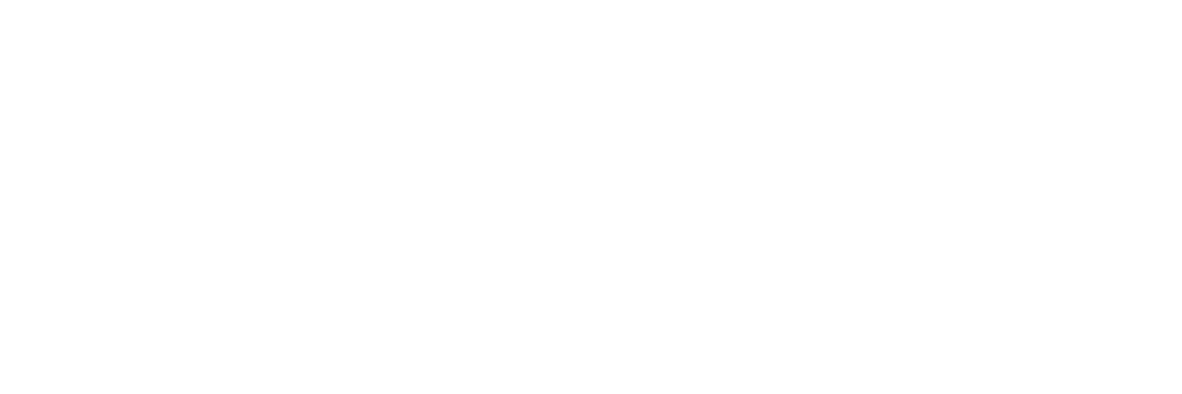}
	 	\caption{Loop $\lambda_\circ$ in $l(p^\circ)$ for Example \ref{ex:figure_eight}.}
	 	\label{fig:figure_eight_dev}
	 \end{figure}	A developing map $\Dcal$ is uniquely determined whenever an initial octahedron is placed in $\overline{\Hbb^3}$. We take $o_1$ as an initial octahedron and place it in $\overline{\Hbb^3}$ so that the top vertex of $o_1$ maps to $\infty \in \Cbb \cup \{ \infty \}$. (If we consider the four-term triangulation, then we further assume that the bottom vertex of $o_1$ maps to the origin of $\Cbb \cup \{\infty\}$.) In the developing map $\Dcal$ along $\lambda_\circ$, each octahedron $o_k$ appears twice as hyperbolic octahedra $O_k$ and $\widehat{O}_k$ when $\lambda_\circ$ passes through $l_k$ and $\widehat{l}_k$, respectively. We remark that $\widehat{O}_k$ can be obtained by ``flipping'' $O_k$ via $z \mapsto \frac{1}{z}$ and applying an appropriate similarity. Note that the ``bottom'' vertex of $\widehat{O}_k$ corresponds to the top vertex of $o_k$. 
	 
	\begin{figure}[!h]
		\centering
		\scalebox{1}{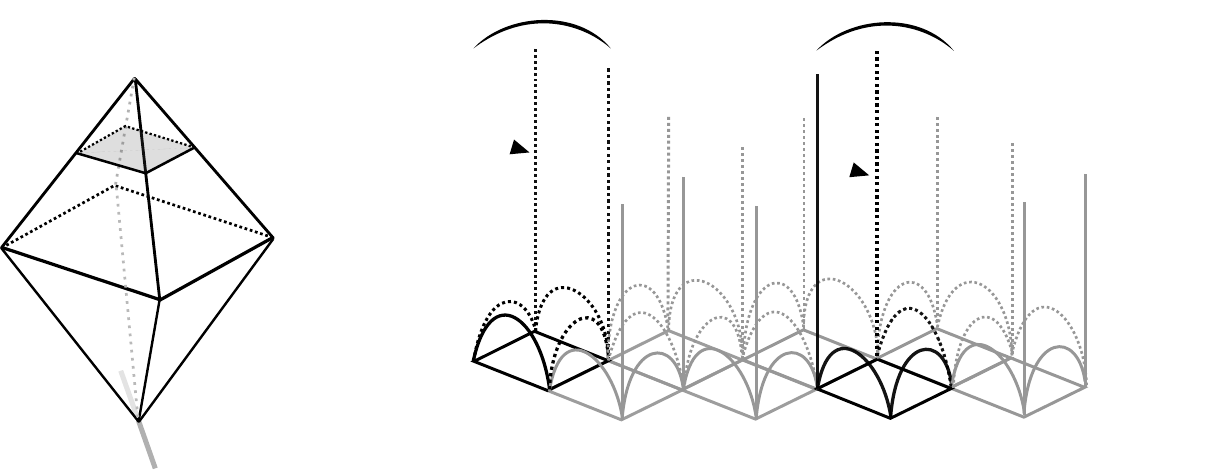}
		\caption{A developing map along $\lambda_\circ$ for Example \ref{ex:figure_eight}.}
		\label{fig:figure_eight_OctaDev}
	\end{figure}   
	
	 We now choose a meridian $\mu \in \pi_1(M)$ as the Wirtinger generator winding the over-arc of the crossing $c_1$. One can check that  $\mu$  is homotopic to the diagonal loop in $l_1$ joining $\delta_1$ to $\beta_1$ after the base point change conjugation as described at the very beginning of this subsection. Therefore, when we choose the lifting of the base point $P_1$ in $O_1$, the holonomy action of $\mu$ sends the hypotenuse of $O_1$ corresponding to  $\beta_1$(resp., $\delta_1$) to that of $\delta_1$(resp., $\beta_1$), if $c_1$ is positive (resp., negative). In particular, it fixes the top vertex of $O_1$, $\infty$, and hence is a similarity. Restricting the similarity to the boundary plane $\Cbb$, we obtain Figure~\ref{fig:figure_eight_hol}. 
	 Here the quadrilateral $L_k$(resp., $\widehat{L}_k$) is the projection image of $O_k$(resp., $\widehat{O}_k$) to $\Cbb$ as in Figure~\ref{fig:figure_eight_OctaDev}.
 	\begin{figure}[!h]
 		\centering
 		\scalebox{1}{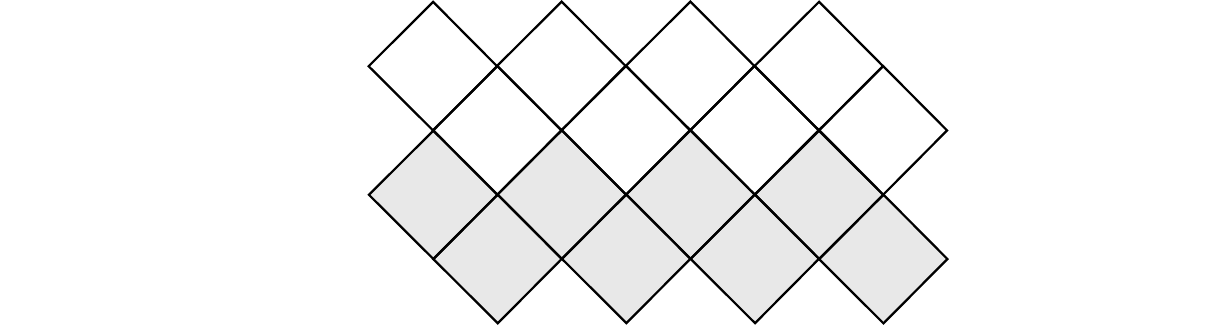}
 		\caption{The similarity map for $\mu$ on $\Cbb$.}
 		\label{fig:figure_eight_hol}
 	\end{figure}
 	Since the given solution is $m$-deformed, the similarity map for  $\mu$ has scaling factor either $m$ or $1/m$. Suppose the similarity has scaling factor $m$. We then obtain relations on $\alpha_k, \gamma_k, \widehat{\alpha}_k$ and $\widehat{\gamma}_k$ from the configuration of $L_k$'s and $\widehat{L}_k$'s. For instance, we have $\gamma_1^{-1}  \widehat{\alpha}_2= m$ and $\widehat{\gamma}_1 \alpha_4^{-1} = m^{-1}$ from Figure \ref{fig:figure_eight_hol}. Considering general cases, we obtain $   \gamma_k^{-1} \widehat{\alpha}_{k+1} = m $ for a segment as in Figure \ref{fig:hyp_segm}(a) and $  \widehat{\gamma}_k\alpha_{k+1}^{-1}  = m^{-1}$ for a segment as in Figure \ref{fig:hyp_segm}(b). Together with Remark \ref{rmk:one_segm}, we obtain a single equation from each segment of $D$ :
	 \begin{equation}\label{eqn:hyp_angle}
	 	\left\{
	 	\begin{array}{cccc}
	    \gamma_k^{-1}   \widehat{\alpha}_{k+1} &=& m & \textrm{for Figure \ref{fig:hyp_segm}(a)} \\[5pt]
	     \widehat{\gamma}_k    \alpha_{k+1}^{-1} &=& m^{-1} & \textrm{for Figure \ref{fig:hyp_segm}(b)} \\[5pt]
	   (  \gamma_k  \alpha_{k+1})^{-1} &=& 1 & \textrm{for Figure \ref{fig:hyp_segm}(c)} \\[5pt]
	     \widehat{\gamma}_k     \widehat{\alpha}_{k+1} &=& 1 & \textrm{for Figure \ref{fig:hyp_segm}(d)}
	 	\end{array}
	 	\right.
	 \end{equation}	 
	 \begin{figure}[!h]
	 	\centering
	 	\scalebox{1}{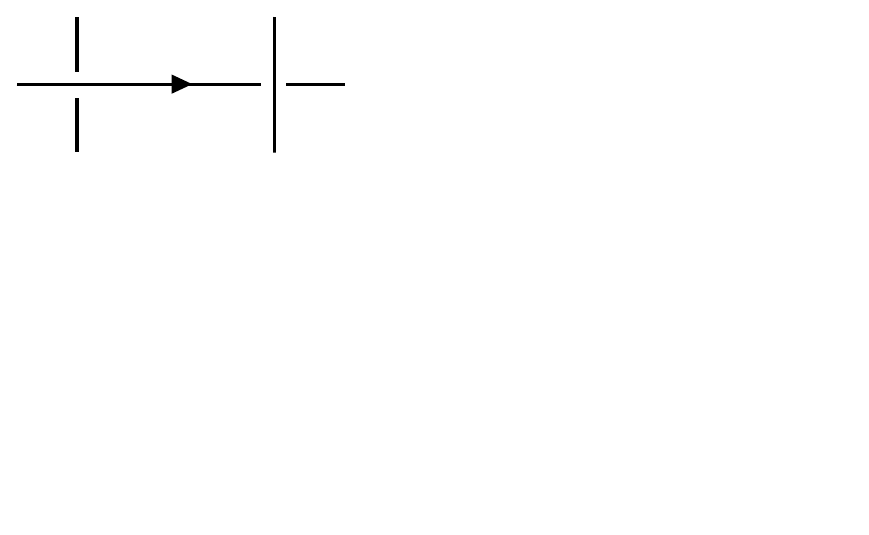}
	 	\caption{Four types of a segment.}
	 	\label{fig:hyp_segm}	 	
	 \end{figure}
	 We call equation (\ref{eqn:hyp_angle}) the $m$\textit{-hyperbolicity equation for a segment}, omitting ``$m$-'' if $m=1$.
	 \begin{prop} The gluing equation for an over(resp., under)-edge of $\Ocal_D$ agrees with the product of the $m$-hyperbolicity equations (\ref{eqn:hyp_angle}) for all segments contained in the corresponding over(resp., under)-arc of $D$.
	 \end{prop}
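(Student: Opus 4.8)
The plan is purely computational: I would expand the over-edge gluing equation (\ref{eqn:over_edge}) (resp.\ the under-edge equation (\ref{eqn:under_edge})) in the variables $\alpha_k,\gamma_k,\widehat{\alpha}_k,\widehat{\gamma}_k$, do the same with the $m$-hyperbolicity equations (\ref{eqn:hyp_angle}) attached to the individual segments of the over-arc (resp.\ under-arc), and check that the product of the latter equals the former. First I would fix an over-edge and label its over-arc so that it runs through the crossings $c_1,\dots,c_{n+1}$ in the orientation of $D$, exactly as in Figure \ref{fig:over_edge}: by definition the strand of the over-arc under-passes at the endpoint crossings $c_1,c_{n+1}$ and over-passes at the interior crossings $c_2,\dots,c_n$, and the over-arc is the concatenation of the $n$ segments $s^{(1)},\dots,s^{(n)}$, where $s^{(j)}$ joins $c_j$ to $c_{j+1}$.

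The key step is to read off, from the local picture at the two crossings bounding $s^{(j)}$, which of the four types of Figure \ref{fig:hyp_segm} it is. When $n\ge 2$: $s^{(1)}$ has an under-pass on the left ($c_1$) and an over-pass on the right ($c_2$), so it is of type (b); $s^{(n)}$ has an over-pass on the left and an under-pass on the right, so it is of type (a); and every interior $s^{(j)}$ with $1<j<n$ lies between two over-passes, so it is of type (c). (When $n=1$ the single segment is of type (d), and the claim reduces to (\ref{eqn:over_edge}) itself, cf.\ Remark \ref{rmk:one_segm}.) By (\ref{eqn:hyp_angle}), these segments therefore carry the equations $\widehat{\gamma}_1\alpha_2^{-1}=m^{-1}$, then $(\gamma_j\alpha_{j+1})^{-1}=1$ for $1<j<n$, and finally $\gamma_n^{-1}\widehat{\alpha}_{n+1}=m$.

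Multiplying these $n$ equations is then a direct computation. On the left the factors regroup, the $\alpha_j^{-1}$ from one equation combining with the $\gamma_j^{-1}$ from the next, so that one is left with
\[
\widehat{\gamma}_1\,\Big(\prod_{j=2}^{n}(\alpha_j\gamma_j)^{-1}\Big)\,\widehat{\alpha}_{n+1},
\]
while the right-hand sides multiply to $m^{-1}\cdot 1^{n-2}\cdot m=1$---the only nontrivial powers of $m$ come from $s^{(1)}$ and $s^{(n)}$, and they are reciprocal. By Lemma \ref{lem:dih_lem}(b), which rewrites $(\alpha_j\gamma_j)^{-1}$ as $\beta_j\delta_j$, the resulting identity is precisely (\ref{eqn:over_edge}). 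The under-edge case runs identically after exchanging ``over'' and ``under'' and inserting hats throughout---so now $s^{(1)}$ is of type (a), $s^{(n)}$ of type (b), and the interior segments of type (d)---and there the product comes out as the reciprocal of (\ref{eqn:under_edge}), which is of course the same equation. I do not expect any genuine difficulty: the whole argument is bookkeeping, and the only spots needing care are the type-assignment of the first and last segments of the arc (it is exactly this that makes a single $m$ and a single $m^{-1}$ appear, so that the $m$-dependence cancels for every value of $m$) and the small cases $n=1$ and $n=2$, where some of the index ranges above are empty but the same identities go through.
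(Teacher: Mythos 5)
Your proof is correct and is exactly the computation the paper's one-line proof (``comparing the gluing equations with equation (\ref{eqn:hyp_angle})'') leaves to the reader: the type classification of the first, last, and interior segments of the arc, the telescoping of the $\alpha^{-1}\gamma^{-1}$ factors into $(\alpha_j\gamma_j)^{-1}$, the cancellation $m^{-1}\cdot m=1$, and the observation that the under-edge case produces the reciprocal of (\ref{eqn:under_edge}) are all right. Same approach, just written out in full.
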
	 
	 \begin{proof}
	 		 Comparing the gluing equations (\ref{eqn:over_edge}) and (\ref{eqn:under_edge}) with equation (\ref{eqn:hyp_angle}), we conclude the proposition.
	 \end{proof}
	 In conclusion, hyperbolic ideal octahedra $\{o_1, \cdots, o_N\}$ form an $m$-deformed solution if and only if their cross-ratios satisfy the gluing equation (\ref{eqn:region_edge}) for every regional edge of $\Ocal_D$ and the $m$-hyperbolicity equation (\ref{eqn:hyp_angle}) for every segment of $D$. 
	 We denote the set of all gluing equations of regional edges of $\Ocal_D$ by $E_R$ and the set of all $m$-hyperbolicity equations of segments of $D$ by $E_{S;m}$.
	 \begin{defn}[Re-definition of an $m$-deformed solution] The set of hyperbolic ideal octahedra $\{o_1, \cdots, o_N\}$ is an $m$\textit{-deformed solution} to $\Tcal_{5D}$(resp., $\Tcal_{4D}$) if all tetrahedra of $\Tcal_{5D}$(resp., $\Tcal_{4D}$) are non-degenerate and their cross-ratios satisfy both $E_R$ and $E_{S;m}$.
	 \end{defn} 
	 \begin{prop}\label{prop:redundant} Each $E_R$ and $E_{S;m}$ have a redundant equation.
	 \end{prop}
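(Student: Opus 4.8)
The plan is to establish both halves by the same device: in each of the systems $E_R$ and $E_{S;m}$ the product of \emph{all} of its equations collapses, once the local identities of Lemma~\ref{lem:dih_lem} are used, to the trivial identity $1=1$. Since in any solution every cross-ratio is nonzero --- indeed $\neq 0,1,\infty$ by the non-degeneracy conditions --- and $m\neq 0$, all quantities appearing are invertible; so as soon as one knows that $\prod_i(\textrm{LHS}_i)=\prod_i(\textrm{RHS}_i)$ holds identically, any single equation of the system becomes a formal consequence of the remaining ones, which is exactly the claimed redundancy.

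For $E_R$ I would multiply the $N+2$ gluing equations (\ref{eqn:region_edge}), one per region $r_j$. The left side of the product is $\prod_j\prod_k \tau_{k,j}$, the inner product running over the corner crossings of $r_j$; this double product is just the product over all corner incidences of $\Ocal_D$, so reindexing by crossings rewrites it as $\prod_{k=1}^{N}\tau_{k,a}\tau_{k,b}\tau_{k,c}\tau_{k,d}$, where $a,b,c,d$ denote the four regions around $c_k$. By Lemma~\ref{lem:dih_lem}(c) every factor equals $1$, so the whole left side is $1$; the right side is $1$ as well, so the product of the $E_R$-equations is identically true, and, say, the equation attached to the last region follows from those attached to the other $N+1$ regions.

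For $E_{S;m}$ I would likewise multiply the $2N$ segment equations (\ref{eqn:hyp_angle}). Reading off (\ref{eqn:hyp_angle}), an endpoint of an oriented segment lying at a crossing $c_k$ contributes to the product a factor $\gamma_k^{-1}$ or $\widehat{\gamma}_k$ according as the knot over- or under-passes at $c_k$ and the segment leaves $c_k$, and a factor $\alpha_k^{-1}$ or $\widehat{\alpha}_k$ according as it over- or under-passes and the segment enters $c_k$. Traversing $K$, on each of the two strands through $c_k$ exactly one segment enters and exactly one leaves, so in the total product $c_k$ contributes precisely $\gamma_k^{-1}\alpha_k^{-1}\cdot\widehat{\gamma}_k\widehat{\alpha}_k=(\alpha_k\gamma_k)^{-1}\,\widehat{\alpha}_k\widehat{\gamma}_k$, which is $1$ by Lemma~\ref{lem:dih_lem}(a); hence the left side of the product over all segments is $1$. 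For the right side, attach to each passage through a crossing the value $0$ if the knot over-passes there and the value $1$ if it under-passes; comparing with (\ref{eqn:hyp_angle}), the exponent of $m$ produced by a segment is exactly the difference of these two values at its two endpoints, so the exponents telescope around the cyclic sequence of passages and the total right side is $m^0=1$. Thus the product of the $E_{S;m}$-equations is again tautological, and one segment equation is redundant.

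I expect the only delicate step to be the incidence bookkeeping in the third paragraph: one must check that, multiplying all $2N$ segment equations, each crossing contributes each of $\alpha_k^{-1},\gamma_k^{-1},\widehat{\alpha}_k,\widehat{\gamma}_k$ exactly once --- including in degenerate diagrams such as a Reidemeister~I kink, where a single segment has both endpoints at the same crossing --- so that Lemma~\ref{lem:dih_lem}(a) may be applied crossing by crossing. Once this count is pinned down, no geometry enters and the computation closes at once; the $E_R$ case is entirely parallel, using Lemma~\ref{lem:dih_lem}(c) in place of (a).
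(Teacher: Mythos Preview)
Your proposal is correct and follows essentially the same approach as the paper: for each system, multiply all equations together and use the local identities of Lemma~\ref{lem:dih_lem}(c) (for $E_R$) and Lemma~\ref{lem:dih_lem}(a) (for $E_{S;m}$) to see that the product is the trivial identity. Your treatment is in fact slightly more detailed than the paper's: the paper simply asserts that ``the product of all equations in $E_{S;m}$ is also trivial'' without discussing the product of the right-hand sides, whereas you give the telescoping argument showing that the total $m$-exponent vanishes (equivalently, the number of over--under segments equals the number of under--over segments around the knot).
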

	 \begin{proof} By Lemma \ref{lem:dih_lem}(c), the product of all $\tau$'s is $1$ and hence the product of all equations in $E_R$ is trivial. On the other hand, each $(\alpha_k)^{-1}$, $\widehat{\alpha}_k$, $(\gamma_k)^{-1}$ and $\widehat{\gamma}_k$ appears exactly once in $E_{S;m}$. Thus the product of all equations in $E_{S;m}$ is also trivial by Lemma \ref{lem:dih_lem}(a).
	 \end{proof}
	\subsection{Segment variables} \label{subsec:segment}
   	 Let hyperbolic ideal octahedra $\{o_1,\cdots,o_N\}$ be an $m$-deformed solution to the four-term triangulation $\Tcal_{4D}$. We describe  the coordinates of vertices of $o_k$ as follows. Here we mean the coordinate by a value of $\Cbb \cup \{\infty\}=\partial \overline{\Hbb^3}$.
	\begin{notat}\label{notat:octa_coord} We denote the coordinates of the top and the bottom vertices of $o_k$ by $z_{k,T}$ and $z_{k,B}$, respectively. The side vertices  of $o_k$ are in one-to-one correspondence with the segments around the crossing $c_k$. We thus denote the coordinates of side vertices by $z_{k,i_1}, z_{k,i_2}, z_{k,i_3}$ and $z_{k,i_4}$ where $i_1,i_2,i_3$ and $i_4$ are the indices of the segments around $c_k$. 
	\end{notat} 
	\begin{prop}\label{prop:seg} For a ($m$-deformed) solution $\{o_1,\cdots,o_N\}$ there are unique coordinates of vertices of $o
_1, \cdots, o_N$ up to scalar multiplication satisfying (a) $z_{k,B}=0$ and $z_{k,T}=\infty$ for all $1 \leq k \leq N$;	(b) For each segment $s_i$ of $D$ we have $z_{k_1,i}=z_{k_2,i}$ where $k_1$ and $k_2$ are the indices of the crossings attached to $s_i$.
	\end{prop}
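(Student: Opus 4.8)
The plan is to normalize each octahedron individually and then remove the remaining one-parameter ambiguity at each crossing using the gluing equations for the regional edges only (equation~(\ref{eqn:region_edge})). Since each $o_k$ is a non-degenerate hyperbolic ideal octahedron, its top and bottom vertices are distinct points of $\partial\Hb$, so after applying a suitable element of $\psl$ we may send them to $\infty$ and $0$ respectively. The stabilizer of $\{0,\infty\}$ in $\psl$ is exactly the group of maps $z\mapsto\lambda z$ with $\lambda\in\Cbb\setminus\{0\}$, so once (a) is imposed the coordinates of $o_k$ are determined up to a single scalar $\lambda_k$; moreover the non-degeneracy conditions on $\tau_{k,a},\tau_{k,b},\tau_{k,c},\tau_{k,d}$ force each side vertex of $o_k$ to be a nonzero complex number. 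Thus the whole problem reduces to choosing the scalars $\lambda_1,\dots,\lambda_N$ so that (b) holds, and to showing that this choice is unique up to a common factor.

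I would then reformulate (b) cohomologically on the connected $4$-valent graph $G$ underlying $D$, whose vertices are the crossings and whose edges are the segments ($G$ is connected since $K$ is a knot). For a segment $s_i$ joining $c_{k_1}$ and $c_{k_2}$, put $\mu_i:=z_{k_2,i}/z_{k_1,i}\in\Cbb\setminus\{0\}$ in the normalization above; for the rescaled coordinates, condition (b) becomes $\lambda_{k_1}=\mu_i\,\lambda_{k_2}$, i.e. the edge-labelling $\mu$ on $G$ must arise as $\lambda_{k_1}/\lambda_{k_2}$ from a vertex-labelling $(\lambda_k)$. By connectedness of $G$ such a $(\lambda_k)$ exists, and is unique up to an overall constant, precisely when the product of the $\mu_i$ (with appropriate exponents $\pm 1$) around every cycle of $G$ equals $1$; since the region boundaries $\partial r_1,\dots,\partial r_{N+2}$ span the cycle space of $G$ subject to a single relation, it suffices to check this around each $\partial r_j$.

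The crux is to identify this last product with the regional gluing equation~(\ref{eqn:region_edge}). For the tetrahedron of $\Tcal_{4D}$ lying in $o_k$ at the region $r_j$, the two faces of $o_k$ meeting along the side edge $\tau_{k,j}$ are the upper and the lower triangle on the two side vertices of $o_k$ adjacent to $r_j$, so this tetrahedron has as its four vertices the top vertex, the bottom vertex, and those two side vertices; since opposite edges of a tetrahedron carry equal cross-ratios, $\tau_{k,j}$ equals the cross-ratio on the edge between the top and bottom vertices, which in the normalization $z_{k,B}=0$, $z_{k,T}=\infty$ is just the quotient of the coordinates of the two adjacent side vertices. Running once around $\partial r_j$ through its corner crossings, the segment shared by two consecutive corners then appears once in a numerator and once in a denominator, and the telescoping rearrangement of the product of the $\tau_{k,j}$ along the region is exactly the product of the $\mu_i^{\pm1}$. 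Hence~(\ref{eqn:region_edge}) for every regional edge is equivalent to the vanishing of the obstruction, which produces the desired $(\lambda_k)$, unique up to a global scalar; for the uniqueness statement of the proposition one observes that any two coordinate systems satisfying (a) and (b) differ on each $o_k$ by some rescaling $z_{k,\cdot}\mapsto\lambda_k z_{k,\cdot}$, and (b) together with $z_{k,i}\neq 0$ forces $\lambda_{k_1}=\lambda_{k_2}$ whenever $c_{k_1}$ and $c_{k_2}$ share a segment, so $(\lambda_k)$ is constant on the connected graph $G$.

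The step I expect to be the main obstacle is purely bookkeeping: reading off from Notation~\ref{notat:edge} and Figures~\ref{fig:reg_crossing}--\ref{fig:edge_notation}, for each crossing sign, exactly which two side vertices of $o_k$ border a given corner region and with which orientation, so that the $\pm 1$ exponents match up and the telescoping is genuinely clean (this is also what makes the redundancy of $E_R$ in Proposition~\ref{prop:redundant} correspond to the single relation among the cycles $\partial r_j$). The geometric and homological ingredients — that opposite edges of an ideal tetrahedron share a cross-ratio, and that the region boundaries span the cycle space of $G$ — are routine.
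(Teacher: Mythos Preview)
Your proposal is correct and is essentially the paper's own argument dressed in cohomological language. The paper fixes $o_1$, propagates the scalars along segments, and checks well-definedness by computing $\tau_{k,j}=z_{k,i_{k-1}}/z_{k,i_k}$ and observing that the regional gluing equation~(\ref{eqn:region_edge}) makes the product around each region telescope to $1$; you do the same thing, only phrasing ``propagate and check around regions'' as ``the $1$-cocycle $\mu$ on the planar $4$-valent graph $G$ is a coboundary because it vanishes on the face cycles that span $H_1(G)$''. The key computation (that $\tau_{k,j}$ is the quotient of the two adjacent side-vertex coordinates once $z_{k,T}=\infty$, $z_{k,B}=0$) and the key structural fact (region boundaries generate the cycle space) are identical in both; your formulation just makes the role of connectedness and of the single relation among the $\partial r_j$ (cf.\ Proposition~\ref{prop:redundant}) a bit more explicit.
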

	\begin{proof} 
	Since the central edge of $o_k$ in $\Tcal_{4D}$ joins the top and bottom vertices, we have $z_{k,B} \neq z_{k,T}$ and may assume $z_{k,B}=0$ and $z_{k,T}=\infty$, respectively, for all $ 1 \leq k \leq N$. Fixing $z_{k,B}=0$ and $z_{k,T}=\infty$, we can only change the coordinates of side vertices of $o_k$ by multiplying by a non-zero complex number simultaneously. Now we choose $o_1$ as an initial octahedron and fix the coordinates of the side vertices of $o_1$. Then the coordinates of any adjacent octahedra to $o_1$ determined by the condition (b). (Here we say two octahedra are adjacent if the corresponding crossings are connected by a segment of $D$.) Continuing the determination, we can fix every coordinates of octahera $o_1,\cdots,o_N$. However, it remains to prove well-definedness of the coordinates through this process.

	For a region $r_j$ of $D$ suppose $c_1, \cdots, c_n$ are the corner crossings of $r_j$ in counter-clockwise order. Let $s_{i_k}$ be the segment in the boundary of $r_j$ joining crossings $c_k$ and $c_{k+1}$ for $1 \leq k \leq n$. (The index $k$ is taken modulo $n$.) Then a simple cross-ratio computation gives $  \tau_{k,j} = \frac{z_{k,i_{k-1}}}{z_{k,i_{k}}}$ and thus the gluing equation (\ref{eqn:region_edge}) of the regional edge corresponding to $r_j$ is $$ \dfrac{z_{1,i_{n}}}{z_{1,i_{1}}} \cdot \dfrac{z_{2,i_{1}}}{z_{2,i_{2}}} \cdots \dfrac{z_{n,i_{n-1}}}{z_{n,i_{n}}} =1.$$ This implies that conditions (b) for the segments $s_{i_1},\cdots,s_{i_n}$ are compatible, i.e., if one determines the coordinates of octahedra from $o_1$ to $o_n$ by condition (b) for $s_{i_1}, \cdots, s_{i_{n-1}}$, which is $z_{k,i_k} = z_{k+1,i_k}$ for $k=1, \cdots, n-1$, then the gluing equation of the regional edge corresponding to $r_j$ implies that the condition (b) for $s_{i_n}$ is automatically satisfied.  
	Now the determination of coordinates of octahedra is independent of the choice of a path from $o_1$ since it is consistent along every cycle bounding a region. This proves the proposition and uniqueness is clear from the proof.
	\end{proof}
	From condition (b), one can record the coordinates of octahedra on segments of $D$ by assigning the common value given in (b), i.e., $z_i=z_{k,i}$ on a segment $s_i$ where $k$ is the index of a crossing attached to $s_i$. Therefore, hyperbolic octahedra $o_1, \cdots, o_N$ of the following description are enough when we consider a solution to $\Tcal_{4D}$.
	\begin{itemize}
		\item Assign a variable $z_i$ on each segment $s_i$ of $D$.
		\item The coordinates of vertices of $o_k$ is determined by $z$-variables : $z_{k,B}=0$, $z_{k,T}=\infty$ and $z_{k,i}=z_i$ where $i$ is the index of a segment attached to $c_k$. 
	\end{itemize} 
	 From the construction,  these hyperbolic octahedra $o_1, \cdots o_N$ automatically satisfy $E_R$. Therefore, they form an $m$-deformed solution if and only if they satisfy $E_{S;m}$. One can express equation (\ref{eqn:hyp_angle}) in $z$-variables as follows.
	\begin{equation} \label{eqn:hyp_z}
	 	\left\{
	 	\begin{array}{cccc}
	   \dfrac{z_c-z_a}{z_c-z_b} \cdot \dfrac{z_d(z_c-z_e)}{z_e(z_c-z_d)} &=& m & \textrm{for Figure \ref{fig:hyp_segm}(a)} \\[12pt]
	  \dfrac{z_a(z_c-z_b)}{z_b(z_c-z_a)} \cdot \dfrac{z_c-z_d}{z_c-z_e} &=& m & \textrm{for Figure \ref{fig:hyp_segm}(b)} \\[12pt]
	  \dfrac{z_c-z_a}{z_c-z_b} \cdot \dfrac{z_c-z_e}{z_c-z_d} &=& 1 & \textrm{for Figure \ref{fig:hyp_segm}(c)} \\[12pt]
	  \dfrac{z_a(z_c-z_b)}{z_b(z_c-z_a)} \cdot \dfrac{z_e(z_c-z_d)}{z_d(z_c-z_e)}    &=& 1 & \textrm{for Figure \ref{fig:hyp_segm}(d)}
	 	\end{array}
	 	\right.
	\end{equation}
	We finally check that the non-degeneracy condition is converted to ``the variable $z_i$ is non-zero and two adjacent $z$-variables are distinct''. Here we say two $z$-variables are adjacent if the corresponding segments share a corner in $D$.
	\begin{defn}[Segment variables] A non-zero variable $z_i$ assigned to a segment $s_i$ of $D$ is called a \emph{segment variable}. We say $z=(z_1, \cdots ,z_{2N} )$ is an \textit{$m$-deformed solution to $\Tcal_{4D}$} if (a) it satifies the $m$-hyperbolicity equation (\ref{eqn:hyp_z}) for every segment of $D$ and (b) each pair of adjacent segment variables is distinct. 
	\end{defn}
	We often confuse a segment of $D$ and the corresponding segment variable.

	\begin{exam}[Figure-eight knot]\label{ex:fig_seg_var}
	Let $s_i$ be a segment of the figure-eight knot diagram labeled as in Figure \ref{fig:FigureEightSeg} and let $z_i$ be a segment variable assigned to $s_i$ for $1 \leq i\leq 8$.
	\begin{figure}[!h]
		\centering
		\scalebox{1}{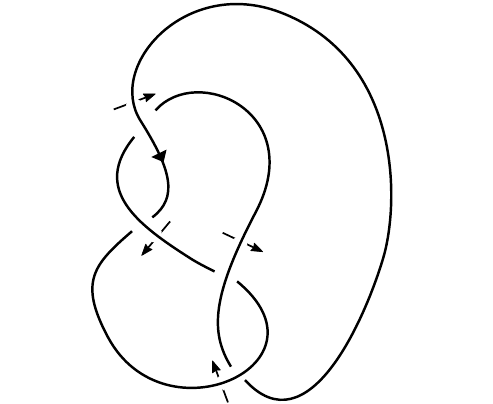}
		\caption{Labelings for the figure-eight knot diagram.}
		\label{fig:FigureEightSeg}
	\end{figure}
	Then $z=(z_1,\cdots,z_8)$ is an $m$-deformed solution if it satisfies $E_{S;m}$ :
		\allowdisplaybreaks
		\begin{align*}
			m&=\dfrac{z_1-z_6}{z_1-z_5} \cdot \dfrac{z_4(z_1-z_5)}{z_5(z_1-z_4)} = \dfrac{z_4(z_2-z_5)}{z_5(z_2-z_4)} \cdot \dfrac{z_2-z_7}{z_2-z_8} \\
&=\dfrac{z_3-z_7}{z_3-z_8} \cdot \dfrac{z_7(z_3-z_6)}{z_6(z_3-z_7)} = \dfrac{z_7(z_4-z_6)}{z_6(z_4-z_7)} \cdot \dfrac{z_4-z_2}{z_4-z_1} \\
&=	\dfrac{z_5-z_2}{z_5-z_1} \cdot \dfrac{z_8(z_5-z_1)}{z_1(z_5-z_8)} = \dfrac{z_8(z_6-z_1)}{z_1(z_6-z_8)} \cdot \dfrac{z_6-z_3}{z_6-z_4} \\
&=\dfrac{z_7-z_3}{z_7-z_4} \cdot \dfrac{z_3(z_7-z_2)}{z_2(z_7-z_3)} = \dfrac{z_3(z_8-z_2)}{z_2(z_8-z_3)} \cdot \dfrac{z_8-z_6}{z_8-z_5}.		
		\end{align*} 
	It is not difficult to check that 
	\begin{equation*}
		\begin{array}{ccl}
			(z_1,z_2,\cdots,z_8)&=& \left(pr,\ pr(1+q \Lambda),\ -\dfrac{pr\Lambda(1+q \Lambda)}{1-p}, \  \dfrac{pqr}{1-p} \right.\\[10pt]
			& & \quad \quad \quad \quad \left. ,\ -qr,\ r-qr,\ -\dfrac{pr(1-q)\Lambda^2}{1+p\Lambda},\ \dfrac{pr}{1+p\Lambda}\right)
		\end{array}
	\end{equation*} is a boundary parabolic solution (i.e., $m=1$) where $\Lambda^2+\Lambda	+1=0$. The solution has 2 choices of $\Lambda= \frac{-1 \pm \sqrt{-3}}{2}$ and ``free'' choices of variables $p,q$ and $r$ but satisfying the non-degeneracy condition. These three degrees of freedom come from the choice of the developing image of the ideal points $p^+$ and $p^-$, and the homogeneity of the hyperbolicity equations. For the case of $m \neq 1$ the computation becomes much more complicated and we will not discuss it here.
	\end{exam}
	\begin{prop}\label{prop:SegmRule} For a boundary parabolic solution, four segment variables sharing a crossing  are mutually distinct.
	\end{prop}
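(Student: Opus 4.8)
The plan is to split the six inequalities among the four segment variables at a crossing $c_k$ into two groups. Four of them --- the pairs of variables on segments sharing a corner at $c_k$ --- follow at once from the non-degeneracy condition, so only the two remaining pairs, which correspond to the two strands of $D$ running through $c_k$, need the boundary-parabolic hypothesis $m=1$.

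First I would unwind the geometry. After the normalization of Proposition~\ref{prop:seg}, $o_k$ is placed with $z_{k,B}=0$, $z_{k,T}=\infty$, and its four side vertices at the four incident segment variables, read in the cyclic order induced by $c_k$; these side vertices are exactly the vertices of the quadrilateral $l_k$, the part of $l(p^\circ)$ in the upper pyramid of $o_k$, up to the scaling coming from the choice of horosphere at $\infty$. The regional-edge cross-ratio $\tau_{k,j}$ at a corner of $c_k$ is the ratio of the two segment variables meeting that corner, and non-degeneracy forces $\tau_{k,j}\neq 1$; hence the four consecutive pairs of vertices of $l_k$ are mutually distinct. The two outstanding pairs are precisely the two diagonals of $l_k$, and these diagonals are the two strands of $D$ through $c_k$ --- one lying on an over-arc, the other on an under-arc.

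The heart of the matter is to realize each diagonal as a meridian and then to use parabolicity. For the over-strand, the Wirtinger meridian $\mu$ winding the over-arc of $c_k$ is homotopic to the diagonal of $l_k$ joining the corners $\beta_k$ and $\delta_k$ (the computation recorded in Section~\ref{subsec:developing}, just before Figure~\ref{fig:figure_eight_hol}); in particular $\beta_k$ and $\delta_k$ are the two halves of the over-strand. Its holonomy $\rho(\mu)$ fixes $\infty=z_{k,T}$ and maps the edge of $O_k$ above the $\beta_k$-vertex to the edge above the $\delta_k$-vertex. Since the solution is boundary parabolic, $\rho(\mu)$ is conjugate to $\left(\begin{smallmatrix}1&1\\0&1\end{smallmatrix}\right)$, hence is parabolic; being parabolic and fixing $\infty$, it acts on $\Cbb$ as a genuine translation $w\mapsto w+t$ with $t\neq 0$, and therefore the two over-strand segment variables differ by the nonzero number $t$. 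For the under-strand I would run the mirror argument at the bottom vertex $0=z_{k,B}$: the meridian of the under-arc of $c_k$ is a diagonal of $\widehat{l}_k$, whose vertices are the reciprocals of the segment variables (this is the flip $z\mapsto 1/z$ exchanging the two pyramids of $o_k$ used in Section~\ref{subsec:developing}); its holonomy is a nontrivial parabolic fixing $0$, so in the coordinate $1/w$ it is a nonzero translation, which gives $1/z-1/z'=t'\neq 0$ for the two under-strand variables $z,z'$, hence $z\neq z'$.

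I expect the only genuinely delicate point to be the combinatorial bookkeeping: checking that the meridian of the over-arc meets the over-strand diagonal of $l_k$ (and not the under-strand one), uniformly over positive and negative crossings, and likewise at the bottom vertex. This is what Notation~\ref{notat:edge} together with the identification of $\mu$ with the $\beta_k$--$\delta_k$ diagonal is designed to settle; once the labels are matched, the computation is identical in all cases, and it uses nothing beyond the non-degeneracy condition and the definition of a boundary parabolic solution. It is worth stressing that $m=1$ is essential: for $m\neq 1$ the holonomy of a meridian is a similarity $w\mapsto mw+t$ with a finite fixed point, and a diagonal of $l_k$ may legitimately degenerate onto that point, so the statement genuinely fails in that generality.
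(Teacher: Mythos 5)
Your reduction of the six pairs to the two ``diagonal'' ones is the same first step as the paper's, but the way you handle the diagonal pairs has a circularity. In Section~\ref{subsec:segment}, where this proposition sits, a boundary parabolic solution has been re-defined purely equationally: $z$ satisfies the $1$-hyperbolicity equations (\ref{eqn:hyp_z}) and adjacent segment variables are distinct. From that data the meridian holonomy is only known to be a similarity of scaling factor $1$, i.e.\ a translation $w\mapsto w+t$, and nothing yet excludes $t=0$. You obtain $t\neq 0$ by invoking ``$\rho(\mu)$ is conjugate to $\bigl(\begin{smallmatrix}1&1\\0&1\end{smallmatrix}\bigr)$,'' which is the earlier conjugacy-class definition of a boundary parabolic solution; but $t$ is precisely the difference of the two segment variables in one diagonal pair (and the translation length of the under-arc meridian in the coordinate $1/w$ is, up to normalization, the difference of the reciprocals of the other pair), so asserting $t\neq0$ \emph{is} the distinctness you are trying to prove. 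This matters for how the proposition is used: Remark~\ref{rmk:non_zero_crossing} relies on it to show the crossing label $\Lambda_k=(z_d-z_b)(1/z_c-1/z_a)$ is nonzero, i.e.\ that the meridian holonomy really is a nontrivial parabolic, so a proof assuming nontriviality would leave that application unsupported. The paper closes the loop algebraically: if two opposite variables at a crossing coincide, the $m=1$ equation (\ref{eqn:hyp_z}) for the segment joining that crossing to the next forces the opposite pair at the next crossing to coincide, and propagating this around the connected diagram makes all segment variables equal, contradicting adjacency non-degeneracy. You need either that argument or some other derivation of $\rho(\mu)\neq\mathrm{id}$ from the equations alone before your translation-length step can begin.

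A secondary point: the combinatorial bookkeeping you defer resolves the opposite way from what you predict. The diagonal of $l_k$ joining the $\beta_k$- and $\delta_k$-corners connects the side vertices $C$ and $E$, which carry the variables of the two halves of the \emph{under}-strand at $c_k$ (the two segments separated by the over-arc); so the meridian of the over-arc, fixing the top vertex $\infty$, translates one under-strand variable to the other, while the meridian of the incoming under-arc, fixing the bottom vertex $0$, relates the reciprocals of the two over-strand variables. This matches the two factors of the crossing label in Section~\ref{sec:Holonomy}. Since your argument is symmetric in the two vertices, the swap does not by itself break the proof---both diagonal pairs are still covered---but the assertion that the over-arc meridian meets the over-strand diagonal is the one that fails.
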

	\begin{proof} We only need to check that two segment variables in opposite position  sharing a common crossing are distinct. Suppose $z_a=z_b$ in Figure \ref{fig:hyp_segm}. Then from the hyperbolicity equation (\ref{eqn:hyp_z}) of $z_c$ we obtain $z_d=z_e$. Along the diagram, we conclude that all segment variables are  to be equal, which violates the non-degeneracy condition.
	\end{proof}
	We remark that Proposition \ref{prop:SegmRule} does not hold for a link. Indeed, there is a simple counter-example  for the Hopf link as follows.
	\[
	\vcenter{\hbox{
\begingroup%
  \makeatletter%
  \providecommand\color[2][]{%
    \errmessage{(Inkscape) Color is used for the text in Inkscape, but the package 'color.sty' is not loaded}%
    \renewcommand\color[2][]{}%
  }%
  \providecommand\transparent[1]{%
    \errmessage{(Inkscape) Transparency is used (non-zero) for the text in Inkscape, but the package 'transparent.sty' is not loaded}%
    \renewcommand\transparent[1]{}%
  }%
  \providecommand\rotatebox[2]{#2}%
  \ifx\svgwidth\undefined%
    \setlength{\unitlength}{84.53960507bp}%
    \ifx\svgscale\undefined%
      \relax%
    \else%
      \setlength{\unitlength}{\unitlength * \real{\svgscale}}%
    \fi%
  \else%
    \setlength{\unitlength}{\svgwidth}%
  \fi%
  \global\let\svgwidth\undefined%
  \global\let\svgscale\undefined%
  \makeatother%
  \begin{picture}(1,0.47888655)%
    \put(0,0){\includegraphics[width=\unitlength,page=1]{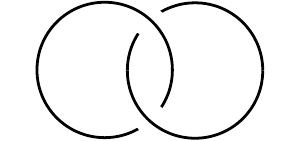}}%
    \put(-0.00211616,0.22558261){\color[rgb]{0,0,0}\makebox(0,0)[lb]{\smash{$z_1$}}}%
    \put(0.30473196,0.22693433){\color[rgb]{0,0,0}\makebox(0,0)[lb]{\smash{$z_2$}}}%
    \put(0.61522589,0.22524441){\color[rgb]{0,0,0}\makebox(0,0)[lb]{\smash{$z_3$}}}%
    \put(0.53822336,0.35470039){\color[rgb]{0,0,0}\makebox(0,0)[lb]{\smash{}}}%
    \put(0.91606346,0.22524441){\color[rgb]{0,0,0}\makebox(0,0)[lb]{\smash{$z_4$}}}%
  \end{picture}%
\endgroup%
}}
	~~~~~
	\left\{
	\begin{aligned}
	z_1&=z_3=p \\
	z_2&=z_4=q \\
	p &\neq q
	\end{aligned}
	\right.
	\]
	
	\begin{prop} \label{prop:mirror_seg} 
		 Let $(z_1,\cdots,z_{2N})$ be segment variables of $D$ and let $(z^*_1,\cdots,z^*_{2N})$ be segment variables of $D^*$ where  $D^*$ is the mirror diagram of $D$ and the indices of the segment in the same position of $D$ and $D^*$ are taken to be same. Then the set map $(z_1,\cdots,z_{2N}) \mapsto \left( 1/z_1, \cdots, 1/z_{2N}\right)$ is a bijection
		  between the set of $m$-deformed solutions to $\Tcal_{4D}$ and the set of $1/m$-deformed solutions to $\Tcal_{4D^*}$.
	\end{prop}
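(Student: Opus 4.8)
\emph{Proof sketch.} The plan is to reduce the statement to the defining equations of a segment-variable solution. With the segment-variable normalization of Proposition~\ref{prop:seg}, any tuple of nonzero complex numbers automatically realizes the region equations $E_R$ by construction, so a tuple $z=(z_1,\dots,z_{2N})$ is an $m$-deformed solution to $\Tcal_{4D}$ precisely when it satisfies the $m$-hyperbolicity equation (\ref{eqn:hyp_z}) at every segment and no two adjacent segment variables coincide; the same description applies verbatim to $D^*$. The non-degeneracy condition (segment variables nonzero, adjacent ones distinct) is visibly invariant under $z_i\mapsto 1/z_i$, since two nonzero complex numbers agree iff their reciprocals do, and adjacency of segments is a property of the planar projection, hence the same for $D$ and $D^*$. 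Thus $z\mapsto(1/z_1,\dots,1/z_{2N})$ is a well-defined map preserving non-degeneracy, and it only remains to match up the hyperbolicity equations.

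First I would record the combinatorial effect of mirroring: $D^*$ has the same underlying projection as $D$ (so its segments, regions and crossings are canonically identified with those of $D$, as in the statement), every crossing of $D^*$ has the opposite sign to the corresponding crossing of $D$, and at every crossing the over- and under-strands are interchanged. Consequently mirroring carries each of the four local pictures of Figure~\ref{fig:hyp_segm} to another one of the four, with the pairing (a)$\leftrightarrow$(b) and (c)$\leftrightarrow$(d), and under this identification the auxiliary labels $z_a,z_b,z_c,z_d,z_e$ are attached to the segments in the same positions in $D$ and in $D^*$. Confirming that the mirror of each labelled local configuration is exactly the labelled picture of its partner is the only genuinely fiddly point, and it is where I expect to spend the most care.

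The analytic core is then a short computation. Using the identity
\[
\frac{\tfrac1u-\tfrac1v}{\tfrac1u-\tfrac1w}=\frac{w}{v}\cdot\frac{u-v}{u-w}\qquad (u,v,w\neq 0),
\]
one substitutes $z_i\mapsto 1/z_i$ in the left-hand side of (\ref{eqn:hyp_z}) and checks that the type~(a) expression becomes the reciprocal of the type~(b) expression, and vice versa, while the type~(c) and type~(d) expressions become reciprocals of each other; the extra monomial factors $z_d/z_e$, $z_a/z_b,\dots$ are exactly accounted for by the fact that mirroring swaps an ``over'' segment-end, where the coordinate enters directly, with an ``under'' end, where its reciprocal enters. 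Since the right-hand side of (\ref{eqn:hyp_z}) is $m$ in cases (a),(b) and $1$ in cases (c),(d), it follows that the type-(a) equation $\mathrm{LHS}_{(a)}(z)=m$ holds for $D$ iff the type-(b) equation $\mathrm{LHS}_{(b)}(1/z)=1/m$ holds for $D^*$, and likewise for the other three types. Hence $z$ satisfies $E_{S;m}$ for $D$ iff $(1/z_1,\dots,1/z_{2N})$ satisfies $E_{S;1/m}$ for $D^*$.

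Combining the two previous paragraphs, $z$ is an $m$-deformed solution to $\Tcal_{4D}$ iff $(1/z_1,\dots,1/z_{2N})$ is a $1/m$-deformed solution to $\Tcal_{4D^*}$. Finally, since $(D^*)^*=D$, $1/(1/m)=m$, and $z_i\mapsto 1/z_i$ is an involution compatible with the canonical identification of the segments of $D$, $D^*$ and $(D^*)^*$, the displayed map and the analogous map for $D^*$ are mutually inverse; in particular the displayed map is a bijection.
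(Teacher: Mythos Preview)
Your proof is correct and follows essentially the same approach as the paper's own proof, which consists of the single sentence ``The proof follows by verifying equation~(\ref{eqn:hyp_z}) directly.'' You have simply carried out that verification in detail: identifying the pairing (a)$\leftrightarrow$(b), (c)$\leftrightarrow$(d) under mirroring, using the cross-ratio identity for $z_i\mapsto 1/z_i$, and noting that non-degeneracy and bijectivity are immediate.
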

	\begin{proof} The proof follows by verifying equation (\ref{eqn:hyp_z}) directly.
	\end{proof}
	\subsection{Region variables}\label{subsec:region}
	In this subsection, we do similar work as in the previous subsection for the five-term triangulation $\Tcal_{5D}$. Let the hyperbolic ideal octahedra $\{o_1, \cdots, o_N\}$ be an $m$-deformed solution to $\Tcal_{5D}$.	To introduce region variables, let us record cross-ratios $\alpha_k,  \gamma_k,  \widehat{\alpha}_k$ and $ \widehat{\gamma}_k$ of $o_k$ around the crossing $c_k$ as follows.
	We locate a small arrow  on each segment  attached to $c_k$  with the right-handed orientation as in Figure \ref{fig:reg_cross}. We then assign $(\sqrt{m} \:   \gamma_k)^{-1}$ and $\sqrt{m} \:   \widehat{\gamma}_k$  to the small arrows on the out-going over-arc and the out-going under-arc, respectively, and also assign $ \alpha_k/\sqrt{m}$ and $\left( \widehat{\alpha}_k/\sqrt{m}\right)^{-1}$  to the small arrows on the incoming over-arc and the incoming under-arc, respectively, as in Figure \ref{fig:reg_cross}(a). 
	\begin{figure}[!h]
		\centering
		\scalebox{1}{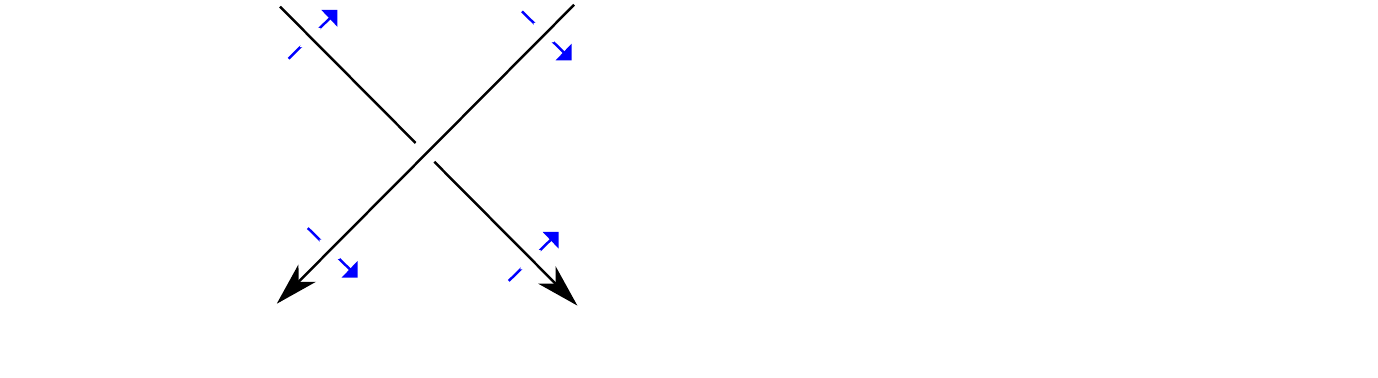}
		\caption{Cross-ratios of $o_k$ around a crossing $c_k$.}
		\label{fig:reg_cross}
	\end{figure}
	Then there are two small arrows on each segment of $D$. However, the $m$-hyperbolicity equation (\ref{eqn:hyp_angle}) is equivalent to condition that those two small arrows on each segment have the same value. Hence we regard that a single small arrow is assigned to each segment of $D$. 
	\begin{prop} \label{prop:region}
		There is a tuple  $(w_1,\cdots,w_{N+2})$ of nonzero complex numbers unique up to scalar multiplication such that $w_{j_2}/w_{j_1}$ is equal to the value on the small arrow pointing from $r_{j_1}$ to $r_{j_2}$ for every adjacent regions $r_{j_1}$ and $r_{j_2}$ of $D$.
	\end{prop}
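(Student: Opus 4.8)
The plan is to read the existence of $(w_1,\dots,w_{N+2})$ as the statement that a prescribed multiplicative $1$-cochain on the planar graph dual to $D$ is a coboundary, and then to verify that the associated obstruction vanishes on each face of that graph by a one-line application of Lemma \ref{lem:dih_lem}(a).

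First I would fix the combinatorial model. Let $\Gamma$ be the graph embedded in $S^2$ dual to $D$: its vertices are the regions $r_1,\dots,r_{N+2}$, its edges are the segments $s_1,\dots,s_{2N}$ (with $s_i$ joining the two regions it separates), and, since every crossing of $D$ is $4$-valent, $\Gamma$ is cellularly embedded with exactly $N$ faces, one quadrilateral for each crossing $c_k$, bounded by the $4$-cycle of the regions around $c_k$. The $m$-hyperbolicity equations $E_{S;m}$ --- equivalently, the fact that the two small arrows on each segment carry the same value --- attach to each oriented segment $\vec s$ a well-defined nonzero complex number $\phi(\vec s\,)$ with $\phi(-\vec s\,)=\phi(\vec s\,)^{-1}$, namely $\phi$ of the arrow from $r_{j_1}$ to $r_{j_2}$ is the value displayed in Figure \ref{fig:reg_cross}(a). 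We must produce $w\colon\{r_j\}\to\Cbb\setminus\{0\}$ with $w_{j_2}/w_{j_1}=\phi(r_{j_1}\!\to r_{j_2})$ for every pair of adjacent regions.

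Next comes the standard coboundary step. Choose a spanning tree $T$ of $\Gamma$ and a base region $r_0$, and define $w_j$ to be the product of the $\phi$-values along the unique $T$-path from $r_0$ to $r_j$; this is automatically consistent on the edges of $T$. It extends to the required identity on a non-tree segment precisely when the product of $\phi$ around the fundamental cycle of that segment equals $1$. Since $\Gamma$ is connected and cellularly embedded in $S^2$, its first homology $H_1(\Gamma)$ is spanned by the boundaries of its $N$ faces, i.e., by the $N$ region-$4$-cycles around the crossings (subject to the single relation that their sum is null-homologous). Hence such a $w$ exists if and only if, for every $k$, the product of $\phi$ around the $4$-cycle bounding the face $c_k$ equals $1$; and when it exists it is unique up to a global scalar, because for any two solutions $w,w'$ the quantity $w'_j/w_j$ is constant along the edges of the connected graph $\Gamma$.

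It remains to check this face condition, which is the only place with genuine content. Orient the $4$-cycle of regions around a crossing $c_k$ and read the four values off Figure \ref{fig:reg_cross}(a): the incoming and outgoing over-arc segments carry $\alpha_k/\sqrt m$ and $(\sqrt m\,\gamma_k)^{-1}$, and the incoming and outgoing under-arc segments carry $(\widehat\alpha_k/\sqrt m)^{-1}$ and $\sqrt m\,\widehat\gamma_k$. Two of these enter the product with an inverse --- the cycle crosses the two over-arc segments, and likewise the two under-arc segments, in opposite senses --- and with the correct choice of which two, the four factors of $\sqrt m$ cancel, leaving the product around the cycle equal to $\big(\widehat\alpha_k\widehat\gamma_k/(\alpha_k\gamma_k)\big)^{\pm1}$, which is $1$ by Lemma \ref{lem:dih_lem}(a); the negative-crossing case is identical, or follows from the symmetry exchanging $'$ and $''$ under change of crossing sign noted below equations (\ref{eqn:tau_a})--(\ref{eqn:tau_b}). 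This verifies the obstruction on every face of $\Gamma$, so $(w_1,\dots,w_{N+2})$ exists and is unique up to scalar. The only delicate point is precisely this orientation bookkeeping around a crossing --- deciding which two of the four arrows enter the face-product inverted --- after which the required identity is nothing more than Lemma \ref{lem:dih_lem}(a).
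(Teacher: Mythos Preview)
Your proof is correct and takes essentially the same approach as the paper: both propagate $w$ from a base region and reduce well-definedness to checking, at each crossing, that the two ways of reaching the opposite region agree, which is exactly Lemma~\ref{lem:dih_lem}(a). The paper's version is more compressed --- it simply writes the identity $\dfrac{\widehat{\alpha}_k}{\sqrt{m}}\left(\dfrac{\alpha_k}{\sqrt{m}}\right)^{-1}=(\sqrt{m}\,\gamma_k)(\sqrt{m}\,\widehat{\gamma}_k)^{-1}$ and declares the propagation well-defined --- whereas you spell out the cohomological skeleton (dual graph, spanning tree, face cycles generating $H_1$) that justifies why the crossing check suffices; but the substantive step is identical in both.
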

	\begin{proof} Assign a non-zero complex number $w_1$ to an initial region $r_1$ of $D$. Then assign complex numbers to other regions by multiplying the value on an small arrow whenever we go across to the region through the small arrow. See Figure \ref{fig:reg_cross}(b). Then the values on regions of $D$ are well-defined by Lemma \ref{lem:dih_lem}(a), $\dfrac{ \widehat{\alpha}_k}{\sqrt{m}} \: \left(\dfrac{ \alpha_k}{\sqrt{m}}\right)^{-1} = (\sqrt{m} \gamma_k) \: (\sqrt{m} \widehat{\gamma}_k)^{-1}$.
	\end{proof}
	 Proposition \ref{prop:region} tells us that hyperbolic octahedra $o_1, \cdots, o_N$ of the following description are enough for considering an $m$-deformed solution to $\Tcal_{5D}$.
	 \begin{itemize}
	 	\item Assign a non-zero complex variable $w_j$ to a region $r_j$ of $D$ for $1 \leq j \leq N+2$.
	 	\item The shape parameters $\alpha_k, \gamma_k, \widehat{\alpha}_k$ and $\widehat{\gamma}_k$ of $o_k$ are determined by the ratio of $w$-variables as in Figure \ref{fig:reg_cross}.
	 \end{itemize}
	  From the construction, these hyperbolic octahedra $o_1,\cdots,o_N$ automatically satisfy $E_{S;m}$. Hence they form an $m$-deformed solution if and only if their cross-ratios satisfy $E_R$. To express $E_R$ in $w$-variables, we give $\tau$'s in terms of  $w$-variables as follows.
	\begin{equation}\label{eqn:tau_w_a}
	\begin{array}{cc}
	\left\{
	\begin{array}{ccc}
	\tau_{k,a} &=& \dfrac{( w_b- \sqrt{m}^{-1} w_a) ( w_d- \sqrt{m}^{-1} w_a)}{w_b w_d - w_a w_c} \\[10pt]
	\tau_{k,b} &=& \dfrac{ (w_a w_c -w_b w_d)}{( \sqrt{m}^{-1} w_a-  w_b) (\sqrt{m} w_c - \ w_b)}\\[12pt]
	\tau_{k,c} &=&\dfrac{( w_b-  \sqrt{m} w_c) ( w_d-  \sqrt{m}w_c)}{w_b w_d-w_a w_c} \\[10pt]
	\tau_{k,d} &=&\dfrac{ (w_a w_c - w_b w_d)}{( \sqrt{m}^{-1} w_a -  w_d ) ( \sqrt{m} w_c -  w_d)}  	  
	\end{array}
	\right. & \textrm{ for Figure \ref{fig:reg_crossing}(a)}
	\end{array}
	\end{equation}	
	 and
	\begin{equation} \label{eqn:tau_w_b}
    \begin{array}{cc}
	\left\{
		\begin{array}{ccc}
	\tau_{k,a} &=& \dfrac{( w_b- \sqrt{m} w_a) ( w_d- \sqrt{m} w_a)}{w_b w_d - w_a w_c} \\[10pt]
	\tau_{k,b} &=& \dfrac{ (w_a w_c -w_b w_d)}{( \sqrt{m} w_a-  w_b) (\sqrt{m}^{-1} w_c - \ w_b)}\\[12pt]
	\tau_{k,c} &=&\dfrac{( w_b-  \sqrt{m}^{-1} w_c) ( w_d-  \sqrt{m}^{-1} w_c)}{w_b w_d-w_a w_c} \\[10pt]
	\tau_{k,d} &=&\dfrac{ (w_a w_c - w_b w_d)}{( \sqrt{m} w_a -  w_d ) ( \sqrt{m}^{-1} w_c -  w_d)}  	  
	\end{array}
	\right. & \textrm{ for Figure \ref{fig:reg_crossing}(b)}.
    \end{array}
	\end{equation} Plugging equations (\ref{eqn:tau_w_a}) and (\ref{eqn:tau_w_b}) to the equation (\ref{eqn:region_edge}), we obtain a gluing equation in $E_R$ in $w$-variables.

	We finally consider the non-degeneracy condition for $\Tcal_{5D}$ : ``$\alpha_k,\gamma_k,\widehat{\alpha}_k,\widehat{\gamma}_k$ and $\alpha_k \gamma_k$ are not $0,1$ and $\infty$''. One can check that the non-degeneracy condition is equivalent to 
	\begin{equation} \label{eqn:non_d1}
		\left\{
		\begin{array}{l}
			w_a w_c - w_b w_d \neq 0\\[1pt]
			\sqrt{m}^{-1}w_a - w_b \neq 0,\ \sqrt{m}^{-1}w_a-w_d \neq 0\\[1pt]
		    \sqrt{m}w_c - w_b \neq 0,\  \sqrt{m}w_c-w_d \neq 0
		\end{array}
		\right. 
	\end{equation} for Figure \ref{fig:reg_crossing}(a) and
	\begin{equation} \label{eqn:non_d2}
		\left\{
		\begin{array}{l}
		w_a w_c - w_b w_d \neq 0\\[1pt]
		\sqrt{m}w_a - w_b \neq 0,\ \sqrt{m}w_a-w_d \neq 0\\[1pt]
		\sqrt{m}^{-1}w_c - w_b \neq 0,\  \sqrt{m}^{-1}w_c-w_d \neq 0
		\end{array}
		\right. 
	\end{equation} for Figure \ref{fig:reg_crossing}(b). Recall that $w_a,\cdots,w_d$ are non-zero complex numbers.
	\begin{defn}
	[Region variables] A nonzero variable $w_j$ assigned to a region $r_j$ of $D$ is called a \textit{region variable}. We say $w=(w_1,\cdots ,w_{N+2})$ is an $m$\textit{-deformed solution to $\Tcal_{5D}$} if it satisfies $E_R$ and conditions (\ref{eqn:non_d1}) and (\ref{eqn:non_d2}) for every crossing of $D$.
	\end{defn}
	
	\begin{exam}[Trefoil knot] Let $r_j$ be a region of the trefoil knot diagram labeled as in Figure \ref{fig:trefoil_reg} and let $w_j$ be the region variable assigned to $r_j$ for $1 \leq j \leq 5$.
		\begin{figure}[!h]
			\centering
			\scalebox{1}{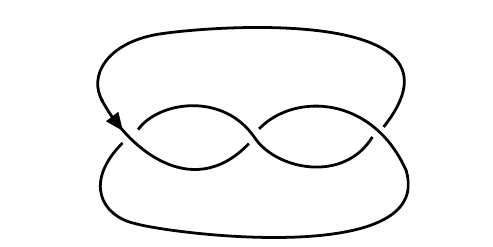}
			\caption{Labeling of regions of the trefoil knot diagram.}
			\label{fig:trefoil_reg}
		\end{figure}
	Then $w=(w_1,\cdots,w_5)$ is an $m$-deformed solution if it satisfies $E_R$
	\begin{equation*}
		\begin{array}{ccl}
			1&=& \dfrac{w_5w_2-w_1w_3}{(\sqrt{m}w_1-w_5)(\sqrt{m} w_1-w_2)} \cdot \dfrac{w_5 w_2 -w_1w_4}{(\sqrt{m}^{-1}w_1-w_5)(\sqrt{m}^{-1} w_1-w_2)} \\[13pt]
			&=& \dfrac{(w_2 -\sqrt{m}w_1)(w_2 -\sqrt{m}^{-1}w_3)}{w_1w_3-w_2w_5} \cdot \dfrac{(w_2 -\sqrt{m}w_3)(w_2 -\sqrt{m}^{-1}w_4)}{w_3w_4-w_2w_5} \\[10pt]
			&& \quad \quad \quad \quad \quad \quad \cdot \dfrac{(w_2 -\sqrt{m}w_4)(w_2 -\sqrt{m}^{-1}w_1)}{w_4w_1-w_2w_5} \\[13pt] 
			&=& \dfrac{w_5 w_2 -w_1w_3}{(\sqrt{m}^{-1} w_3-w_5)(\sqrt{m}^{-1} w_3-w_2)} \cdot \dfrac{w_5 w_2 -w_3w_4}{(\sqrt{m} w_3-w_5)(\sqrt{m} w_3-w_2)} \\[13pt]
			&=& \dfrac{w_5 w_2 -w_3w_4}{(\sqrt{m}^{-1} w_4-w_5)(\sqrt{m}^{-1} w_4-w_2)} \cdot \dfrac{w_5 w_2 -w_4w_1}{(\sqrt{m} w_4-w_5)(\sqrt{m} w_4-w_2)} \\[13pt]	
			&=& \dfrac{(w_5 -\sqrt{m}^{-1}w_3)(w_5 -\sqrt{m}w_1)}{w_1w_3-w_2w_5} \cdot \dfrac{(w_5 -\sqrt{m}^{-1}w_4)(w_5 -\sqrt{m}w_3)}{w_3w_4-w_2w_5} \\[10pt]
			&& \quad \quad \quad \quad \quad \quad \cdot \dfrac{(w_5 -\sqrt{m}^{-1}w_1)(w_5 -\sqrt{m}w_4)}{w_4w_1-w_2w_5}
		\end{array}
	\end{equation*} and the non-degeneracy condition.
	One can check that 
	$$(w_1,w_2,w_3,w_4,w_5)=\left(\dfrac{(1+q-p)r}{1+q+pq},\dfrac{(q-p)r}{1+q+pq} , \dfrac{(q-p+ pq)r}{1+q+pq}, \dfrac{(1+2q+pq)r}{1+q+pq}, r \right)$$ and 
		$$(w_1,w_2,w_3,w_4,w_5)=\left( p,\; q,\;p,\; p, \; 2p-q \right)$$ 
	are boundary parabolic solutions. Here we choose variables $p,q$ and $r$ in the solutions freely but satisfying the non-degeneracy condition. In the following subsection, we will see that the latter solution is different from the former one, since it consists of \emph{pinched} octahedra. Indeed, the latter solution gives an abelian representation while the former one gives an irreducible representation.
	\end{exam}		

	\subsection{Pinched octahedra}\label{subsec:pinched}
	 One of  the essential differences between $\Tcal_{4D}$ and $\Tcal_{5D}$ is that a solution to $\Tcal_{5D}$ allows the top and the bottom vertices of an octahedron to coincide while a solution to $\Tcal_{4D}$ does not.		
		\begin{defn}  We say that a hyperbolic ideal octahedron is \textit{pinched} if the top and the bottom vertices of the octahedron coincide.
		\end{defn}
		Let hyperbolic ideal octahedra $\{o_1,\cdots,o_N\}$ be an $m$-deformed solution to $\Tcal_{5D}$. We will keep using Notation \ref{notat:edge} and \ref{notat:octa_coord} in Sections \ref{subsec:gluing} and \ref{subsec:segment}, respectively.
		\begin{prop} \label{prop:pinch_cond} An octahedron $o_k$ is pinched if and only if  one of $\tau_{k,a},\tau_{k,b},\tau_{k,c},\tau_{k,d}$ is $1$. In this case, we actually have $\tau_{k,a}=\tau_{k,b}=\tau_{k,c}=\tau_{k,d}=1$.
		\end{prop}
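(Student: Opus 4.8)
The plan is to rewrite each side-edge cross-ratio $\tau_{k,j}$ as a cross-ratio supported on the \emph{top--bottom} edge of $o_k$, after which the statement becomes a one-line cross-ratio identity. First I would recall from Notation~\ref{notat:edge}(c) that $\tau_{k,j}$ is the cross-ratio, at the side edge corresponding to $r_j$, of the tetrahedron cut out of $o_k$ by the two faces of $o_k$ through that edge. A side edge of $o_k$ is an equatorial edge joining two consecutive side vertices $s,s'$, and the two faces of $o_k$ through it are the upper face (through the top vertex) and the lower face (through the bottom vertex); so the cut-out tetrahedron has vertices $\{\text{top},\text{bottom},s,s'\}$, in which the equatorial edge $ss'$ is opposite to the top--bottom edge. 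By the symmetry relation~\eqref{eq:crossratiorel}, cross-ratios on opposite edges of a tetrahedron agree, so in terms of the vertex coordinates of Notation~\ref{notat:octa_coord},
\[
\tau_{k,j}=[\,z_{k,T},\,z_{k,B},\,z_s,\,z_{s'}\,]
\]
up to the usual symmetries of the cross-ratio. (Concretely, reading Figure~\ref{fig:edge_notation}: $\tau_{k,b}=[C,D,F,A]$, whose two faces are $ACD$ and $FCD$, giving the tetrahedron $ACDF$ in which $CD$ is opposite $AF$.)

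Next I would invoke the elementary fact that for $a,b,c,d\in\Cbb\cup\{\infty\}$, whenever $[a,b,c,d]$ is defined the equation $[a,b,c,d]=1$ holds precisely when $a=b$ or $c=d$; this comes from clearing denominators in \eqref{eq:crossratiodef}, since $(a-d)(b-c)-(a-c)(b-d)=(a-b)(d-c)$. Applied to the display above, $\tau_{k,j}=1$ if and only if $z_{k,T}=z_{k,B}$ or $z_s=z_{s'}$. It therefore remains to rule out the second alternative, namely to know that the two side vertices on any corner of $c_k$ develop to distinct points; and, so that the display stays well-defined in the pinched case, that each side vertex differs from $z_{k,T}$ and from $z_{k,B}$.

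Both of these are furnished by the non-degeneracy conditions for $\Tcal_{5D}$: since $\alpha_k$ and $\gamma_k$ are cross-ratios on upper hypotenuses of $o_k$ and neither is $0$, $1$ or $\infty$, the identity above forces every side vertex to be distinct from $z_{k,T}$ and from each of its two adjacent side vertices, and the conditions on $\widehat\alpha_k,\widehat\gamma_k$ do the same for $z_{k,B}$. Hence $\tau_{k,j}=1\iff z_{k,T}=z_{k,B}\iff o_k$ is pinched; since the middle condition does not involve $j$, as soon as one $\tau_{k,j}$ equals $1$ all four do, which is the last assertion. (One can cross-check this directly from the region-variable formulas \eqref{eqn:tau_w_a}--\eqref{eqn:tau_w_b}: $\tau_{k,a}=1$ reduces, after dividing by $w_a\neq0$, to a linear relation among $w_a,\dots,w_d$ symmetric enough to force $\tau_{k,b}=\tau_{k,c}=\tau_{k,d}=1$ as well.) The step I would be most careful about is the bookkeeping in the first reduction together with the non-degeneracy verification — correctly matching, in each cut-out tetrahedron, the edge opposite the relevant side edge, and checking that the $\Tcal_{5D}$ non-degeneracy conditions really exclude every vertex coincidence that is needed — but this is routine once the labels $A,\dots,F$ of Figure~\ref{fig:edge_notation} are fixed.
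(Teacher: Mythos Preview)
Your proof is correct and follows essentially the same approach as the paper. The only cosmetic difference is that the paper normalizes $z_{k,T}=\infty$ so that the cross-ratio $\tau_{k,j}$ reduces to the explicit fraction $\dfrac{z_{k,B}-z_{k,i_1}}{z_{k,B}-z_{k,i_2}}$, whereas you invoke the opposite-edge symmetry $[s,s',F,A]=[A,F,s',s]$ to reach the same conclusion without choosing coordinates; both arguments then finish with the $\Tcal_{5D}$ non-degeneracy condition to exclude $z_s=z_{s'}$.
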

		 \begin{proof} An octahedron $o_k$ is pinched if and only if $z_{k,T}=z_{k,B}$. We may   assume $z_{k,T}=\infty$ and then a simple cross-ratio computation gives that $\tau_{k,a} = \dfrac{z_{k,B}-z_{k,i_1}}{z_{k,B}-z_{k,i_2}}$ where $i_1$ and $i_2$ are the indices of the segments around the crossing $c_k$ attached to the region $r_a$. Also the non-degeneracy condition for $\Tcal_{5D}$ gives $z_{k,i_1} \neq z_{k,i_2}$  and thus $\tau_{k,a}=1$ if and only if $z_{k,B}=\infty$. Applying the same argument to other sides of $o_k$, we conclude the proposition.
		 \end{proof}		
		\begin{prop} \label{prop:col_reg} An octahedron $o_k$ is pinched if and only if 
    	    \begin{equation*}
            	\left\{
            	\begin{array}{cc}
		        \sqrt{m}^{-1}w_a- w_b+ \sqrt{m} w_c - w_d=0 & \textrm{ for Figure \ref{fig:reg_crossing}(a)} \\[10pt]
                \sqrt{m} w_a - w_b+ \sqrt{m}^{-1} w_c - w_d=0 & \textrm{ for Figure \ref{fig:reg_crossing}(b)}.
                \end{array}
            	\right.
	        \end{equation*}
		\end{prop}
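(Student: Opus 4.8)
The plan is to reduce the statement to the single condition $\tau_{k,a}=1$ and then unwind the $w$-variable formula for $\tau_{k,a}$. First I would invoke Proposition \ref{prop:pinch_cond}: the octahedron $o_k$ is pinched if and only if $\tau_{k,a}=1$, since whenever one of $\tau_{k,a},\tau_{k,b},\tau_{k,c},\tau_{k,d}$ equals $1$ they all do. Thus it suffices to translate the scalar equation $\tau_{k,a}=1$ into the asserted linear relation among region variables.

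For a positive crossing as in Figure \ref{fig:reg_crossing}(a), I would start from the expression for $\tau_{k,a}$ in equation (\ref{eqn:tau_w_a}),
\[
\tau_{k,a}=\frac{(w_b-\sqrt{m}^{-1}w_a)(w_d-\sqrt{m}^{-1}w_a)}{w_b w_d-w_a w_c}.
\]
The non-degeneracy condition (\ref{eqn:non_d1}) guarantees $w_b w_d-w_a w_c\neq 0$, so $\tau_{k,a}=1$ is equivalent to the polynomial identity $(w_b-\sqrt{m}^{-1}w_a)(w_d-\sqrt{m}^{-1}w_a)=w_b w_d-w_a w_c$. Expanding the left-hand side and cancelling the common term $w_b w_d$ leaves $m^{-1}w_a^2-\sqrt{m}^{-1}w_a(w_b+w_d)=-w_a w_c$. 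Since $w_a\neq 0$ by the definition of a region variable, dividing through by $w_a$ and then multiplying by $\sqrt{m}$ yields exactly $\sqrt{m}^{-1}w_a-w_b+\sqrt{m}w_c-w_d=0$. The negative-crossing case of Figure \ref{fig:reg_crossing}(b) is carried out the same way, now starting from the formula for $\tau_{k,a}$ in equation (\ref{eqn:tau_w_b}) and using (\ref{eqn:non_d2}) for the nonvanishing of the denominator; the roles of $\sqrt{m}$ and $\sqrt{m}^{-1}$ are interchanged, and one obtains $\sqrt{m}w_a-w_b+\sqrt{m}^{-1}w_c-w_d=0$.

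Each manipulation — clearing the denominator, expanding, cancelling $w_b w_d$, dividing by $w_a$, rescaling by $\sqrt{m}$ — is reversible under the standing hypotheses, so this establishes the equivalence in both directions. There is no genuine obstacle here; the only point requiring care is that the denominator-clearing and division steps are legitimate, which is precisely what the non-degeneracy conditions (\ref{eqn:non_d1}) and (\ref{eqn:non_d2}) together with $w_a\neq 0$ provide. As a consistency check one could repeat the computation with $\tau_{k,b}$, $\tau_{k,c}$ or $\tau_{k,d}$ in place of $\tau_{k,a}$; by Proposition \ref{prop:pinch_cond} each must produce the same linear relation, which indeed it does after the analogous simplification.
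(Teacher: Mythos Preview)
Your proof is correct and follows essentially the same route as the paper: both reduce to the condition $\tau_{k,a}=1$ via Proposition~\ref{prop:pinch_cond}, then use the $w$-variable formulas (\ref{eqn:tau_w_a}), (\ref{eqn:tau_w_b}) together with the non-degeneracy conditions and $w_a\neq 0$ to extract the linear relation. The paper presents the result of your expansion in the compact factored form $\tau_{k,a}-1=\dfrac{\sqrt{m}\,w_a(\sqrt{m}\,w_a-w_b+\sqrt{m}^{-1}w_c-w_d)}{w_bw_d-w_aw_c}$ (for the negative crossing), but the underlying computation is the same.
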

		\begin{proof} Equations (\ref{eqn:tau_w_a}) and (\ref{eqn:tau_w_b}) and the non-degeneracy condition give a direct proof. For instance, we have $\tau_{k,a}-1=\dfrac{\sqrt{m} w_a( \sqrt{m} w_a- w_b + \sqrt{m}^{-1} w_c -  w_d)}{w_b w_d- w_aw_c}$ from the equation (\ref{eqn:tau_w_b}) and the proof follows by Proposition \ref{prop:pinch_cond}.
		\end{proof}
		\begin{prop} \label{lem:pinched_region} Suppose that a region of $D$ has $n$ corner crossings and $n-1$ octahedra among them are pinched. Then the remaining octahedron is also pinched.
		\end{prop}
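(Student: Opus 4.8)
The plan is to reduce the statement to the regional gluing equation (\ref{eqn:region_edge}) together with the local characterization of pinchedness in Proposition \ref{prop:pinch_cond}. First I would recall that, by Proposition \ref{prop:pinch_cond}, an octahedron $o_k$ sitting at a corner crossing of the region $r_j$ is pinched if and only if its side cross-ratio $\tau_{k,j}$ at that region equals $1$ (and in that case \emph{all four} side cross-ratios of $o_k$ are $1$). So being pinched is precisely the local condition $\tau_{k,j}=1$, and this is the only input about pinched octahedra that will be needed.

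Next, label the corner crossings of $r_j$ by $c_1,\dots,c_n$ and assume, after relabeling, that $o_1,\dots,o_{n-1}$ are pinched; then $\tau_{1,j}=\cdots=\tau_{n-1,j}=1$. Since the given data is an $m$-deformed solution to $\Tcal_{5D}$, it satisfies every equation in $E_R$; in particular the gluing equation (\ref{eqn:region_edge}) for the regional edge corresponding to $r_j$ gives $\prod_{k=1}^{n}\tau_{k,j}=1$, which forces $\tau_{n,j}=1$. Applying Proposition \ref{prop:pinch_cond} in the other direction, $o_n$ is pinched, which is the claim. Conceptually this is the same mechanism as in the proof of Proposition \ref{prop:seg}: the value $1$ propagates around the cyclic product attached to a region.

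The only point requiring a word of care is the degenerate possibility that $r_j$ meets a single crossing at two (necessarily opposite) corners, so that one octahedron contributes two factors to (\ref{eqn:region_edge}); but if the unique non-pinched octahedron occurred twice it would account for two non-pinched corners, contradicting the hypothesis that exactly $n-1$ of the $n$ corners come from pinched octahedra, so the non-pinched octahedron appears exactly once and the cancellation above goes through verbatim. I do not expect any genuine obstacle here — the statement is essentially immediate from Proposition \ref{prop:pinch_cond} once the regional gluing equation is written down.
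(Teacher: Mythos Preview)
Your argument is correct and is essentially the paper's own proof: use Proposition~\ref{prop:pinch_cond} to translate ``pinched'' into $\tau_{k,j}=1$, then the regional gluing equation (\ref{eqn:region_edge}) forces the last $\tau$ to be $1$. The paper's proof is the same one-line observation; your added discussion of the degenerate case where a crossing meets the region at two corners is extra care that the paper omits, but it does not change the approach.
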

		\begin{proof} The gluing equation of the regional edge corresponding to the region, the product of $n$ $\tau$'s which come from each corner crossing of the region is equal to $1$, tells us that any $\tau$ among them becomes $1$ whenever the others are $1$.
		\end{proof}
		\begin{exam} Suppose a diagram $D$ has a kink. Considering the region bounded by the kink with Proposition \ref{prop:pinch_cond}, the octahedron on the kink is pinched. We will give another ``non-trivial'' example in Section \ref{subsec:85}.
        \end{exam}

 	\section{Holonomy representation} \label{sec:Holonomy}

	Let segment variables $z=(z_1,\cdots,z_{2N})$ be a boundary parabolic solution to $\Tcal_{4D}$ and let $o_1,\cdots, o_N$ be hyperbolic ideal octahedra determined by $z$. Let $\rho$ be the  holonomy representation associated to the solution. In this section, we present an explicit formula to compute $\psl$-matrices  for the $\rho$-image  of the Wirtinger generators. We also present a formula for the cusp shape of the pseudo-hyperbolic structure.

	We will use the base work that we did in Section \ref{subsec:developing} and we briefly recall it here with further assumptions. Let $P_k$(resp., $\widehat{P}_k$) be the intersection point of $l_k$ (resp., $\widehat{l}_k$) and the central edge of $o_k$ and we choose the base point of $\pi_1(M)$ to be $P_1$. Let $\lambda_\circ$ be a loop obtained by pushing the knot to the right-hand side and we may assume that $\lambda_\circ$ passes through every $P_k$ and $\widehat{P}_k$. See Figure \ref{fig:figure_eight_dev}. 
	We denote two developing images of $o_k$ by $O_k$ and $\widehat{O}_k$ which appear when we consider the developing map along $\lambda_\circ$.
	We also denote the projection image of $O_k$ and $\widehat{O}_k$ to $\Cbb$ by $L_k$ and $\widehat{L}_k$, respectively. See Figure \ref{fig:figure_eight_OctaDev}. We may assume that the bottom and the top vertices of $O_1$ are placed at the origin and $\infty$, respectively, and the ``vertical'' diagonal of $L_1$ is of length $1$.  Choosing the meridian $\mu$ to be the Wirtinger generator winding the over-arc of $c_1$ and the lifting of the base point $P_1$ in $O_1$, these assumptions imply that the holonomy action of $\mu$ is $z \mapsto z+1$ and hence the ``vertical'' diagonal of every $L_k$ and $\widehat{L}_k$ is of length $1$. See Figure \ref{fig:segm_defn}. See Section \ref{subsec:developing} for details.

		 Following the oriented diagram $D$ from the over-arc of $c_1$, let $k_n$ be the index of the $n$-th under-passing crossing and $m_{k_n}$ be the Wirtinger generator winding the over-arc of the crossing $c_{k_n}$ for $1 \leq n \leq N$. We will compute  the $\rho$-image of Wirtinger generators in the order $\rho(m_{k_1}),\rho(m_{k_2}),\cdots, \rho(m_{k_N})$.  
		 
	 We define a loop $\underline{m}_{k_n} \in \pi_1(M)$ as follows : (1) Follow the loop $\lambda_\circ$ from the base point $P_1$ to $\widehat{P}_{k_n}$. (2) Wind the over-arc of $c_{k_n}$ in the right-handed orientation. (3) Retrace $\lambda_\circ$ from $\widehat{P}_{k_n}$ to $P_1$. One can depict $\underline{m}_{k_n}$ as in Figure \ref{fig:mu_loop}, where $e_{k_i}$ denotes the sign of the crossing $c_{k_i}$.
	\begin{figure}[!h] 		
		\centering		
		\begin{subfigure}[H]{0.9\textwidth}
			\centering
			\scalebox{1}{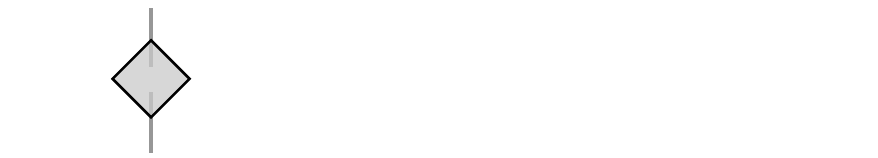}
			\caption{The loop $\underline{m}_{k_n}$ on the diagram.}
			\end{subfigure}

		\begin{subfigure}[H]{0.9\textwidth}
			\centering
			\scalebox{1}{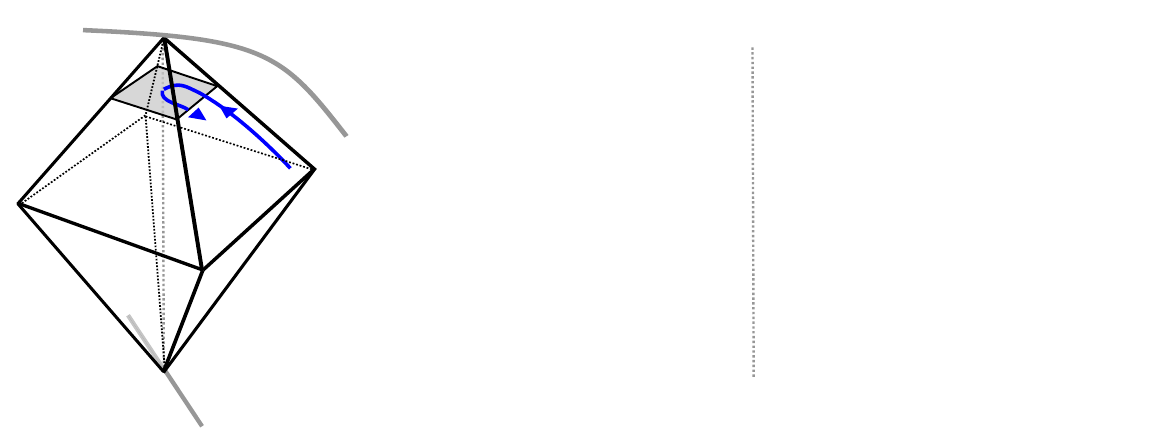}
			\caption{The loop $\underline{m}_{k_n}$ : $P_1 \overset{\lambda_\circ}{\rightarrow} \widehat{P}_{k_n}\rightarrow P_{k_n} \rightarrow D_{k_n} =B_{k_n}\rightarrow P_{k_n} \rightarrow \widehat{P}_{k_n} \overset{\lambda_\circ^{-1}}{\rightarrow} P_1$.}
		\end{subfigure}
		\caption{The loop $\underline{m}_{k_n}$.}
		\label{fig:mu_loop}
	\end{figure} 	
	\begin{lem} \label{lem:mu} The loop $\underline{m}_{k_{n}}$ can be expressed as
	\begin{equation*}
		\begin{array}{ccc} 
			\underline{m}_{k_{n}} &=& (m_{k_1}^{e_{k_1}}  \cdots m_{k_{n-1}}^{e_{k_{n-1}}}) \, m_{k_{n}} \, (m_{k_1}^{e_{k_1}}  \cdots m_{k_{n-1}}^{e_{k_{n-1}}})^{-1}
		\end{array}
	\end{equation*}
	\end{lem}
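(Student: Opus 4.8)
The plan is to prove Lemma~\ref{lem:mu} by induction on $n$, reading the word off the push--off loop $\lambda_\circ$ one under--crossing at a time; this is essentially the standard bookkeeping by which Wirtinger relations are obtained by walking along the knot. First I would unwind the definition: the loop $\underline{m}_{k_n}$ is tautologically the meridian $\widehat{\mu}_{k_n}\in\pi_1(M,\widehat{P}_{k_n})$ winding the over--arc of $c_{k_n}$ (step (2) in the construction of $\underline{m}_{k_n}$), transported to the base point $P_1$ along the initial sub--arc $\sigma_n$ of $\lambda_\circ$ running from $P_1$ to $\widehat{P}_{k_n}$; that is, $\underline{m}_{k_n}=\sigma_n\ast\widehat{\mu}_{k_n}\ast\sigma_n^{-1}$. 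Since $\lambda_\circ$ is the right--handed parallel copy of $K$, following it from $P_1$ it passes underneath a strand of $D$ precisely at the under--passing crossings it meets, and these are, in cyclic order, $c_{k_1},c_{k_2},\dots$; hence by the time it reaches $\widehat{P}_{k_n}$ it has passed underneath the over--arcs at $c_{k_1},\dots,c_{k_{n-1}}$ and at no other crossing, the crossing $c_{k_1}$ being met first, i.e.\ closest to the base point, so that it will end up outermost in the conjugating word. The developing picture of Figure~\ref{fig:mu_loop}(b), in which $O_{k_i}$ and $\widehat{O}_{k_i}$ appear consecutively along $\lambda_\circ$, makes the same geometry visible on the level of the octahedra.

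For the induction I would carry the slightly stronger statement that, having fixed once and for all a ``vertical'' auxiliary path $t_n$ from $\widehat{P}_{k_n}$ back to $P_1$ that is disjoint from $K$ away from its endpoints, one has $[\sigma_n\ast t_n]=m_{k_1}^{e_{k_1}}\cdots m_{k_{n-1}}^{e_{k_{n-1}}}$ in $\pi_1(M,P_1)$ together with $[\,t_n^{-1}\ast\widehat{\mu}_{k_n}\ast t_n\,]=m_{k_n}$. The base case $n=1$ is immediate: $\sigma_1$ meets no crossing underneath, so $\sigma_1\ast t_1$ is null--homotopic in $M$ (it can be pushed off $K$ to one side), giving the empty product, while $t_1^{-1}\ast\widehat{\mu}_{k_1}\ast t_1=m_{k_1}$ by the normalization of the auxiliary path. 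For the inductive step I would write $\sigma_n=\sigma_{n-1}\ast\delta$, where $\delta$ is the arc of $\lambda_\circ$ from $\widehat{P}_{k_{n-1}}$ to $\widehat{P}_{k_n}$; across the corner at $\widehat{P}_{k_{n-1}}$ the end of $\sigma_{n-1}$ and the start of $\delta$ together perform one full passage of the push--off underneath the over--arc of $c_{k_{n-1}}$, and the Wirtinger relation at that crossing (relating the meridians of the incoming and outgoing under--arcs through the meridian $m_{k_{n-1}}$ of the over--arc) turns this passage into conjugation by $m_{k_{n-1}}^{\,e_{k_{n-1}}}$, appended on the right of the word supplied by the inductive hypothesis; the remainder of $\delta$ meets nothing underneath, so $[\sigma_n\ast t_n]=m_{k_1}^{e_{k_1}}\cdots m_{k_{n-1}}^{e_{k_{n-1}}}$, and $[\,t_n^{-1}\ast\widehat{\mu}_{k_n}\ast t_n\,]=m_{k_n}$ again by normalization. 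Inserting $(\sigma_n\ast t_n)(\sigma_n\ast t_n)^{-1}$ and regrouping then gives
\[
\underline{m}_{k_n}=\bigl(m_{k_1}^{e_{k_1}}\cdots m_{k_{n-1}}^{e_{k_{n-1}}}\bigr)\,m_{k_n}\,\bigl(m_{k_1}^{e_{k_1}}\cdots m_{k_{n-1}}^{e_{k_{n-1}}}\bigr)^{-1},
\]
which is the assertion of the lemma.

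The one step that requires real care, and which I expect to be the main obstacle, is the orientation and sign bookkeeping: one must check that each passage of the \emph{right--handed} push--off underneath the over--arc of $c_{k_i}$ produces $m_{k_i}^{+e_{k_i}}$ and not $m_{k_i}^{-e_{k_i}}$, and that the conjugating factors are stacked in the order $k_1,k_2,\dots$ read from the outside in. This amounts to drawing the local picture at a positive and at a negative crossing, tracking the co--orientation of $\lambda_\circ$ relative to $K$ together with the over/under data, and matching it with the crossing sign $e_{k_i}$ as recorded in Figure~\ref{fig:mu_loop}(a); once this is settled in that one local model it applies verbatim at every crossing, and the base--point conventions fixed in Section~\ref{subsec:developing} (the base--point change through the right--hand side of the over--arc of $c_1$, and the meridian $\mu$ as the diagonal loop in $l_1$) guarantee that the auxiliary paths $t_n$ can indeed be chosen compatibly from one $n$ to the next.
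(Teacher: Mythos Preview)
Your proof is correct and follows essentially the same approach as the paper's, which simply says: ``Pulling the loop $\underline{m}_{k_n}$ over the diagram $D$, the part of $\underline{m}_{k_n}$ given in step~(1) is $m_{k_1}^{e_{k_1}}\cdots m_{k_{n-1}}^{e_{k_{n-1}}}$ (see Figure~\ref{fig:mu_loop}(a)) and thus we obtain the desired expression.'' Your argument is the rigorous unpacking of that one sentence---the auxiliary vertical paths $t_n$ are exactly the ``pulling over the diagram,'' and your induction on $n$ tracks the Wirtinger contributions crossing by crossing---so the two proofs differ only in the level of detail, not in substance.
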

	\begin{proof}  Pulling the loop $\underline{m}_{k_n}$ over the diagram $D$, the part of $\underline{m}_{k_{n}}$ given in step (1) is $m_{k_1}^{e_{k_1}} \cdots m_{k_{n-1}}^{e_{k_{n-1}}}$ (see Figure \ref{fig:mu_loop}(a)) and thus we obtain the desired expression.
	\end{proof}	
	
	\begin{lem} \label{lem:exact_hol}  The holonomy action of $\underline{m}_{k_n}$  sends the lifting of the  hypotenuse $\beta_{k_n}$ (resp., $\delta_{k_n}$) to that of   $\delta_{k_n}$ (resp., $\beta_{k_n}$)  in $\widehat{O}_{k_n}$ at a positive crossing (resp., negative crossing), i.e., it sends  the hypotenuse on the right side   to the left side of the over-arc at $c_{k_n}$. In particular, it fixes the bottom vertex of $\widehat{O}_{k_n}$.
	\end{lem}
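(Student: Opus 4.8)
The plan is to reduce the statement to a purely local analysis at the crossing $c_{k_n}$, transporting the picture already established for the meridian $\mu$ at $c_1$ in Section~\ref{subsec:developing}. By Lemma~\ref{lem:mu} we have $\underline{m}_{k_n}=g\,m_{k_n}\,g^{-1}$ with $g=m_{k_1}^{e_{k_1}}\cdots m_{k_{n-1}}^{e_{k_{n-1}}}$, and the proof of that lemma shows that $g$ is read off from the part of $\underline{m}_{k_n}$ built in step (1), i.e.\ the portion of $\lambda_\circ$ running from $P_1$ to $\widehat{P}_{k_n}$. Hence $\rho(\underline{m}_{k_n})=\rho(g)\,\rho(m_{k_n})\,\rho(g)^{-1}$, and $\rho(g)$ is precisely the holonomy transport carrying the initial placement of $o_1$ as $O_1$ to the developed copy of $o_{k_n}$ that appears at $\widehat{P}_{k_n}$ along $\lambda_\circ$; since $\widehat{P}_{k_n}$ lies on $\widehat{l}_{k_n}$, the lower pyramid of $o_{k_n}$, this developed copy is the flipped octahedron $\widehat{O}_{k_n}$. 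So it suffices to describe the holonomy of the ``winding'' loop of step (2), based at $\widehat{P}_{k_n}$, relative to the placement $\widehat{O}_{k_n}$.

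Next I would trace that winding loop $\widehat{P}_{k_n}\to P_{k_n}\to D_{k_n}(=B_{k_n})\to P_{k_n}\to\widehat{P}_{k_n}$ of Figure~\ref{fig:mu_loop}(b): it travels up the central edge of $o_{k_n}$ from $\widehat{l}_{k_n}$ into $l_{k_n}$, loops once around the side vertex $D$ of $o_{k_n}$ (identified with $B$ in the twisted octahedron), and returns. Recalling from Section~\ref{subsec:developing} that $\widehat{O}_{k_n}$ is obtained from $O_{k_n}$ by the flip $z\mapsto 1/z$ composed with a similarity, so that the top vertex of $o_{k_n}$ — the one touched by its over-arc — becomes the \emph{bottom} vertex of $\widehat{O}_{k_n}$, this winding loop is exactly the flipped version of the diagonal loop in the link quadrilateral that realizes the Wirtinger meridian of the over-arc, i.e.\ the loop analyzed for $\mu$ in $l_1$. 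Developing it therefore exchanges the two lifted upper hypotenuses of $o_{k_n}$ adjacent to its over-arc: by Notation~\ref{notat:edge} and Figure~\ref{fig:edge_notation}, it sends $\beta_{k_n}$ to $\delta_{k_n}$ at a positive crossing and $\delta_{k_n}$ to $\beta_{k_n}$ at a negative one, equivalently it carries the hypotenuse on the right of the over-arc to the one on the left. Conjugating back by $\rho(g)$ yields the asserted action of $\rho(\underline{m}_{k_n})$ on the edges of $\widehat{O}_{k_n}$.

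The ``in particular'' clause is then immediate: $\underline{m}_{k_n}$ is a meridian winding the over-arc at $c_{k_n}$, and the lift of $p^\circ$ fixed by such a meridian is the top vertex of $o_{k_n}$, which is the bottom vertex of $\widehat{O}_{k_n}$. Alternatively, $\beta_{k_n}$ and $\delta_{k_n}$ are both upper hypotenuses of $o_{k_n}$, hence share the endpoint that becomes the bottom vertex of $\widehat{O}_{k_n}$, and an isometry exchanging the corresponding lifted edges must fix that common endpoint.

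The hard part is not any computation but the orientation- and side-sensitive bookkeeping: confirming that the octahedron met at $\widehat{P}_{k_n}$ is the flipped copy $\widehat{O}_{k_n}$ rather than $O_{k_n}$, that the step-(2) loop realizes the diagonal of $l_{k_n}$ rather than of $\widehat{l}_{k_n}$, and that the right-handed winding produces $\beta_{k_n}\to\delta_{k_n}$ at a positive crossing and $\delta_{k_n}\to\beta_{k_n}$ at a negative one. Each of these is pinned down by the cross-ratio data of the octahedra and the similarity factors recorded along $\lambda_\circ$ in Section~\ref{subsec:developing}, but a left/right or sign slip is the natural pitfall, so I would cross-check the conclusion against the figure-eight example of Figures~\ref{fig:figure_eight_dev}--\ref{fig:figure_eight_hol}.
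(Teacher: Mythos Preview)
Your second paragraph, where you trace the winding loop $\widehat{P}_{k_n}\to P_{k_n}\to D_{k_n}(=B_{k_n})\to P_{k_n}\to\widehat{P}_{k_n}$ and read off that the lift of $\beta_{k_n}$ is carried to that of $\delta_{k_n}$, is exactly the paper's argument and is the entire proof. The paper phrases it even more directly: since $\underline{m}_{k_n}$ follows $P_1\to\widehat{P}_{k_n}\to P_{k_n}$ at the start and retraces it at the end, its holonomy sends the lift of $B_{k_n}$ in $\widehat{O}_{k_n}$ to that of $D_{k_n}$, and cellularity then carries the edge $\beta_{k_n}$ to $\delta_{k_n}$.

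The conjugation scaffolding in your first paragraph, however, contains a real misconception. You assert that ``$\rho(g)$ is precisely the holonomy transport carrying the initial placement of $o_1$ as $O_1$ to the developed copy of $o_{k_n}$ that appears at $\widehat{P}_{k_n}$.'' This cannot hold: $\rho(g)$ is a deck transformation, so it maps $O_1$ to another lift of the \emph{same} octahedron $o_1$, never to a lift of a different octahedron $o_{k_n}$. What actually places $\widehat{O}_{k_n}$ in $\overline{\Hbb^3}$ is the \emph{developing along the open path} $P_1\to\widehat{P}_{k_n}$; there is no holonomy element attached to a non-closed path. The identity $\underline{m}_{k_n}=g\,m_{k_n}\,g^{-1}$ from Lemma~\ref{lem:mu} is used downstream (Theorem~\ref{thm:HolonomyRep}) to recover $\rho(m_{k_n})$ from $\rho(\underline{m}_{k_n})$, not to prove the present lemma. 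For the same reason your closing ``conjugating back by $\rho(g)$'' is superfluous: once you have lifted the winding loop starting from the lift of $\widehat{P}_{k_n}$ in $\widehat{O}_{k_n}$, the deck transformation you obtain \emph{is} $\rho(\underline{m}_{k_n})$; nothing remains to conjugate. A minor point: the holonomy \emph{sends} $\beta_{k_n}$ to $\delta_{k_n}$ but does not ``exchange'' them, since a nontrivial parabolic is not an involution.
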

	\begin{proof} 
		Let $B_{k_n}$(resp., $D_{k_n}$) be the intersection point of  $\beta_{k_n}$(resp., $\delta_{k_n}$) and $l_{k_n}$ as in Figure \ref{fig:mu_loop}(b). Note that $B_{k_n}$ and $D_{k_n}$ are identified in $\Ocal_D$. Then we can rewrite $\underline{m}_{k_n}$ as $$\underbrace{P_1 \overset{\lambda_\circ}{\rightarrow}}_{(1)} \underbrace{\widehat{P}_{k_n} \rightarrow P_{k_n} \rightarrow D_{k_n} = B_{k_n} \rightarrow P_{k_n} \rightarrow}_{(2)} \underbrace{\widehat{P}_{k_n} \overset{\lambda_\circ^{-1}}{\rightarrow} P_1}_{(3)}.$$ Since we follow the path $P_1\rightarrow \widehat{P}_{k_n} \rightarrow P_{k_n}$ in the beginning and trace it back in the end, the holonomy action of $\underline{m}_{k_n}$ sends the lifting of $B_{k_n}$ in $\widehat{O}_{k_n}$ to that of $D_{k_n}$. 
		Recall that holonomy actions on the developing image are cellular.
		Hence the action sends the lifting of $\beta_{k_n}$ in $\widehat{O}_{k_n}$ to that of $\delta_{k_n}$. 

		The same argument also holds for  negative crossings except changing $\beta_{k_n}$ and $\delta_{k_n}$.
	\end{proof}
	Since hyperbolic octahedra $\{o_1,\cdots,o_N\}$ form a boundary parabolic solution, $\rho(m_{k_n})$ is indeed a parabolic element and so is $\rho(\underline{m}_{k_n})$  by Lemma \ref{lem:mu}. Therefore, Lemma \ref{lem:exact_hol} characterizes it uniquely.

	 Now we carry out an explicit computation for  $\rho(\underline{m}_{k_n})$ in segment variables.
	 We first compute the coordinates of the bottom vertex of $\widehat{O}_{k_n}$. To do this, we compute the difference of the coordinates between the bottom vertices of consecutive octahedra in the developing map along $\lambda_\circ$. The computation will be elementary and thus we only compute for the case of Figure \ref{fig:hyp_segm}(a) : the difference of the coordinates between the bottom and the east vertices of $O_{k}$ is $\frac{z_c}{z_a-z_b}$. See Figure \ref{fig:segm_defn}. 
	 Note that we divide it by $z_a-z_b$ to make the ``vertical'' diagonal of $L_k$ to be of length $1$. On the other hand, the difference of the coordinates between the bottom of the north vertices of $\widehat{O}_{k+1}$ is $\frac{z_d}{z_d-z_e}$. 
	 Note that $\widehat{O}_{k+1}$ can be obtained  by flipping $O_{k+1}$ through $ z \mapsto \frac{1}{z}$ and then normalized so that the ``vertical'' diagonal of $\widehat{L}_{k+1}$ becomes length $1$. 
	 Therefore, the difference of the coordinates between the bottom vertices of $O_k$ and $\widehat{O}_{k+1}$ is $\frac{z_c}{z_a-z_b} - \frac{z_d}{z_d-z_e}$. 
	\begin{figure}[!h]
		\centering
		\scalebox{0.8}{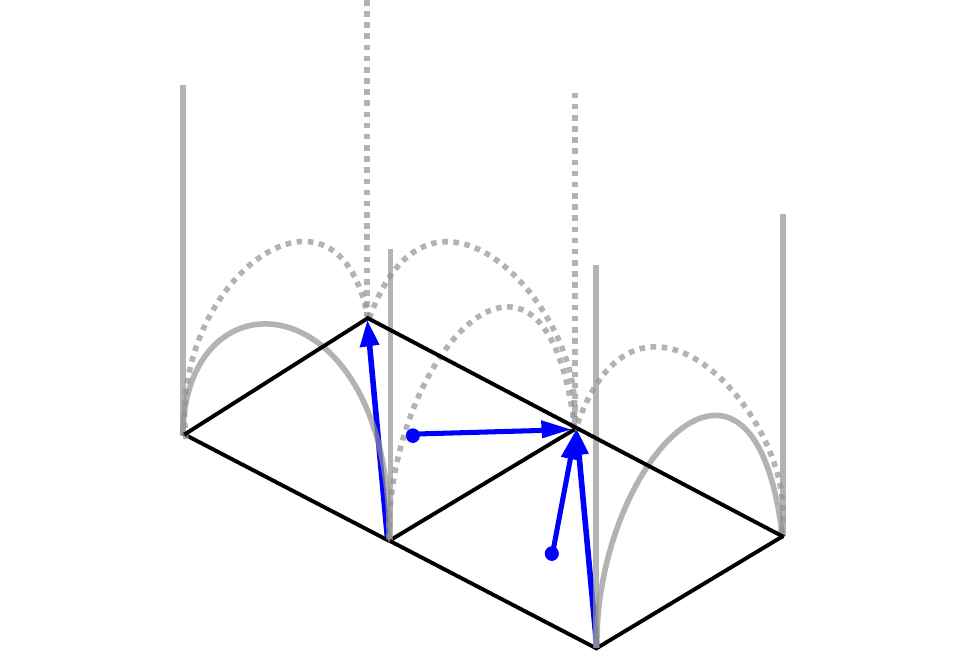}
		\caption{Developing map along $\lambda_\circ$ for Figure \ref{fig:hyp_segm}(a).}
		\label{fig:segm_defn}
	\end{figure}
	We compute the difference of the coordinates for other cases similarly and define these value as \emph{segment labels}.
	\begin{defn}\label{defn:segm_label} For a segment of $D$ we define a \textit{segment label} by 
		\begin{equation*} 
			\left\{
			\begin{array}{cc}
				\dfrac{z_c}{z_a-z_b} - \dfrac{z_d}{z_d-z_e} & \textrm{for  Figure \ref{fig:hyp_segm}(a)} \\[10pt]
				\dfrac{z_a}{z_a-z_b} - \dfrac{z_c}{z_d-z_e} & \textrm{for  Figure \ref{fig:hyp_segm}(b)} \\[10pt]
				\dfrac{z_c}{z_a-z_b} - \dfrac{z_d}{z_d-z_e} & \textrm{for  Figure \ref{fig:hyp_segm}(c)} \\[10pt]
				\dfrac{z_a}{z_a-z_c} - \dfrac{z_c}{z_d-z_e} & \textrm{for  Figure \ref{fig:hyp_segm}(d)}
			\end{array}
			\right.
		\end{equation*}
	\end{defn}
	Since we assume that the bottom vertex of $O_1$ is the origin, the coordinate of the bottom vertex of $\widehat{O}_{k_n}$ 
	is the sum of segment labels over every segment which appears when we follow the over-arc of $c_1$ from the crossing $c_1$ until we under-pass the crossing $c_{k_n}$.
	\begin{defn} For a crossing $c_k$ of $D$ as in  figure \ref{fig:reg_crossing}, we define a \textit{crossing label} $\Lambda_k$ by 
		\begin{equation*} 
				(z_d-z_b) \left( \dfrac{1}{z_c}-\dfrac{1}{z_a} \right).
		\end{equation*}
	\end{defn}
	\begin{rmk}\label{rmk:non_zero_crossing} Proposition \ref{prop:SegmRule} tells us that a crossing label is non-zero and an edge label is well-defined.
	\end{rmk}
		
	\begin{rmk}
		The terms ``segment label'' and ``crossing label'' were introduced in \cite{tsvietkova_hyperbolic_2012}, and changed to \emph{translation parameter} and \emph{intercusp parameter} in \cite{neumann_intercusp_2016}, respectively. To consider translation (resp., intercusp) parameters, we need to choose three (resp., two) ideal points with horospheres centered at each point. See Figures~2 and 3 in \cite{neumann_intercusp_2016}. We choose the horosphere centered at $\infty$ to be the plane of height $1$ in $\Hbb^3$, and the other horosphere centered at the top or the bottom vertex of $O_k$ or $\widehat{O}_k$, $\rho$-equivariently. Then the segment label in this paper coincides with the translation parameter in \cite{neumann_intercusp_2016} for the top vertex of $O_k$, and the bottom vertices of $O_k$ and $\widehat{O}_{k+1}$ in Figure \ref{fig:segm_defn} with respect to the chosen horospheres. Also the crossing label $\Lambda_k$ in this paper coincides with the intercusp parameter for the top and the bottom vertices of $O_k$.
	\end{rmk}
	\begin{lem}\label{lem:mu_repn} The $\rho$-image of $\underline{m}_{k_n}$ is 
		\begin{equation*}
			\rho(\underline{m}_{k_n})= \begin{pmatrix} 1+\Lambda_{k_n} \Sigma_{k_n} & -\Lambda_{k_n} \Sigma_{k_n}^2 \\[3pt] \Lambda_{k_n}  & 1-\Lambda_{k_n} \Sigma_{k_n} \end{pmatrix}
		\end{equation*}
		where $\Lambda_{k_n}$ is the crossing label of $c_{k_n}$ and $\Sigma_{k_n}$ is the sum of segment labels over every segment appearing when we follow the over-arc of $c_1$ from the crossing $c_1$ until we under-pass the crossing $c_{k_n}$. 
	\end{lem}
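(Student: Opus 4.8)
The plan is to determine $\rho(\underline{m}_{k_n})$ from the geometric characterization in Lemma~\ref{lem:exact_hol}. As noted just before the statement, $\rho(\underline{m}_{k_n})$ is parabolic (the solution is boundary parabolic and $\underline{m}_{k_n}$ is conjugate to $m_{k_n}$ by Lemma~\ref{lem:mu}), and by Lemma~\ref{lem:exact_hol} it fixes the bottom vertex of $\widehat{O}_{k_n}$ while carrying the lift of the hypotenuse $\beta_{k_n}$ to that of $\delta_{k_n}$ for a positive crossing (and the other way round for a negative one). The first two facts force the shape of the matrix; the last one pins down its remaining parameter.

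First I would record that the bottom vertex of $\widehat{O}_{k_n}$ lies at the coordinate $\Sigma_{k_n}$: this is precisely the content of the discussion preceding the statement, obtained by starting with the bottom vertex of $O_1$ at the origin and summing the segment labels along the over-arc of $c_1$ from $c_1$ until $c_{k_n}$ is under-passed; in particular $\Sigma_{k_n}$ is finite. Since a non-trivial parabolic fixing $\Sigma_{k_n}\in\Cbb$ is necessarily of the form $\begin{pmatrix} 1+\Sigma_{k_n}t & -\Sigma_{k_n}^2 t \\ t & 1-\Sigma_{k_n}t\end{pmatrix}$ for a unique $t\in\Cbb\setminus\{0\}$ --- equivalently a translation $w\mapsto w+t$ in the coordinate $w=1/(z-\Sigma_{k_n})$ --- it only remains to show $t=\Lambda_{k_n}$.

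For that I would use the remaining clause of Lemma~\ref{lem:exact_hol}. The hypotenuses $\beta_{k_n}$ and $\delta_{k_n}$ are upper-hypotenuses of $o_{k_n}$, hence edges of $\widehat{O}_{k_n}$ emanating from its bottom vertex $\Sigma_{k_n}$ (which is the developed top vertex of $o_{k_n}$); so $\rho(\underline{m}_{k_n})$ sends the other endpoint $v_\beta$ of $\beta_{k_n}$ to the other endpoint $v_\delta$ of $\delta_{k_n}$, and therefore
\[
t=\frac{1}{v_\delta-\Sigma_{k_n}}-\frac{1}{v_\beta-\Sigma_{k_n}}.
\]
To evaluate this I would compute the vertices of $\widehat{O}_{k_n}$ explicitly: start from the $z$-variable normalization $z_{k_n,B}=0$, $z_{k_n,T}=\infty$, $z_{k_n,i}=z_i$ of $o_{k_n}$ (Notation~\ref{notat:octa_coord}, Proposition~\ref{prop:seg}), apply the flip $z\mapsto 1/z$, and then the similarity that places the bottom vertex at $\Sigma_{k_n}$ and rescales the vertical diagonal of $\widehat{L}_{k_n}$ to length $1$, exactly as in Section~\ref{subsec:developing} and Figure~\ref{fig:segm_defn}. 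Reading off from Figure~\ref{fig:edge_notation} that $\beta_{k_n}$ and $\delta_{k_n}$ land on the two under-arc segments while the vertical diagonal runs through the two over-arc segments, the flip places $v_\beta,v_\delta$ at the reciprocals of those segment variables and the rescaling contributes the factor $(1/z_c-1/z_a)^{-1}$; substituting into the displayed formula collapses it to $t=(z_d-z_b)(1/z_c-1/z_a)=\Lambda_{k_n}$, which is non-zero by Remark~\ref{rmk:non_zero_crossing}. This yields the asserted matrix.

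The hard part is this last evaluation: one must keep careful track of the interplay between the flip $z\mapsto 1/z$ and the length-$1$ normalization of the vertical diagonal, so that the similarity scaling factor is exactly the one turning the bare difference of reciprocal segment variables into $\Lambda_{k_n}$ rather than some scalar multiple of it. One must also treat the two crossing signs separately: for a negative crossing $\beta_{k_n}$ and $\delta_{k_n}$ exchange roles, which flips the sign of the naive computation, but this is compensated by the sign change between the segment--region labelings of Figure~\ref{fig:reg_crossing}(a) and (b), so that the same matrix results in both cases.
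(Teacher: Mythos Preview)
Your proposal is correct and follows essentially the same approach as the paper's proof: identify the bottom vertex of $\widehat{O}_{k_n}$ as $\Sigma_{k_n}$, write the parabolic fixing $\Sigma_{k_n}$ as a one-parameter family (the paper writes it as the conjugate $\begin{pmatrix} 1 & \Sigma_{k_n} \\ 0 & 1 \end{pmatrix}\begin{pmatrix} 1 & 0 \\ \Lambda & 1 \end{pmatrix}\begin{pmatrix} 1 & -\Sigma_{k_n} \\ 0 & 1 \end{pmatrix}$, which is equivalent to your parametrization), and then invoke Lemma~\ref{lem:exact_hol} to pin down the parameter as $\Lambda_{k_n}$. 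The paper simply says ``one can check that $\Lambda=\Lambda_{k_n}$ from Lemma~\ref{lem:exact_hol}'', whereas you sketch how that check actually goes via the flip and rescaling of $\widehat{O}_{k_n}$; your added detail is consistent with the paper's setup and is more explicit than the paper itself.
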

	\begin{proof} As we discussed before, the coordinate of the bottom vertex of $\widehat{O}_{k_n}$ is $\Sigma_{k_n}$. Since $\rho(\underline{m}_{k_n})$ is a parabolic element fixing $\Sigma_{k_n}$, $$\rho(\underline{m}_{k_n})=\begin{pmatrix} 1 & \Sigma_{k_n} \\ 0 & 1 \end{pmatrix} \begin{pmatrix} 1 & 0 \\ \Lambda & 1 \end{pmatrix}\begin{pmatrix} 1 & -\Sigma_{k_n} \\ 0 & 1 \end{pmatrix}$$ for some $\Lambda$. One can check that $\Lambda= \Lambda_{k_n}$ from Lemma \ref{lem:exact_hol} and thus obtain the lemma.
	\end{proof}

	Combining Lemmas \ref{lem:mu} and \ref{lem:mu_repn} together, we obtain :
	\begin{thm} \label{thm:HolonomyRep} For the Wirtinger generator $m_{k_n}$ winding the over-arc of the crossing $c_{k_n}$ we have
	$$			\rho(m _{k_n})= \rho(m_{k_1}^{e_{k_1}} \cdots m_{k_{n-1}}^{e_{k_{n-1}}})^{-1} \arraycolsep=4pt\begin{pmatrix} 1+\Lambda_{k_n} \Sigma_{k_n} & -\Lambda_{k_n} \Sigma_{k_n}^2 \\[3pt] \Lambda_{k_n} & 1-\Lambda_{k_n} \Sigma_{k_n}\end{pmatrix} \rho(m_{k_1}^{e_{k_1}} \cdots  m_{k_{n-1}}^{e_{k_{n-1}}})$$ where $\Lambda_{k_n}$ is the crossing label of $c_{k_n}$ and $\Sigma_{k_n}$ is the sum of segment labels over every segment appeared when we follow the over-arc of $c_1$ from the crossing $c_1$ until we under-pass the crossing $c_{k_n}$.
	\end{thm}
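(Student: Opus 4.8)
The plan is to obtain Theorem~\ref{thm:HolonomyRep} by combining Lemma~\ref{lem:mu} and Lemma~\ref{lem:mu_repn}, so that the only step remaining is an elementary manipulation inside $\psl$. Write $g_{n-1} := m_{k_1}^{e_{k_1}} \cdots m_{k_{n-1}}^{e_{k_{n-1}}} \in \pi_1(M)$ for the product of signed Wirtinger generators that Lemma~\ref{lem:mu} identifies with the ``outgoing'' part $P_1 \overset{\lambda_\circ}{\rightarrow} \widehat{P}_{k_n}$ of the loop $\underline{m}_{k_n}$. Then Lemma~\ref{lem:mu} reads $\underline{m}_{k_n} = g_{n-1}\, m_{k_n}\, g_{n-1}^{-1}$; applying the homomorphism $\rho$ and rearranging gives $\rho(m_{k_n}) = \rho(g_{n-1})^{-1}\,\rho(\underline{m}_{k_n})\,\rho(g_{n-1})$. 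Substituting the explicit parabolic matrix for $\rho(\underline{m}_{k_n})$ from Lemma~\ref{lem:mu_repn} then yields the stated formula verbatim, and since $\rho$ is a homomorphism one may further expand $\rho(g_{n-1}) = \rho(m_{k_1})^{e_{k_1}} \cdots \rho(m_{k_{n-1}})^{e_{k_{n-1}}}$.

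First I would stress that all of the geometric content is already discharged before the theorem: Lemma~\ref{lem:mu} reads off the homotopy class of $\underline{m}_{k_n}$ by pulling the loop across the diagram (Figure~\ref{fig:mu_loop}(a)); Lemma~\ref{lem:exact_hol}, together with boundary parabolicity of the solution, forces $\rho(\underline{m}_{k_n})$ to be the unique parabolic fixing the developing image $\Sigma_{k_n}$ of the bottom vertex of $\widehat{O}_{k_n}$ and exchanging the two over-hypotenuses at $c_{k_n}$; and Lemma~\ref{lem:mu_repn} turns this normalization into the product $\left(\begin{smallmatrix}1 & \Sigma_{k_n}\\ 0 & 1\end{smallmatrix}\right)\left(\begin{smallmatrix}1 & 0\\ \Lambda_{k_n} & 1\end{smallmatrix}\right)\left(\begin{smallmatrix}1 & -\Sigma_{k_n}\\ 0 & 1\end{smallmatrix}\right)$. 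So the proof of the theorem itself is exactly the one-line conjugation above; I would present it as such and then point out that it is in fact a recursion: running over the under-passing crossings in the order $k_1, k_2, \dots, k_N$ in which they are met along $D$, the block $\rho(g_{n-1})$ at stage $n$ is built from the generators already computed at stages $1,\dots,n-1$, while $\Lambda_{k_n}$ and $\Sigma_{k_n}$ are explicit rational functions of the segment variables.

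The proof presents essentially no obstacle, but two small points deserve a word. The $\psl$ sign ambiguity is harmless: $\rho(g_{n-1})$ is defined only up to $\pm I$, yet the conjugation $A \mapsto \rho(g_{n-1})^{-1} A\, \rho(g_{n-1})$ does not see it, so the formula is well defined in $\psl$ with no residual ambiguity (consistent with $m$ being determined only up to $z \mapsto 1/z$). The other point is purely bookkeeping: one must check that the side on which the conjugators appear is dictated by the path-composition and holonomy conventions fixed at the start of Section~\ref{sec:Holonomy} --- the base point $P_1$, the lift of the base point chosen inside $O_1$, and the normalization $\rho(\mu): z \mapsto z+1$ --- so that the ordering $g_{n-1}\, m_{k_n}\, g_{n-1}^{-1}$ in Lemma~\ref{lem:mu} produces precisely the $\rho(g_{n-1})^{-1}(\,\cdots)\rho(g_{n-1})$ in the statement. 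With that convention in place there is nothing further to verify.
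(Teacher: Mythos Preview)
Your argument is correct and matches the paper's own proof exactly: the theorem is stated immediately after the phrase ``Combining Lemmas~\ref{lem:mu} and~\ref{lem:mu_repn} together, we obtain'', with no further justification given. Your additional remarks on the recursive nature of the formula and on the $\psl$ sign ambiguity are accurate elaborations that go slightly beyond what the paper spells out.
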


	    \begin{exam}[Figure-eight knot] \label{exam:figure_hol} We label crossings and segments as in Figure \ref{fig:FigureEightSeg}. Let $\sigma_i$ be a segment label of a segment $s_i$ and let $\Lambda_k$ be a crossing label of a crossing $c_k$. From the boundary parabolic solution given in Example \ref{ex:fig_seg_var}, we have	$\sigma_1=\sigma_5=0,\ \sigma_3=\sigma_7=-1,\ \sigma_2=\sigma_4=\sigma_6=\sigma_8= -\Lambda$ and $\Lambda_3=\Lambda_4=-\Lambda,\ \Lambda_1 =\Lambda_2= 1+\Lambda$ where $\Lambda^2 + \Lambda +1 =0$. From the diagram we have $(k_1,k_2,k_3,k_4)=(2,4,1,3)$ and compute $\rho(m_2), \rho(m_4), \rho(m_1)$ and $\rho(m_3)$ in this order.

		\begin{itemize}
			\item $\Sigma_2=\sigma_1=0,\ e_2=1$ and thus $\rho(m_2) = \arraycolsep=4pt\begin{pmatrix} 1+\Lambda_2 \Sigma_2 & -\Lambda_2 \Sigma_2 ^2  \\ \Lambda_2 & 1-\Lambda_2 \Sigma_2 \end{pmatrix}= \begin{pmatrix} 1 & 0 \\ 1+ \Lambda & 1 \end{pmatrix}$. 
			\item $\Sigma_4=\sigma_1+\sigma_2+\sigma_3 = -1-\Lambda,\ e_4=-1$ and thus $$\rho(m_4)= \rho(m_2)^{-1} \arraycolsep=4pt\begin{pmatrix} 1+\Lambda_4 \Sigma_4 & -\Lambda_4 \Sigma_4 ^2  \\ \Lambda_4 & 1-\Lambda_4 \Sigma_4 \end{pmatrix}\rho(m_2)=  \begin{pmatrix} -\Lambda & -1-\Lambda \\ 1+\Lambda & 2+\Lambda \end{pmatrix}.$$
			\item $\Sigma_1=\sigma_1+\sigma_2+\cdots+\sigma_5 = -1-2\Lambda,\ e_1=1$ and thus $$\rho(m_1)= \rho(m_2 \, m_4^{-1})^{-1} \arraycolsep=4pt\begin{pmatrix} 1+\Lambda_1 \Sigma_1 & -\Lambda_1 \Sigma_1^2 \\ \Lambda_1 & 1-\Lambda_1 \Sigma_1 \end{pmatrix} \rho(m_2 \, m_4^{-1}) = \begin{pmatrix} 1 & 1 \\ 0 & 1 \end{pmatrix}.$$
			\item $\Sigma_3=\sigma_1+\sigma_2+\cdots +\sigma_7=-2-3\Lambda,\ e_3=-1$ and thus $$\rho(m_3)= \rho(m_2 \, m_4^{-1} \, m_1)^{-1} \begin{pmatrix} 1+\Lambda_3 \Sigma_3 & -\Lambda_3 \Sigma_3^2 \\ \Lambda_3 & 1-\Lambda_3 \Sigma_3 \end{pmatrix} \rho(m_2 \, m_4^{-1} \, m_1) = \begin{pmatrix} 2+\Lambda & 1 \\ -\Lambda & -\Lambda \end{pmatrix}.$$
		\end{itemize}
		Remark that we have ``free'' parameters $p,q$ and $r$ in the solution but the holonomy representation $\rho$ does not depend on them as we computed. In this reason, we don't really care about these parameters.

    \end{exam}    		
	 \begin{prop} \label{prop:length_trace}  Let $\widehat{m}_{k_n}$ be the Wirtinger generator winding the incoming under-arc of $c_{k_n}$. Then we have $\textrm{Tr}\big(\widetilde{\rho}(m_{k_n} \widehat{m}_{k_n})\big) = 2+ \Lambda_{k_n}$ where $\widetilde{\rho}$ is a $\sl$-lifting of $\rho$.
	 \end{prop}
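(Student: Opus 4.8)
The plan is to reduce the computation to a single developed copy of the octahedron $o_{k_n}$, in which $m_{k_n}$ and $\widehat{m}_{k_n}$ become the two ``vertical'' meridians, and then to read the trace off from the intercusp-parameter interpretation of the crossing label $\Lambda_{k_n}$ recorded in the remark after its definition. I would first fix the $\sl$-lift $\widetilde{\rho}$ by requiring $\mathrm{Tr}\,\widetilde{\rho}(\mu)=2$ for one meridian $\mu$; since $H_1(M)$ is generated by the meridian class there are exactly two lifts, and this choice makes $\mathrm{Tr}\,\widetilde{\rho}=2$ on every Wirtinger generator. It is also the only choice for which $\mathrm{Tr}=2+\Lambda_{k_n}$ can hold.

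Both $m_{k_n}$ and $\widehat{m}_{k_n}$ are Wirtinger meridians, hence $\rho$-parabolic because the solution is boundary parabolic. Using Lemma~\ref{lem:mu} together with the analogous loop description for $\widehat{m}_{k_n}$ — following $\lambda_\circ$ along the same prefix $m_{k_1}^{e_{k_1}}\cdots m_{k_{n-1}}^{e_{k_{n-1}}}$ before reaching $c_{k_n}$, winding the incoming under-arc, and retracing — I would replace $m_{k_n}\widehat{m}_{k_n}$ by the conjugate product $\underline{m}_{k_n}\,\underline{\widehat m}_{k_n}$, which has the same trace. By Lemma~\ref{lem:exact_hol}, $\rho(\underline{m}_{k_n})$ is the parabolic fixing the bottom vertex of $\widehat{O}_{k_n}$ (the developed image of the over-arc end of $o_{k_n}$), and the parallel argument identifies $\rho(\underline{\widehat m}_{k_n})$ as the parabolic fixing the other vertical vertex of $\widehat{O}_{k_n}$ (the developed image of the under-arc end). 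These two ideal points are distinct, since a solution to $\Tcal_{4D}$ has no pinched octahedron.

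The crux is then the elementary identity $\mathrm{Tr}(AB)=2+c$ for the product of the two unit-translation parabolics $A,B$ fixing two distinct ideal points whose intercusp parameter, with respect to the chosen horospheres, is $c$: normalizing the fixed points to $\infty$ and $0$ and the horospheres to height $1$ and the one they determine gives $A=\left(\begin{smallmatrix}1&1\\0&1\end{smallmatrix}\right)$ and $B=\left(\begin{smallmatrix}1&0\\c&1\end{smallmatrix}\right)$, so $AB=\left(\begin{smallmatrix}1+c&1\\c&1\end{smallmatrix}\right)$. By the remark following the definition of $\Lambda_k$ (see \cite{neumann_intercusp_2016}), the intercusp parameter of the two vertical vertices of $\widehat{O}_{k_n}$, for the horospheres fixed throughout Section~\ref{sec:Holonomy} — the plane of height $1$ at $\infty$, carried $\rho$-equivariantly — equals $\Lambda_{k_n}$; and in the normalization of Section~\ref{subsec:developing}, where $\mu$ acts as $z\mapsto z+1$ and the vertical diagonal of every $L_k$ and $\widehat{L}_k$ has length $1$, every Wirtinger meridian acts as a unit-translation parabolic for these horospheres. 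Substituting yields $\mathrm{Tr}\,\widetilde{\rho}(m_{k_n}\widehat{m}_{k_n})=2+\Lambda_{k_n}$.

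I expect the main obstacle to be bookkeeping rather than anything conceptual: verifying that the conjugating word attached to $\underline{\widehat m}_{k_n}$ is genuinely the one attached to $\underline{m}_{k_n}$ (so that $\underline{m}_{k_n}\,\underline{\widehat m}_{k_n}$ really is conjugate to $m_{k_n}\widehat{m}_{k_n}$), and that the paper's developing-map normalization matches the unit convention underlying $\mathrm{Tr}(AB)=2+c$ on the nose, including the $\sl$-sign. If one prefers to bypass \cite{neumann_intercusp_2016}, the same conclusion follows by a direct computation in segment variables: compute $\rho(\underline{\widehat m}_{k_n})$ exactly as in Lemma~\ref{lem:mu_repn} — it is parabolic with fixed point the vertex of $\widehat{O}_{k_n}$ whose coordinate differs from $\Sigma_{k_n}$ by the relevant segment label of Definition~\ref{defn:segm_label}, and with strength pinned down by the $\Tcal_{4D}$-hyperbolicity equation at $c_{k_n}$ — then multiply the two $2\times 2$ matrices and simplify, using the definitions of the segment and crossing labels, to collapse the trace to $2+\Lambda_{k_n}$.
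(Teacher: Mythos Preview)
Your approach is correct, but the paper's route is shorter and avoids the intercusp-parameter machinery. The key observation you are missing is that the conjugate of $\widehat{m}_{k_n}$ by the prefix $m_{k_1}^{e_{k_1}}\cdots m_{k_{n-1}}^{e_{k_{n-1}}}$ is not merely \emph{some} parabolic fixing the other vertical vertex of $\widehat{O}_{k_n}$ --- it is literally $m_1$, the initial Wirtinger meridian. This is immediate from Figure~\ref{fig:mu_loop}(a): the incoming under-arc at $c_{k_n}$ is precisely the over-arc you have been traveling along since the last under-passing, and its Wirtinger generator is $m_1$ conjugated forward by the crossings already traversed. Since $\rho(m_1)=\left(\begin{smallmatrix}1&1\\0&1\end{smallmatrix}\right)$ by the normalization fixed at the start of Section~\ref{sec:Holonomy}, the paper simply computes
\[
\mathrm{Tr}\big(\widetilde{\rho}(m_{k_n}\widehat{m}_{k_n})\big)=\mathrm{Tr}\big(\widetilde{\rho}(\underline{m}_{k_n}\,m_1)\big)
=\mathrm{Tr}\left(\begin{pmatrix}1+\Lambda_{k_n}\Sigma_{k_n}&-\Lambda_{k_n}\Sigma_{k_n}^2\\ \Lambda_{k_n}&1-\Lambda_{k_n}\Sigma_{k_n}\end{pmatrix}\begin{pmatrix}1&1\\0&1\end{pmatrix}\right)=2+\Lambda_{k_n},
\]
using Lemma~\ref{lem:mu_repn} for the first factor.

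Your route via the identity $\mathrm{Tr}(AB)=2+c$ for unit-translation parabolics with intercusp parameter $c$ is really the same computation dressed up geometrically: the ``other vertex'' you locate is $\infty$, the parabolic fixing it is $\left(\begin{smallmatrix}1&1\\0&1\end{smallmatrix}\right)$, and the intercusp parameter between $\Sigma_{k_n}$ and $\infty$ with the height-$1$ horosphere normalization is indeed $\Lambda_{k_n}$. What your version buys is a conceptual picture that generalizes (and makes contact with \cite{neumann_intercusp_2016}); what the paper's version buys is that no verification of horosphere conventions or ``unit-translation'' normalizations is needed --- the bookkeeping worry you flagged simply does not arise, because one of the two matrices is already known explicitly.
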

	 Note that the trace in the above equation does not depend on the choice of $\sl$-lifting.
   	\begin{proof}
   	 From Figure \ref{fig:mu_loop}(a) we have $$\widehat{m}_{k_n}=(m_{k_1}^{e_{k_1}} \cdots m_{k_{n-1}}^{e_{k_{n-1}}})^{-1} \, m_1 \, (m_{k_1}^{e_{k_1}} \cdots m_{k_{n-1}}^{e_{k_{n-1}}}).$$ Combining Lemmas \ref{lem:mu} and \ref{lem:mu_repn}, 
   	\begin{equation*}
   	\begin{array}{ccl}
   	\textrm{Tr}\big(\widetilde{\rho}(m_{k_n} \widehat{m}_{k_n})\big) &=& \textrm{Tr}\big(\widetilde{\rho}(m_{k_1}^{e_{k_1}} \cdots m_{k_{n-1}}^{e_{k_{n-1}}})^{-1} \, \widetilde{\rho}(\underline{m}_{k_n} m_1) \, \widetilde{\rho}(m_{k_1}^{e_{k_1}} \cdots m_{k_{n-1}}^{e_{k_{n-1}}})\big) \\[3pt]
   	&=& \textrm{Tr}\big(\widetilde{\rho}(\underline{m}_{k_n} m_1)\big) \\[3pt]
   	&=& \textrm{Tr}\left( \arraycolsep=4pt\begin{pmatrix} 1+\Lambda_{k_n} \Sigma_{k_n} & -\Lambda_{k_n} \Sigma_{k_n}^2 \\ \Lambda_{k_n}  & 1-\Lambda_{k_n} \Sigma_{k_n} \end{pmatrix}\begin{pmatrix} 1 & 1 \\ 0  & 1 \end{pmatrix}\right) \\[1pt]
   	&=& 2+ \Lambda_{k_n}
   	\end{array} 
   	\end{equation*} Here we use the fact that the Wirtinger generator $m_1$ coincides with  the meridian $\mu$ chosen in Section \ref{subsec:developing} whose holonomy action is $z \mapsto z+1$.	
   	\end{proof} 

     \begin{rmk} \label{rmk:hat_wirtinger}It is convenient to keep the holonomy action of $\widehat{m}_{k_n}$ in mind. For instance, one can verify that $\rho(\widehat{m}_1)$ sends the lifting of the hypotenuse $\widehat{\beta}_1$ in $O_1$ to that of $\widehat{\delta}_1$. In particular, it fixes the bottom vertex $O_1$ as in Figure~\ref{fig:hol_action} and a simple computation gives $\rho(\widehat{m}_1)= \arraycolsep=4pt\begin{pmatrix} 1 & 0 \\ \Lambda_1 & 1 \end{pmatrix}$ where $\Lambda_1$ is the crossing-label of the crossing $c_1$. Furthermore, conjugating $\widehat{m}_{k_n}$ appropriately as we did to $\underline{m}_{k_n}$, we can obtain a loop whose holonomy action sends the lifting of $\widehat{\beta}_{k_n}$ (resp., $\widehat{\delta}_{k_n}$) in $O_{k_n}$ to that of $\widehat{\delta}_{k_n}$ (resp., $\widehat{\beta}_{k_n}$) fixing the bottom of vertex of $O_{k_n}$ at a positive crossing (resp,. negative crossing). (This would be a ``hat-version'' of Lemma~\ref{lem:exact_hol}.)
   		\begin{figure}[!h]	
   			\centering
   			\scalebox{1}{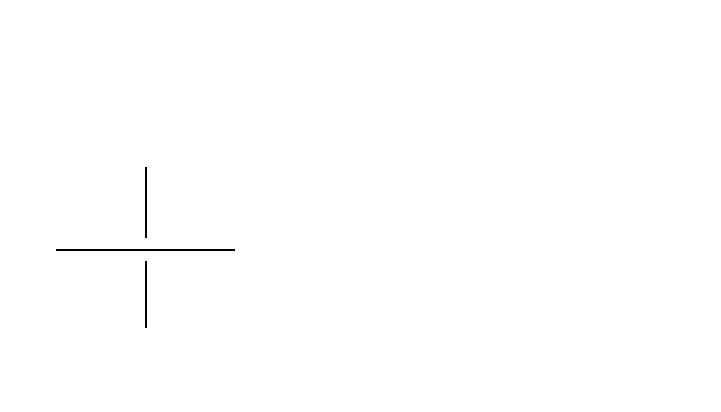}
   			\caption{The holonomy actions of $m_1$ and $\widehat{m}_1$ on $O_1$.}
   			\label{fig:hol_action}
   		\end{figure}
     \end{rmk}
     
         \begin{rmk} In this paper, we only present the formula in segment variables. However, the argument in this section can also be applied to the five-term triangulation. In this case, we need to care about pinched octahedra. In particular, Lemma \ref{lem:mu_repn} should be revised, since the fixed point  of $\rho(\underline{m}_{k_n})$ may be $\infty$ and in that case, $\rho(\underline{m}_{k_n})$ is of the form $\left(\begin{array}{cc} 1 & * \\ 0 & 1 \end{array} \right)$.  The exact formula will be given in \cite{KYptolemy}.
         \end{rmk}
         \begin{rmk} \label{rmk:hol_link} One can simply extend Theorem \ref{thm:HolonomyRep} to the case of a link as follows : consider a base point for each component of a link and then the argument in this section will be valid for each component. Then one can put them together through appropriate conjugation, which is determined by a path joining two base points. We will discuss the precise formula in \cite{KYptolemy}.
         \end{rmk}

   		Now we consider the cusp shape of $\rho$. Recall that the cusp shape of a hyperbolic knot is the Euclidean similarity structure on the link of the knot. For the holonomy representation of the hyperbolic structure, the \emph{cusp shape} is defined by the translation length of the holonomy action of the longitude when we fix the holonomy of the meridian by $z \mapsto z+1$. Here we take the longitude as a null-homologous one in $S^3\setminus K$.   In this manner, we define the cusp shape for any boundary parabolic representation by the translation length of the holonomy action of the longitude.
	\begin{thm} \label{thm:Longitude} The $\rho$-representation of the longitude $\lambda$ is $$\rho(\lambda)=\arraycolsep=4pt\begin{pmatrix} 1&\Sigma_\circ-w(D) \\[3pt] 0 & 1 \end{pmatrix}$$ where $\Sigma_\circ$ is the sum of segment labels over all segments in $D$ and $w(D)$ is the writhe of $D$. In particular, the cusp shape of $\rho$ is $\Sigma_\circ-w(D)$.
	\end{thm}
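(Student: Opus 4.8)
The plan is to compute $\rho(\lambda_\circ)$ directly from the developing map along the right-handed push-off $\lambda_\circ$ of Section~\ref{subsec:developing}, and then pass to the null-homologous longitude $\lambda$ by a framing correction involving the writhe. First I would note that $\mu$ and $\lambda_\circ$ are both (homotopic to) based loops in $l(p^\circ)$, which is a copy of the peripheral torus of $K$, so their classes commute in $\pi_1(M,P_1)$. Since the solution is boundary parabolic and $\rho(\mu)$ has been normalized to be $z\mapsto z+1$, the element $\rho(\mu)$ is a non-trivial parabolic fixing $\infty$; a standard fact about $\psl$ then forces the commuting element $\rho(\lambda_\circ)$ to fix $\infty$ as well, so $\rho(\lambda_\circ)=\begin{pmatrix} 1 & c \\ 0 & 1 \end{pmatrix}$ in $\psl$ for some $c\in\Cbb$.

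Next I would identify $c$ with $\Sigma_\circ$. Developing along $\lambda_\circ$ produces the chain of octahedra visiting the $2N$ quadrilaterals $l_k,\widehat{l}_k$ of $l(p^\circ)$ exactly once each, starting from $O_1$ normalized so that its bottom vertex is $0$ and its top vertex is $\infty$. Consecutive octahedra in this chain are separated by exactly one segment of $D$, and by Definition~\ref{defn:segm_label} the segment label of that segment is precisely the difference of the coordinates of the two relevant bottom vertices, the flip $z\mapsto 1/z$ relating $O_k$ and $\widehat{O}_k$ together with the renormalization of the vertical diagonal having already been absorbed into the definition. Because $\lambda_\circ$ runs parallel to $K=s_1\cup\cdots\cup s_{2N}$ it passes along every one of the $2N$ segments exactly once, so telescoping these differences around the whole loop shows that the bottom vertex of the terminal octahedron $\rho(\lambda_\circ)\cdot O_1$ lies at $\Sigma_\circ$, the sum of all $2N$ segment labels. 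Since this bottom vertex is $\rho(\lambda_\circ)(0)=c$, we get $c=\Sigma_\circ$ and hence $\rho(\lambda_\circ)=\begin{pmatrix} 1 & \Sigma_\circ \\ 0 & 1 \end{pmatrix}$.

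Finally I would apply the framing correction. The right-handed push-off $\lambda_\circ$ has linking number $w(D)$ with $K$, while the null-homologous longitude $\lambda$ has linking number $0$; since $H_1(M)\cong\mathbb{Z}$ is generated by $\mu$, this means $\lambda_\circ=\lambda\,\mu^{w(D)}$ as elements carried by the peripheral torus (with the standard orientation conventions), so $\rho(\lambda)=\rho(\lambda_\circ)\,\rho(\mu)^{-w(D)}=\begin{pmatrix} 1 & \Sigma_\circ \\ 0 & 1 \end{pmatrix}\begin{pmatrix} 1 & -w(D) \\ 0 & 1 \end{pmatrix}=\begin{pmatrix} 1 & \Sigma_\circ-w(D) \\ 0 & 1 \end{pmatrix}$. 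As the cusp shape is by definition the translation length of $\rho(\lambda)$ once $\rho(\mu)$ is fixed to be $z\mapsto z+1$, it equals $\Sigma_\circ-w(D)$, which is the assertion.

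The step I expect to be the main obstacle is the middle one: making the telescoping argument for $c=\Sigma_\circ$ fully rigorous requires care with conventions — which vertex of $\widehat{O}_k$ is the ``bottom'', the direction in which $\lambda_\circ$ is traversed relative to the orientation of $D$, and verifying in each of the four cases of Figure~\ref{fig:hyp_segm} that the actual per-segment displacement is exactly the quantity recorded in Definition~\ref{defn:segm_label} rather than its negative. Everything else is formal: the commuting-parabolics step is routine hyperbolic geometry (and one checks separately that the degenerate subcase $\rho(\mu)=\mathrm{id}$ cannot occur under our normalization), and the framing correction is the standard relation between the blackboard and Seifert framings. I would also remark that the same result can be obtained algebraically by writing $\lambda$ as a word in the Wirtinger generators $m_{k_i}$ and multiplying the matrices of Theorem~\ref{thm:HolonomyRep}, but the developing-map argument above is shorter and makes the geometric meaning of $\Sigma_\circ$ transparent.
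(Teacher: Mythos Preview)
Your proposal is correct and follows essentially the same approach as the paper: both arguments observe that peripheral loops act as translations (since $\rho(\mu)$ is the parabolic $z\mapsto z+1$), identify the translation length of $\rho(\lambda_\circ)$ as $\Sigma_\circ$ via the developing along $\lambda_\circ$, and then apply the framing correction $\lambda=\lambda_\circ\,\mu^{-w(D)}$. Your version supplies more justification for each step (commuting parabolics, the telescoping of segment labels, the linking-number interpretation of $w(D)$), but the structure is identical to the paper's terse proof.
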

	\begin{proof} Since we fix the holonomy action of the meridian $\mu$ by $z \mapsto z+1$, that of any loop in the link of the knot is also a translation. In particular, the holonomy action of the loop $\lambda_\circ$ is $z \mapsto z+\Sigma_\circ$. As one verifies that the loop $\lambda_\circ$ winds the knot $w(D)$ times, the longitude $\lambda$ is $\lambda_\circ  \mu^{-w(D)}$ and hence we conclude the theorem.
	\end{proof}
	\begin{cor} \label{cor:cusp} The cusp shape of $\rho$ is
		$$\sum_\textrm{crossing $c_k$} \lambda_k$$ where $\lambda_k$ at the crossing $c_k$ is $\dfrac{z_c-z_a}{z_d-z_b}-1$ for Figure \ref{fig:reg_crossing}(a) and $\dfrac{z_c-z_a}{z_b-z_d}+1$ for Figure \ref{fig:reg_crossing}(b). Here $z_i$ is a segment variable assigned to the segment $s_i$ as in Figure \ref{fig:reg_crossing}.
	\end{cor}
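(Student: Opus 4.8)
The starting point is Theorem~\ref{thm:Longitude}, which already identifies the cusp shape with $\Sigma_\circ - w(D)$, where $\Sigma_\circ=\sum_i \sigma_i$ is the sum of all segment labels. So the task is purely to reorganize this sum into one indexed by crossings. The key observation is that each segment label in Definition~\ref{defn:segm_label} is a difference of two fractions, the first of which depends only on the crossing at which the segment originates (and on whether the segment leaves there as the over- or the under-strand), while the second depends only on the crossing at which the segment terminates. Hence each segment label contributes a ``head term'' to its initial crossing and, with a minus sign, a ``tail term'' to its terminal crossing.

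Since $D$ has $2N$ segments and $N$ crossings, and each crossing has exactly two outgoing and two incoming segments, regrouping $\Sigma_\circ$ by crossing produces, at each $c_k$, a sum of four fractions in the four segment variables $z_a,z_b,z_c,z_d$ around $c_k$ (as in Figure~\ref{fig:reg_crossing}): the two head terms of the segments leaving $c_k$ minus the two tail terms of the segments entering $c_k$. Writing $w(D)=\sum_k e_k$ and distributing $-w(D)$ as $\sum_k(-e_k)$, it then suffices to prove the local identity that this four-fraction sum at $c_k$, minus $e_k$, equals $\lambda_k$, i.e.\ $\tfrac{z_c-z_a}{z_d-z_b}-1$ if $c_k$ is positive and $\tfrac{z_c-z_a}{z_b-z_d}+1$ if $c_k$ is negative. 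This local identity is then a direct algebraic simplification once all fractions are written in the common variables $z_a,z_b,z_c,z_d$; the one-segment over- and under-arc cases of Figure~\ref{fig:hyp_segm}(c),(d) are absorbed automatically, the ``head'' and ``tail'' crossings merely coinciding there.

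The main obstacle is the bookkeeping rather than the algebra. One must carefully translate between the labelling of segments around a crossing in Figure~\ref{fig:reg_crossing} and the labelling $z_a,\dots,z_e$ of Figure~\ref{fig:hyp_segm} used in Definition~\ref{defn:segm_label}, and verify that the head/tail splitting is consistent across all four segment types (a)--(d) and both crossing signs, so that the fraction attached to a crossing through an outgoing segment and the one attached through an incoming segment genuinely combine as functions of the same local data. Once this is set up, the collapse of the four fractions (together with $-e_k$) to $\lambda_k$ is routine, in the spirit of the direct check in Proposition~\ref{prop:mirror_seg}, and one can cross-check the outcome against the figure-eight computation of Example~\ref{exam:figure_hol}, where $\Sigma_\circ=-2-4\Lambda$ and $w(D)=0$ recover the expected value.
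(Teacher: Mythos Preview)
Your proposal is correct and follows essentially the same route as the paper: both start from Theorem~\ref{thm:Longitude}, split each segment label into a term attached to its initial crossing and a term attached to its terminal crossing, regroup $\Sigma_\circ$ as a sum over crossings of four local fractions, and then absorb $-w(D)=\sum_k(-e_k)$ crossing by crossing to obtain $\lambda_k$. The paper simply carries out the local computation explicitly (writing the four terms at a positive crossing as $-\tfrac{z_c}{z_c-z_a},\ \tfrac{z_c}{z_c-z_a},\ -\tfrac{z_a}{z_d-z_b},\ \tfrac{z_c}{z_d-z_b}$ and summing), whereas you describe the same regrouping in head/tail language and leave that final simplification as routine.
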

	\begin{proof}
		Note that a segment label of a segment is defined by the sum of two terms where each term comes from the crossing attached to the segment, i.e., the first and the second terms in Definition \ref{defn:segm_label} come from the crossings $c_k$ and $c_{k+1}$ in Figure \ref{fig:hyp_segm}, respectively. Thus each crossing contributes four terms to  $\Sigma_\circ$, the sum of all segment labels. Precisely, these four terms are $-\frac{z_c}{z_c-z_a},\frac{z_c}{z_c-z_a},-\frac{z_a}{z_d-z_b}$ and $\frac{z_c}{z_d-z_b}$ for the crossing in Figure \ref{fig:reg_crossing}(a) and the sum of them is $\frac{z_c-z_a}{z_d-z_b}$. Similarly, we obtain $\frac{z_c-z_a}{z_b-z_d}$ for Figure \ref{fig:reg_crossing}(b). To manage the term $-w(D)$ in Theorem \ref{thm:Longitude}, we revise these sums by $-1$ and $1$ for Figure \ref{fig:reg_crossing}(a) and (b), respectively.
	\end{proof}	
	\begin{exam}[Figure-eight knot] As we computed in Example \ref{exam:figure_hol}, the sum of all segment labels is $-4\Lambda -2$ for the figure-eight knot diagram in Example \ref{ex:fig_seg_var} where $\Lambda^2+\Lambda+1=0$. Since the diagram has 0 writhe, the cusp shape is $\pm 2 \sqrt{-3}$. 
	\end{exam}
	\begin{rmk} \label{rmk:cusp_link} For  the case of a link, one can compute the cusp shape for each component of a link using the same formula give in Theorem \ref{thm:Longitude}. Specifically, summing segment labels of the segments in a component of a link and then revising it by the writhe of the component, it gives the cusp shape for the component.
	\end{rmk} 
	\section{The $T(2,N)$ torus knot and the $J(N,M)$ knot} \label{sec:Solution}
		In this section, we present all boundary parabolic solutions for the $T(2,N)$ torus knot $T(2,N)$ and the $J(N,-M)$ knot with respect to the diagrams given in Figure \ref{fig:torus_twist}. We then compute the complex volume and cusp shape by applying the solution to Theorem 1.2 in \cite{CKK_2014} and Theorem \ref{thm:Longitude}, respectively.
	\begin{figure}[!h]
			\centering
			\scalebox{1}{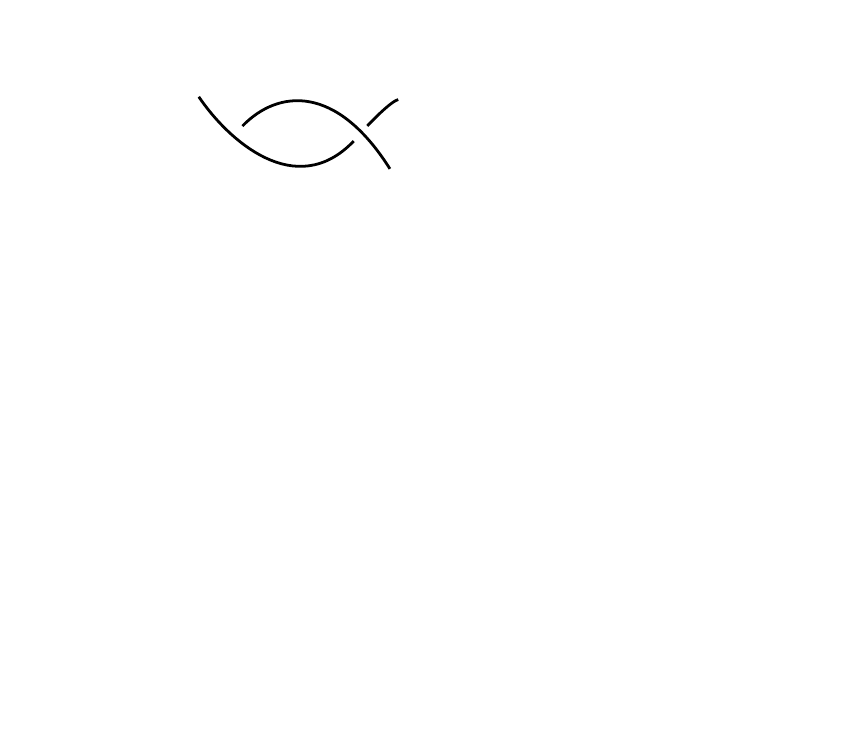}
			\caption{Diagrams of $T(2,N)$ and $J(N,-M)$.}
			\label{fig:torus_twist}
	\end{figure}

	Any non-trivial boundary parabolic representation arises as the holonomy of a pseudo-developing associated to a boundary parabolic solution for the five-term triangulation \cite{cho_optimistic_2016}. On the other hand, applying Proposition \ref{lem:pinched_region} to the given diagrams, one can check that all the octahedra have to be  pinched whenever one of them is pinched. In fact, in this case, we obtain an abelian representation as the holonomy which is not our interest.  Therefore, the boundary parabolic solutions for the four-term triangulation present all the non-abelian boundary parabolic representations. 
	
		\subsection{Boundary parabolic solutions for the $T(2,N)$ torus knot} \label{subsec:torus}
		Let us assign segment variables $z=(z_1,z_2,\cdots,z_{2N+1},z_{2N+2} )$ as in Figure \ref{fig:torus_twist}(a). We doubly label the top segment by $z_1, z_{2N+1}$ and the bottom segment by $z_2, z_{2N+2}$ for notational convenience. Also, we will confuse a segment and its segment variable for simplicity. We will compute a boundary parabolic solution in the following order.
		\begin{enumerate}
			\item[(1)]  Solve the hyperbolicity equations of $z_3,z_4,\cdots,z_{2N}$.
			\item[(2)] Solve $z_1=z_{2N+1}$ and $z_2=z_{2N+2}$.
			\item[(3)] Solve the hyperbolicity equations of $z_1(=z_{2N+1})$ and  $z_2(=z_{2N+2})$. 
		\end{enumerate}		
		\textbf{(1) Solve the hyperbolicity equations of  $z_3,z_4,\cdots,z_{2N}$.} \\[3pt]        
        We first define the notion of a \emph{$W$-Fibonacci sequence} which plays a key role in computation.
		\begin{defn} A sequence $(F_i)_{i \in \Zbb}$ is called a \emph{$W$-Fibonacci} for a complex number $W$ if it satisfies the recurrsion relation $F_{i+1} = W \cdot F_i + F_{i-1}$ for all $i$. The $W$-Fibonacci sequence $(B_i)_{i \in \Zbb}$ with initial conditions $B_0=0$ and $B_1=1$ is called the \emph{base $W$-Fibonacci sequence}.
		\end{defn}
		The following lemmas are quite easy but will be useful in the computation.
        \begin{lem} \label{lem:BasicFibo} Let $(F_i)_{i \in \Zbb}$ and $(G_i)_{i \in \Zbb}$ be $W$-Fibonacci sequences and $(B_i)_{i \in \Zbb}$ be the base $W$-Fibonacci sequence. Then
        	\\[3pt]
            (a) $F_i=F_0  B_{i-1} + F_1 B_{i}$ for all $i$;\\
            (b) For any $n \in \Zbb$, $$(-1)^i \cdot(F_iG_{i+n}-F_{i-1}G_{i+n+1})$$ does not depend on $i$. In particular, we have $B_i^2 -B_{i-1}B_{i+1}=(-1)^{i+1}$ for all $i$;\\
            (c) $B_{i+1} = \displaystyle\sum_{0 \leq j \leq \frac{i}{2}} \binom{i-j}{j} W^{i-2j}$ for all $i \geq 0$. 
		\end{lem}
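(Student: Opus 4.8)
The plan is to handle the three parts in turn, the guiding observation being that the set of $W$-Fibonacci sequences is a two-dimensional vector space over $\Cbb$ under the recursion $F_{i+1}=WF_i+F_{i-1}$, with a sequence pinned down by any two consecutive entries; everything then reduces to comparing entries at $i=0,1$ and a determinant computation.

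For (a), I would first note that the shifted sequence $(B_{i-1})_{i\in\Zbb}$ is again $W$-Fibonacci, and that from $B_1=WB_0+B_{-1}$ with $B_0=0$, $B_1=1$ one gets $B_{-1}=1$. Hence $(B_{i-1})$ has entries $1$ at $i=0$ and $0$ at $i=1$, while $(B_i)$ has entries $0$ at $i=0$ and $1$ at $i=1$, so the right-hand side $F_0B_{i-1}+F_1B_i$ is the $W$-Fibonacci sequence taking the values $F_0,F_1$ at $i=0,1$. It therefore agrees with $(F_i)$ for all $i\in\Zbb$ by a two-sided induction (the recursion is reversible), which finishes (a).

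For (b) I would package the recursion as the matrix identity
\[
\begin{pmatrix} F_{i+1} & G_{i+n+2} \\ F_i & G_{i+n+1} \end{pmatrix}
=\begin{pmatrix} W & 1 \\ 1 & 0 \end{pmatrix}
\begin{pmatrix} F_i & G_{i+n+1} \\ F_{i-1} & G_{i+n} \end{pmatrix},
\]
valid for all $i$ and all $n\in\Zbb$. Since $\left(\begin{smallmatrix} W & 1 \\ 1 & 0\end{smallmatrix}\right)$ has determinant $-1$, taking determinants gives
\[
F_{i+1}G_{i+n+1}-F_iG_{i+n+2}=-\bigl(F_iG_{i+n}-F_{i-1}G_{i+n+1}\bigr),
\]
so $(-1)^i\bigl(F_iG_{i+n}-F_{i-1}G_{i+n+1}\bigr)$ is independent of $i$ (this can also be checked directly by substituting $F_{i+1}=WF_i+F_{i-1}$, $G_{i+n+2}=WG_{i+n+1}+G_{i+n}$ and cancelling the $WF_iG_{i+n+1}$ terms). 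Specializing to $F=G=B$, $n=0$, and evaluating the now-constant expression at $i=1$ gives $(-1)^1(B_1^2-B_0B_2)=-1$, hence $B_i^2-B_{i-1}B_{i+1}=(-1)^{i+1}$ for all $i$.

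For (c) I would induct on $i\ge0$, writing $S(i):=\sum_{0\le j\le i/2}\binom{i-j}{j}W^{i-2j}$, checking $S(0)=1=B_1$ and $S(1)=W=B_2$, and for $i\ge2$ substituting $B_i=S(i-1)$, $B_{i-1}=S(i-2)$ into $B_{i+1}=WB_i+B_{i-1}$, reindexing the second sum by $j\mapsto j-1$, and matching the coefficient of $W^{i-2j}$ via Pascal's rule $\binom{i-1-j}{j}+\binom{i-1-j}{j-1}=\binom{i-j}{j}$ for $j\ge1$, with the $j=0$ term handled separately and the convention $\binom{a}{b}=0$ for $0\le a<b$ absorbing the range mismatch. (Alternatively, a one-line combinatorial argument: $B_{m+1}$ is the weighted count of tilings of a length-$m$ strip by weight-$W$ monominoes and weight-$1$ dominoes, and a tiling with $j$ dominoes contributes $\binom{m-j}{j}W^{m-2j}$.) All three parts are elementary; I expect the only place requiring genuine care to be the summation-range and Pascal-rule bookkeeping in (c), and everything else to be short linear algebra over $\Cbb$.
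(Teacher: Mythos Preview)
Your proof is correct and follows essentially the same approach as the paper: both argue by the uniqueness of a $W$-Fibonacci sequence given two consecutive values for (a) and (c), and both reduce (b) to the single-step identity $F_iG_{i+n}-F_{i-1}G_{i+n+1}=-(F_{i-1}G_{i-1+n}-F_{i-2}G_{i+n})$. Your matrix packaging in (b) via the transfer matrix $\left(\begin{smallmatrix} W & 1 \\ 1 & 0\end{smallmatrix}\right)$ of determinant $-1$ is a clean alternative presentation of the same computation the paper does by direct substitution, and your (c) simply spells out in detail the Pascal-rule induction that the paper leaves as ``one can check.''
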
 
        \begin{proof} (a) Let $H_i:=F_0 B_{i-1} + F_i B_i$. Then $(H_i)_{i\in \Zbb}$ is the $W$-Fibonacci sequence satisfying $H_0=F_0$ and $H_1=F_1$. Since any $W$-Fibonacci sequence is uniquely determined by two consecutive terms, we have $H_i=F_i$ for all $i$. \\
	   (b) We have 			    $F_iG_{i+n}-F_{i-1}G_{i+n+1}=(WF_{i-1}+F_{i-2})G_{i+n}-F_{i-1}(WG_{i+n}+G_{i-1+n}) = -(F_{i-1}G_{i-1+n} - F_{i-2}G_{i+n})$.\\        	        
        (c) One can check that the right-hand side of the equation is a $W$-Fibonacci sequence with the same initial condition as $(B_i)_{i \in \Zbb}$.
		\end{proof}
		\begin{lem}\label{lem:key_lem} Let $D$ be a knot diagram containing a local diagram with segment variables $z_i$ assigned as in Figure \ref{fig:local_twisting}. Suppose $z_1,z_2,z_3$ and $z_4$ satisfy \begin{equation} \label{eqn:key_eqn}
		z_{2i-1} = \dfrac{F_i}{G_i},\ z_{2i} = \dfrac{F_{i-1}}{G_{i+1}}
		\end{equation} for some $W$-Fibonacci sequences $(F_i) _{i \in \Zbb}$ and $(G_i) _{i \in \Zbb}$, for $i=1,2$. Then the hyperbolicity equations of $z_3,z_4,\cdots,z_{2n}$ hold if and only if the equation (\ref{eqn:key_eqn}) holds for $1 \leq i \leq n+1$.
        \begin{figure}[!h]
			\centering
			\scalebox{1}{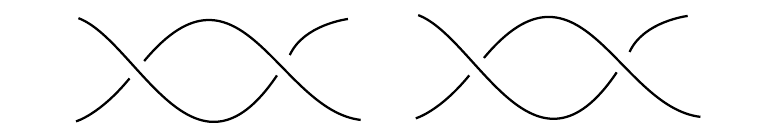}
			\caption{Local diagram with twisting.}
			\label{fig:local_twisting}
		\end{figure}
	\end{lem}
	\begin{proof} Rewriting the hyperbolicity equations of $z_{2i+1}$ and $z_{2i+2}$, we have 
\begin{equation*}
	\left\{ 
		\begin{array}{ll} 
			z_{2i+1}=z_{2i-1}+z_{2i} - \dfrac{z_{2i-1} z_{2i} }{z_{2i-3}} \\[10pt]
			z_{2i+2}=\dfrac{z_{2i-1} z_{2i}}{z_{2i-1}+z_{2i}-z_{2i-2}} 
		\end{array}
		\right.
\end{equation*} for $1 \leq i \leq n-1$ and one can prove the lemma by induction.
	\end{proof}
	\begin{rmk} \label{rmk:key_mirror} By Proposition \ref{prop:mirror_seg}, if one considers a diagram containing the mirror image of Figure \ref{fig:local_twisting}, then the equation (\ref{eqn:key_eqn}) is replaced by $$z_{2i-1} = \dfrac{F_{i-1}}{G_{i+1}},\  z_{2i} = \dfrac{F_{i}}{G_{i}}.$$
	\end{rmk}
	We now apply Lemma \ref{lem:key_lem} to $T(2,N)$ as follows. Let  $$\Lambda:=(z_2-z_3)\left(\dfrac{1}{z_1} - \dfrac{1}{z_4}\right)$$ and we define $\sqrt{\Lambda}$-Fibonacci sequences $(F_i)$ and $(G_i)$ by initial conditions $F_0= z_1 z_2 \sqrt{\Lambda}$, $F_1=z_1(z_3-z_2)$ and $G_1 =(z_3-z_2)$, $G_2= z_1\sqrt{\Lambda}$. Then the equation (\ref{eqn:key_eqn}) holds trivially for $i=1,2$ and hence for $1 \leq i \leq N+1$ by Lemma \ref{lem:key_lem}. We may rewrite $z_1,\cdots,z_{2N+2}$ using the base $\sqrt{\Lambda}$-Fibonacci sequence $(B_i)$ : 
	\begin{equation}\label{eqn:torus_sol}
	z_{2i-1} = \dfrac{z_1 z_2 \sqrt{\Lambda} B_{i-1}+z_1(z_3-z_2) B_i }{(z_3-z_2)B_{i-2}+z_1\sqrt{\Lambda}B_{i-1}},\ z_{2i} = \dfrac{z_1 z_2 \sqrt{\Lambda} B_{i-2}+z_1(z_3-z_2) B_{i-1} }{(z_3-z_2)B_{i-1}+z_1\sqrt{\Lambda}B_{i}}
	\end{equation} for $1 \leq i \leq N+1$. Here in the denominator we use $G_i=G_0B_{i-1}+G_1 B_i = G_1 B_{i-2}+G_2 B_{i-1}$. Using the explicit form of $B_i$ as in Lemma \ref{lem:BasicFibo}(c), the expressions of $z_{2i-1}$ and $z_{2i}$ in the equation (\ref{eqn:torus_sol}) are rational polynomials in $z_1,z_2,z_3$ and $\Lambda$ (not $\sqrt{\Lambda}$). \\[8pt]
	\textbf{(2) Solve $z_1=z_{2N+1}$ and $z_2=z_{2N+2}$.} \\[3pt]
	 To solve $z_1=z_{2N+1} \Leftrightarrow F_1G_{N+1}-F_{N+1}G_1=0$ and $z_2=z_{2N+2} \Leftrightarrow F_0G_{N+2}-F_{N}G_2= 0$ let $H_{i}:=F_1 G_{i+1} -F_{i+1}G_1$ and $H'_{i}:=F_0 G_{i+2} -F_{i}G_2$. Then $(H_{i})$ and $(H'_{i})$ are $\sqrt{\Lambda}$-Fibonacci sequences satisfying $H_0=H'_0=0$. Thus, by Lemma \ref{lem:BasicFibo}(a), we have $H_{N}=H_1 B_{N}$ and $H'_{N}=H'_1  B_{N}$. Also,  $H_1$ and $H'_1$ are non-zero, otherwise we have either $z_1=z_3$ or  $z_2=z_4$ and this violates Proposition \ref{prop:SegmRule}. Here we also use $\Lambda \neq 0$.  Therefore, we conclude that both $z_1=z_{2N+1}$ and $z_2=z_{2N+2}$ hold if and only if $B_N=0$. \\[8pt]	 
	\textbf{(3) Solve the hyperbolicity equations of $z_1(=z_{2N+1})$ and  $z_2(=z_{2N+2})$.} \\[3pt]
	To solve the hyperbolicity equation of $z_1$,  $$\dfrac{z_1-z_2}{z_1-z_3}\cdot \dfrac{z_2(z_1-z_{2N-1})}{z_{2N-1}(z_1-z_2)}=1 \Longleftrightarrow z_1 z_2 G_N -(z_1+z_2-z_3)F_N=0,$$ let $H''_i:=z_1 z_2 G_i -(z_1+z_2-z_3)F_i$. Then $(H''_i)$ is a $\sqrt{\Lambda}$-Fibonacci sequence satisfying $H''_0=0$ and $H''_1 \neq 0$.  Hence the hyperbolicity equation of $z_1$ holds if and only if $B_N=0$. Moreover, Proposition \ref{prop:redundant} tells us that we do not need to solve the hyperbolicity equation of $z_2$.
	
	Replacing $z_1,z_2,$ and $z_3$ by $p,q,$ and $r$, respectively, we obtain :
	\begin{thm} \label{thm:SolTorus} Let $z=(z_1,\cdots,z_{2N})$ be segment variables assigned to the torus knot $T(2,N)$ as in Figure \ref{fig:torus_twist}(a). Let $(B_i)_{i \in \Zbb}$ be the base $\sqrt{\Lambda}$-Fibonacci sequence. The segment variables $z$ form a boundary parabolic solution if and only if they are of the form
		$$z_{2i-1} = \dfrac{p q \sqrt{\Lambda}B_{i-1}+p(r-q) B_i }{(r-q)B_{i-2}+p\sqrt{\Lambda}B_{i-1}},\ z_{2i} = \dfrac{p q \sqrt{\Lambda} B_{i-2}+p(r-q) B_{i-1} }{(r-q)B_{i-1}+p\sqrt{\Lambda}B_{i}}
	$$ for $1 \leq i \leq N$ for some $p,q,r$ where $\Lambda$ is chosen to satisfy $B_N=0$. 
	\end{thm}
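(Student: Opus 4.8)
The plan is to carry out, rigorously and as an ``if and only if'', the three-step reduction sketched in the discussion preceding the statement, so that the theorem becomes a clean packaging of that argument. First we fix segment variables $z_1,\dots,z_{2N}$ as in Figure \ref{fig:torus_twist}(a) together with the shadow labels $z_{2N+1},z_{2N+2}$, and set $\Lambda:=(z_2-z_3)\left(\tfrac{1}{z_1}-\tfrac{1}{z_4}\right)$ and $W:=\sqrt{\Lambda}$. Since all $N$ crossings of the standard $T(2,N)$ diagram have the same sign, the entire diagram is a single uniform twist region, i.e.\ an instance of the local picture of Figure \ref{fig:local_twisting} (appealing to Remark \ref{rmk:key_mirror} if the orientation forces the mirror version). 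Defining the $W$-Fibonacci sequences $(F_i)$ and $(G_i)$ by $F_0=z_1z_2W$, $F_1=z_1(z_3-z_2)$, $G_1=z_3-z_2$, $G_2=z_1W$, one checks by a direct substitution that relation (\ref{eqn:key_eqn}) holds for $i=1,2$; Lemma \ref{lem:key_lem} then yields that the hyperbolicity equations of $z_3,\dots,z_{2N}$ hold if and only if (\ref{eqn:key_eqn}) holds for every $1\le i\le N+1$. Rewriting $(F_i),(G_i)$ through the base $W$-Fibonacci sequence $(B_i)$ by Lemma \ref{lem:BasicFibo}(a) produces the closed form (\ref{eqn:torus_sol}), and the parity identity $B_i(-W)=(-1)^{i+1}B_i(W)$ (an immediate induction from the recursion) shows these expressions are invariant under $W\mapsto -W$, hence are rational functions of $z_1,z_2,z_3$ and $\Lambda$ alone.

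Second, we impose the two closing-up conditions coming from the doubled labelling. Expanding $z_1=z_{2N+1}$ via $z_{2i-1}=F_i/G_i$ gives $F_1G_{N+1}-F_{N+1}G_1=0$; the sequence $H_i:=F_1G_{i+1}-F_{i+1}G_1$ is again $W$-Fibonacci with $H_0=0$, so $H_N=H_1B_N$ by Lemma \ref{lem:BasicFibo}(a), and $H_1\ne 0$ since $H_1=0$ would force $z_1=z_3$, contradicting Proposition \ref{prop:SegmRule} (here we also use $\Lambda\ne 0$, part of non-degeneracy). The identical argument applied to $H'_i:=F_0G_{i+2}-F_iG_2$ handles $z_2=z_{2N+2}$. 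Thus both closing-up conditions collapse to the single scalar equation $B_N=0$.

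Third, the only surviving equations are the hyperbolicity equations of $z_1$ and $z_2$. The equation of $z_1$ rearranges to $z_1z_2G_N-(z_1+z_2-z_3)F_N=0$; the sequence $H''_i:=z_1z_2G_i-(z_1+z_2-z_3)F_i$ is $W$-Fibonacci with $H''_0=0$ and $H''_1\ne 0$, so this equation is again equivalent to $B_N=0$ and contributes nothing new, while the equation of $z_2$ is redundant by Proposition \ref{prop:redundant}. Combining the three steps and renaming $(z_1,z_2,z_3)$ as $(p,q,r)$ gives precisely the stated characterization; the non-degeneracy condition in the definition of a segment variable is an open condition automatically satisfied for generic $(p,q,r)$ once $\Lambda$ is chosen among the (finitely many, nonzero) roots of $B_N$.

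The step I expect to be the main obstacle is the first one: confirming that the hyperbolicity equations (\ref{eqn:hyp_z}) throughout the uniform twist region of $T(2,N)$ are exactly the two-term recursion used in the proof of Lemma \ref{lem:key_lem}, and then pinning down the initial data $F_0,F_1,G_1,G_2$ so that (\ref{eqn:key_eqn}) becomes an \emph{identity} for $i=1,2$ rather than a genuine constraint --- together with the bookkeeping that identifies the ``leftover'' equations with precisely those of $z_1$ and $z_2$ and that makes the closing-up conditions collapse, through the Fibonacci identities of Lemma \ref{lem:BasicFibo}, to the single polynomial condition $B_N=0$. After that, everything reduces to routine manipulation of linear recurrences.
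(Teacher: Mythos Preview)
Your proposal is correct and follows essentially the same three-step argument as the paper: apply Lemma \ref{lem:key_lem} with the specific initial data $F_0=z_1z_2\sqrt{\Lambda}$, $F_1=z_1(z_3-z_2)$, $G_1=z_3-z_2$, $G_2=z_1\sqrt{\Lambda}$ to handle the interior equations, reduce both closing-up conditions to $B_N=0$ via the auxiliary $W$-Fibonacci sequences $H_i,H'_i$, and then show the remaining hyperbolicity equation of $z_1$ is again equivalent to $B_N=0$ via $H''_i$ while that of $z_2$ is redundant by Proposition \ref{prop:redundant}. Your additional remark that the parity identity $B_i(-W)=(-1)^{i+1}B_i(W)$ makes the formulas depend only on $\Lambda$ is a helpful clarification the paper leaves implicit.
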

	Suppose $N=2n+1$ is odd so that $T(2,N)$ is a knot, then $B_N$ is a monic polynomial in $\Lambda$ of degree $n$ with no repeated root \cite{riley_parabolic_1972} and hence we have $n$ choices for $\Lambda$. We also can choose $p,q,$ and $r$ freely but satisfying the non-degeneracy condition. 
	\begin{rmk}\label{rmk:tours_rep} One can check that the segment labels and crossing labels do not depend on $p,q,$ and $r$. Thus, by Theorem \ref{thm:HolonomyRep}, the holonomy representation associated to the solution $z$ depends only on the choice of $\Lambda$, not on $p,q,r$. Therefore, the torus knot $T(2,2n+1)$ has $n$ non-abelian boundary parabolic representations.
	\end{rmk}
	\begin{exam}[Trefoil knot] Let $z=( z_1,\cdots, z_6 )$ be segment variables assigned to the trefoil knot, $T(2,3)$, as in Figure \ref{fig:torus_twist}(a). Then we have $B_3=\Lambda+1=0$ and hence $$(z_1,\cdots,z_6)=\left(p,\; q,\; r,\;  \dfrac{p(q-r)}{p+q-r},\; \dfrac{pq}{p+q-r},\; \dfrac{p r}{r-q}\right).$$
	\end{exam}
	\subsection{Boundary parabolic solutions for the $J(N,-M)$ knot} \label{subsec:JMN}
	We use the same strategy  for the $J(N,-M)$ knot as the previous subsection. We assign segment variables $z_1,\cdots,z_{2N+2}$ and $z'_1,\cdots,z'_{2M+2}$ as in Figure \ref{fig:torus_twist}(b). Note that there are $4$ double-labelings: $z_{2N+2}=z'_1,\  z'_2=z_2,\ z_{2N+1}=z'_{2M+1}$, and $z_1=z'_{2M+2}$. We will compute a boundary parabolic solution in the following order.
	\begin{enumerate}	
		\item[(1)] Apply Lemma \ref{lem:key_lem} to solve the hyperbolicity equations of $z_3,\cdots,z_{2N}$.
		\item[(2)] Solve the hyperbolicity equations of $z_{2N+2}$ and $z_{2}$ to obtain $z'_3$ and $z'_4$, respectively.
		\item[(3)] Set $z'_1:=z_{2N+2},\ z'_2:=z_2$ and apply Lemma \ref{lem:key_lem} again to solve the hyperbolicity equations of $z'_3,\cdots,z'_{2M}$.
		\item[(4)] Solve $z_{2N+1}=z'_{2M+1},\ z_{1}=z'_{2M+2}$ and the hyperbolicty equations of them.
	\end{enumerate}
	\textbf{(1) Apply Lemma \ref{lem:key_lem} to solve the hyperbolicity equations of $z_3,\cdots,z_{2N}$.} \\[3pt]
	Repeating the first step of the previous subsection, we have 
	\begin{equation}\label{eqn:jnm_sol1} z_{2i-1} = \dfrac{F_i}{G_i},\  z_{2i} = \dfrac{F_{i-1}}{G_{i+1}}\end{equation}for $1 \leq i \leq N+1$ where  $(F_i)$ and $(G_i)$ are $\sqrt{\Lambda}$-Fibonacci sequences with initial conditions $F_0= z_1 z_2 \sqrt{\Lambda}$, $F_1=z_1(z_3-z_2)$ and $G_1 =(z_3-z_2)$, $G_2= z_1\sqrt{\Lambda}$. Recall that the expressions of the equation (\ref{eqn:jnm_sol1}) consists of rational polynomials in $z_1,z_2,z_3,$ and $\Lambda=(z_2-z_3)\left(\frac{1}{z_1} - \frac{1}{z_4}\right)$. \\[8pt]
	\textbf{(2) Solve the hyperbolicity equations of $z_{2N+2}$ and $z_{2}$ to obtain $z'_3$ and $z'_4$, respectively.} \\[3pt]
	Plugging the equation (\ref{eqn:jnm_sol1}) into the hyperbolicity equation of $z_{2N+2}$ we have  
	\begin{equation}\label{eqn:y3}
		\begin{array}{ll}
		&\dfrac{z_{2N+2}-z_{2N+1}}{z_{2N+2}-z_{2N}}\cdot \dfrac{z'_3(z_{2N+2}-z_2)}{z_2(z_{2N+2}-z'_3)}=1 \\[15pt] 
		\Leftrightarrow&	\dfrac{F_0}{z'_3}=G_2+ \dfrac{\sqrt{\Lambda}B_N(F_0G_{N+2}-F_NG_2)}{F_NG_{N+1}-F_{N-1}G_{N+2}} \cdot G_{N+2}.
	\end{array}
	\end{equation} By Lemma \ref{lem:BasicFibo}(a) we have $F_0G_{N+2}-F_NG_2=( F_0G_3- F_1G_2) \cdot B_N$ where $(B_i)$ is the base $\sqrt{\Lambda}$-Fibonacci sequence. Also by Lemma \ref{lem:BasicFibo}(b) we have $F_NG_{N+1}-F_{N-1}G_{N+2}=(-1)^{N-1} \cdot (F_1G_2-F_0G_3)$. Hence the equation (\ref{eqn:y3}) can be simplified to \begin{equation} \label{eqn:z3}
	z'_3=\dfrac{F_0}{G_2+(-1)^{N}\sqrt{\Lambda}B_N\cdot G_{N+2}}.
	\end{equation} Similarly, the hyperbolicity equation of $z_2$ is simplified to \begin{equation} \label{eqn:z4} z'_4= \dfrac{F_N + \sqrt{\Lambda} B_N \cdot F_0}{G_{N+2}}. \end{equation}
	\textbf{(3) Set $z'_1:=z_{2N+2},\ z'_2:=z_2$ and apply Lemma \ref{lem:key_lem} again to solve the hyperbolicity equations of $z'_3,\cdots,z'_{2M}$.} \\[3pt]
	Set $z'_1:=z_{2N+2}=\dfrac{F_N}{G_{N+2}}$ and $z'_2:=z_2=\dfrac{F_0}{G_2}$. Let $\Lambda':=(-1)^{N} \Lambda B_N^2$ and define $\sqrt{\Lambda'}$-Fibonacci sequences $(F'_i)$ and $(G'_i)$ by initial conditions $F'_0 =(-1)^{\frac{N}{2}} F_N,\ F'_1=F_0$ and $G'_1 =G_2$, $G'_2=(-1)^{\frac{N}{2}} G_{N+2}$. Then one can check that
	\begin{equation}\label{eqn:jnm_sol2} z'_{2i-1}=\dfrac{F'_{i-1}}{G'_{i+1}},\ z'_{2i}=\dfrac{F'_{i}}{G'_{i}}\end{equation} holds for $i=1,\ 2$ by the equations (\ref{eqn:z3}) and (\ref{eqn:z4}), and thus it holds also for $i=1,2,\cdots, M+1$ by Lemma \ref{lem:key_lem} and Remark \ref{rmk:key_mirror}. We remark that the expressions for $z'_{2i-1}$ and $z'_{2i}$ in the equation (\ref{eqn:jnm_sol2}) are rational polynomials in $z_1,z_2,z_3,$ and $\Lambda$.\\[8pt]
	\textbf{(4) Solve $z_{1}=z'_{2M+2},\ z_{2N+1}=z'_{2M+1}$ and the hyperbolicity equations of them. }\\[3pt]
	Let $(B'_i)$ be the base $\sqrt{\Lambda'}$-Fibonacci sequence. Then we have
    \begin{equation*}
       	\begin{array}{ccl}
           	F'_i&=&(-1)^{\frac{N}{2}}F_N B'_{i-1}+z_1 z_2 \sqrt{\Lambda} B'_i \\[5pt]
            G'_i&=&z_1\sqrt{\Lambda}  B'_{i-2}+(-1)^{\frac{N}{2}}G_{N+2} B'_{i-1}
        \end{array}
     \end{equation*} for all $i$. One can check that $z_1=z'_{2M+2}$ holds if and only if $B'_{M+1} +(-1)^\frac{N}{2} B'_M B_{N-1}=0$ using the following equalities. 
\begin{equation*}
	\begin{array}{ccl}
	 F'_{M+1}-z_1 G'_{M+1} &=&(-1)^{\frac{N}{2}} B'_{M} (F_N-z_1 G_{N+2}) + z_1 z_2 \sqrt{\Lambda} B'_{M+1} - z_1^2 \sqrt{\Lambda} B'_{M-1} \\[3pt]
	&=&(-1)^{\frac{N}{2}} B'_{M} (z_1z_2 \sqrt{\Lambda} B_{N-1} - z_1^2 \sqrt{\Lambda} B_{N+1}) \\[3pt]
	& & \quad \quad \quad \quad \quad \quad + z_1 z_2 \sqrt{\Lambda} B'_{M+1} - z_1^2 \sqrt{\Lambda} B'_{M-1} \\[5pt]
      &=&z_1z_2 \sqrt{\Lambda}(B'_{M+1} + (-1)^\frac{N}{2} B'_M B_{N-1}) \\[3pt]
     & & \quad \quad \quad \quad \quad \quad - z_1^2 \sqrt{\Lambda} (B'_{M-1}+(-1)^\frac{N}{2} B'_M B_{N+1})\\[3pt]	&=&z_1 (z_2 -z_1) \sqrt{\Lambda}(B'_{M+1} + (-1)^\frac{N}{2} B'_M B_{N-1}). 
	\end{array}
\end{equation*} Here in the last equality, we use $B'_{M+1}-B'_{M-1} = (-1)^\frac{N}{2}(B_{N+1}-B_{N-1}) B'_M$.

By similar computation one can check that $B'_{M+1} +(-1)^\frac{N}{2} B'_M B_{N-1}=0$ is also a necessary and sufficient condition for $ z_{2N+1}=z'_{2M+1}$ and the hyperbolicity equations of $z_1$ and $z_{2N+1}$.
\begin{thm} \label{thm:SolJMN} Let $z=(z_1,\cdots,z_{2N+2},z'_3,\cdots,z'_{2M} )$ be segment variables assigned to the $J(N,-M)$ knot diagram as in Figure \ref{fig:torus_twist}(b). Let $(B_i)_{i \in \Zbb}$ be the base $\sqrt{\Lambda}$-Fibonacci sequence and $(B'_i)_{i \in \Zbb}$ be the base $\big((-1)^\frac{N}{2} \sqrt{\Lambda} B_N\big)$-Fibonacci sequence. The segment variables $z$ form a boundary parabolic solution if and only if they are of the form 
	 $$ z_{2i-1} = \dfrac{p q \sqrt{\Lambda} B_{i-1}+p(r-q) B_i}{(r-q)B_{i-2}+p\sqrt{\Lambda}B_{i-1}},\ z_{2i} = \dfrac{p q \sqrt{\Lambda} B_{i-2}+p(r-q) B_{i-1}}{(r-q)B_{i-1}+p\sqrt{\Lambda}B_{i}}
	$$ for $1 \leq i \leq N+1$,  and  $$	z'_{2i-1} = \dfrac{(-1)^\frac{N}{2} F_N B'_{i-2}+pq\sqrt{\Lambda} B'_{i-1}}{p\sqrt{\Lambda}B'_{i-1}+(-1)^\frac{N}{2}G_{N+2}B'_{i}},\ z'_{2i} = \dfrac{(-1)^\frac{N}{2} F_N B'_{i-1}+pq\sqrt{\Lambda} B'_i}{p\sqrt{\Lambda}B'_{i-2}+(-1)^\frac{N}{2}G_{N+2}B'_{i-1}}
	$$  for $2 \leq i \leq M$ where $F_N= p q \sqrt{\Lambda} B_{N-1}+p(r-q) B_N$ and $G_{N+2}=(r-q)B_{N}+p\sqrt{\Lambda}B_{N+1}$ and $\Lambda$ is chosen to satisfy $B'_{M+1} + (-1)^\frac{N}{2} B'_M B_{N-1} = 0$.
\end{thm}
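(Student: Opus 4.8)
The plan is to follow exactly the four-step strategy announced just before the statement, assembling the ingredients that were proved in Section~\ref{subsec:torus} (for $T(2,N)$) together with the two new computations needed for the twisted part of $J(N,-M)$. First I would set up the segment variables $z_1,\dots,z_{2N+2}$ and $z'_1,\dots,z'_{2M+2}$ with the four double-labelings $z_{2N+2}=z'_1$, $z'_2=z_2$, $z_{2N+1}=z'_{2M+1}$, $z_1=z'_{2M+2}$ as in Figure~\ref{fig:torus_twist}(b). By Proposition~\ref{lem:pinched_region} applied to this diagram, no octahedron is pinched unless all are (giving only the abelian representation), so it is legitimate to look for boundary parabolic solutions in $\Tcal_{4D}$ via segment variables. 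Introducing $\Lambda:=(z_2-z_3)\bigl(\tfrac1{z_1}-\tfrac1{z_4}\bigr)$ and the $\sqrt{\Lambda}$-Fibonacci sequences $(F_i),(G_i)$ with $F_0=z_1z_2\sqrt{\Lambda}$, $F_1=z_1(z_3-z_2)$, $G_1=z_3-z_2$, $G_2=z_1\sqrt{\Lambda}$, Lemma~\ref{lem:key_lem} immediately yields $z_{2i-1}=F_i/G_i$, $z_{2i}=F_{i-1}/G_{i+1}$ for $1\le i\le N+1$: this is Step~(1) and it is already done in the $T(2,N)$ case.

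For Step~(2), I would plug these formulas into the hyperbolicity equation (\ref{eqn:hyp_z}) of the segment $z_{2N+2}$ (which is of type Figure~\ref{fig:hyp_segm}(a)) and of $z_2$, and simplify using Lemma~\ref{lem:BasicFibo}(a) to pull out a factor $B_N$ from $F_0G_{N+2}-F_NG_2=(F_0G_3-F_1G_2)B_N$, and Lemma~\ref{lem:BasicFibo}(b) to evaluate $F_NG_{N+1}-F_{N-1}G_{N+2}=(-1)^{N-1}(F_1G_2-F_0G_3)$. This collapses the two equations to the closed forms (\ref{eqn:z3}) and (\ref{eqn:z4}) for $z'_3,z'_4$. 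For Step~(3) I then set $z'_1:=z_{2N+2}=F_N/G_{N+2}$, $z'_2:=z_2=F_0/G_2$, put $\Lambda':=(-1)^N\Lambda B_N^2$, and define $\sqrt{\Lambda'}$-Fibonacci sequences $(F'_i),(G'_i)$ with $F'_0=(-1)^{N/2}F_N$, $F'_1=F_0$, $G'_1=G_2$, $G'_2=(-1)^{N/2}G_{N+2}$; the point is to verify the two base cases $i=1,2$ of $z'_{2i-1}=F'_{i-1}/G'_{i+1}$, $z'_{2i}=F'_i/G'_i$ directly from (\ref{eqn:z3}),(\ref{eqn:z4}), after which Lemma~\ref{lem:key_lem} together with Remark~\ref{rmk:key_mirror} (since the $M$-twist region is a mirror of Figure~\ref{fig:local_twisting}) propagates it to all $1\le i\le M+1$. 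Rewriting $F'_i,G'_i$ via the base sequence $(B'_i)$ — using $F'_i=(-1)^{N/2}F_NB'_{i-1}+z_1z_2\sqrt{\Lambda}B'_i$ and $G'_i=z_1\sqrt{\Lambda}B'_{i-2}+(-1)^{N/2}G_{N+2}B'_{i-1}$, which follow from Lemma~\ref{lem:BasicFibo}(a) after substituting $F_N=z_1z_2\sqrt{\Lambda}B_{N-1}+z_1(z_3-z_2)B_N$ etc. — yields exactly the formulas in the statement after renaming $(z_1,z_2,z_3)\mapsto(p,q,r)$.

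The heart of the proof is Step~(4): closing the diagram by imposing $z_1=z'_{2M+2}$ and $z_{2N+1}=z'_{2M+1}$, together with the hyperbolicity equations at those two segments. I would compute $F'_{M+1}-z_1G'_{M+1}$, expand using the two displayed identities for $F'_i$ and $G'_i$, and use $F_N-z_1G_{N+2}=z_1z_2\sqrt{\Lambda}B_{N-1}-z_1^2\sqrt{\Lambda}B_{N+1}$ together with the recursion $B'_{M+1}-B'_{M-1}=(-1)^{N/2}(B_{N+1}-B_{N-1})B'_M$ to arrive at
\[
F'_{M+1}-z_1G'_{M+1}=z_1(z_2-z_1)\sqrt{\Lambda}\bigl(B'_{M+1}+(-1)^{N/2}B'_MB_{N-1}\bigr),
\]
so that, since $z_1\neq z_2$ and $\Lambda\neq0$ by non-degeneracy and Proposition~\ref{prop:SegmRule}, the condition $z_1=z'_{2M+2}$ is equivalent to $B'_{M+1}+(-1)^{N/2}B'_MB_{N-1}=0$. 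An entirely analogous computation handles $z_{2N+1}=z'_{2M+1}$, and Proposition~\ref{prop:redundant} then makes the remaining hyperbolicity equation of the closed-up segment automatic, exactly as in the $T(2,N)$ argument where $H''_1\neq0$ forced $B_N=0$; one checks the relevant "$H_1$-type" quantities are nonzero using Proposition~\ref{prop:SegmRule} again. Conversely, if $\Lambda$ is chosen to satisfy $B'_{M+1}+(-1)^{N/2}B'_MB_{N-1}=0$, all displayed formulas define segment variables satisfying $E_{S;1}$; the $E_R$ equations hold automatically because the $z_i$ are built as vertex coordinates (Proposition~\ref{prop:seg}), and the non-degeneracy conditions can be arranged by a generic choice of $p,q,r$. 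The main obstacle I anticipate is purely bookkeeping: keeping the $\pm$ signs consistent under the mirror (Remark~\ref{rmk:key_mirror}) and the parity factors $(-1)^{N/2}$, $(-1)^N$ straight when splicing the $\sqrt{\Lambda}$- and $\sqrt{\Lambda'}$-Fibonacci sequences across the junction segments, and making sure the denominators appearing in (\ref{eqn:z3}),(\ref{eqn:z4}) and in the final formulas are nonzero — all of which reduce to the non-degeneracy hypotheses and Proposition~\ref{prop:SegmRule}, but require care to state cleanly.
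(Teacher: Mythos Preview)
Your proposal is correct and follows essentially the same four-step argument as the paper's own proof, including the same auxiliary Fibonacci sequences $(F'_i),(G'_i)$, the same simplifications via Lemma~\ref{lem:BasicFibo}(a),(b), and the same key factorization of $F'_{M+1}-z_1G'_{M+1}$. The only cosmetic difference is that the paper dispatches all four closing conditions in Step~(4) by parallel ``similar computation'' rather than invoking Proposition~\ref{prop:redundant} for the last one, but this is not a substantive divergence.
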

	\begin{rmk}\label{rmk:jnm_rep}If we assume either $N$ or $M$ is even so that $J(N,-M)$ is a knot, then $B'_{M+1} + (-1)^\frac{N}{2} B'_M B_{N-1}$ turns out to be a monic polynomial in $\Lambda$ of degree $\frac{NM}{2}$ with no repeated roots and therefore the $J(N,-M)$ knot has $\frac{NM}{2}$ non-abelian boundary parabolic representations. The polynomial $B'_{M+1} + (-1)^\frac{N}{2} B'_M B_{N-1}$ coincides with \emph{Riley's polynomial} $\Lambda(y)$ in \cite{riley_parabolic_1972}.
	\end{rmk}
	\subsection{Cusp shapes and complex volumes of $T(2,N)$}
    	We fix $N$ to be odd so that $T(2,N)$ is a knot. Let segment variables $z=(z_1, \cdots, z_{2N})$ be a boundary parabolic solution given in Theorem \ref{thm:SolTorus}. As we mentioned in Remark \ref{rmk:tours_rep}, the holonomy representation  only depends on the choice of $\Lambda$. We denote the solution $z$ by $z_\Lambda$ to stress the choice of $\Lambda$ and the corresponding holonomy representation by $\rho_\Lambda$.
    	
    	We first compute the cusp shape of $\rho_\Lambda$ using Theorem \ref{thm:Longitude}. To do this, we compute the segment label $\sigma_i$ of a segment $z_i$.
    	\begin{itemize}
        	\item[(1)] For $1 \leq i \leq {N-1}$ we have
            \begin{equation*}
            	\begin{array}{ccl}
                	\sigma_{2i+1} &=&\dfrac{z_{2i-1}}{z_{2i-1}-z_{2i+2}}-\dfrac{z_{2i+1}}{z_{2i+3}-z_{2i+2}}\\[9pt]
                    &=& \dfrac{F_i/G_i}{F_{i}/G_{i}-F{i}/G_{i+2}} - \dfrac{F_{i+1}/G_{i+1}}{F_{i+2}/G_{i+2}-F{i}/G_{i+2}} \\[9pt]
                    &=& \dfrac{G_{i+2}}{\sqrt{\Lambda} G_{i+1}} - \dfrac{G_{i+2}}{\sqrt{\Lambda} G_{i+1}}=0. 
                \end{array}
            \end{equation*}
            \item[(2)] For $i=1$ we have \begin{equation*}
            	\begin{array}{ccl}
                	\sigma_1&=& \dfrac{z_{2N-1}}{z_{2N-1}-z_{2N+2}}-\dfrac{z_1}{z_3-z_2} \\[7pt]
                    &=& \dfrac{F_{N}/G_{N}}{F_{N}/G_{N}-F_{N}/G_{N+2}} - \dfrac{F_{1}/G_{1}}{F_{2}/G_{2}-F_{0}/G_{2}} \\[9pt]
                    &=& \dfrac{G_{N+2}}{\sqrt{\Lambda} G_{N+1}} - \dfrac{G_{2}}{\sqrt{\Lambda} G_{1}} \\[9pt]
                    &=&\dfrac{G_{N+2}G_1 - G_{N+1}G_2}{\sqrt{\Lambda}G_{N+1}G_1}=0. 
                \end{array}
            \end{equation*} In the last equation, we use Lemma \ref{lem:BasicFibo}(a): $G_{N+2}G_1 - G_{N+1} G_2 = (G_3G_1-G_2^2) B_N$.
            \item[(3)] Similar computation gives $\sigma_{2i} =-1$ for $1 \leq i \leq {N}$.
        \end{itemize}
        Since the diagram in Figure~\ref{fig:torus_twist}(a) has $-N$ writhe, we have :
        \begin{prop} Let $z_\Lambda=(z_1,\cdots,z_{2N})$ be a boundary parabolic solution of the $T(2,N)$-torus knot given in Theorem \ref{thm:SolTorus} with a prefered choice of $\Lambda$. Let $\rho_\Lambda$ be the holonomy representation associated to $z_\Lambda$. Then the cusp shape of $\rho_\Lambda$ is $-2N$.
	    \end{prop}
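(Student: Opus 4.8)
The plan is to read the cusp shape straight off Theorem~\ref{thm:Longitude}, which gives it as $\Sigma_\circ-w(D)$, where $\Sigma_\circ$ is the sum of the segment labels of the diagram of Figure~\ref{fig:torus_twist}(a) and $w(D)$ is its writhe. Since $w(D)$ is read directly from the diagram (its $N$ crossings are mutually like-signed), the entire content is the evaluation of $\Sigma_\circ$, and for that I would feed the explicit boundary parabolic solution $z_\Lambda$ of Theorem~\ref{thm:SolTorus} into the segment-label formula of Definition~\ref{defn:segm_label}. The point that makes this tractable is that in $z_\Lambda$ every segment variable is a ratio of two consecutive terms of a $\sqrt{\Lambda}$-Fibonacci sequence, so that Lemma~\ref{lem:BasicFibo} collapses each label to a constant; in particular the outcome will be visibly independent of the free parameters $p,q,r$, as it must be by Remark~\ref{rmk:tours_rep}.

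Concretely, write $z_{2i-1}=F_i/G_i$ and $z_{2i}=F_{i-1}/G_{i+1}$ with $(F_i),(G_i)$ the $\sqrt{\Lambda}$-Fibonacci sequences fixed in Theorem~\ref{thm:SolTorus}. For an ``over-arc'' segment $z_{2i+1}$ with $1\le i\le N-1$ the two octahedra meeting along it share the top ideal vertex, and Definition~\ref{defn:segm_label} presents the label as a difference of two quotients; substituting and using $G_{i+2}-G_i=\sqrt{\Lambda}\,G_{i+1}$ and $F_{i+2}-F_i=\sqrt{\Lambda}\,F_{i+1}$, both quotients reduce to $G_{i+2}/(\sqrt{\Lambda}\,G_{i+1})$, so the label is $0$. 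The remaining over-arc segment is the wrap-around one carrying the double label $z_1=z_{2N+1}$: here the same reduction leaves $G_{N+2}G_1-G_{N+1}G_2$ as a factor of the numerator, which by Lemma~\ref{lem:BasicFibo}(a) equals $(G_3G_1-G_2^2)\,B_N$ and hence vanishes because $\Lambda$ was chosen with $B_N=0$, so this label is $0$ too. An analogous but non-cancelling computation for the ``under-arc'' segments $z_{2i}$, $1\le i\le N$, gives the label $-1$ each. Summing over all $2N$ segments yields $\Sigma_\circ=-N$, and combining this with the writhe of the diagram through Theorem~\ref{thm:Longitude} produces the cusp shape $-2N$; alternatively one may route the last step through Corollary~\ref{cor:cusp} by checking that every crossing contributes the same value $\lambda_k=-2$.

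The only genuinely delicate step is the wrap-around label: one must keep careful track of which developed ideal vertices are identified along the closure arc of the braid, and of the two equally legitimate ways of expanding a $\sqrt{\Lambda}$-Fibonacci term in terms of the base sequence $(B_i)$ (e.g. $G_i=G_0B_{i-1}+G_1B_i=G_1B_{i-2}+G_2B_{i-1}$), so that the leftover numerator is recognised as exactly a multiple of $B_N$; the identity $B_N=0$ is precisely what closes the computation up. Everything else is a bounded, mechanical manipulation of $2\times2$ Fibonacci-type identities of the same flavour as the displayed computations of $\sigma_{2i+1}$ and $\sigma_1$ preceding the statement.
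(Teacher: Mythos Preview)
Your proposal is correct and matches the paper's own argument essentially step for step: the paper likewise computes $\sigma_{2i+1}=0$ for $1\le i\le N-1$ via the reduction to $G_{i+2}/(\sqrt{\Lambda}\,G_{i+1})$, handles the wrap-around label $\sigma_1$ by recognising the numerator $G_{N+2}G_1-G_{N+1}G_2=(G_3G_1-G_2^2)B_N=0$, states $\sigma_{2i}=-1$ for $1\le i\le N$, and then combines $\Sigma_\circ=-N$ with the writhe via Theorem~\ref{thm:Longitude}. The only addition in your write-up is the remark that one could instead check $\lambda_k=-2$ at each crossing and invoke Corollary~\ref{cor:cusp}, which the paper does not spell out here.
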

        
        \begin{rmk} This agrees with Proposition 5 in \cite{riley_parabolic_1972}. Note that Riley used the longitude which is the inverse of ours.
        \end{rmk}
        
	Now we compute the complex volume of $\rho_\Lambda$ using the $z$-solution and the formula given by Theorem 1.2 in \cite{CKK_2014}. The volume of $\rho_\Lambda$ is $0$ as expected and we plot Chern-Simons invariants (mod $\pi^2$) up to $N=31$. 
    	\begin{figure}[!h]
    		\centering
    		\def\svgwidth{0.8\columnwidth}
    		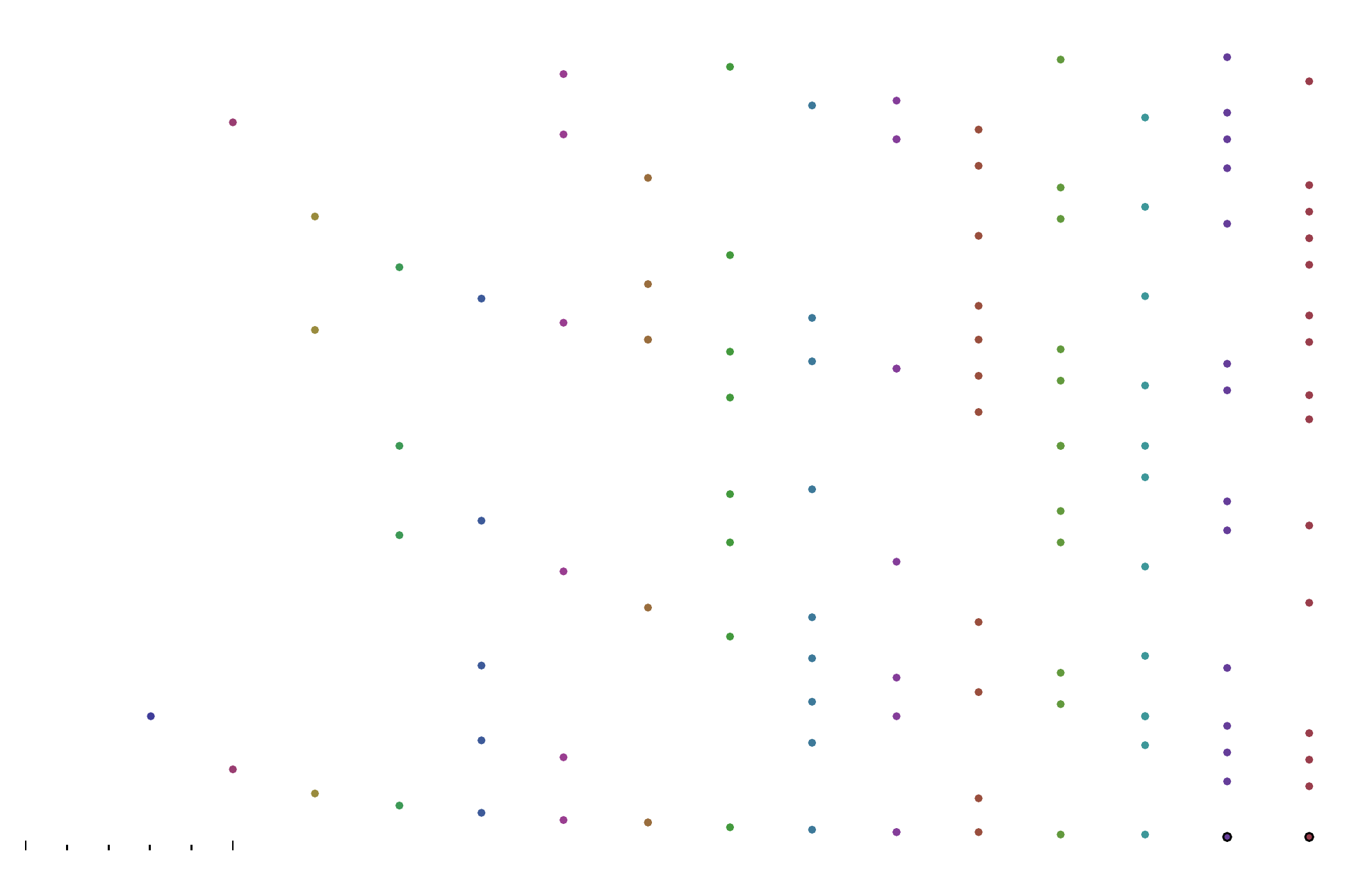 
    		\caption{Chern-Simons invariants of $T(2,N)$ (mod $\pi^2$).}
    		\label{fig:ChernSimonTorus}
    	\end{figure} 
    For instance, $3$ dots on the vertical line $N=7$ are the Chern-Simons invariants (mod $\pi^2$) of $T(2,7)$.
    It is interesting to observe that the graph without reducing modulo $\pi^2$ has more natural patterns as in Figure \ref{fig:ChernSimonTorusB}. Further work would be required to investigate this phenomenon more precisely.
     	\begin{figure}[!h]
    		\centering
    		\def\svgwidth{0.8\columnwidth}
    		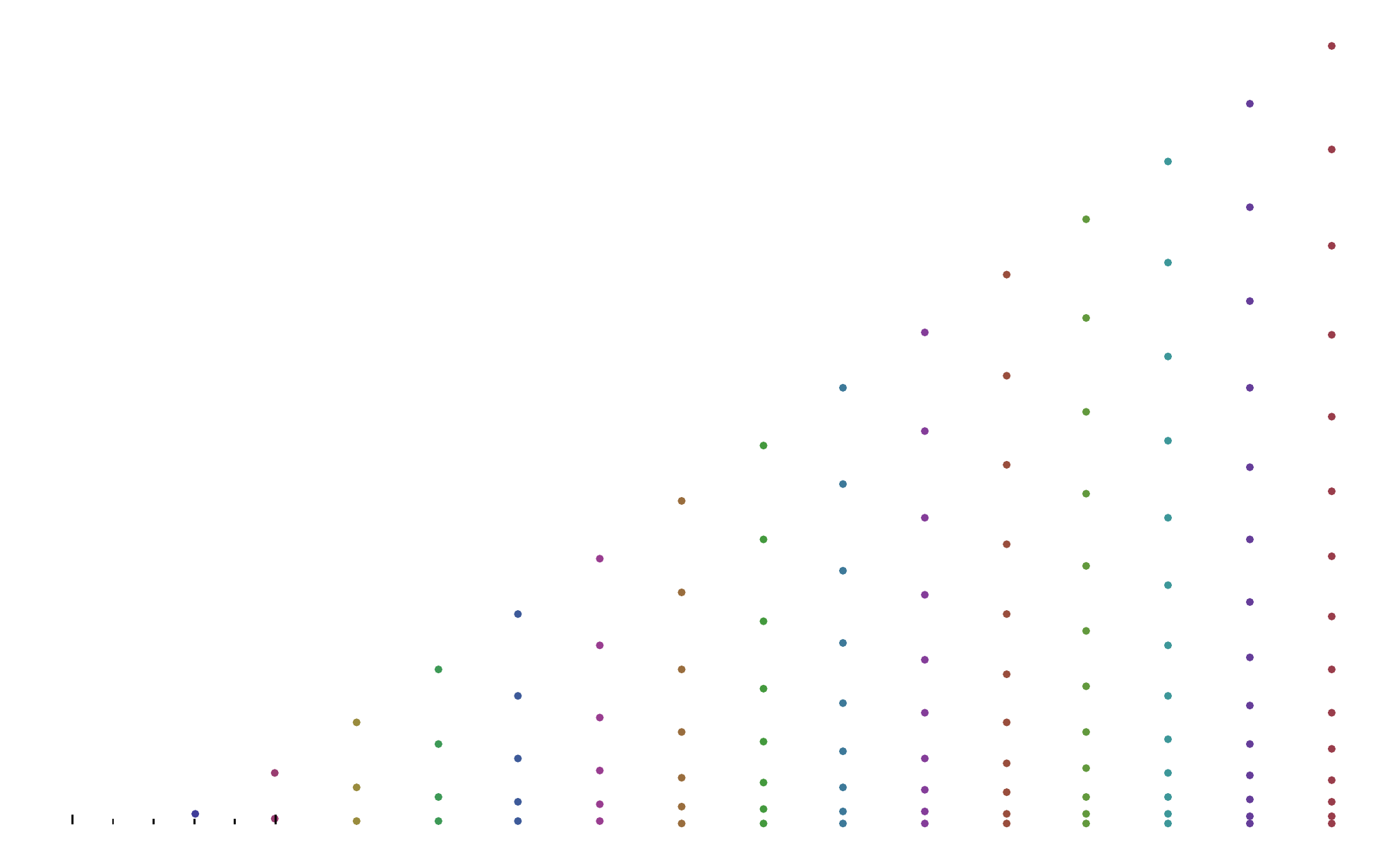 
    		\caption{Chern-Simons invariants of $T(2,N)$ without reducing modulo $\pi^2$.}
    		\label{fig:ChernSimonTorusB}
    	\end{figure}
    \subsection{Cusp shapes and complex volumes of the $J(N,-M)$ knot.} 
       We assume that $N$ is even so that  $J(N,-M)$ is a knot. Let segment variables $z_\Lambda=(z_1,\cdots,z_{2N+2},z'_3,\cdots,z'_{2M} )$ be a boundary parabolic solution given in Theorem \ref{thm:SolJMN} with the preferred choice of $\Lambda$. Let $\rho_\Lambda$ be the holonomy rerpresentation associated to $z_\Lambda$.
       
       To give the cusp shape formula we compute the segment labels $\sigma_i$ and $\sigma'_{j}$. We first assume that $M$ is odd.
        \begin{itemize}
        	\item[(1)] The same computation as in the previous subsection gives $\sigma_{2i+1}=0$ and $\sigma_{2i+2}=-1$ for $1 \leq i \leq N-1$.
        	\item[(2)] Similarly, $\sigma'_{j}=0$ if $j \equiv 1,2 \ (\textrm{mod } 4)$ and  $\sigma'_{j}=1$ if $j \equiv 0,3 \ (\textrm{mod } 4)$ for $3 \leq j \leq 2M$.
        	\item[(3)] For $\sigma_{2N+2}$ we have
        	\begin{equation*}
        		\begin{array}{ccl}
        		\sigma_{2N+2}&=&\dfrac{z_{2N+2}}{z_{2N+1}-z_{2N}}-\dfrac{z'_3}{z'_3-z'_2} \\[10pt]
        		&=&\dfrac{G_{N+1}}{\sqrt{\Lambda}G_{N+2}}+\dfrac{G'_{1}}{\sqrt{\Lambda'}G'_{2}} \\[10pt]
        		&=&\dfrac{1}{\sqrt{\Lambda}B_N} \cdot \dfrac{(z_3-z_2)B_{N-1}B_N + z_1 \sqrt{\Lambda}(B_N^2+1)}{(z_3-z_2)B_N+z_1 \sqrt{\Lambda} B_{N+1}} \\[10pt]
        		&=&\dfrac{B_{N-1} }{\sqrt{\Lambda}B_N} = \dfrac{B_{N-1} }{B_{N+1}-B_{N-1}}.
        		\end{array}
        	\end{equation*} The fourth equality follows from Lemma \ref{lem:BasicFibo}(b): $B_N^2+1=B_{N-1}B_{N+1}$. Similarly, we have $\sigma_2=\dfrac{B_{N-1} }{B_{N+1}-B_{N-1}}$.
        	\item[(4)] For $\sigma_{1}$ we have
        	$$\dfrac{G'_M}{G'_{M+1}} = \dfrac{(z_3-z_2)(-1)^\frac{N}{2} B_NB'_{M-1} + z_1 \sqrt{\Lambda}(B'_{M-2}+(-1)^\frac{N}{2}B_{N+1}B'_{M-1})}{(z_3-z_2) (-1)^\frac{N}{2} B_NB'_M + z_1 \sqrt{\Lambda}(B'_{M-1}+(-1)^\frac{N}{2}B_{N+1}B'_M)}$$ and $B'_{M+1}+(-1)^\frac{N}{2}B'_MB_{N-1}=B'_{M-1}+(-1)^\frac{N}{2}B'_MB_{N+1}=0$. Therefore,
				\begin{equation*}
					\begin{array}{ccl}
						\sigma_1 &=& \dfrac{z'_{2M+1}}{z'_{2M+1}-z'_{2M}}-\dfrac{z_1}{z_3-z_2} \\[10pt]
						&=&\dfrac{G'_M}{-\sqrt{\Lambda'}G'_{M+1}}-\dfrac{z_1}{z_3-z_2} \\[10pt]
						&=& -\dfrac{(z_3-z_2) (-1)^\frac{N}{2} B_NB'_{M-1} + z_1 \sqrt{\Lambda}(B'_{M-2}+(-1)^\frac{N}{2}B_{N+1}B'_{M-1})}{(z_3-z_2) (-1)^\frac{N}{2} \sqrt{\Lambda'}B_NB'_M}-\dfrac{z_1}{z_3-z_2}	\\[5pt]	
				&=& -\dfrac{(z_3-z_2) (-1)^\frac{N}{2} B_NB'_{M-1}}{(z_3-z_2) (-1)^\frac{N}{2} \sqrt{\Lambda'}B_NB'_M} = -\dfrac{B'_{M-1}}{B'_{M+1}-B'_{M-1}} = \dfrac{B_{N+1}}{B_{N+1}-B_{N-1}}.
					\end{array}
				\end{equation*} We use $B_N^2+1=B_{N-1}B_{N+1}$ in the fourth equality and $B'_{M+1}+(-1)^\frac{N}{2}B'_MB_{N-1}=B'_{M-1}+(-1)^\frac{N}{2}B'_MB_{N+1}=0$ in the last equality. Similar computation gives $\sigma_{2N+1}=\dfrac{B_{N+1}}{B_{N+1}-B_{N-1}}$.
        \end{itemize}
         For an even $M$, we have
        \begin{itemize} 
        	\item[(1)] $\sigma_i =0 $ if $ i  \equiv 1,2 \textrm{ (mod 4)}$ and $\sigma_i = -1 $ if $i \equiv 0,3 \textrm{ (mod 4)}$ for $3 \leq i \leq 2N$.
        	\item[(2)] $\sigma'_{j} =1 $ if $ j  \equiv 1,2 \textrm{ (mod 4)}$ and $\sigma'_{j} = 0 $ if $j \equiv 0,3 \textrm{ (mod 4)}$ for $3 \leq j \leq 2M$. 
        	\item[(3)] $\sigma_1=\sigma_2=\sigma_{2N}=\sigma_{2N+1} = \dfrac{B_{N+1}}{B_{N+1}-B_{N-1}}$.	
        \end{itemize}
        Combining both cases, Theorem \ref{thm:Longitude} gives :
        \begin{prop} Let $z_\Lambda$ be a boundary parabolic solution of the $J(N,-M)$ knot given in Theorem \ref{thm:SolJMN} with a prefered choice of $\Lambda$. Let $\rho_\Lambda$ be the holonomy representation associated to $z_\Lambda$. Then the cusp shape of $\rho_\Lambda$ is $$((-1)^{M}-1)\cdot N+\dfrac{2(B_{N+1}+B_{N-1})}{B_{N+1}-B_{N-1}}.$$
        \end{prop}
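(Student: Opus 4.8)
The plan is to apply Theorem~\ref{thm:Longitude}: the cusp shape of $\rho_\Lambda$ equals $\Sigma_\circ-w(D)$, where $\Sigma_\circ$ is the sum of all segment labels and $w(D)$ is the writhe of the diagram in Figure~\ref{fig:torus_twist}(b) (equivalently, by Corollary~\ref{cor:cusp}, one may sum the per-crossing quantities $\lambda_k$). So there are two ingredients: reading off $w(D)$ from the diagram---the $N$-crossing and the $M$-crossing twist regions enter with opposite signs---and evaluating every segment label on the explicit boundary parabolic solution of Theorem~\ref{thm:SolJMN}. As in the $T(2,N)$ computation above (Remark~\ref{rmk:tours_rep}), the segment and crossing labels do not depend on the free parameters $p,q,r$, so I may substitute the Fibonacci-ratio formulas of Theorem~\ref{thm:SolJMN} directly.

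Next I would compute the segment labels, splitting the segments of $D$ into three groups: those interior to the $N$-twist region, those interior to the $M$-twist region, and the four transition segments $z_1=z'_{2M+2}$, $z_2=z'_2$, $z_{2N+1}=z'_{2M+1}$, $z_{2N+2}=z'_1$ joining the two regions. For an interior segment one plugs $z_{2i-1}=F_i/G_i$ and $z_{2i}=F_{i-1}/G_{i+1}$ (and the mirrored version from Remark~\ref{rmk:key_mirror} in the second region) into Definition~\ref{defn:segm_label}; the two terms of the label collapse to the same ratio of consecutive $G$'s (resp.\ $G'$'s) and cancel, exactly as for $T(2,N)$, so these labels are $0$ or $\pm1$ in a $4$-periodic pattern whose phase is controlled by $M\bmod 2$ (this is where the four cases of Figure~\ref{fig:hyp_segm} and the mirror relation Proposition~\ref{prop:mirror_seg} enter). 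The four transition labels are the substantive ones: I would evaluate them using Lemma~\ref{lem:BasicFibo}(a),(b)---in particular the identity $B_N^2+1=B_{N-1}B_{N+1}$---together with the defining (Riley-type) relation $B'_{M+1}+(-1)^{N/2}B'_MB_{N-1}=0$ of Theorem~\ref{thm:SolJMN}, which forces the expressions to telescope down to $\dfrac{B_{N-1}}{B_{N+1}-B_{N-1}}$ or $\dfrac{B_{N+1}}{B_{N+1}-B_{N-1}}$.

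Finally I would sum everything. Grouping the interior labels of the two twist regions and using the $4$-periodicity produces a contribution which, once combined with the two twist-region writhe terms of Theorem~\ref{thm:Longitude}, collapses to $\bigl((-1)^M-1\bigr)N$---zero when $M$ is even, $-2N$ when $M$ is odd---while the four transition labels add to $\dfrac{2(B_{N+1}+B_{N-1})}{B_{N+1}-B_{N-1}}$ regardless of the parity of $M$. Adding these and subtracting $w(D)$ then gives the claimed value.

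The main obstacle is the parity bookkeeping rather than any single hard computation. Changing $M$ from even to odd alters the over/under-type (the four cases of Figure~\ref{fig:hyp_segm}) of \emph{every} segment in both twist regions, hence changes every interior segment label and even which segments count as ``transition'' segments (note, for instance, that for even $M$ the special segments include $z_{2N}$ rather than $z_{2N+2}$); keeping the index ranges, the $4$-periodic phases, and the sign of each twist region's contribution to the writhe mutually consistent---so that precisely the term $\bigl((-1)^M-1\bigr)N$ survives and the Fibonacci tails assemble into $2(B_{N+1}+B_{N-1})/(B_{N+1}-B_{N-1})$---is where the care lies. The Fibonacci identities of Lemma~\ref{lem:BasicFibo} do the algebra; a useful consistency check is to specialize to small twist knots $J(2,-M)$ and to compare with the $T(2,N)$ results of the previous subsection.
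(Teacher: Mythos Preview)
Your plan is essentially the paper's own computation: it applies Theorem~\ref{thm:Longitude}, evaluates the segment labels by splitting into interior twist-region labels (which give the $4$-periodic $0/\pm1$ pattern) and the four transitional labels (which, via Lemma~\ref{lem:BasicFibo}(b) with $B_N^2+1=B_{N-1}B_{N+1}$ and the Riley relation $B'_{M+1}+(-1)^{N/2}B'_MB_{N-1}=0$, reduce to $\frac{B_{N-1}}{B_{N+1}-B_{N-1}}$ or $\frac{B_{N+1}}{B_{N+1}-B_{N-1}}$), and treats the two parities of $M$ separately---including the shift from $\sigma_{2N+2}$ to $\sigma_{2N}$ that you flagged. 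One small caution: in the even-$M$ case the paper finds all four special labels equal to $\frac{B_{N+1}}{B_{N+1}-B_{N-1}}$ rather than the mixed pair, so their sum is not literally $\frac{2(B_{N+1}+B_{N-1})}{B_{N+1}-B_{N-1}}$; the surplus of $2$ is absorbed by the altered interior/writhe count, exactly the bookkeeping you identified as the main obstacle.
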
 

	    We plot the cusp shapes of $J(2,-M)$ for $M=10,20$, and $30$ in Figure \ref{fig:cusp_graph} as examples. (Some cusp shapes for $J(2,-20)$ and $J(2,-30)$ are omitted in the graph.)  As their Riley polynomials are integral polynomials, their cusp shapes are in symmetry with respect to the real axis as in the graph. Also we can observe that the cusp shapes of geometric representations approach $2+2i$ which is the cusp shape of Whitehead link. We showed in Section \ref{subsec:white} that each component of the Whitehead link has cusp shape $2+2i$ for the geometric representation.
      	\begin{figure}[!h]
      		\centering
      		\def\svgwidth{0.8\columnwidth}
			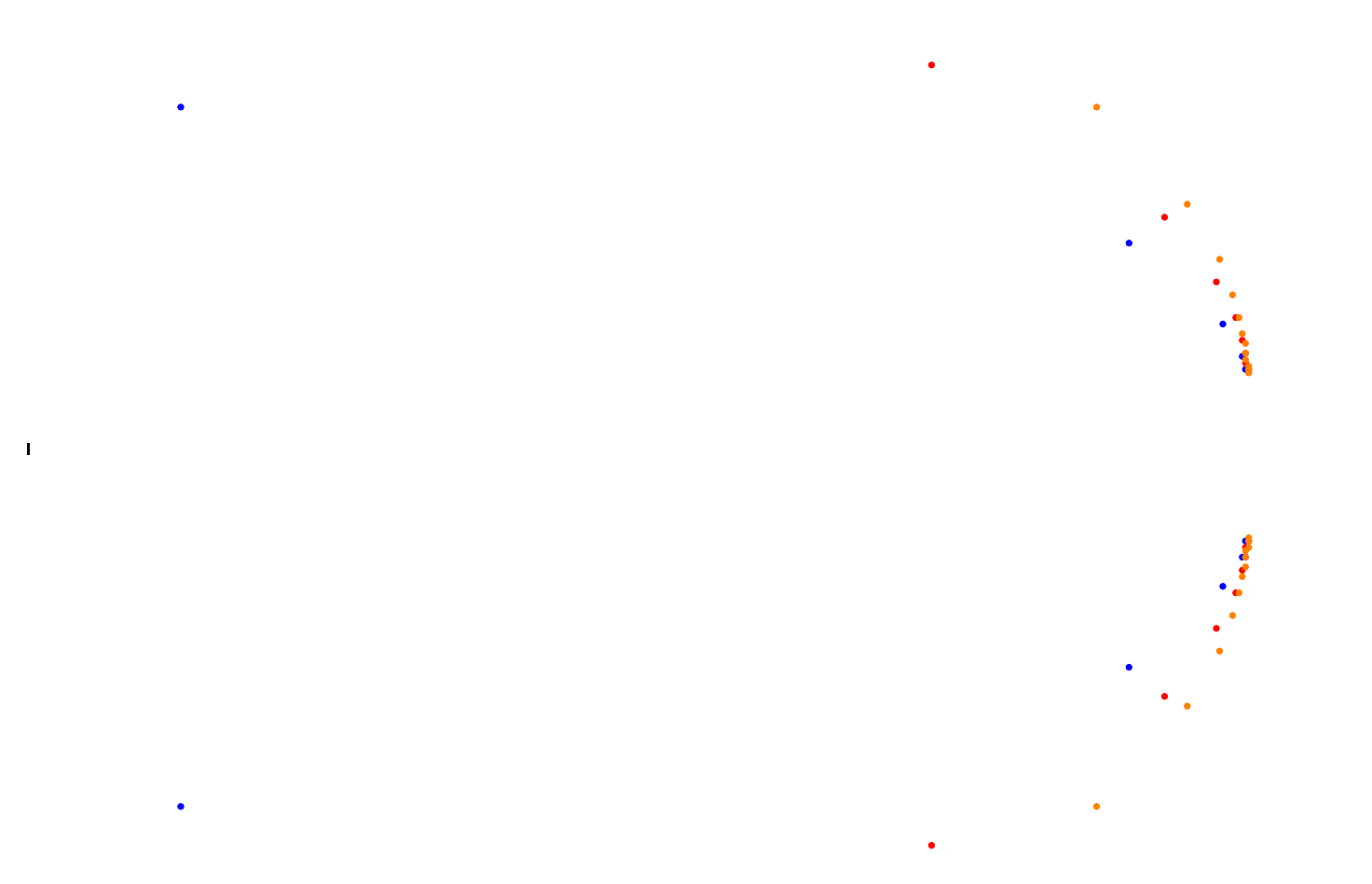
      		\caption{Cusp shapes for $J(2,-10),J(2,-20)$ and $J(2,-30)$}
      		\label{fig:cusp_graph}
      	\end{figure}
      	    	
        We finally compute the complex volume of $J(N,-M)$ as before. Here we only present graphs for the case $N=2$, i.e., for the twist knot case, as an example.
        \begin{figure}[!h]
    		\centering
    		\begin{subfigure}[t]{0.9\textwidth}
    		\centering
    		\def\svgwidth{0.9\columnwidth}
    		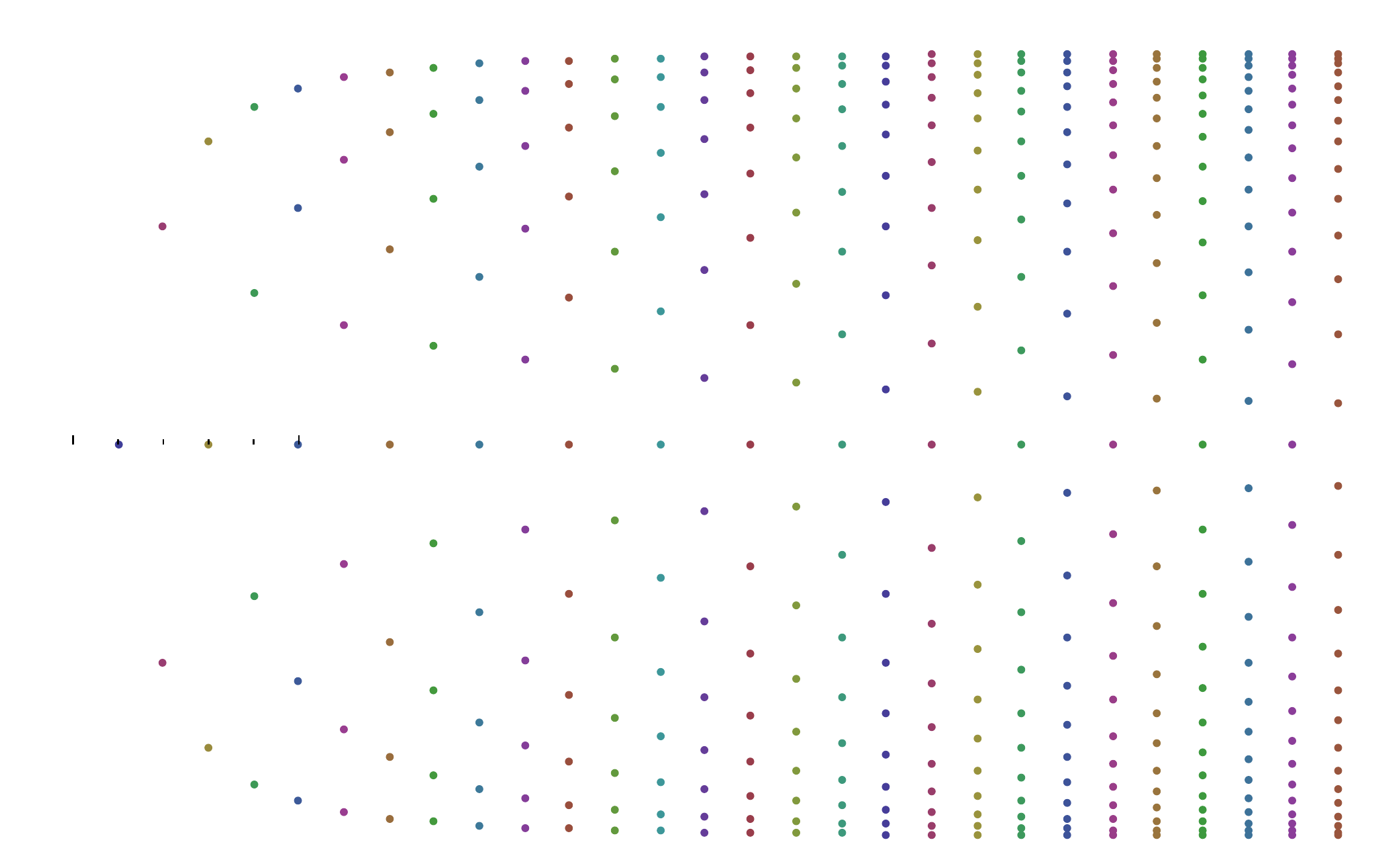 
    		\caption{Volumes of representations of $J(2,-M)$.}
    		\label{fig:ComplexVolumeJ2}
    		\end{subfigure}
    	\end{figure}
    	\begin{figure}[!h]
    		\centering
    		\ContinuedFloat
    		\begin{subfigure}[t]{0.9\textwidth}
    		\centering
    		\def\svgwidth{0.88\columnwidth}
    		~~~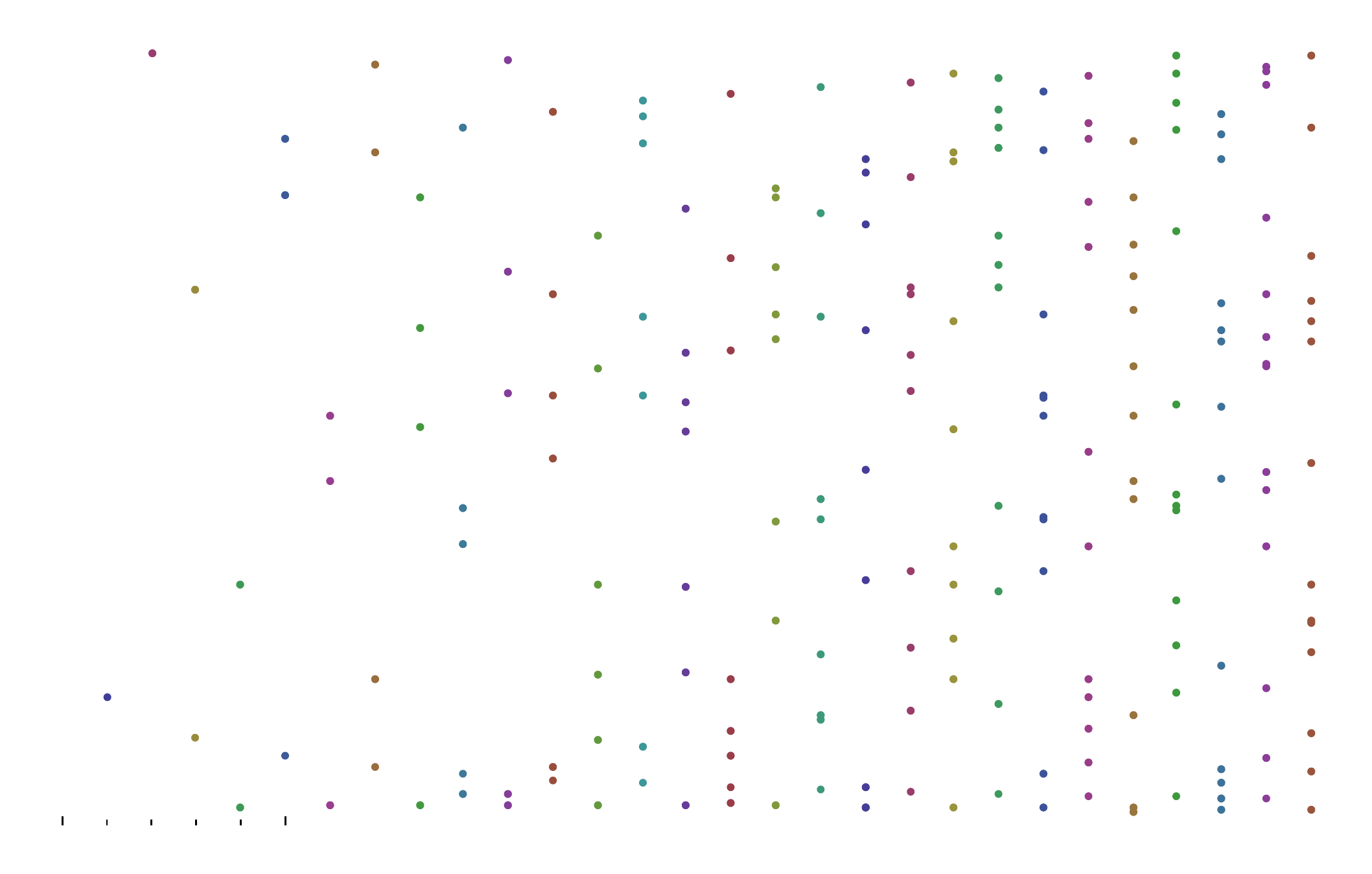 
    		\caption{Chern-Simons invariants of $J(2,-M)$ (mod $\pi^2$).}
    		\label{fig:ComplexVolumeJ2b}
    		\end{subfigure}
	   \end{figure}
	   \begin{figure}[!h]
		   	\centering
		   	\ContinuedFloat
		   	\begin{subfigure}[t]{0.9\textwidth}
    		\centering
    		\def\svgwidth{0.9\columnwidth}
    		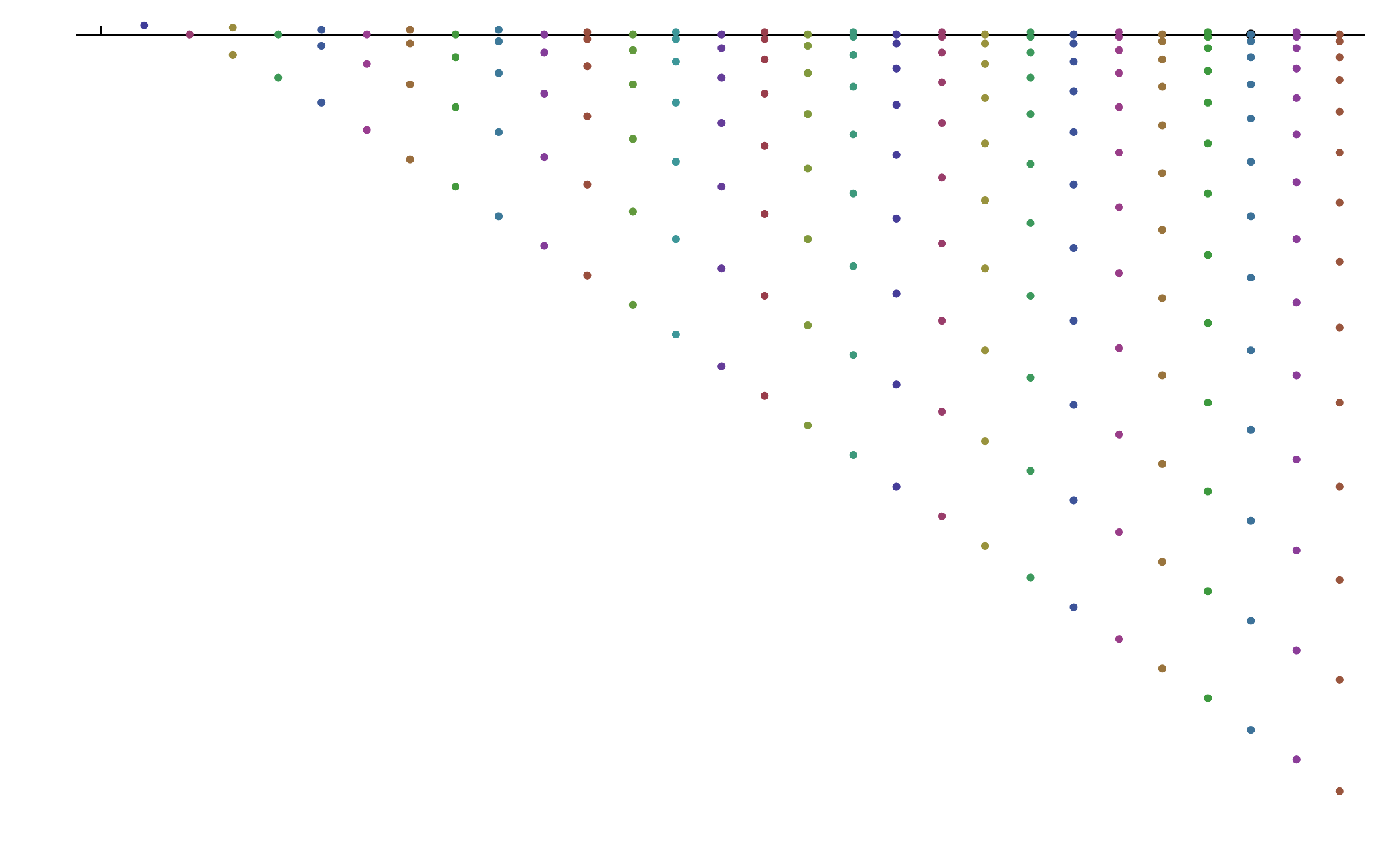 
    		\caption{Chern-Simons invariants of $J(2,-M)$ without reducing modulo $\pi^2$.}
    		\label{fig:ComplexVolumeJ2c}
    		\end{subfigure}
    	\caption{Complex volumes of $J(2,M)$.}
    	\end{figure}     
    
    	It is well-known that the maximal volume of $J(2,-M)$ tends to the volume of the Whitehead link as $M$ goes to infinity. The graph in Figure \ref{fig:ComplexVolumeJ2} tells us that  the second largest volume of $J(2,-M)$ also have the same limit and so does the third largest volume, and so on.  We also remark that $J(2,-M)$ has a representation of volume $0$ if $M$ is odd, as its Riley polynomial has a real solution.
    	
	\section{Further Examples}\label{sec:Exam}   
	\subsection{Granny and square knots}
	Connect-summing two trefoil knots, we obtain the granny knot. If we take one of them mirror-image, the connected-sum results in the square knot. We assign segment variables $z=(z_1,\cdots,z_{12})$ and $z'=(z'_1, \cdots, z'_{12})$ as follows.
	\begin{figure}[!h]
		\centering
		\scalebox{1}{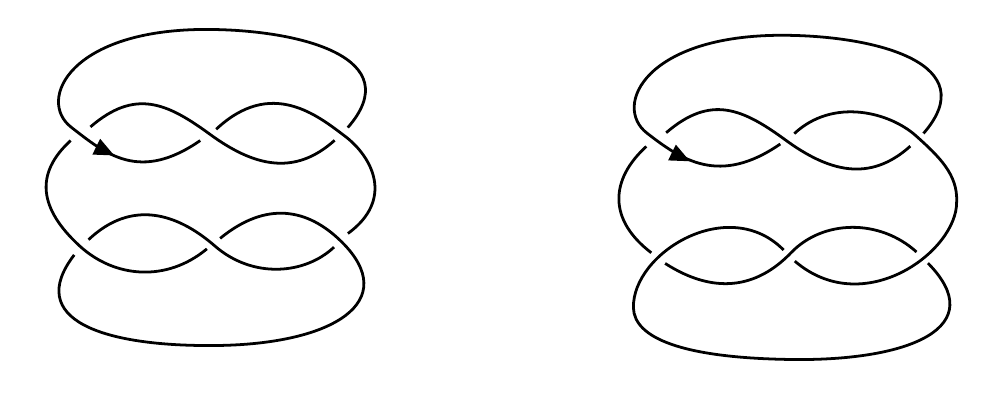}
		\caption{The granny and the square knots}
		\label{fig:square_granny}
	\end{figure}

	First, we compute a boundary parabolic solution for the granny knot. From the previous section, $(z_1,z_{10},z_{11},z_{2},z_3,z_{12})$ and $( z_{10},z_7,z_6,z_9,z_8,z_5)$ are of the form
	$$(z_1,z_{10},z_{11},z_{2},z_3,z_{12})=\left(p,\; q,\;r,\;\dfrac{p(q-r)}{p+q-r},\;\dfrac{pq}{p+q-r},\;\dfrac{pr}{r-q} \right)$$
	and
	$$(z_{10},z_7,z_6,z_9,z_8,z_5)=\left(p',\;q',\;r',\;\dfrac{p'(q'-r')}{p'+q'-r'},\;\dfrac{p'q'}{p'+q'-r'},\;\dfrac{p'r'}{r'-q'} \right)$$ for some $p,q,r$ and $p',q',r'$. Note that $z_{10}=z_4$ automatically holds as we have seen in the step (2) of Section \ref{subsec:torus}. Then we immediately obtain $p'=q$ from $z_{10}$ and $q'=\dfrac{p q^2-(p-q)(q-r) r'}{q^2+(p-q)r}$ from the hyperbolicity equations of $z_{10}$ and $z_4$. We hence obtain a solution in $p,q,r$ and $s$ (here we substitute $s$ for $r'$ for consistency) : 
	\begin{equation*}
	\begin{array}{ccl}
	(z_1,\cdots,z_{12}) &=& \left(p,\ \dfrac{p (q-r)}{p+q-r},\ \dfrac{p q}{p+q-r},\ q,\ -\dfrac{s\left(p r+q^2-q r\right)}{p (q-s)} \right.\\[12pt] 
	&&\quad \quad ,\ s, \ \dfrac{p q^2-s(p-q) (q-r)}{p r+q^2-q r},\ \dfrac{p q^2-s (p-q) (q-r)}{p (q+r-s)+q (q-r)}\\[12pt]
	&& \quad \quad \left. ,\ \dfrac{p q (q-s)}{p (q+r-s)+q (q-r)},\ q,\ r,\ \dfrac{p r}{r-q}\right).
	\end{array}
	\end{equation*}
	Similar computation using Proposition \ref{prop:mirror_seg} gives a solution for the square knot:
	\begin{equation*}
	\begin{array}{ccl}
	(z'_1,\cdots,z'_{12}) &=& \left(p,\ \dfrac{p (q-r)}{p+q-r},\ \dfrac{p q}{p+q-r},\ q,\ \dfrac{p q (q s-1)}{s \left(pr+q^2-q r'\right)} \right.\\[12pt] 
	&&\quad \quad ,\ \dfrac{1}{s}, \ \dfrac{p r+q^2-q r}{p-s (p-q) (q-r)},\ \dfrac{p \left(-q^2 s+q+r\right)+q (q-r)}{p-s (p-q) (q-r)}\\[12pt]
	&& \quad \quad \left. ,\ \dfrac{p r+q^2-q r}{p-p q s}+q,\ q,\ r,\ \dfrac{p r}{r-q}\right).
	\end{array}
	\end{equation*}
	\begin{rmk}	We remark that both the above solutions have four ``free'' parameters, $p,q,s$ and $r$. The number of free parameters was three in Example \ref{ex:fig_seg_var} and also in Section \ref{sec:Solution}. The appearance of one additional free parameter shows the existence of a $1$-parameter family of irreducible boundary parabolic representations coming from the connected-sum \cite{cho_erratum_2016}.
	\end{rmk} 
	 One can check using the solution above that these representations for the granny knot all have the same cusp shape $-12$ while those of the sqaure knot have $0$ cusp shape. This difference of cusp shapes again shows the difference of their peripheral structure. Note that there are  more irreducible representations for a connected sum when one of summands has an abelian representation.
	\subsection{The $8_5$ knot} \label{subsec:85}
	 Let us consider the $8_5$ knot diagram with segment variables $z=(z_1,\cdots ,z_{16} )$ and region variables $w=( w_1, \cdots, w_{10})$ as in Figure \ref{fig:eight_five}.
	\begin{figure}[!h]
		\centering
		\scalebox{1}{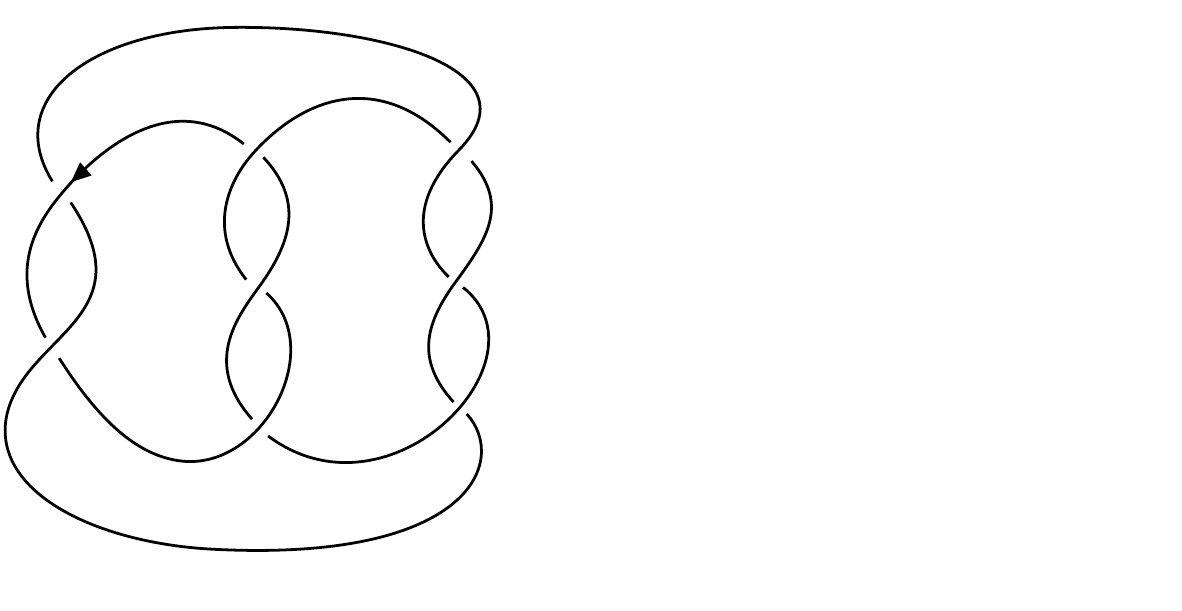}
		\caption{The $8_5$ knot}
		\label{fig:eight_five}
	\end{figure} 	
	We first give a boundary parabolic solution in segment variables, which should be a solution without pinched octahedra. The computation will be done by applying Lemma~\ref{lem:key_lem} in parallel for the three twists. We equate the variables in the ends of the twists appropriately and solve hyperbolicity equations for these variables as we did in Section~\ref{subsec:torus}. Using Mathematica, we obtain :
	\begingroup
	\allowdisplaybreaks
	\begin{align*}
			z_1 &= r \\
			z_2 &= \dfrac{-p(r-q)+q(r-p)}{-p+q(\Lambda ^3-2 \Lambda)} \\
			z_3 &= \dfrac{p(r-q)(\Lambda^3-\Lambda-1)-q(r-p)\Lambda}{-p+q(\Lambda ^3-2 \Lambda)}\\
			z_4 &= 
			\dfrac{- p(r-q)(\Lambda^4-\Lambda^3-\Lambda^2+1)+q(r-p)(\Lambda^2-\Lambda) }{-p +q(\Lambda ^2-\Lambda)} \\
			z_5 &= \dfrac{p(r-q) \left(\Lambda ^2-\Lambda -1\right)}{-p +q(\Lambda ^2-\Lambda)}\\
			z_6 &= r-q \\
			z_7 &= \dfrac{p(r-q)+q(r-p)(\Lambda^3-2\Lambda-1)}{p} \\
			z_8 &= \dfrac{p(r-q)(\Lambda-1)+q(r-p)(\Lambda^4-\Lambda^3-2\Lambda^2+\Lambda+1)}{
				p(\Lambda^6-\Lambda^5-3\Lambda^4+\Lambda^3+3\Lambda^2+2\Lambda-1)+q(\Lambda^4-\Lambda^3-2\Lambda^2+\Lambda+1)} \\
			z_9 &= \dfrac{p(r-q)(\Lambda^2-\Lambda)-q(r-p)}{p(\Lambda^2-\Lambda)-q(\Lambda^5-\Lambda^4-2\Lambda^3+\Lambda^2+\Lambda+1)}\\
			z_{10} &= \dfrac{p(r-q)(\Lambda^2-\Lambda)-q(r-p)}{q \left(\Lambda ^2-\Lambda -1\right)}\\
			z_{11}&= r-p \\
			z_{12}&= \dfrac{q(r-p) (\Lambda -1)}{p(\Lambda^3-\Lambda^2-\Lambda)+q(\Lambda-1)}\\
			z_{13} &= \dfrac{p(r-q)(\Lambda^3-\Lambda^2-\Lambda)+q(r-p)(\Lambda^6-\Lambda^5-3\Lambda^4+\Lambda^3+3\Lambda^2+2\Lambda-1)}{p(\Lambda^3-\Lambda^2-\Lambda)+q(\Lambda-1)}\\
			z_{14}&=\dfrac{p(r-q)(\Lambda^5-\Lambda^4-2\Lambda^3+\Lambda^2+\Lambda+1)-q(r-p)(\Lambda^3-\Lambda^2-\Lambda+1)}{p-q(\Lambda^3-\Lambda^2-\Lambda+1)}\\
			z_{15}&=\dfrac{-p(r-q)(\Lambda^3-\Lambda^2-1)+q(r-p)(\Lambda-1)}{p-q(\Lambda^3-\Lambda^2-\Lambda+1)} \\
			z_{16}&=\dfrac{p(r-q)(\Lambda^3-\Lambda^2-1)-q(r-p)(\Lambda-1)}{p(\Lambda-1)-q(\Lambda-1)}
	\end{align*}
	\endgroup
		where $\Lambda$ is chosen to satisfy $$\left(\Lambda ^5-\Lambda ^4-3 \Lambda ^3+2 \Lambda ^2+2 \Lambda +1\right) \left(\Lambda ^6-\Lambda ^5-2 \Lambda ^4+2 \Lambda ^2+2 \Lambda -1\right)=0.$$ Hence we obtain $11$ boundary parabolic representations. 
		We compute the cusp shape and complex volume using the above $z$-solutions for each holonomy representation.
	\begin{center}
	\begin{tabular}{c|c|c|c}
		$\Lambda$ & Volume & CS(mod $\pi^2$) & Cusp shape \\[2pt]
		\hline
		$-1.4978440$ & 0.0 &  8.514816 & -10.28576 \\[2pt]
		$-0.331409 - 0.386277i$ & $-1.138823$ &  0.373180& $-4.718078 + 6.0545 i	$\\[2pt]
		$-0.331409 + 0.386277i$&   $1.138823$&0.373180&$-4.718078 -6.0545i$  \\[2pt]
		$1.5803315 - 0.282555i$ & 6.997189 &3.594081&$-11.13904 - 3.54683i$ \\[2pt]
		$1.5803315 + 0.282555i$ & $-6.997189$ & 3.594081&$-11.13904 + 3.54683i$ \\
		\hline
		\hline
		$-1.1341553$ & 0.0 & 7.693190 & $-3.01951$ \\[2pt]
		$0.37927761$ & 0.0& 2.176413 &1.01951\\[2pt]
		$-0.592989 - 0.8475437i$ & $- 2.828122$ &$3.555607$ & $-9.50976 + 2.97945 i$\\[2pt]
		$-0.592989 + 0.8475437i$ &  2.828122 & 3.555607 & $-9.50976 - 
		2.97945 i$\\[2pt]
		$1.4704279 - 0.1026820i$ & 2.828122 & 6.313996 &$-9.50976 - 2.97945i$ \\[2pt]
		$1.4704279 + 0.1026820i$ & $-2.828122$ & 6.313996&$-9.50976 + 
		2.97945i$ 
	\end{tabular}
\end{center}

	On the other hand, the $8_5$ knot has 
	a boundary parabolic solution with pinched octahedra, which can not be obtained from segment variables. Thus we use region variables $w=(w_1,\cdots,w_{10})$. Applying Proposition \ref{lem:pinched_region} to the diagram, we have $3$ possibilities : 
	\begin{itemize}
		\item[(1)] The octahedra $o_1$ and $o_2$ are pinched.
		\item[(2)] The octahedra $o_3,o_4,$ and $o_5$ are pinched.
		\item[(3)] The octahedra $o_6,o_7,$ and $o_8$ are pinched.
	\end{itemize}
	Note that two possibilities among the above three can not occur at the same time, since in that case  every octahedron becomes pinched. 
	
	Suppose the octahedra $o_1$ and $o_2$ are pinched. Then Proposition \ref{prop:col_reg} gives additional equations $w_1+w_4=w_2+w_3$ and $w_1+w_4=w_3+w_{10}$. With these equations, we are able to obtain a solution as follows.
	\begin{equation*}
		\begin{array}{ccl}
		(w_1,\cdots,w_{10}) &=&	\left(-\dfrac{1}{p+q-pqr}+\dfrac{1}{p}+\dfrac{1}{q},\;-\dfrac{1}{p+q-pqr}+\dfrac{1}{p}+r\right.\\[13pt] 
		& &\quad \quad ,\; -\dfrac{1}{p (-q) r+p+q}+\dfrac{1}{p}+\dfrac{2}{q}-r,\; -\dfrac{1}{p+q-pqr}+\dfrac{1}{p}+\dfrac{1}{q}\\[13pt]
		& &\quad \quad ,\;-\dfrac{1}{p+q-pqr}+\dfrac{1}{q}+r,\; \dfrac{1}{p}+\dfrac{1}{q},\; r,\; \dfrac{1}{p}+\dfrac{1}{q}\\[13pt]
		& &\quad \quad \left.,\; -\dfrac{1}{p+q-pqr}+\dfrac{1}{q}+r,\; -\dfrac{1}{p+q-pqr}+\dfrac{1}{p}+r\right).
		\end{array}
	\end{equation*} In this case, we obtain the holonomy representation $\rho$,
	\begin{equation*}
		\begin{array}{lll}
			\rho(m_1)=\left(\begin{array}{cc}2 & 1 \\ -1 & 0 \end{array}\right)&,& \rho(m_2)=\left(\begin{array}{cc}2 & 1 \\ -1 & 0 \end{array}\right), \\[13pt]
			\rho(m_3)=\left(\begin{array}{cc}1 & 1 \\ 0 & 1 \end{array}\right)&,&
			\rho(m_4)=\left(\begin{array}{cc}1 & 0 \\ -1 & 1 \end{array}\right), \\[13pt]
			\rho(m_5)=\left(\begin{array}{cc}2 & -1 \\ 1 & 0 \end{array}\right)&,&
			\rho(m_6)=\left(\begin{array}{cc}0 & -1 \\ 1 & 2 \end{array}\right), \\[13pt]
			\rho(m_7)=\left(\begin{array}{cc}1 & 0 \\ -1 & 1 \end{array}\right)&,&
			\rho(m_8)=\left(\begin{array}{cc}1 & 1 \\ 0 & 1 \end{array}\right)
		\end{array}
	\end{equation*} where $m_k$ is the Wirtinger generator winding the over-arc of a crossing $c_k$. The volume of $\rho$ is $0$ and the Chern-Simons invariant (mod $\pi^2$) is $3.28987$.
	We can check that other possibilities (2) and (3) do not have a solution that induces a  non-abelian representation. Therefore, we obtain one additional representation from $w$-variables and conclude that the $8_5$ knot has exactly $12$ non-abelian boundary parabolic representations. 
	We note that there is a missing  irreducible boundary parabolic representation in the table  of  \emph{Curve project}  \cite{curve_project}.
	\subsection{The Whitehead link}\label{subsec:white}
	 Let us consider the Whitehead link diagram with segment variables as in Figure \ref{fig:whitehead}. 
    	\begin{figure}[!h]	
    		\centering
    		\scalebox{1}{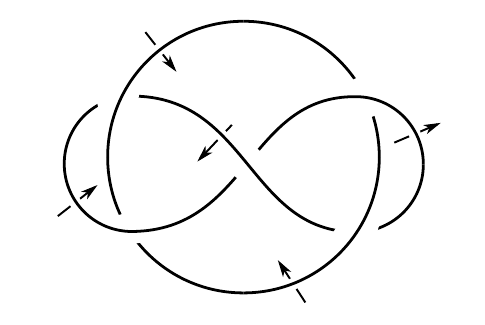}
    		\caption{The Whitehead link.}
    		\label{fig:whitehead}
    	\end{figure}
    
    	By symbolic computation using computer, one can obtain the boundary parabolic solutions to the hyperbolicity equations for the Whitehead link as follows.
    	\begin{equation*}
    	\begin{array}{ccl}
    	(z_1,\cdots,z_6) &=& \left( q+(1+\Lambda)p -\Lambda \dfrac{pq}{r},\ -\dfrac{(1-\Lambda) p r}{q}+p+ \Lambda r,\ p+r \right. \\[5pt]
    	& & \quad \quad \quad \quad \quad \left. ,\dfrac{q(p+r)}{(1+\Lambda)q-\Lambda r},\ -\dfrac{\Lambda r (p+q)}{(1+\Lambda) q-\Lambda r},\ p+q \right) \\[10pt]
    	(z'_1,\cdots,z'_4) &=& \left(\dfrac{p (q-r)}{q-\Lambda r},\ p,\ \dfrac{\Lambda p q-p r-(1-\Lambda) q r}{q-r},\ \dfrac{p (r-\Lambda q)+(1-\Lambda) q r}{q-\Lambda r}\right)
    	\end{array}
    	\end{equation*} where $\Lambda^2=-1$.
    	
    	Let $\rho_\Lambda$ be the holonomy representation corresponding to the choice of $\Lambda$. We first compute $\rho_\Lambda(m_k)$ of the Wirtinger generator $m_k$ winding the over-arc of a crossing $c_k$. Let $\Lambda_k$ be the crossing label of $c_k$. Let $K$ be the component of the Whitehead link which passes $c_5$ and let $K'$ be the other component. Let us choose a base point $P$ below the crossing $c_2$ and follow the component $K$. Then as in Section \ref{sec:Holonomy} we have the followings under the assumption $\rho_\Lambda(m_2) =  \begin{pmatrix} 1 & 1\\0 & 1\end{pmatrix}$. 
	   	\begin{itemize}
	   		\item $\rho_\Lambda(m_5) = \arraycolsep=4pt\begin{pmatrix}  1 +\Lambda_5 \Sigma_5 & -\Lambda_5 \Sigma_5^2\\\Lambda_5 & 1-\Lambda_5 \Sigma_5 \end{pmatrix} =\begin{pmatrix} -	\Lambda & -1 \\ 2\Lambda & \Lambda+2 \end{pmatrix}$ where $\Sigma_5$ is the segment label of $z_2$. 
	   		\item $\rho_\Lambda(m_3) = \rho_\Lambda(m_5) \arraycolsep=4pt\begin{pmatrix} 1 +\Lambda_3 \Sigma_3& -\Lambda_3 \Sigma_3^2 \\ \Lambda_3 & 1-\Lambda_3 \Sigma_3 \end{pmatrix} \rho_\Lambda(m_5^{-1})=\begin{pmatrix} 1 & 0 \\ \Lambda-1 & 1 \end{pmatrix}$ where $\Sigma_3$ is the sum of the segment labels of $z_2,z_3,$ and $z_4$.
	   		\item $\rho_\Lambda(m_1) = \rho_\Lambda(m_3^{-1}m_5) \arraycolsep=4pt\begin{pmatrix} 1 +\Lambda_1 \Sigma_1& -\Lambda_1 \Sigma_1^2 \\ \Lambda_1 & 1-\Lambda_1 \Sigma_1 \end{pmatrix} \rho_\Lambda(m_5^{-1}m_3)=\begin{pmatrix} 2-\Lambda & 1-\Lambda \\ \Lambda-1 & \Lambda \end{pmatrix}$ where $\Sigma_1$ is the sum of the segment labels of $z_2,z_3,z_4,z_5,$ and $z_6$.
	   		\item At the crossing $c_5$ we have $m_4=m_5 m_2 m_5^{-1}$ and hence $\rho_\Lambda(m_4) =\begin{pmatrix} -1 & -1 \\ 4 & 3 \end{pmatrix}$.
	   	\end{itemize}

		We can also compute the cusp shape of $\rho_\Lambda$ for each component as in Remark \ref{rmk:cusp_link}. The cusp shape for the component $K$ is $2+2\Lambda$, which is the sum of the segment labels of $z_1,z_2,\cdots,z_6$ with writhe correction $-1$. Similarly, the cusp shape for the component $K'$ is also $2+2\Lambda$, which is the sum of the segment labels of $z'_1,z'_2,z'_3$ and $z'_4$ with writhe correction $0$.   Finally, Theorem 1.2 in \cite{CKK_2014} gives that the complex volume (mod $\pi^2i$) is $\pm3.66386 -2.4674 i$.
		
		One can claim that the above representations are all of the non-abelian boundary parabolic representations of the Whitehead link if there is no solution with pinched octahedra. In fact, this is true since Proposition \ref{lem:pinched_region} tells us that the octahedron in the crossing $c_5$ is the only possibility for a pinched octahedron. In that case, we can compute the cross-ratios satisfying the gluing equations and verify that they give the trivial representation.

\end{document}